\documentclass{amsart}

\usepackage{amssymb}
\usepackage{amsmath}
\usepackage{amsthm}
\usepackage{amsfonts}

\usepackage{a4wide}
\usepackage{longtable}
\usepackage{multirow}
\usepackage{setspace}

\newtheorem{lemma}{Lemma}[section]
\newtheorem{proposition}{Proposition}[section]
\newtheorem{theorem}{Theorem}[section]
\newtheorem{corollary}{Corollary}[section]

\theoremstyle{definition}
\newtheorem{definition}{Definition}[section]
\newtheorem{remark}{Remark}[section]
\newtheorem{example}{Example}[section]
\newtheorem{problem}{Problem}[section]
\newtheorem{conjecture}{Conjecture}[section]

\numberwithin{equation}{section}

%--------------------------------------------------------------------------------

\begin{document}

%\linenumbers

\title[Differential principal factors of pure metacyclic fields]{Differential principal factors and Polya property \\ of pure metacyclic fields}

\author{Daniel C. Mayer}
\address{Naglergasse 53\\8010 Graz\\Austria}
\email{algebraic.number.theory@algebra.at}
\urladdr{http://www.algebra.at}

\thanks{Research supported by the Austrian Science Fund (FWF): projects J 0497-PHY and P 26008-N25}

\subjclass[2000]{Primary 11R16, 11R20, 13F20, secondary 11R27, 11R29}
\keywords{Pure cubic fields, pure quintic fields, pure metacyclic fields, Polya fields, Ostrowski ideals,
strongly ambiguous classes, differential principal factorization types, class groups}

\date{December 05, 2018}

% \dedicatory{Dedicated to the memory of Charles John Parry}
% \dedicatory{Dedicated to Amandine Leriche}

%--------------------------------------------------------------------------------

\begin{abstract}
Barrucand and Cohn's theory of principal factorizations
in pure cubic fields \(\mathbb{Q}(\sqrt[3]{D})\)
and their Galois closures \(\mathbb{Q}(\zeta_3,\sqrt[3]{D})\)
with \(3\) types
is generalized to pure quintic fields \(L=\mathbb{Q}(\sqrt[5]{D})\)
and pure metacyclic fields \(N=\mathbb{Q}(\zeta_5,\sqrt[5]{D})\)
with \(13\) possible types.
The classification is based on the Galois cohomology of the unit group \(U_N\),
viewed as a module over the automorphism group \(\mathrm{Gal}(N/K)\)
of \(N\) over the cyclotomic field \(K=\mathbb{Q}(\zeta_5)\),
by making use of theorems by Hasse and Iwasawa on the Herbrand quotient
of the unit norm index \((U_K:N_{N/K}(U_N))\)
by the number \(\#(\mathcal{P}_{N/K}/\mathcal{P}_K)\) of primitive ambiguous principal ideals,
which can be interpreted as principal factors of the different \(\mathfrak{D}_{N/K}\).
The precise structure of the group of differential principal factors
is determined with the aid of kernels of norm homomorphisms
and central orthogonal idempotents.
A connection with integral representation theory is established via
class number relations by Parry and Walter
involving the index of subfield units \((U_N:U_0)\).
Generalizing criteria for the Polya property
of Galois closures \(\mathbb{Q}(\zeta_3,\sqrt[3]{D})\)
of pure cubic fields \(\mathbb{Q}(\sqrt[3]{D})\) by Leriche and Zantema,
we prove that pure metacyclic fields \(N=\mathbb{Q}(\zeta_5,\sqrt[5]{D})\)
of only \(1\) type cannot be Polya fields.
All theoretical results are underpinned by extensive
numerical verifications of the \(13\) possible types
and their statistical distribution in the range
\(2\le D<10^3\) of \(900\) normalized radicands.
\end{abstract}

\maketitle

%\newpage
%--------------------------------------------------------------------------------

\section{Introduction and Main Theorems}
\label{s:Intro}
\noindent
Let \(F\) be an algebraic number field.
For each prime number \(p\in\mathbb{P}\) and each integer exponent \(f\ge 1\),
let the \textit{Ostrowski ideal}
\cite{Os}
of \(F\) for the prime power \(p^f\) be defined as
the product of all prime ideals of \(F\) with norm \(p^f\), that is
\begin{equation}
\label{eqn:Ostrowski} 
\mathfrak{b}_{p^f}(F):=\prod\,\lbrace\ \mathfrak{p}\in\mathbb{P}_F\mid N_{F/\mathbb{Q}}(\mathfrak{p})=p^f\ \rbrace.
\end{equation}
According to Zantema
\cite{Za},
\(F\) is called a \textit{Polya field}
\cite{Po},
if the Ostrowski ideals of \(F\) for arbitrary prime powers are principal, that is
\begin{equation}
\label{eqn:Polya} 
(\forall\,p\in\mathbb{P})\,(\forall\,f\in\mathbb{N})\,(\exists\,\Gamma\in F)\,\mathfrak{b}_{p^f}(F)=\Gamma\mathcal{O}_F.
\end{equation}

The aim of the present paper is to provide a
necessary and sufficient condition for the Polya property and a
classification of all
pure metacyclic fields \(N=\mathbb{Q}(\zeta_5,\sqrt[5]{D})\),
with \(\zeta_5=\exp(2\pi\sqrt{-1}/5)\) and \(D\ge 2\) a fifth power free integer,
into precisely \(13\) exhaustive and mutually exclusive types,
according to the Galois cohomology of the unit group \(U_N\),
viewed as a module over the automorphism group \(G=\mathrm{Gal}(N/K)\)
of \(N\) over the cyclotomic field \(K=\mathbb{Q}(\zeta_5)\).
Each type is characterized uniquely by the \(0\)-th cohomology, i.e. the unit norm index,
\begin{equation}
\label{eqn:UNI}
\#(H^0(G,U_N))=(U_K:N_{N/K}(U_N))=5^U \text{ with } 0\le U\le 2
\end{equation}
as the primary classification invariant,
and by the triplet \((A,I,R)\) of \(\mathbb{F}_5\)-dimensions
of the \(5\)-elementary abelian components
\((\mathcal{P}_{L/\mathbb{Q}}:\mathcal{P}_\mathbb{Q})=5^A\),
\(\#\left((\mathcal{P}_{M/K^+}/\mathcal{P}_{K^+})\cap\ker(N_{M/L})\right)=5^I\), and
\(\#\left((\mathcal{P}_{N/K}/\mathcal{P}_K)\cap\ker(N_{N/M})\right)=5^R\)
in the direct product decomposition
\begin{equation}
\label{eqn:AIR}
\begin{aligned}
H^1(G,U_N) & \simeq\mathcal{P}_{N/K}/\mathcal{P}_K \\
& \simeq
(\mathcal{P}_{L/\mathbb{Q}}/\mathcal{P}_\mathbb{Q}) \times
\left((\mathcal{P}_{M/K^+}/\mathcal{P}_{K^+})\cap\ker(N_{M/L})\right) \times
\left((\mathcal{P}_{N/K}/\mathcal{P}_K)\cap\ker(N_{N/M})\right)
\end{aligned}
\end{equation}
of the \(1\)-st cohomology, i.e. the group of primitive ambiguous principal ideals of \(N/K\),
as the secondary classification invariant,
where \(L=\mathbb{Q}(\sqrt[5]{D})\) denotes the pure quintic subfield of \(N\)
and \(M=\mathbb{Q}(\sqrt{5},\sqrt[5]{D})\) denotes the maximal real subfield of \(N\),
which contains \(K^+=\mathbb{Q}(\sqrt{5})\).

Since the Polya property can be proved independently of the classification,
we immediately state and prove our first Main Theorem concerning \textit{Polya fields},
reproved later in Theorem
\ref{thm:MainPolya}.

%--------------------------------------------------------------------------------

\begin{theorem}
\label{thm:MainQuinticPolya}
A pure metacyclic field \(N=\mathbb{Q}(\zeta_5,\sqrt[5]{D})\)
of absolute degree \(\lbrack N:\mathbb{Q}\rbrack=20\)
with \(5\)-th power free radicand \(D\in\mathbb{Z}\), \(D\ge 2\),
is a Polya field if and only if
\begin{equation}
\label{eqn:MainQuinticPolya}
(\forall\,p\in\mathbb{P},\ p \text{ ramified in } N/K)\,(\exists\,\alpha\in L)\,N_{L/\mathbb{Q}}(\alpha)=p.
\end{equation}
\end{theorem}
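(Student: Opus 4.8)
The plan is to compute the Polya group of $N$ directly from the factorization of the ramified primes, reducing the principality of each Ostrowski ideal to a norm condition in the quintic subfield $L$. First I would observe that, since $N/\mathbb{Q}$ is Galois, all prime ideals of $N$ above a given rational prime $p$ share a common ramification index $e$ and residue degree $f$; hence the only Ostrowski ideal attached to powers of $p$ is $\mathfrak{b}_{p^f}(N)$, and it satisfies $\mathfrak{b}_{p^f}(N)^{e}=p\mathcal{O}_N$. For $p$ unramified in $N/\mathbb{Q}$ this forces $\mathfrak{b}_{p^f}(N)=p\mathcal{O}_N$, which is principal, so by \eqref{eqn:Polya} the Polya property is equivalent to principality of $\mathfrak{b}_{p^f}(N)$ for the finitely many $p$ ramifying in $N/\mathbb{Q}$, all of which divide $5D$.

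For a prime $p\mid D$ with $p\neq 5$ I would exploit the tower $\mathbb{Q}\subset L\subset N$: such $p$ is totally ramified in $L/\mathbb{Q}$, say $p\mathcal{O}_L=\mathfrak{p}_L^{5}$ with $f(\mathfrak{p}_L/p)=1$, while it is unramified in $K/\mathbb{Q}$ and therefore in $N/L$. Matching residue degrees through the tower then yields $\mathfrak{b}_{p^f}(N)=\mathfrak{p}_L\mathcal{O}_N$, so principality of the Ostrowski ideal becomes principality of the extended ideal $\mathfrak{p}_L\mathcal{O}_N$. To descend this to $L$ I would invoke $N_{N/L}(\mathfrak{p}_L\mathcal{O}_N)=\mathfrak{p}_L^{[N:L]}=\mathfrak{p}_L^{4}$, which shows that the kernel of the extension map $\mathrm{Cl}(L)\to\mathrm{Cl}(N)$ is annihilated by $4$; since $\mathfrak{p}_L^{5}$ is principal the class of $\mathfrak{p}_L$ is annihilated by $5$, and $\gcd(4,5)=1$ rules out capitulation. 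Hence $\mathfrak{p}_L\mathcal{O}_N$ is principal if and only if $\mathfrak{p}_L$ is, and because $\mathfrak{p}_L$ is the unique prime of $L$ of norm $p$ while $[L:\mathbb{Q}]=5$ is odd, this is equivalent to the existence of $\alpha\in L$ with $N_{L/\mathbb{Q}}(\alpha)=p$, the sign being absorbed by the unit $-1$.

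The delicate case, which I expect to be the main obstacle, is $p=5$, where the ramification is wild and $5$ ramifies simultaneously in $K/\mathbb{Q}$ and, possibly, in the Kummer extension $N/K$. Writing $\mathfrak{l}=(1-\zeta_5)\mathcal{O}_K$ for the prime of $K$ above $5$, I would first analyze the inertia group in $\mathrm{Gal}(N/\mathbb{Q})$ and show that $5$ ramifies in $N/K$ precisely when it is totally ramified in $N/\mathbb{Q}$, say $5\mathcal{O}_N=\mathfrak{P}^{20}$; in that situation the tower gives $\mathfrak{p}_L\mathcal{O}_N=\mathfrak{P}^{4}$ and $\mathfrak{l}\mathcal{O}_N=\mathfrak{P}^{5}$. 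The essential extra ingredient is that $\mathfrak{l}$ is principal with $N_{K/\mathbb{Q}}(1-\zeta_5)=5$, so $\mathfrak{P}^{5}=(1-\zeta_5)\mathcal{O}_N$ is principal; combined with $\mathfrak{b}_{5}(N)=\mathfrak{P}$ this yields the class identity $[\mathfrak{P}]=[\mathfrak{P}^{5}]-[\mathfrak{P}^{4}]=-[\mathfrak{p}_L\mathcal{O}_N]$, reducing principality of the Ostrowski ideal at $5$ to principality of $\mathfrak{p}_L$, which is then handled exactly as above.

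Finally, to justify that the criterion ranges only over primes ramified in $N/K$, I would treat the remaining case in which $5$ is ramified in $N/\mathbb{Q}$ but unramified in $N/K$: then $\mathfrak{l}$ is either inert or totally split in the cyclic degree-$5$ extension $N/K$, and in both sub-cases $\mathfrak{l}\mathcal{O}_N$ equals the product of all primes of $N$ above $5$, so $\mathfrak{b}_{5^f}(N)=\mathfrak{l}\mathcal{O}_N=(1-\zeta_5)\mathcal{O}_N$ is automatically principal. Assembling the three cases, the Polya property holds if and only if $\mathfrak{p}_L$ is principal for every $p$ ramified in $N/K$, which by the norm reformulation is equivalent to \eqref{eqn:MainQuinticPolya}.
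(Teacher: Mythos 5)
Your proposal is correct, and it follows the same global strategy as the paper's proof: reduce the Polya property to the finitely many ramified primes, identify each Ostrowski ideal of \(N\) with the extension to \(N\) of the corresponding ideal of \(L\), descend principality through the relative norm \(N_{N/L}\), and treat \(p=5\) separately according to whether it divides the conductor of \(N/K\), exploiting that the prime of \(K\) above \(5\) is principal. Within that framework you make two genuine streamlinings. First, you dispose of all \(p\mid D\), \(p\ne 5\), in a single stroke by using that in the Galois extension \(N/\mathbb{Q}\) all primes above \(p\) share common invariants \(e,f\), whence \(\mathfrak{b}_{p^f}(N)=\mathfrak{p}_L\mathcal{O}_N\) in every case; the paper instead runs through the three congruence cases \(q\equiv\pm 2\), \(\ell\equiv -1\), \(\ell\equiv +1\,(\mathrm{mod}\,5)\) in \eqref{eqn:NonSplitIdl}, \eqref{eqn:2SplitIdl}, \eqref{eqn:4SplitIdl}, all of which your uniform argument subsumes. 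Second, for the descent you observe that the class of \(\mathfrak{p}_L\) is annihilated both by \(4\) (via \(N_{N/L}\) applied to a generator of \(\mathfrak{p}_L\mathcal{O}_N\)) and by \(5\) (via \(\mathfrak{p}_L^5=p\mathcal{O}_L\)), so \(\gcd(4,5)=1\) forces principality; the paper instead argues that a class of order \(2\) or \(4\) capitulating in \(N\) would contradict Walter \cite{Wa1}. Your version is more elementary and self-contained -- indeed the paper itself uses the same \(\gcd\)-style trick at \(p=5\), writing \(\mathcal{P}=\mathcal{P}^5/\mathcal{P}^4\). Your treatment of \(5\) matches the paper's, with the explicit generator \(1-\zeta_5\) replacing the citation that \(K\) is a Polya field, and you even cover the inert sub-case in species \(2\), which the paper silently excludes because total splitting is what actually occurs there. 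One shared soft spot: like the paper, you assert the equivalence between principality of \(\mathfrak{b}_p(L)\) and solvability of \(N_{L/\mathbb{Q}}(\alpha)=p\) without full proof; in the direction from the norm equation to principality one must reduce to an integral \(\alpha\), since only an \emph{integral} ideal of norm \(p\) is forced to coincide with the unique degree-one prime \(\mathfrak{p}_L\) above \(p\) (a fractional \(\alpha\) of norm \(p\) a priori only yields \(\lbrack\mathfrak{p}_L\rbrack\) inverse to the class of a norm-one ideal). Your appeal to the uniqueness of the prime of norm \(p\) gestures at exactly this and deserves one explicit line, but it does not affect the correctness of the overall argument relative to the paper's own standard.
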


\begin{remark}
\label{rmk:MainQuinticPolya}
It will turn out that only pure metacyclic fields \(N\)
having the single differential principal factorization type \(\alpha_3\) in Theorem
\ref{thm:MainQuinticDisplay} 
cannot possess the Polya property.
For fields \(N\) of the other \(12\) types we shall give conditions
in terms of the prime factorization of the class field theoretic conductor of the Kummer extension \(N/K\)
which are necessary and sufficient for the Polya property of \(N\).
\end{remark}

\begin{proof}
Since the absolute extension \(N/\mathbb{Q}\) is Galois,
it suffices to consider Ostrowski ideals of \(N\) for powers of primes which ramify in \(N/\mathbb{Q}\)
\cite{Za}.
We reduce the principal ideal conditions for Ostrowski ideals of the normal field \(N\)
to principal ideal conditions for Ostrowski ideals of the non-Galois field \(L\).
Then we use the equivalence
\(\mathfrak{b}_p(L)=\alpha\mathcal{O}_L\) \(\Longleftrightarrow\) \(N_{L/\mathbb{Q}}(\alpha)=p\)
for \(\alpha\in L\) and \(p\in\mathbb{P}\) ramified in \(L/\mathbb{Q}\).

Firstly, if \(q\equiv\pm 2\,(\mathrm{mod}\,5)\), then
\(q\mathcal{O}_L=\mathcal{Q}^5\), \(N(\mathcal{Q})=q\), \(\mathfrak{b}_{q}(L)=\mathcal{Q}\), and
\(q\mathcal{O}_N=\mathfrak{Q}^5\), \(N(\mathfrak{Q})=q^4\), \(\mathfrak{b}_{q^4}(N)=\mathfrak{Q}\), and
\begin{equation}
\label{eqn:NonSplitIdl}
\mathfrak{b}_{q}(L)\mathcal{O}_N=\mathcal{Q}\mathcal{O}_N=\mathfrak{Q}=\mathfrak{b}_{q^4}(N).
\end{equation}
Thus, on the one hand, for \(x\in L\),
\begin{equation}
\label{eqn:NonSplitUp}
\mathfrak{b}_{q}(L)=x\mathcal{O}_L\ \Longrightarrow\ \mathfrak{b}_{q^4}(N)=x\mathcal{O}_N,
\end{equation}
and on the other hand, for \(\Gamma\in N\),
\begin{equation}
\label{eqn:NonSplitDown}
\mathfrak{b}_{q^4}(N)=\Gamma\mathcal{O}_N\ \Longrightarrow\ 
N_{N/L}(\Gamma)\mathcal{O}_L=N_{N/L}(\mathfrak{Q})=\mathcal{Q}^4=\mathfrak{b}_{q}(L)^4.
\end{equation}
If \(\mathcal{Q}\) were not principal, then
the class \(\mathcal{Q}\mathcal{P}_L\in\mathrm{Cl}(L)\)
would have order \(2\) or \(4\) in \(\mathrm{Cl}_2(L)\)
and would capitulate in \(N\),
\(T_{N/L}(\mathcal{Q}\mathcal{P}_L)=(\mathcal{Q}\mathcal{O}_N)\mathcal{P}_N=\mathfrak{Q}\mathcal{P}_N=1\),
which is impossible
\cite{Wa1}.

Secondly, if \(\ell\equiv -1\,(\mathrm{mod}\,5)\), then
\(\ell\mathcal{O}_L=\mathcal{L}^5\), \(N(\mathcal{L})=\ell\), \(\mathfrak{b}_{\ell}(L)=\mathcal{L}\), and
\(\ell\mathcal{O}_N=\mathfrak{L}_1^5\mathfrak{L}_2^5\), \(N(\mathfrak{L}_i)=\ell^2\), \(\mathfrak{b}_{\ell^2}(N)=\mathfrak{L}_1\mathfrak{L}_2\), and
\begin{equation}
\label{eqn:2SplitIdl}
\mathfrak{b}_{\ell}(L)\mathcal{O}_N=\mathcal{L}\mathcal{O}_N=\mathfrak{L}_1\mathfrak{L}_2=\mathfrak{b}_{\ell^2}(N).
\end{equation}
Thus, on the one hand, for \(x\in L\),
\begin{equation}
\label{eqn:2SplitUp}
\mathfrak{b}_{\ell}(L)=x\mathcal{O}_L\ \Longrightarrow\ \mathfrak{b}_{\ell^2}(N)=x\mathcal{O}_N,
\end{equation}
and on the other hand, for \(\Gamma\in N\),
\begin{equation}
\label{eqn:2SplitDown}
\mathfrak{b}_{\ell^2}(N)=\Gamma\mathcal{O}_N\ \Longrightarrow\ 
N_{N/L}(\Gamma)\mathcal{O}_L=N_{N/L}(\mathfrak{L}_1\mathfrak{L}_2)=\mathcal{L}^4=\mathfrak{b}_{\ell}(L)^4.
\end{equation}
If \(\mathcal{L}\) were not principal, then
the class \(\mathcal{L}\mathcal{P}_L\in\mathrm{Cl}(L)\)
would have order \(2\) or \(4\) in \(\mathrm{Cl}_2(L)\)
and would capitulate in \(N\),
\(T_{N/L}(\mathcal{L}\mathcal{P}_L)=(\mathcal{L}\mathcal{O}_N)\mathcal{P}_N=(\mathfrak{L}_1\mathfrak{L}_2)\mathcal{P}_N=1\),
which is impossible
\cite{Wa1}.

Next, if \(\ell\equiv +1\,(\mathrm{mod}\,5)\), then
\(\ell\mathcal{O}_L=\mathcal{L}^5\), \(N(\mathcal{L})=\ell\), \(\mathfrak{b}_{\ell}(L)=\mathcal{L}\), and
\(\ell\mathcal{O}_N=\mathfrak{L}_1^5\cdots\mathfrak{L}_4^5\), \(N(\mathfrak{L}_i)=\ell\), \(\mathfrak{b}_{\ell}(N)=\mathfrak{L}_1\cdots\mathfrak{L}_4\), and
\begin{equation}
\label{eqn:4SplitIdl}
\mathfrak{b}_{\ell}(L)\mathcal{O}_N=\mathcal{L}\mathcal{O}_N=\mathfrak{L}_1\cdots\mathfrak{L}_4=\mathfrak{b}_{\ell}(N).
\end{equation}
Thus, on the one hand, for \(x\in L\),
\begin{equation}
\label{eqn:4SplitUp}
\mathfrak{b}_{\ell}(L)=x\mathcal{O}_L\ \Longrightarrow\ \mathfrak{b}_{\ell}(N)=x\mathcal{O}_N,
\end{equation}
and on the other hand, for \(\Gamma\in N\),
\begin{equation}
\label{eqn:4SplitDown}
\mathfrak{b}_{\ell}(N)=\Gamma\mathcal{O}_N\ \Longrightarrow\ 
N_{N/L}(\Gamma)\mathcal{O}_L=N_{N/L}(\mathfrak{L}_1\cdots\mathfrak{L}_4)=\mathcal{L}^4=\mathfrak{b}_{\ell}(L)^4.
\end{equation}
If \(\mathcal{L}\) were not principal, then
the class \(\mathcal{L}\mathcal{P}_L\in\mathrm{Cl}(L)\)
would have order \(2\) or \(4\) in \(\mathrm{Cl}_2(L)\)
and would capitulate in \(N\),
\(T_{N/L}(\mathcal{L}\mathcal{P}_L)=(\mathcal{L}\mathcal{O}_N)\mathcal{P}_N=(\mathfrak{L}_1\cdots\mathfrak{L}_4)\mathcal{P}_N=1\),
which is impossible
\cite{Wa1}.

Finally, the investigation of the special prime \(5\)
must be divided in two parts.
Generally,
\(5\mathcal{O}_K=\mathfrak{p}^4\), \(N(\mathfrak{p})=5\), \(\mathfrak{b}_{5}(K)=\mathfrak{p}\),
and, since \(K\) is a Polya field
\cite{Za},
\(\mathfrak{b}_{5}(K)=\xi\mathcal{O}_K\) for \(\xi\in K\).

If \(5\) divides the fourth power of the conductor of \(N/K\), then
\(5\mathcal{O}_L=\mathcal{P}^5\), \(N(\mathcal{P})=5\), \(\mathfrak{b}_{5}(L)=\mathcal{P}\), and
\(5\mathcal{O}_N=\mathfrak{P}^{20}\), \(N(\mathfrak{P})=5\), \(\mathfrak{b}_{5}(N)=\mathfrak{P}\), and
\begin{equation}
\label{eqn:5IdlK}
\xi\mathcal{O}_N=\mathfrak{b}_{5}(K)\mathcal{O}_N=\mathfrak{p}\mathcal{O}_N=\mathfrak{P}^5=\mathfrak{b}_{5}(N)^5,
\end{equation}
whence \(\mathfrak{b}_{5}(N)\) is principal \(\Longleftrightarrow\) \(\mathfrak{b}_{5}(N)^4\) is principal in \(N\).
Furthermore,
\begin{equation}
\label{eqn:5IdlL}
\mathfrak{b}_{5}(L)\mathcal{O}_N=\mathcal{P}\mathcal{O}_N=\mathfrak{P}^4=\mathfrak{b}_{5}(N)^4.
\end{equation}
Thus on the one hand, for \(x\in L\),
\begin{equation}
\label{eqn:5Up}
\mathfrak{b}_{5}(L)=x\mathcal{O}_L\ \Longrightarrow\ \mathfrak{b}_{5}(N)^4=x\mathcal{O}_N\ \Longrightarrow\ \mathfrak{b}_{5}(N) \text{ is principal},
\end{equation}
and on the other hand, for \(\Gamma\in N\),
\begin{equation}
\label{eqn:5Down}
\mathfrak{b}_{5}(N)^4=\Gamma\mathcal{O}_N\ \Longrightarrow\ 
N_{N/L}(\Gamma)\mathcal{O}_L=N_{N/L}(\mathfrak{P}^4)=\mathcal{P}^4=\mathfrak{b}_{5}(L)^4,
\end{equation}
and thus
\(\mathfrak{b}_{5}(L)=\mathcal{P}=\mathcal{P}^5/\mathcal{P}^4=5\mathcal{O}_L/N_{N/L}(\Gamma)\mathcal{O}_L=(5/N_{N/L}(\Gamma))\mathcal{O}_L\)
is principal.

If \(5\) does not divide the fourth power of the conductor of \(N/K\), then
\(5\mathcal{O}_L=\mathcal{P}_1\mathcal{P}_2^4\), \(N(\mathcal{P}_i)=5\), \(\mathfrak{b}_{5}(L)=\mathcal{P}_1\mathcal{P}_2\), and
\(5\mathcal{O}_N=\mathfrak{P}_1^4\cdots\mathfrak{P}_5^4\), \(N(\mathfrak{P}_i)=5\), \(\mathfrak{b}_{5}(N)=\mathfrak{P}_1\cdots\mathfrak{P}_5\).
Since \(\mathfrak{p}^4\mathcal{O}_N=5\mathcal{O}_N=\mathfrak{P}_1^4\cdots\mathfrak{P}_5^4\),
we see that
\(\mathfrak{b}_{5}(N)=\mathfrak{P}_1\cdots\mathfrak{P}_5=\mathfrak{p}\mathcal{O}_N=\mathfrak{b}_{5}(K)\mathcal{O}_N=\xi\mathcal{O}_N\)
is certainly principal.
\end{proof}

%--------------------------------------------------------------------------------

\noindent
Leriche
\cite{Le1,Le2}
has proved the following analogue of our Theorem
\ref{thm:MainQuinticPolya},
giving a necessary and sufficient criterion for the Polya property of
Galois closures \(\mathbb{Q}(\zeta_3,\sqrt[3]{D})\)
of pure cubic fields \(\mathbb{Q}(\sqrt[3]{D})\).

\begin{theorem}
\label{thm:MainCubicPolya}
The normal field \(N=\mathbb{Q}(\zeta_3,\sqrt[3]{D})\)
of a pure cubic field \(L=\mathbb{Q}(\sqrt[3]{D})\)
with cube free radicand \(D\in\mathbb{Z}\), \(D\ge 2\),
is a Polya field if and only if
\begin{equation}
\label{eqn:MainCubicPolya}
(\forall\,p\in\mathbb{P},\ p \text{ ramified in } N/\mathbb{Q}(\zeta_3))\,(\exists\,\alpha\in L)\,N_{L/\mathbb{Q}}(\alpha)=p.
\end{equation}
\end{theorem}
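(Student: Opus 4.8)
The plan is to mimic almost verbatim the proof of Theorem \ref{thm:MainQuinticPolya}, transporting the splitting-type analysis from the quintic setting to the cubic one. The cubic case is genuinely simpler: the relevant extension degree is $\lbrack N:\mathbb{Q}\rbrack=6$ rather than $20$, the intermediate layer $M$ present in the quintic proof disappears (there $K^+=\mathbb{Q}(\sqrt{5})$ is a genuine proper subfield, whereas here $\mathbb{Q}(\zeta_3)=K$ has degree $2$ with no real quadratic layer), and the torsion that can obstruct principality lives in $\mathrm{Cl}_2(L)$ in both cases but the index $\lbrack N:L\rbrack=2$ controls it directly. As in the quintic theorem, since $N/\mathbb{Q}$ is Galois, by Zantema \cite{Za} it suffices to test the principal-ideal condition \eqref{eqn:Polya} only on Ostrowski ideals $\mathfrak{b}_{p^f}(N)$ for primes $p$ ramifying in $N/\mathbb{Q}(\zeta_3)$, and I would again reduce these conditions on the Galois field $N$ to conditions on the non-Galois cubic field $L$, using the equivalence $\mathfrak{b}_p(L)=\alpha\mathcal{O}_L \Longleftrightarrow N_{L/\mathbb{Q}}(\alpha)=p$ for $\alpha\in L$ and $p$ ramified in $L/\mathbb{Q}$.

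\emph{First} I would organize the argument by the splitting type of a rational prime $p\ne 3$ in the cubic field $L=\mathbb{Q}(\sqrt[3]{D})$, which is governed by the residue of $p$ modulo $3$ (the analogue of the modulo-$5$ trichotomy above). For $p\equiv 2\,(\mathrm{mod}\,3)$ one has a single prime above $p$ in $L$ of residue degree $f$ determined by inertia, while for $p\equiv 1\,(\mathrm{mod}\,3)$ splitting can occur; in each case I would write down the factorizations $p\mathcal{O}_L$ and $p\mathcal{O}_N$ explicitly, identify the Ostrowski ideals $\mathfrak{b}_{p^f}(L)$ and $\mathfrak{b}_{p^f}(N)$, and record the key relation $\mathfrak{b}_{p^f}(L)\mathcal{O}_N=\mathfrak{b}_{p^{f'}}(N)$ exactly as in \eqref{eqn:NonSplitIdl}, \eqref{eqn:2SplitIdl}, \eqref{eqn:4SplitIdl}. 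The \emph{upward} implication (principality in $L$ forces principality in $N$) is then immediate by extension of ideals, and the \emph{downward} implication follows by applying $N_{N/L}$ and observing that the resulting $L$-ideal is $\mathfrak{b}_p(L)^2$, since $\lbrack N:L\rbrack=2$. To close the downward direction I would invoke the capitulation obstruction: if the prime of $L$ were non-principal, its class would have $2$-power order in $\mathrm{Cl}_2(L)$ and would capitulate in the degree-two extension $N/L$, contradicting the principal ideal theorem / Hilbert's theorem on capitulation as cited in \cite{Wa1}.

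\emph{Then} I would treat the ramified prime $p=3$ separately, paralleling the final paragraph of the quintic proof. Here $3\mathcal{O}_{\mathbb{Q}(\zeta_3)}=\mathfrak{p}^2$ with $N(\mathfrak{p})=3$, and since $\mathbb{Q}(\zeta_3)$ is itself a Polya field \cite{Za}, $\mathfrak{b}_3(\mathbb{Q}(\zeta_3))=\mathfrak{p}$ is principal. I would split into the two subcases according to whether $3$ divides the conductor of $N/\mathbb{Q}(\zeta_3)$ (equivalently, whether $D\equiv\pm 1\,(\mathrm{mod}\,9)$, controlling wild ramification of $3$ in $L$), writing out $3\mathcal{O}_L$ and $3\mathcal{O}_N$ in each subcase and tracing the principality of $\mathfrak{b}_3(N)$ back through $N_{N/L}$ to the principality of $\mathfrak{b}_3(L)$, which is governed by $\exists\,\alpha\in L$ with $N_{L/\mathbb{Q}}(\alpha)=3$. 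In the subcase where $3$ is unramified from $\mathbb{Q}(\zeta_3)$ up to $N$, principality is automatic from $\mathfrak{b}_3(N)=\mathfrak{p}\mathcal{O}_N=\xi\mathcal{O}_N$.

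\emph{The main obstacle} I anticipate is purely bookkeeping rather than conceptual: correctly determining the ramification and splitting data of $3$ in $L$ and $N$ across the conductor subcases, and verifying that the norm-index exponents match up so that the equivalence between principality of $\mathfrak{b}_{p^f}(N)$ and principality of $\mathfrak{b}_p(L)^2$ collapses to principality of $\mathfrak{b}_p(L)$ itself. The clean point that makes everything go through is that the obstruction group is $\mathrm{Cl}_2(L)$ and the extension $N/L$ has degree $2$, so any potentially obstructing class has order a power of $2$ and is killed upon extension to $N$ by capitulation; this is exactly the phenomenon exploited throughout the quintic proof via \cite{Wa1}, and I would cite it identically.
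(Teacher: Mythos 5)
Your overall architecture is sound and is the faithful cubic transcription of the paper's proof of Theorem~\ref{thm:MainQuinticPolya}: the Zantema reduction to primes ramified in the Galois extension \(N/\mathbb{Q}\), the case split according to the residue of \(q\) modulo \(3\) (inert versus split in \(K=\mathbb{Q}(\zeta_3)\), the analogue of the modulo-\(5\) trichotomy), the separate treatment of the wild prime \(3\) according to conductor divisibility with the automatic case \(\mathfrak{b}_3(N)=\mathfrak{p}\mathcal{O}_N\) when \(3\nmid f\), the upward implication by extension of ideals, and the downward implication by applying \(N_{N/L}\). For calibration: the paper does not prove Theorem~\ref{thm:MainCubicPolya} this way at all --- it simply attributes the statement to Leriche \cite{Le1,Le2}, and the statement also falls out later as the equivalence of items (1) and (6) in the general Theorem~\ref{thm:MainPolya}, proved cohomologically via Zantema's exact sequence and strongly ambiguous classes. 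So your direct, more elementary route is legitimate; but one step in it, as written, is genuinely broken. (A minor slip besides: the residue of \(q\) modulo \(3\) governs the splitting in \(K\) and hence in \(N\); in \(L\) itself every prime ramified in \(N/K\) is totally ramified with residue degree \(1\), not ``residue degree \(f\) determined by inertia''.)

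The gap is your closing of the downward direction. You assert that a class of \(2\)-power order in \(\mathrm{Cl}_2(L)\) cannot capitulate in the degree-two extension \(N/L\), ``contradicting the principal ideal theorem / Hilbert's theorem on capitulation as cited in \cite{Wa1}''. No such general theorem exists, and the results you name assert the opposite tendency: Hilbert's Theorem \(94\) \emph{guarantees} nontrivial capitulation in unramified cyclic extensions, the Artin--Furtw\"angler principal ideal theorem says every class capitulates in the Hilbert class field, and order-\(2\) classes routinely capitulate in quadratic extensions (the class of \((2,1+\sqrt{-5})\) in \(\mathbb{Q}(\sqrt{-5})\) capitulates in \(\mathbb{Q}(\sqrt{-5},\sqrt{-1})\)). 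Whatever the exact scope of Walter's statement invoked in the quintic proof, it cannot be ``cited identically'' when \(\lbrack N:L\rbrack=2\). Fortunately the appeal is unnecessary, and your own bookkeeping already closes the argument: for \(q\) ramified in \(N/K\) one has total ramification \(\mathfrak{b}_q(L)^3=\mathcal{Q}^3=q\mathcal{O}_L\), and your norm step shows \(\mathfrak{b}_q(L)^2=N_{N/L}(\Gamma)\mathcal{O}_L\) is principal; since \(\gcd(2,3)=1\), the ideal \(\mathfrak{b}_q(L)=\bigl(q/N_{N/L}(\Gamma)\bigr)\mathcal{O}_L\) is principal outright, with no class field theory at all. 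This is exactly the device the quintic proof itself uses at the wild prime (\(\mathcal{P}=\mathcal{P}^5/\mathcal{P}^4\)), and in the cubic case it works uniformly for every ramified prime, including \(3\) of the first species where \(\mathcal{P}^2\) principal and \(\mathcal{P}^3=3\mathcal{O}_L\) give principality of \(\mathcal{P}\). With that one replacement your proof is correct and self-contained.
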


%--------------------------------------------------------------------------------

Now we state our second Main Theorem concerning the \textit{classification}.

\begin{theorem}
\label{thm:MainQuinticDisplay}
Each pure metacyclic field \(N=\mathbb{Q}(\zeta_5,\sqrt[5]{D})\)
of absolute degree \(\lbrack N:\mathbb{Q}\rbrack=20\)
with \(5\)-th power free radicand \(D\in\mathbb{Z}\), \(D\ge 2\),
belongs to precisely one of the \(13\) differential principal factorization types in Table
\ref{tbl:QuinticDPFTypesDisplay},
in dependence on the invariant \(U\) and the triplet \((A,I,R)\).

%--------------------------------------------------------------------------------

\renewcommand{\arraystretch}{1.1}

\begin{table}[ht]
\caption{Differential principal factorization types of pure metacyclic fields \(N\)}
\label{tbl:QuinticDPFTypesDisplay}
\begin{center}
\begin{tabular}{|c||c||c||ccc|}
\hline
 Type           & \(U\) & \(U+1=A+I+R\) & \(A\) & \(I\) & \(R\) \\
\hline
\(\alpha_1\)    & \(2\) & \(3\) & \(1\) & \(0\) & \(2\) \\
\(\alpha_2\)    & \(2\) & \(3\) & \(1\) & \(1\) & \(1\) \\
\(\alpha_3\)    & \(2\) & \(3\) & \(1\) & \(2\) & \(0\) \\
\(\beta_1\)     & \(2\) & \(3\) & \(2\) & \(0\) & \(1\) \\
\(\beta_2\)     & \(2\) & \(3\) & \(2\) & \(1\) & \(0\) \\
\(\gamma\)      & \(2\) & \(3\) & \(3\) & \(0\) & \(0\) \\
\hline
\(\delta_1\)    & \(1\) & \(2\) & \(1\) & \(0\) & \(1\) \\
\(\delta_2\)    & \(1\) & \(2\) & \(1\) & \(1\) & \(0\) \\
\(\varepsilon\) & \(1\) & \(2\) & \(2\) & \(0\) & \(0\) \\
 \hline
\(\zeta_1\)     & \(1\) & \(2\) & \(1\) & \(0\) & \(1\) \\
\(\zeta_2\)     & \(1\) & \(2\) & \(1\) & \(1\) & \(0\) \\
\(\eta\)        & \(1\) & \(2\) & \(2\) & \(0\) & \(0\) \\
 \hline
\(\vartheta\)   & \(0\) & \(1\) & \(1\) & \(0\) & \(0\) \\
\hline
\end{tabular}
\end{center}
\end{table}

%--------------------------------------------------------------------------------

%\noindent
The types \(\delta_1\), \(\delta_2\), \(\varepsilon\)
are characterized additionally by \(\zeta_5\not\in N_{N/K}(U_N)\),
and the types \(\zeta_1\), \(\zeta_2\), \(\eta\)
by \(\zeta_5\in N_{N/K}(U_N)\).
\end{theorem}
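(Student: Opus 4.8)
The plan is to reduce the entire classification to four elementary facts about the Tate cohomology of $U_N$ regarded as a module over the cyclic group $G=\mathrm{Gal}(N/K)$ of order $5$, and then to read off the thirteen rows of Table~\ref{tbl:QuinticDPFTypesDisplay} by a finite enumeration. By \eqref{eqn:UNI} we have $\#H^0(G,U_N)=5^U$, and by the decomposition \eqref{eqn:AIR} we have $\#H^1(G,U_N)=5^{A+I+R}$; with these in hand the facts I would establish are (i) the Herbrand quotient equals $q(U_N)=\#H^0/\#H^1=1/5$, so that $A+I+R=U+1$; (ii) $0\le U\le 2$; (iii) $A\ge 1$; and (iv) the condition $\zeta_5\in N_{N/K}(U_N)$ is forced by the value of $U$ when $U\in\{0,2\}$ but remains free when $U=1$.

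First I would compute the Herbrand quotient from the Galois module structure of the units. Since $K$, and hence $N$, is totally complex and every archimedean place of $K$ splits completely in $N/K$, the Dirichlet logarithm gives a $\mathbb{Q}[G]$-isomorphism between $U_N\otimes\mathbb{Q}$ and $X\otimes\mathbb{Q}$, where $X=\ker\big(\bigoplus_{w\mid\infty}\mathbb{Z}\to\mathbb{Z}\big)$ is the augmentation submodule. Multiplicativity of $q$ along $0\to X\to\bigoplus_{w\mid\infty}\mathbb{Z}\to\mathbb{Z}\to 0$, together with $q\big(\bigoplus_{w\mid\infty}\mathbb{Z}\big)=\prod_{w\mid\infty}[N_w:K_v]=1$ (all local degrees being $1$) and $q(\mathbb{Z})=\lvert G\rvert=5$, yields $q(U_N)=1/5$. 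This is exactly the Hasse--Iwasawa evaluation invoked in the introduction, and combined with \eqref{eqn:UNI} and \eqref{eqn:AIR} it is the relation $U+1=A+I+R$ appearing as the column header of Table~\ref{tbl:QuinticDPFTypesDisplay}. For (ii) I would use $N_{N/K}(U_N)\supseteq U_K^5$, so that $H^0(G,U_N)=U_K/N_{N/K}(U_N)$ is a quotient of $U_K/U_K^5$; since $U_K$ has unit rank $1$ and torsion $\mu_K\cong\mathbb{Z}/10\mathbb{Z}$, one has $U_K/U_K^5\cong\mathbb{F}_5^2$, whence $U\le 2$ while $U\ge 0$ is trivial.

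For (iii) I would exhibit a distinguished primitive ambiguous principal ideal inside the $L/\mathbb{Q}$-component of \eqref{eqn:AIR}: the radicand yields $\alpha=\sqrt[5]{D}\in L$ with $\alpha^{\sigma-1}=\zeta_5\in U_N$, so $(\alpha)$ is $G$-invariant and determines a class in $\mathcal{P}_{L/\mathbb{Q}}/\mathcal{P}_\mathbb{Q}$; because $D$ is fifth-power free with $D\ge 2$, this class is nontrivial modulo $\mathcal{P}_\mathbb{Q}$, forcing $A\ge 1$. For (iv) I would follow the image $\bar\zeta_5$ of $\zeta_5$ in $U_K/U_K^5\cong\mathbb{F}_5^2$, which is nonzero: any fifth power of a unit $\zeta_{10}^a\varepsilon^b$ equals the real number $(-1)^a\varepsilon^{5b}$ and can never equal the non-real $\zeta_5$. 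Consequently, when $U=2$ we have $N_{N/K}(U_N)=U_K^5$ and hence $\zeta_5\notin N_{N/K}(U_N)$, while when $U=0$ we have $N_{N/K}(U_N)=U_K$ and hence $\zeta_5\in N_{N/K}(U_N)$; only for $U=1$, where $N_{N/K}(U_N)/U_K^5$ is a line in $\mathbb{F}_5^2$, can this line either contain or avoid $\bar\zeta_5$. This is precisely the extra datum that separates $\delta_1,\delta_2,\varepsilon$ from $\zeta_1,\zeta_2,\eta$.

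Finally I would assemble the classification by enumeration. Under the constraints $A+I+R=U+1$, $A\ge 1$, $I,R\ge 0$, $0\le U\le 2$, the admissible triplets $(A,I,R)$ are the six with sum $3$ for $U=2$, the three with sum $2$ for $U=1$, and the single triplet $(1,0,0)$ for $U=0$; attaching the $\zeta_5$-datum, which is forced for $U\in\{0,2\}$ but doubles the $U=1$ triplets, produces exactly $6+6+1=13$ rows with pairwise distinct signatures $(U,A,I,R)$ plus the bit. Distinctness gives mutual exclusivity and the constraints give exhaustiveness, so every $N$ lands in precisely one row. The hard part is not this bookkeeping but the two structural inputs it consumes: the exact Herbrand evaluation, and above all the three-factor decomposition \eqref{eqn:AIR} of $\mathcal{P}_{N/K}/\mathcal{P}_K$ along the tower $L\subset M\subset N$ through the kernels of $N_{M/L}$ and $N_{N/M}$; I expect the analysis of the central orthogonal idempotents governing the $\mathrm{Gal}(K/\mathbb{Q})\cong\mathbb{Z}/4\mathbb{Z}$-action, which isolates these three components and aligns the $\zeta_5$-condition with $U$, to be the most delicate step.
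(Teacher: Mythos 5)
Your proposal is correct and reproduces the architecture of the paper's own argument: the paper proves Theorem \ref{thm:MainQuintic} (which coincides with the statement) precisely by combining the Herbrand relation \eqref{eqn:HerbrandQuot}, i.e.\ $A+I+R=U+1$, with the decompositions \eqref{eqn:NaturalDecomp} and \eqref{eqn:QuinticDecomp}, the bounds $0\le U\le 2$ and $A\ge 1$, and the $\zeta_5$-datum from \S\ \ref{ss:AmbPrId}, followed by exactly your finite enumeration. The differences lie in what is proved versus consumed. You actually prove the Herbrand evaluation $q(U_N)=1/5$ via the Dirichlet--Herbrand unit theorem, multiplicativity along $0\to X\to\bigoplus_{w\mid\infty}\mathbb{Z}\to\mathbb{Z}\to 0$, and triviality of the archimedean local degrees, where the paper merely cites Takagi--Hasse--Herbrand in \eqref{eqn:HerbrandQuotThm}; your computations $U_K/U_K^5\simeq\mathbb{F}_5^2$, $A\ge 1$ from the radical $\sqrt[5]{D}$ (nontrivial since $D=\pm q^5$ is excluded for fifth-power-free $D\ge 2$), and $\bar\zeta_5\ne 0$ in $U_K/U_K^5$ all match \S\ \ref{ss:AmbPrId}. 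Conversely, you take the three-factor decomposition \eqref{eqn:AIR} as an input, and your closing guess about its provenance is slightly off: in the paper it does not come from the central orthogonal idempotents of \S\ \ref{s:Idempotents}, but from the elementary splitting of Theorem \ref{thm:BottomTop}, applied twice along $L<M<N$ over $\mathbb{Q}<K^+<K$ --- for coprime degrees the norm induces an epimorphism of elementary abelian $5$-groups of primitive ambiguous (principal) ideals, and such an epimorphism splits off its kernel. The idempotents enter only later, for the $\tau$-invariance analysis (Proposition \ref{prp:Invariance}, Theorems \ref{thm:RelDim} and \ref{thm:KernelDimension}), which refines but is not needed for the table. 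One strengthening you could add at no cost: since $N_{N/K}(U_N)$ is $\tau$-stable and $\tau$ acts on $U_K/U_K^5$ with the distinct eigenvalues $2$ on $\bar\zeta$ and $-1$ on $\bar\eta$, the $U=1$ norm line must be $\mathbb{F}_5\bar\zeta$ or $\mathbb{F}_5\bar\eta$, recovering the sharper dichotomy (either $\eta$ or $\zeta$ is a norm, never only a mixed product) recorded in Table \ref{tbl:DPFTypes}.
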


\begin{proof}
The proof will be developed
in a sequence of partial results,
%presented in the following sections,
as explained in the following summary
of the layout of this paper.
\end{proof}

%--------------------------------------------------------------------------------

In \S\
\ref{s:PureMetacyclic},
basic arithmetic and Galois theory of pure quintic fields and their pure metacyclic normal closures
are recalled,
normalized radicands
%(sec. \ref{ss:Normalization})
are introduced,
and formulas for conductors and discriminants
%(sec. \ref{ss:CondDisc})
are given.
In \S\S\
\ref{s:SemiLocal}
and
\ref{s:Dimensions},
differential principal factorizations are explained
in a series of steps.
Starting with a general definition of ambiguous ideals in arbitrary (not necessarily normal) extensions,
%(sec. \ref{ss:AmbIdl}),
we define differential factors as primitive ambiguous ideals
%(sec. \ref{ss:PrmAmbIdl})
by comparing their structure with the shape of the relative different.
%(sec. \ref{ss:RelDiff}).
%In
%(sec. \ref{ss:KerNorm}),
Further, we investigate kernels of ideal norm homomorphisms and \(\mathbb{F}_5\)-dimensions.
In \S\S\
\ref{s:Cohomology}
and
\ref{s:Idempotents},
Galois cohomology is used to prove the Main Classification Theorem
\ref{thm:MainQuintic}
and \(\tau\)-invariance is analyzed.
Finally, \S\
\ref{s:CompRslt}
establishes the Main Theorem
\ref{thm:MainPolya}
and underpins all theoretical results in the previous sections
by concrete numerical examples for each of the \(13\) differential principal factorization types
and the statistical distribution of their occurrence
in the range \(2\le D<10^3\) of \(900\) normalized radicands.
%Several striking observations give rise to unproven conjectures.

%\newpage
%--------------------------------------------------------------------------------

\section{Pure metacyclic fields}
\label{s:PureMetacyclic}
\noindent
Let \(q_1,\ldots,q_s\) be pairwise distinct primes
such that \(s\ge 1\) and \(5\) may be among them.
Denote by \(L=\mathbb{Q}(\sqrt[5]{D})\) the \textit{pure quintic number field}
with fifth power free radicand \(D=q_1^{e_1}\cdots q_s^{e_s}\),
where the exponents are integers \(1\le e_i\le 4\).
The field \(L\) is generated by adjoining
the unique real and irrational solution of the pure quintic equation \(X^5-D=0\)
to the rational number field \(\mathbb{Q}\).
It is a non-Galois algebraic number field with signature \((1,2)\)
and thus possesses four conjugate and isomorphic complex fields
\(L_j=\mathbb{Q}(\zeta^{j}\cdot\sqrt[5]{D})\), \(1\le j\le 4\),
where \(\zeta=\zeta_5\) denotes a primitive fifth root of unity,
for instance \(\zeta=\exp(2\pi\sqrt{-1}/5)\).
All arithmetical invariants of the \(L_j\), \(1\le j\le 4\), coincide with those of \(L\).

The normal closure of \(L\) is the compositum \(N=L\cdot K=\mathbb{Q}(\sqrt[5]{D},\zeta)\)
of \(L=\mathbb{Q}(\sqrt[5]{D})\) with the cyclotomic field \(K=\mathbb{Q}(\zeta)\).
\(N\) is a complex \textit{pure metacyclic field} of degree \(20\)
with signature \((0,10)\) whose Galois group
\begin{equation}
\label{eqn:GaloisGroup}
\mathrm{Gal}(N/\mathbb{Q})=\langle\ \sigma,\tau\ \mid\ \sigma^5=1,\ \tau^4=1,\ \sigma\tau=\tau\sigma^2\ \rangle\simeq M_5=C_5\rtimes C_4
\end{equation}
is the semidirect product of two cyclic groups, isomorphic to SmallGroup\((20,3)\)
\cite{BEO1,BEO2}.
The action of the automorphisms is given by
\(\sigma(\sqrt[5]{D})=\zeta\cdot\sqrt[5]{D}\), \(\sigma(\zeta)=\zeta\),
\(\tau(\sqrt[5]{D})=\sqrt[5]{D}\), \(\tau(\zeta)=\zeta^2\).

The cyclotomic field \(K=\mathrm{Fix}(\langle\sigma\rangle)\) is a complex cyclic quartic field
and contains the real quadratic field \(\mathbb{Q}(\sqrt{5})=\mathrm{Fix}(\langle\sigma,\tau^2\rangle)\)
as its maximal real subfield \(K^+=\mathbb{Q}(\zeta+\zeta^{-1})\).
Consequently, there exists a \textit{real intermediate field}
\(M=\mathbb{Q}(\sqrt[5]{D},\sqrt{5})\) of degree \(10\)
between \(L\) and \(N\), which is non-Galois with signature \((2,4)\).
The two quintets of non-Galois subfields of \(N\) are
\begin{equation}
\label{eqn:ConjugateFields}
L_j=L^{\sigma^j}=\mathrm{Fix}(\langle\sigma^{-j}\tau\sigma^j\rangle),\quad M_j=M^{\sigma^j}=\mathrm{Fix}(\langle\sigma^{-j}\tau^2\sigma^j\rangle),\quad \text{ with } 0\le j\le 4.
\end{equation}

%\newpage
%--------------------------------------------------------------------------------

\subsection{Normalization of radicands}
\label{ss:Normalization}
\noindent
Let \(p\in\mathbb{P}\setminus\lbrace 2\rbrace\) be an odd prime number.
With an integer \(s\ge 1\), exponents \(1\le e_j\le p-1\) for \(1\le j\le s\),
and pairwise distinct prime numbers \(q_1,\ldots,q_s\in\mathbb{P}\)
(where \(2\) and \(p\) may be among them)
let \(D=\prod_{j=1}^s\,q_j^{e_j}\) be a \(p\)-th power free \textit{radicand}.

Since we desire a bijective correspondence between \(p\)-th power free radicands
and pairwise non-isomorphic \textit{pure number fields} \(L=\mathbb{Q}(\sqrt[p]{D})\) of degree \(p\),
we introduce the following concepts.

\begin{definition}
\label{dfn:Normalization}
For a \(p\)th power free radicand \(D\),
let \(D_k:=\prod\lbrace q\in\mathbb{P}\mid v_q(D)=k\rbrace\) for \(1\le k\le p-1\) be
the \textit{homogeneous component of degree} \(k\) of \(D\),
where \(v_q\) denotes the \(q\)-adic valuation \(\mathbb{Q}^\times\to\mathbb{Z}\),
\(\prod_{\ell\in\mathbb{P}}\,\ell^{n_\ell}\mapsto n_q\).
Then we have \(D=\prod_{k=1}^{p-1}\,D_k^k\), where each \(D_k\) is squarefree.

In contrast, we let \(D^{(1)}:=D\), and for \(2\le k\le p-1\) we construct
\(D^{(k)}\) by forming the \(k\)-th power \(D^k\) of \(D\)
and reducing all occurring exponents modulo \(p\).
Then the minimal positive integer
\(D^{(k_0)}:=\min\lbrace D^{(k)}\mid 1\le k\le p-1\rbrace\)
is called the \textit{normalized radicand} of the field \(L=\mathbb{Q}(\root{p}\of{D})\)
and all the bigger integers \(D^{(k)}\) with \(k\ne k_0\) are called the \textit{co-radicands}.
\end{definition}

%--------------------------------------------------------------------------------

\begin{example}
\label{exm:Normalization}
Only in the \textbf{cubic} case \(p=3\), where
\(D^{(1)}=D_1D_2^2\) and \(D^{(2)}=D_1^2D_2^4/D_2^3=D_1^2D_2\),
we can easily achieve the normalization by selecting two coprime integers with \(\gcd(D_1,D_2)=1\)
such that the quadratic component \(D_2<D_1\) is smaller than the linear component.
In this case we can definitely say that \(D^{(1)}\) is normalized and \(D^{(2)}\) is \text{the} co-radicand,
since \(D_1D_2^2<D_1^2D_2\).

In the \textbf{quintic} case \(p=5\), where the principal focus will lie in this work,
we have the following four radicands, one of which is normalized:
\begin{equation}
\label{eqn:QntRad}
\begin{aligned}
D^{(1)} &= D_1D_2^2D_3^3D_4^4, \\
D^{(2)} &= D_1^2D_2^4D_3^6D_4^8/D_3^5D_4^5=D_1^2D_2^4D_3D_4^3, \\
D^{(3)} &= D_1^3D_2^6D_3^9D_4^{12}/D_2^5D_3^5D_4^{10}=D_1^3D_2D_3^4D_4^2, \\
D^{(4)} &= D_1^4D_2^8D_3^{12}D_4^{16}/D_2^5D_3^{10}D_4^{15}=D_1^4D_2^3D_3^2D_4.
\end{aligned}
\end{equation}
Note that generally
\begin{equation}
\label{eqn:ModPowers}
D^{(k)}\equiv D^k(\mathrm{mod}^\times\,\mathbb{Z}^p) \text{ for each } 1\le k\le p-1.
\end{equation}
\end{example}

%--------------------------------------------------------------------------------

\begin{proposition}
\label{prp:Normalization}
There is a one-to-one correspondence between isomorphism classes of pure fields \(L=\mathbb{Q}(\root{p}\of{D})\) of degree \(p\)
and normalized \(p\)-th power free radicands \(D\).
\end{proposition}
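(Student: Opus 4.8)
The plan is to reduce the assertion to a clean isomorphism criterion for binomial fields and then to organise the combinatorics already set up in Definition~\ref{dfn:Normalization}. The criterion I would establish is: for $p$-th power free integers $D,D'\ge 2$, the pure fields $\mathbb{Q}(\sqrt[p]{D})$ and $\mathbb{Q}(\sqrt[p]{D'})$ are isomorphic if and only if $D'\equiv D^k\pmod{(\mathbb{Q}^\times)^p}$ for some exponent $1\le k\le p-1$. The ``if'' part is elementary: writing $D'=D^kc^p$ with $c\in\mathbb{Q}^\times$ and noting $\gcd(k,p)=1$, the element $c\,(\sqrt[p]{D})^{\,k}$ is the unique real $p$-th root of $D'$ (here $p$ odd is essential), so $\sqrt[p]{D'}\in\mathbb{Q}(\sqrt[p]{D})$; conversely, choosing $k'$ with $kk'=1+pm$ recovers $\sqrt[p]{D}$ from $\sqrt[p]{D'}$, whence the two degree-$p$ real fields coincide.

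The substantive direction is the ``only if'', and this is where I expect the main obstacle. An abstract isomorphism $\mathbb{Q}(\sqrt[p]{D})\to\mathbb{Q}(\sqrt[p]{D'})$ fixes $\mathbb{Q}$ and carries $\sqrt[p]{D}$ to a $p$-th root of $D$, so after fixing an algebraic closure it identifies $\mathbb{Q}(\sqrt[p]{D'})$ with some conjugate $\mathbb{Q}(\zeta_p^{\,j}\sqrt[p]{D})$. To extract a multiplicative relation I would adjoin $\zeta_p$ and work over $K=\mathbb{Q}(\zeta_p)$, where the common Galois closure is $K(\sqrt[p]{D})=K(\sqrt[p]{D'})$. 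Kummer theory over $K$ then forces the cyclic groups $\langle D\,(K^\times)^p\rangle$ and $\langle D'\,(K^\times)^p\rangle$ to coincide, i.e.\ $D'=D^k\gamma^p$ with $\gamma\in K^\times$ and $1\le k\le p-1$. It remains to descend from $K$ to $\mathbb{Q}$: the rational number $r:=D'/D^k$ is a $p$-th power in $K$, so $\mathbb{Q}(\sqrt[p]{r})\subseteq K$ has degree dividing $[K:\mathbb{Q}]=p-1$; since for prime $p$ the polynomial $X^p-r$ is irreducible over $\mathbb{Q}$ unless $r\in(\mathbb{Q}^\times)^p$, and $p\nmid(p-1)$, the number $r$ must already be a $p$-th power of a rational. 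This $\gcd(p,p-1)=1$ descent is the crux; everything else is formal.

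Granting the criterion, the proof concludes by bookkeeping. The integers $D^{(1)},\dots,D^{(p-1)}$ of Definition~\ref{dfn:Normalization} are $p$-th power free and, by \eqref{eqn:ModPowers}, satisfy $D^{(k)}\equiv D^k\pmod{(\mathbb{Q}^\times)^p}$, so by the criterion they are precisely the integer radicands whose pure fields are isomorphic to $L=\mathbb{Q}(\sqrt[p]{D})$. They are pairwise distinct: $D^{(k)}=D^{(l)}$ would make $D^{\,k-l}$ a $p$-th power, impossible for $0<|k-l|<p$ since some valuation $v_q(D)$ is coprime to $p$ while $p\nmid(k-l)$. Moreover each coset $a\,(\mathbb{Q}^\times)^p$ of a positive rational contains exactly one $p$-th power free positive integer (reduce all prime valuations modulo $p$ into $\{0,\dots,p-1\}$), so each $D^{(k)}$ is the canonical representative of its class and the minimum $D^{(k_0)}$ is well defined.

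Finally I would assemble the bijection sending a normalized radicand $D$ to the isomorphism class of $\mathbb{Q}(\sqrt[p]{D})$. Surjectivity is immediate, since any pure degree-$p$ field is $\mathbb{Q}(\sqrt[p]{D'})$ for some $p$-th power free $D'\ge 2$, and replacing $D'$ by the minimum of its associated set $\{(D')^{(1)},\dots,(D')^{(p-1)}\}$ does not change the class. For injectivity, if $D$ and $D'$ are both normalized with $\mathbb{Q}(\sqrt[p]{D})\cong\mathbb{Q}(\sqrt[p]{D'})$, the criterion gives $D'\equiv D^k$; since $j\mapsto kj\bmod p$ permutes $\{1,\dots,p-1\}$ for $\gcd(k,p)=1$, the two associated sets of $p$-th power free radicands coincide, and hence so do their minima, forcing $D=D'$.
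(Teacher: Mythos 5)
Your proposal is correct, and it is in fact more complete than the paper's own proof. The paper's argument establishes only the easy half of the correspondence: every pure degree-\(p\) field is generated by a \(p\)-th power free integer radicand, which may be taken normalized, and all co-radicands \(D^{(k)}\) generate one and the same field, since by \eqref{eqn:ModPowers} the co-radicals \(\root{p}\of{D^{(k)}}=\frac{1}{n}\root{p}\of{D^k}\) already lie in \(L\); this yields well-definedness and surjectivity. For injectivity the paper merely asserts that \lq\lq different normalized radicands are never multiplicatively congruent modulo \(\mathbb{Z}^p\)\rq\rq, and it nowhere proves the substantive converse that an abstract isomorphism \(\mathbb{Q}(\root{p}\of{D})\simeq\mathbb{Q}(\root{p}\of{D'})\) forces \(D'\equiv D^k\) modulo \((\mathbb{Q}^\times)^p\) --- precisely the step you correctly identify as the crux. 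Your route supplies this missing half: passing to the common Galois closure over \(K=\mathbb{Q}(\zeta_p)\), applying Kummer theory to identify the cyclic subgroups \(\langle D\,(K^\times)^p\rangle=\langle D'\,(K^\times)^p\rangle\), and descending from \(K\) to \(\mathbb{Q}\) via \(\gcd(p,p-1)=1\) together with the irreducibility of \(X^p-r\) over \(\mathbb{Q}\) for \(r\notin(\mathbb{Q}^\times)^p\) and \(p\) prime. Your bookkeeping (each coset of \((\mathbb{Q}^\times)^p\) in the positive rationals contains exactly one \(p\)-th power free positive integer; multiplication by \(k\) permutes the classes \(D^j(\mathbb{Q}^\times)^p\); hence the minima, i.e.\ the normalized radicands, coincide) matches the normalization machinery of Definition~\ref{dfn:Normalization}. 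What each approach buys: the paper's version is short because it treats the isomorphism criterion as classical background and leaves the uniqueness as an assertion, whereas yours is self-contained and actually proves the bijection; one cosmetic simplification available to you is that, with both fields fixed inside \(\mathbb{R}\), the image of \(\root{p}\of{D}\) must be the unique \emph{real} root of \(X^p-D\), so the detour through the complex conjugates \(\mathbb{Q}(\zeta_p^{\,j}\root{p}\of{D})\) can be skipped and the two real fields coincide outright before adjoining \(\zeta_p\).
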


\begin{proof}
Any pure number field \(L=\mathbb{Q}(\root{p}\of{R})\) of degree \(p\) over \(\mathbb{Q}\)
can be generated by adjoining to the rational number field \(\mathbb{Q}\)
the unique real solution \(\root{p}\of{R}\) of a \textit{pure equation} \(X^p-R=0\)
with \(R\in\mathbb{Q}^\times\) and \(R\not\in(\mathbb{Q}^\times)^p\).
By multiplication of the rational radicand \(R\) by the \(p\)th power of an integer,
we can achieve that \(R=D\) is a \(p\)th power free integer radicand
as defined at the beginning of this section.
Without loss of generality, we may assume that \(D=D^{(1)}\) is the \textit{normalized} radicand of
the field \(L=\mathbb{Q}(\root{p}\of{D})\).
Then the powers \((\root{p}\of{D})^k=\root{p}\of{D^k}\) with exponents \(2\le k\le p-1\)
of the radical \(\root{p}\of{D}\) are also elements of \(L\).
According to Formula
\eqref{eqn:ModPowers},
each of the \textit{co-radicals}
\(\root{p}\of{D^{(k)}}=\root{p}\of{\frac{D^k}{n^p}}=\frac{1}{n}\cdot\root{p}\of{D^k}\)
with some \(n\in\mathbb{Z}\setminus\lbrace 0\rbrace\) is also an element of \(L\).
Different normalized radicands are never multiplicatively congruent modulo \(\mathbb{Z}^p\),
but we have isomorphisms between all fields generated by co-radicals
\begin{equation}
\label{eqn:Iso}
L=\mathbb{Q}(\root{p}\of{D})\simeq\mathbb{Q}(\root{p}\of{D^{(k)}})\text{ for all } 1\le k\le p-1. \qedhere
\end{equation}
\end{proof}

%\newpage
%--------------------------------------------------------------------------------

\subsection{Conductor and discriminants}
\label{ss:CondDisc}
\noindent
Let
\(R = q_1\cdots q_s\)
be the squarefree product of all prime divisors of the radicand \(D\) 
of the pure quintic field
\(L=\mathbb{Q}(\root{5}\of{D})\). 
Independently of the exponents \(e_1,\ldots,e_s\), 
the conductor \(f\) of the cyclic quintic relative extension \(N/K\) 
\cite[Thm. 1, p. 103]{Ma1}
is given by

\begin{equation}
\label{eqn:Conductor}
f^4 =
\begin{cases}
5^2 R^4 & \text{ if } D\not\equiv \pm 1,\pm 7\,(\mathrm{mod}\,5^2) \text{ (field of the first species),}\\ 
R^4     & \text{ if } D\equiv \pm 1,\pm 7\,(\mathrm{mod}\,5^2) \text{ (field of the second species).}
\end{cases}
\end{equation}

%--------------------------------------------------------------------------------

\noindent 
It is well known that the cyclotomic discriminant takes the value 
\(d_K = +5^3 = 125\), 
and Hilbert's Theorem \(39\) on discriminants of composite fields shows the following result
(\cite[pp. 103--104]{Ma1}).

\begin{theorem}
\label{thm:CondDisc}

The \textbf{metacyclic} discriminant is given by 

\begin{equation}
\label{eqn:MetacycDisc}
d_N = d_K^5\cdot f^{16} = 
\begin{cases}
= 5^{23} R^{16} & \text{ for a field of the first species,}\\
= 5^{15} R^{16} & \text{ for a field of the second species,}
\end{cases}
\end{equation}

the \textbf{intermediate} discriminant is given by 
\begin{equation}
\label{eqn:IntermedDisc}
d_M = 5 d_K^2\cdot f^8 = 
\begin{cases}
= 5^{11} R^8 & \text{ for a field of the first species,}\\ 
= 5^7 R^8    & \text{ for a field of the second species,}
\end{cases}
\end{equation}

and the \textbf{pure quintic} discriminant is given by 
\begin{equation}
\label{eqn:PureDisc}
d_L = d_K\cdot f^4 = 
\begin{cases}
= 5^5 R^4 & \text{ for a field of the first species,}\\ 
= 5^3 R^4 & \text{ for a field of the second species.}
\end{cases}
\end{equation}

\end{theorem}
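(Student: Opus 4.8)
The plan is to anchor the computation at the unique \emph{abelian} layer $N/K$ and to propagate the result to the subfields $L$ and $M$ through the tower form of the conductor--discriminant (F\"uhrerdiskriminantenprodukt) formula
\[
d_F = d_E^{\,[F:E]}\cdot N_{E/\mathbb{Q}}(\mathfrak{d}_{F/E}),
\]
valid for any tower $\mathbb{Q}\subseteq E\subseteq F$, which is the relative incarnation of Hilbert's Theorem~39 for the composite $N=K\cdot L$. First I would treat $N/K$: since $\mathrm{Gal}(N/K)\simeq C_5$ is cyclic of prime order, each of the four nontrivial characters is faithful and hence has the single conductor $\mathfrak{f}=\mathfrak{f}_{N/K}$, so the conductor--discriminant formula gives $\mathfrak{d}_{N/K}=\mathfrak{f}^4$. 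Writing $f^4:=N_{K/\mathbb{Q}}(\mathfrak{f})$ for the absolute norm already evaluated in \eqref{eqn:Conductor}, and inserting $d_K=5^3$, the tower formula yields $d_N=(5^3)^5\,N_{K/\mathbb{Q}}(\mathfrak{f})^4=d_K^5\,f^{16}$, which is \eqref{eqn:MetacycDisc}; substituting the two cases of \eqref{eqn:Conductor} then produces the values $5^{23}R^{16}$ and $5^{15}R^{16}$.

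Next I would descend to the pure quintic field. The extension $N=L(\zeta)$ is Galois over $L$ with $\mathrm{Gal}(N/L)\simeq C_4$, and it is ramified only above $5$, because every radicand prime $q\mid R$ is already totally and \emph{tamely} ramified in $L/\mathbb{Q}$ and stays unramified in $N/L$. At the prime(s) above $5$ the local degree of $N/L$ equals $4$, coprime to the residue characteristic, so $N/L$ is tamely ramified there with different exponent $e-1=3$; carrying out the norm in both species, where $5\mathcal{O}_L=\mathcal{P}^5$ respectively $5\mathcal{O}_L=\mathcal{P}_1\mathcal{P}_2^4$ as recorded in the proof of Theorem~\ref{thm:MainQuinticPolya}, gives $N_{L/\mathbb{Q}}(\mathfrak{d}_{N/L})=5^3=d_K$ in \emph{both} cases. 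The tower formula $d_N=d_L^{4}\,N_{L/\mathbb{Q}}(\mathfrak{d}_{N/L})$ then forces $d_L=(d_N/d_K)^{1/4}=d_K\,f^4$, which is \eqref{eqn:PureDisc}. For the intermediate field I would use the quadratic step $M=L(\sqrt5)$, which is unramified outside $5$ and, above $5$, tamely ramified of degree $2$ exactly at the prime(s) of $L$ where $5$ occurs to odd multiplicity, whence $N_{L/\mathbb{Q}}(\mathfrak{d}_{M/L})=5$ in both species; thus $d_M=d_L^{2}\cdot 5=5\,d_K^2\,f^8$, which is \eqref{eqn:IntermedDisc}. Equivalently one may descend through the quadratic layer $N/M=M(\zeta)$, using $N_{M/\mathbb{Q}}(\mathfrak{d}_{N/M})=5$.

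The only delicate point is the ramification at $5$, and it appears in two disguises. First, $5$ ramifies in \emph{both} $K$ and $L$, so the coprime-discriminant case of Hilbert's Theorem~39 cannot be applied at $5$: away from $5$ the discriminants $d_K$ and $d_L$ are coprime and the tame powers $R^{4},R^{8},R^{16}$ drop out at once, but the exact power of $5$ must be isolated separately. This wild contribution is precisely what the species dichotomy of \eqref{eqn:Conductor} encodes, so once that conductor formula is taken from \cite[Thm.~1]{Ma1} the $5$-part is already absorbed into $f^4$ and no new wild computation is required. Second, one must resist routing the descent through the non-Galois layers $L/\mathbb{Q}$ or $M/K^+$, for which no conductor exists and the ramification-group formula is unavailable; the argument above avoids this by descending only through the Galois extensions $N/K$, $N/L$ and the quadratic steps $M/L$, $N/M$, where every relevant ramification at $5$ is tame and hence transparent. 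I expect the verification of the two tame norm identities $N_{L/\mathbb{Q}}(\mathfrak{d}_{N/L})=d_K$ and $N_{L/\mathbb{Q}}(\mathfrak{d}_{M/L})=5$ uniformly across both species to be the main, though routine, technical obstacle.
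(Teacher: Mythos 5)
Your argument is correct, but it follows a genuinely different---and far more self-contained---route than the paper's, whose entire justification is a citation: the displayed formulas are quoted from \cite[pp.~103--104]{Ma1}, invoking Hilbert's Theorem~39 on discriminants of composita, i.e.\ treating the pure quintic discriminant with its species dichotomy as known input and assembling \(d_N\) and \(d_M\) from \(d_K\) and \(d_L\), the shared ramification at \(5\) having been isolated beforehand in the conductor computation \eqref{eqn:Conductor}. You reverse the flow of information: you anchor at the unique abelian layer \(N/K\), where all four nontrivial characters of \(C_5\) are faithful, so the conductor--discriminant formula gives \(\mathfrak{d}_{N/K}=\mathfrak{f}^4\) and the tower formula yields \(d_N=d_K^5 f^{16}\) with \(f^4=N_{K/\mathbb{Q}}(\mathfrak{f})\) taken from \eqref{eqn:Conductor}; you then \emph{derive} \eqref{eqn:PureDisc} and \eqref{eqn:IntermedDisc} by descending through the Galois steps \(N/L\) and \(M/L\), which are everywhere tame. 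Your two norm identities do check out in all species: for species 1a/1b one has \(5\mathcal{O}_L=\mathcal{P}^5\), \(5\mathcal{O}_N=\mathfrak{P}^{20}\), so \(e(\mathfrak{P}/\mathcal{P})=4\) is tame, \(\mathfrak{d}_{N/L}=N_{N/L}(\mathfrak{P}^3)=\mathcal{P}^3\), of norm \(5^3=d_K\); for species 2 only the prime over \(\mathcal{P}_1\) ramifies (totally, tamely) in \(N/L\), giving \(\mathfrak{d}_{N/L}=\mathcal{P}_1^3\), again of norm \(5^3\); and analogously \(\mathfrak{d}_{M/L}=\mathcal{P}\) resp.\ \(\mathcal{P}_1\) of norm \(5\). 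Your diagnosis that Theorem~39 is inapplicable naively at \(5\) is also exactly right: \(d_K^5 d_L^4\) would give \(5\)-exponents \(35\) resp.\ \(27\) instead of the true \(23\) resp.\ \(15\). What your route buys is a complete proof from the single input \eqref{eqn:Conductor}, every descent step being tame and hence governed by \(v(\mathfrak{D})=e-1\); what the paper's route buys is brevity, at the cost of deferring all content to \cite{Ma1}. Two small repairs: in species 2 your phrase \lq\lq the local degree of \(N/L\) equals \(4\) at the prime(s) above \(5\)\rq\rq\ holds only at the prime over \(\mathcal{P}_1\)---over \(\mathcal{P}_2\), where \(e=4\) already in \(L\), the step \(N/L\) is unramified with \(g=4\), as your own norm computation implicitly uses; and extracting \(d_L=(d_N/5^3)^{1/4}\) requires fixing the sign of \(d_L\), which is positive because \(L\) has signature \((1,2)\) and \((-1)^{r_2}=+1\) (likewise for \(d_M\) and \(d_N\) with \(r_2=4\) and \(10\)).
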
 

%--------------------------------------------------------------------------------

\begin{example}
\label{exm:CondDisc}
The smallest radicands \(D\) of pure quintic fields
\(L=\mathbb{Q}(\root{5}\of{D})\) 
with a given species and increasing number of prime divisors are
\begin{itemize}
\item
\(2\), \(6 = 2\cdot 3\), \(42 = 2\cdot 3\cdot 7\),
for fields of the first species with \(\gcd(D,5) = 1\) (species 1b),
\item
\(5\), \(10 = 2\cdot 5\), \(30 = 2\cdot 3\cdot 5\),
for fields of the first species with \(5\mid D\) (species 1a),
\item
\(7\), \(18 = 2\cdot 3^2\), \(126 = 2\cdot 7\cdot 3^2\),
for fields of the second species. 
\end{itemize}
\end{example}

%\newpage
%--------------------------------------------------------------------------------

\section{Differential factors and norm kernels}
\label{s:SemiLocal}

%\newpage
%--------------------------------------------------------------------------------

\subsection{Ambiguous ideals}
\label{ss:AmbIdl}
\noindent
Let \(E/F\) be a relative extension of algebraic number fields
with relative degree \(d:=\lbrack E:F\rbrack\).
We do not assume that \(E/F\) is a normal extension, and thus
the law of decomposition of a prime ideal \(\mathfrak{p}\in\mathbb{P}_F\) of \(F\) in \(E\) may be arbitrary:
\begin{equation}
\label{eqn:PrmDec}
\begin{aligned}
\mathfrak{p}\mathcal{O}_E &= \mathfrak{P}_1^{e_1}\cdots\mathfrak{P}_g^{e_g},
\text{ where } g=g(\mathfrak{p})\in\mathbb{N}, \text{ and for } 1 \le i\le g:\ \mathfrak{P}_i\in\mathbb{P}_E,\ e_i\in\mathbb{N}, \\
f_i &= \lbrack (\mathcal{O}_E/\mathfrak{P}_i):(\mathcal{O}_F/\mathfrak{p}) \rbrack\in\mathbb{N},\
\sum_{i=1}^g\,e_if_i=d,\ e=e(\mathfrak{p}):=\gcd(e_1,\ldots,e_g),\ e\mid d.
\end{aligned}
\end{equation}
Since a Galois group \(\mathrm{Gal}(E/F)\) of automorphisms may be missing,
we cannot speak about ideals in \(E\) which are \textit{invariant} with respect to \(F\).
However, as a compensation, we define \textit{ambiguous} ideals of \(E\) relative to \(F\).

%--------------------------------------------------------------------------------

\begin{definition}
\label{dfn:AmbIdl}
An ideal \(\mathfrak{A}\in\mathcal{I}_E\) is called \textit{ambiguous} with respect to \(F\)
(symbol \(\mathfrak{A}\in\mathcal{I}_{E/F}\))
if there exists a positive integer \(n\in\mathbb{N}\) such that \(\mathfrak{A}^n\in\mathcal{I}_F\),
that is, \(\mathfrak{A}\cdot\mathcal{I}_F\) has finite order in \(\mathcal{I}_E/\mathcal{I}_F\).
\end{definition} 

%--------------------------------------------------------------------------------

\begin{lemma}
\label{lem:AmbIdl}
Let \(\mathfrak{A}\in\mathcal{I}_E\) be an ideal of \(E\), then the following assertions are equivalent
\cite{Ma}:
\begin{enumerate}
\item
\((\exists\,n\in\mathbb{N})\,\mathfrak{A}^n\in\mathcal{I}_F\), that is, \(\mathfrak{A}\in\mathcal{I}_{E/F}\),
\item
\((\exists\,e\mid d)\,\mathfrak{A}^e\in\mathcal{I}_F\),
\item
\(\mathfrak{A}^d\in\mathcal{I}_F\),
\item
\(\mathfrak{A}^d=N_{E/F}(\mathfrak{A})\).
\end{enumerate}
\end{lemma}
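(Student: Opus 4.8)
The plan is to prove the cycle of implications $(4)\Rightarrow(3)\Rightarrow(2)\Rightarrow(1)\Rightarrow(4)$, of which only the last step carries real content. Throughout I identify $\mathcal{I}_F$ with its image in $\mathcal{I}_E$ under extension of ideals $\mathfrak{a}\mapsto\mathfrak{a}\mathcal{O}_E$, and I argue prime by prime using unique factorization: an identity in $\mathcal{I}_E$ holds if and only if the $\mathfrak{P}$-valuations of both sides agree for every $\mathfrak{P}\in\mathbb{P}_E$. The three cheap implications are then immediate. First, $(4)\Rightarrow(3)$ because $N_{E/F}(\mathfrak{A})$ is by construction an ideal of $F$. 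Next, $(3)\Rightarrow(2)$ by choosing the divisor $e=d$ of $d$. Finally, $(2)\Rightarrow(1)$ because the divisor $e$ is in particular a positive integer, so it serves as the required exponent $n$.

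The substance lies in $(1)\Rightarrow(4)$, whose key is the observation that the $\mathfrak{P}_i$-valuations of an ambiguous ideal are forced to be proportional to the ramification indices $e_i$. Concretely, I would assume $\mathfrak{A}^n=\mathfrak{a}\mathcal{O}_E$ for some $n\in\mathbb{N}$ and $\mathfrak{a}\in\mathcal{I}_F$, fix a prime $\mathfrak{p}\in\mathbb{P}_F$ with decomposition $\mathfrak{p}\mathcal{O}_E=\mathfrak{P}_1^{e_1}\cdots\mathfrak{P}_g^{e_g}$ as in \eqref{eqn:PrmDec}, write $v_i:=v_{\mathfrak{P}_i}(\mathfrak{A})$, and set $c:=v_\mathfrak{p}(\mathfrak{a})$. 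Comparing $\mathfrak{P}_i$-valuations on the two sides of $\mathfrak{A}^n=\mathfrak{a}\mathcal{O}_E$ yields $n\,v_i=c\,e_i$ for every $1\le i\le g$, whence $v_i=r_\mathfrak{p}\,e_i$ with the single rational number $r_\mathfrak{p}:=c/n$ independent of $i$. Thus on the primes above $\mathfrak{p}$ the ideal $\mathfrak{A}$ looks, up to the scalar $r_\mathfrak{p}$, exactly like the extension of $\mathfrak{p}$ itself; this proportionality is what replaces, in the non-normal setting, the usual permutation of the $\mathfrak{P}_i$ by $\mathrm{Gal}(E/F)$, and it is the one step that demands genuine care — the main obstacle.

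It then remains to feed this proportionality into the norm. Using $N_{E/F}(\mathfrak{P}_i)=\mathfrak{p}^{f_i}$ together with $v_i=r_\mathfrak{p} e_i$ and the fundamental identity $\sum_{i=1}^g e_if_i=d$, the $\mathfrak{p}$-part of $N_{E/F}(\mathfrak{A})$ becomes $\mathfrak{p}^{\sum_i v_if_i}=\mathfrak{p}^{r_\mathfrak{p}\sum_i e_if_i}=\mathfrak{p}^{r_\mathfrak{p} d}$, which upon extension back to $E$ has $\mathfrak{P}_i$-valuation $r_\mathfrak{p} d\,e_i$; this is precisely the $\mathfrak{P}_i$-valuation $d\,v_i=d\,r_\mathfrak{p} e_i$ of $\mathfrak{A}^d$. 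Since $\mathfrak{p}$ was arbitrary, $\mathfrak{A}^d=N_{E/F}(\mathfrak{A})$, establishing $(4)$ and hence $(3)$. In passing one checks $r_\mathfrak{p} d\in\mathbb{Z}$: from $v_i=r_\mathfrak{p} e_i\in\mathbb{Z}$ and a B\'ezout relation $\sum_i k_ie_i=e=\gcd(e_1,\ldots,e_g)$ one obtains $r_\mathfrak{p} e\in\mathbb{Z}$, and then $e\mid d$ gives $r_\mathfrak{p} d\in\mathbb{Z}$, confirming that the resulting ideal genuinely lies in $\mathcal{I}_F$. Everything beyond the proportionality step is routine bookkeeping with valuations.
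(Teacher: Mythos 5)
Your proof is correct and rests on the same key idea as the paper's: the semi-local valuation comparison \(n\,v_i=c\,e_i\) (the paper's system \(v_in=e_im\)), which forces the exponents of an ambiguous ideal to be proportional to the ramification indices \(e_i\). If anything, yours is more complete, since the paper folds the lemma into the proof of Theorem \ref{thm:Ambiguity} and never explicitly carries out the norm computation establishing item (4), which you derive cleanly from \(N_{E/F}(\mathfrak{P}_i)=\mathfrak{p}^{f_i}\) and \(\sum_{i=1}^{g}e_if_i=d\); one small simplification: \(r_{\mathfrak{p}}d=\sum_{i=1}^{g}v_if_i\in\mathbb{Z}\) holds automatically, so your B\'ezout detour for integrality is unnecessary.
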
 

%--------------------------------------------------------------------------------

\begin{problem}
\label{prb:SemiLocal}
The following two questions can be answered by a
\textit{semi-local} investigation with respect to
the prime ideals \(\mathfrak{p}\in\mathbb{P}_F\) of the base field \(F\)
(whereas a \textit{local} consideration concerns
the prime ideals \(\mathfrak{P}\in\mathbb{P}_E\) of the extension \(E\)):
\begin{enumerate}
\item
Which fractional ideals \(\mathfrak{A}\in\mathcal{I}_E\) are of finite order relative to \(\mathcal{I}_F\)?
\item
Which fractional ideals \(\mathfrak{A}\in\mathcal{I}_E\) become trivial under the relative norm \(N_{E/F}\)?
\end{enumerate}
\end{problem}

%--------------------------------------------------------------------------------

%\noindent
The first, respectively second, question of Problem
\ref{prb:SemiLocal}
will be answered by Theorem
\ref{thm:Ambiguity},
respectively Theorem
\ref{thm:TrivProj}.

\begin{theorem}
\label{thm:Ambiguity}
Let \(\mathfrak{A}\in\mathcal{I}_E\) be a fractional ideal of \(E\). Then
\begin{equation}
\label{eqn:Ambiguity}
\mathfrak{A}\in\mathcal{I}_{E/F} \quad \Longleftrightarrow \quad
\mathfrak{A}=\prod_{\mathfrak{p}\in\mathbb{P}_F}\,(\prod_{i=1}^{g(\mathfrak{p})}\,\mathfrak{P}^{\frac{e_i}{e(\mathfrak{p})}})^{v(\mathfrak{p})},
\end{equation}
with exponents \(v(\mathfrak{p})\in\mathbb{Z}\).
\end{theorem}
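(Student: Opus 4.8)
The plan is to exploit that \(\mathcal{I}_E\) is the free abelian group on the prime ideals \(\mathfrak{P}\in\mathbb{P}_E\), and that the extension map \(\mathfrak{p}\mapsto\mathfrak{p}\mathcal{O}_E=\prod_{i=1}^{g(\mathfrak{p})}\mathfrak{P}_i^{e_i}\) sends the free generator \(\mathfrak{p}\) into the subgroup spanned by the primes lying above it. Because distinct primes of \(F\) have disjoint sets of primes above them, the condition ``\(\mathfrak{A}^n\) is extended from \(F\)'' decouples completely over \(\mathbb{P}_F\). First I would therefore reduce the statement to a single ``\(\mathfrak{p}\)-block'': the free subgroup of \(\mathcal{I}_E\) generated by \(\mathfrak{P}_1,\ldots,\mathfrak{P}_{g(\mathfrak{p})}\), in which the only extended ideals are the powers \(\mathfrak{p}^m\mathcal{O}_E=\prod_i\mathfrak{P}_i^{m e_i}\) with \(m\in\mathbb{Z}\), since extension preserves contraction to \(F\).

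For the forward implication I would invoke Lemma \ref{lem:AmbIdl}, which lets me replace the unspecified exponent \(n\) by the fixed degree \(d\): ambiguity of \(\mathfrak{A}\) is equivalent to \(\mathfrak{A}^d\in\mathcal{I}_F\). Writing \(\mathfrak{A}=\prod_{\mathfrak{p}}\prod_i\mathfrak{P}_i^{a(\mathfrak{P}_i)}\) and \(\mathfrak{A}^d=\mathfrak{a}\mathcal{O}_E\) with \(\mathfrak{a}=\prod_{\mathfrak{p}}\mathfrak{p}^{m(\mathfrak{p})}\), comparison of the exponents of each \(\mathfrak{P}_i\) yields \(d\,a(\mathfrak{P}_i)=m(\mathfrak{p})\,e_i\) for all \(i\). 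Hence within each block the ratios \(a(\mathfrak{P}_i)/e_i\) are all equal to the single rational number \(m(\mathfrak{p})/d\); equivalently, the exponent vector \((a(\mathfrak{P}_1),\ldots,a(\mathfrak{P}_g))\) is a rational multiple of \((e_1,\ldots,e_g)\).

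The crux --- and the step I expect to be the only real obstacle --- is the elementary but essential observation that an \emph{integer} vector proportional to \((e_1,\ldots,e_g)\) must in fact be an \emph{integer} multiple of the primitive vector \((e_1/e,\ldots,e_g/e)\), where \(e=e(\mathfrak{p})=\gcd(e_1,\ldots,e_g)\). Writing the common ratio in lowest terms as \(s/t\), the integrality of each \(a(\mathfrak{P}_i)=(s/t)e_i\) forces \(t\mid e_i\) for every \(i\), whence \(t\mid e\) and the ratio equals \(v/e\) for an integer \(v=v(\mathfrak{p})\); conversely every integer \(v\) produces integer exponents since \(e\mid e_i\). This identifies the \(\mathfrak{p}\)-block of \(\mathcal{I}_{E/F}\) with the infinite cyclic group generated by \(\prod_i\mathfrak{P}_i^{e_i/e(\mathfrak{p})}\), and assembling the blocks over all \(\mathfrak{p}\in\mathbb{P}_F\) produces exactly the product displayed in \eqref{eqn:Ambiguity}.

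Finally I would dispatch the converse by direct computation: raising the displayed product to the power \(d\) gives \(\prod_{\mathfrak{p}}\bigl(\prod_i\mathfrak{P}_i^{e_i}\bigr)^{v(\mathfrak{p})\,d/e(\mathfrak{p})}=\bigl(\prod_{\mathfrak{p}}\mathfrak{p}^{v(\mathfrak{p})\,d/e(\mathfrak{p})}\bigr)\mathcal{O}_E\), where each exponent \(d/e(\mathfrak{p})\) is an integer because \(e(\mathfrak{p})\mid d\); this lies in \(\mathcal{I}_F\), so \(\mathfrak{A}\in\mathcal{I}_{E/F}\) by Definition \ref{dfn:AmbIdl}. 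Throughout I would note that only finitely many \(v(\mathfrak{p})\) are nonzero, so all infinite products are honestly finite and the block-by-block reasoning is legitimate.
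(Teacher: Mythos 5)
Your proposal is correct and takes essentially the same route as the paper's proof: a semi-local, block-by-block decomposition over the primes \(\mathfrak{p}\in\mathbb{P}_F\), comparison of exponents yielding the linear system \(n\,v_i=m\,e_i\), and resolution of that system by the primitive vector \((e_1/e,\ldots,e_g/e)\) with \(e=e(\mathfrak{p})=\gcd(e_1,\ldots,e_g)\). Two remarks: invoking Lemma \ref{lem:AmbIdl} to normalize the exponent to \(d\) is formally circular within the paper's architecture, since the paper proves the lemma and Theorem \ref{thm:Ambiguity} simultaneously and the implication \((1)\Rightarrow(3)\) of the lemma is extracted from the theorem's proof --- but this is harmless here, because your lowest-terms argument works verbatim for an arbitrary exponent \(n\) in place of \(d\), and indeed that ``crux'' step (an integer vector proportional to \((e_1,\ldots,e_g)\) is an integer multiple of the primitive vector) is \emph{more} explicit than the paper's treatment, which merely exhibits the solution \(v_i=e_i/e\), \(m=n/e\) without verifying that every integer solution is an integer multiple of it.
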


\begin{proof}
(of Lemma
\ref{lem:AmbIdl}
and Theorem
\ref{thm:Ambiguity})
If \((\exists\,e\mid d)\,\mathfrak{A}^e\in\mathcal{I}_F\),
that is, \(d=e\cdot c\) for some \(c\in\mathbb{N}\), then
\(\mathfrak{A}^d=(\mathfrak{A}^e)^c\in\mathcal{I}_F\).
Conversely, we trivially have \(d\mid d\).
The proof that the condition \((\exists\,n\in\mathbb{N})\,\mathfrak{A}^n\in\mathcal{I}_F\)
implies \(n\mid d\) is contained in the following proof of Theorem
\ref{thm:Ambiguity}.

We select an arbitrary prime ideal \(\mathfrak{p}\in\mathbb{P}_F\) of the base field \(F\)
and consider the condition \(\mathfrak{A}^n\in\mathcal{I}_F\) semi-locally with respect to \(\mathfrak{p}\):
The \(\mathfrak{p}\)-component of \(\mathfrak{A}^n\) lies in \(\mathcal{I}_F\) if and only if
there exists some exponent \(m\in\mathbb{Z}\) such that
\((\prod_{i=1}^g\,\mathfrak{P}_i^{v_i})^n=\mathfrak{p}^m\), where \(v_i:=v_{\mathfrak{P}_i}(\mathfrak{A})\) for each \(1\le i\le g\).
This is equivalent with
\((\prod_{i=1}^g\,\mathfrak{P}_i^{v_i})^n=(\prod_{i=1}^g\,\mathfrak{P}_i^{e_i})^m\),
respectively,
\(\prod_{i=1}^g\,\mathfrak{P}_i^{v_in}=\prod_{i=1}^g\,\mathfrak{P}_i^{e_im}\).
We obtain an equivalent system of linear equations with integer coefficients in the unknown integers \(v_i\),
\[(\forall\,1\le i\le g)\,v_in=e_im,\]
which we can divide by the greatest common divisor \(e\) of \(e_1,\ldots,e_g\),
\[(\forall\,1\le i\le g)\,\frac{v_in}{e}=\frac{e_i}{e}m.\]
A solution of this system is given by \(v_i:=\frac{e_i}{e}\) and \(m:=\frac{n}{e}\).
The \textit{minimal} possible semi-local choice is \(n:=e\), whence \(m=1\).
Finally, we can combine the semi-local solutions to a global solution
by putting \(n\) equal to the \textit{least} common multiple \(\mathrm{lcm}\lbrace e(\mathfrak{p})\mid\mathfrak{p}\in\mathbb{P}_F\rbrace\)
of all semi-local values of \(e=e(\mathfrak{p})\), which is still a divisor of \(d\),
since \(d\) is a common multiple of these values.
\end{proof}

%\newpage
%--------------------------------------------------------------------------------

\subsection{Primitive ambiguous ideals}
\label{ss:PrmAmbIdl}
\noindent
There exists an \textit{infinitude} of ambiguous ideals in \(E\) relative to \(F\),
since for instance the infinitely many inert prime ideals are all ambiguous.
We are interested in ambiguous ideals which are not contained in the base field
and therefore we define \textit{primitivity}.

\begin{definition}
\label{dfn:Primitivity}
An element \(\mathfrak{A}\cdot\mathcal{I}_F\) of the quotient group \(\mathcal{I}_E/\mathcal{I}_F\)
is called a \textit{primitive} ideal of \(E\) relative to \(F\). 
\end{definition} 

%--------------------------------------------------------------------------------

\begin{corollary}
\label{cor:PrimitiveAmbiguity}
A primitive ambiguous ideal of \(E\) relative to \(F\) possesses the shape
\begin{equation}
\label{eqn:PrimitiveAmbiguity}
\mathfrak{A}\cdot\mathcal{I}_F=\prod_{\mathfrak{p}\in\mathbb{P}_F,\ e(\mathfrak{p})>1}\,
(\prod_{i=1}^{g(\mathfrak{p})}\,\mathfrak{P}^{\frac{e_i}{e(\mathfrak{p})}})^{v(\mathfrak{p})}\cdot\mathcal{I}_F
\end{equation}
with exponents \(v(\mathfrak{p})\in\mathbb{Z}/e(\mathfrak{p})\mathbb{Z}\).
\end{corollary}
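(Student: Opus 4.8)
The plan is to read the asserted normal form directly off Theorem \ref{thm:Ambiguity} by passing to the quotient group \(\mathcal{I}_E/\mathcal{I}_F\) and examining the semi-local factor attached to each prime \(\mathfrak{p}\in\mathbb{P}_F\) in isolation. By Theorem \ref{thm:Ambiguity}, any representative \(\mathfrak{A}\in\mathcal{I}_{E/F}\) of a primitive ambiguous ideal has the form \(\mathfrak{A}=\prod_{\mathfrak{p}\in\mathbb{P}_F}\mathfrak{g}_{\mathfrak{p}}^{v(\mathfrak{p})}\), where I abbreviate \(\mathfrak{g}_{\mathfrak{p}}:=\prod_{i=1}^{g(\mathfrak{p})}\mathfrak{P}_i^{e_i/e(\mathfrak{p})}\), the exponents \(v(\mathfrak{p})\in\mathbb{Z}\) being zero for almost all \(\mathfrak{p}\). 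The single identity I would isolate first, read off from the decomposition law \eqref{eqn:PrmDec}, is
\[
\mathfrak{g}_{\mathfrak{p}}^{e(\mathfrak{p})}=\prod_{i=1}^{g(\mathfrak{p})}\mathfrak{P}_i^{e_i}=\mathfrak{p}\mathcal{O}_E\in\mathcal{I}_F .
\]
Everything in the corollary follows from this relation upon reduction modulo \(\mathcal{I}_F\).

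First I would dispose of the primes with \(e(\mathfrak{p})=1\). For such a prime the generator \(\mathfrak{g}_{\mathfrak{p}}=\prod_i\mathfrak{P}_i^{e_i}=\mathfrak{p}\mathcal{O}_E\) already lies in \(\mathcal{I}_F\), so its class in \(\mathcal{I}_E/\mathcal{I}_F\) is trivial and the whole factor \(\mathfrak{g}_{\mathfrak{p}}^{v(\mathfrak{p})}\) vanishes in the quotient. Hence only the primes with \(e(\mathfrak{p})>1\) survive in the product representing \(\mathfrak{A}\cdot\mathcal{I}_F\), which is exactly the restricted index set in \eqref{eqn:PrimitiveAmbiguity}. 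Next I would reduce the exponents: for a prime with \(e(\mathfrak{p})>1\) the displayed identity shows that the class of \(\mathfrak{g}_{\mathfrak{p}}\) in \(\mathcal{I}_E/\mathcal{I}_F\) has order dividing \(e(\mathfrak{p})\), so that \(v(\mathfrak{p})\) and \(v(\mathfrak{p})+e(\mathfrak{p})\) produce the same primitive factor and the exponent is only well defined as a residue \(v(\mathfrak{p})\in\mathbb{Z}/e(\mathfrak{p})\mathbb{Z}\). Collecting the surviving local factors yields the claimed shape.

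The one point deserving genuine care — and the only (minor) obstacle I foresee — is to confirm that \eqref{eqn:PrimitiveAmbiguity} is a faithful normal form, i.e. that distinct reduced exponents give distinct classes and that the local contributions of different primes are independent in the quotient. I would verify this by testing when a product \(\prod_{\mathfrak{p}}\mathfrak{g}_{\mathfrak{p}}^{v(\mathfrak{p})}\) lies in \(\mathcal{I}_F\): since the prime ideals \(\mathfrak{P}_i\) occurring above distinct \(\mathfrak{p}\) are pairwise distinct, any such element of \(\mathcal{I}_F\) must equal \(\prod_{\mathfrak{p}}\mathfrak{p}^{m(\mathfrak{p})}\mathcal{O}_E=\prod_{\mathfrak{p}}\prod_i\mathfrak{P}_i^{m(\mathfrak{p})e_i}\), and comparing \(\mathfrak{P}_i\)-valuations forces \(v(\mathfrak{p})e_i/e(\mathfrak{p})=m(\mathfrak{p})e_i\), hence \(v(\mathfrak{p})=m(\mathfrak{p})e(\mathfrak{p})\) for every \(\mathfrak{p}\). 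Thus \(\prod_{\mathfrak{p}}\mathfrak{g}_{\mathfrak{p}}^{v(\mathfrak{p})}\in\mathcal{I}_F\) if and only if \(e(\mathfrak{p})\mid v(\mathfrak{p})\) for all \(\mathfrak{p}\), which simultaneously establishes that each \(\mathfrak{g}_{\mathfrak{p}}\) has order exactly \(e(\mathfrak{p})\) and that the factors over different primes are independent. I would record this computation explicitly, since it upgrades \eqref{eqn:PrimitiveAmbiguity} from a mere possible shape to a bona fide parametrization with the stated exponent ranges \(v(\mathfrak{p})\in\mathbb{Z}/e(\mathfrak{p})\mathbb{Z}\).
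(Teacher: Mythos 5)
Your proposal is correct and follows essentially the same route as the paper's proof: discard the primes with \(e(\mathfrak{p})=1\) because \(\prod_i\mathfrak{P}_i^{e_i}=\mathfrak{p}\mathcal{O}_E\in\mathcal{I}_F\), and reduce each exponent modulo \(e(\mathfrak{p})\) via the same identity \(\mathfrak{g}_{\mathfrak{p}}^{e(\mathfrak{p})}=\mathfrak{p}\mathcal{O}_E\). Your closing valuation computation showing that \(\prod_{\mathfrak{p}}\mathfrak{g}_{\mathfrak{p}}^{v(\mathfrak{p})}\in\mathcal{I}_F\) if and only if \(e(\mathfrak{p})\mid v(\mathfrak{p})\) for all \(\mathfrak{p}\) goes beyond what the corollary asserts (it only claims a shape), but it is a worthwhile addition, since it is exactly the faithfulness statement that the paper leaves implicit when it declares Corollary \ref{cor:Finiteness}, with its exact order \(\prod_{\mathfrak{p}}e(\mathfrak{p})\), an \lq\lq immediate consequence\rq\rq.
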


\begin{proof}
For \(e(\mathfrak{p})=1\), we have
\(\prod_{i=1}^{g(\mathfrak{p})}\,\mathfrak{P}^{\frac{e_i}{e(\mathfrak{p})}}=\prod_{i=1}^{g(\mathfrak{p})}\,\mathfrak{P}^{e_i}=\mathfrak{p}\mathcal{O}_E\in\mathcal{I}_F\).
This reduces the components of a primitive ambiguous ideal to the finitely many prime ideals \(\mathfrak{p}\in\mathbb{P}_F\) with \(e(\mathfrak{p})>1\).
However, for \(e(\mathfrak{p})>1\) and \(v(\mathfrak{p})=e(\mathfrak{p})\), we have
\((\prod_{i=1}^{g(\mathfrak{p})}\,\mathfrak{P}^{\frac{e_i}{e(\mathfrak{p})}})^{v(\mathfrak{p})}=\prod_{i=1}^{g(\mathfrak{p})}\,\mathfrak{P}^{e_i}=\mathfrak{p}\mathcal{O}_E\in\mathcal{I}_F\).
Thus we can restrict the exponents of the finitely many components to finitely many values \(v(\mathfrak{p})\in\mathbb{Z}/e(\mathfrak{p})\mathbb{Z}\).
\end{proof}

%--------------------------------------------------------------------------------

%\noindent
If \(N/F\) with \(F\le E\le N\) is the normal closure of \(E/F\)
with Galois group \(G:=\mathrm{Gal}(N/F)\),
then we set \(\mathcal{I}_{E/F}:=\mathcal{I}_E\cap\mathcal{I}_{N/F}\),
where \(\mathcal{I}_{N/F}:=\lbrace\mathfrak{A}\in\mathcal{I}_N\mid(\forall\,\phi\in G)\,\mathfrak{A}^\phi=\mathfrak{A}\rbrace\).

\begin{corollary}
\label{cor:Finiteness} \quad
\(\mathcal{I}_{E/F}/\mathcal{I}_F\simeq\prod_{\mathfrak{p}\in\mathbb{P}_F}\,\mathbb{Z}/e(\mathfrak{p})\mathbb{Z}\)  \quad is finite with \quad 
\(\#(\mathcal{I}_{E/F}/\mathcal{I}_F)=\prod_{\mathfrak{p}\in\mathbb{P}_F}\,e(\mathfrak{p})\).
\end{corollary}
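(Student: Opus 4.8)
The plan is to realize the isomorphism explicitly as the map that records the exponents $v(\mathfrak{p})$ appearing in the normal form of Corollary \ref{cor:PrimitiveAmbiguity}. For each prime $\mathfrak{p}\in\mathbb{P}_F$ with $e(\mathfrak{p})>1$, I would single out the distinguished ambiguous ideal
\[
\mathfrak{a}_{\mathfrak{p}}:=\prod_{i=1}^{g(\mathfrak{p})}\mathfrak{P}_i^{e_i/e(\mathfrak{p})}\in\mathcal{I}_{E/F},
\qquad \mathfrak{a}_{\mathfrak{p}}^{\,e(\mathfrak{p})}=\mathfrak{p}\mathcal{O}_E\in\mathcal{I}_F,
\]
and then define
\[
\Phi:\mathcal{I}_{E/F}/\mathcal{I}_F\longrightarrow\prod_{\mathfrak{p}\in\mathbb{P}_F}\mathbb{Z}/e(\mathfrak{p})\mathbb{Z},
\qquad
\mathfrak{A}\cdot\mathcal{I}_F\longmapsto\bigl(v(\mathfrak{p})\bmod e(\mathfrak{p})\bigr)_{\mathfrak{p}},
\]
where the integers $v(\mathfrak{p})$ are read off from the representation $\mathfrak{A}\cdot\mathcal{I}_F=\prod_{\mathfrak{p}}\mathfrak{a}_{\mathfrak{p}}^{\,v(\mathfrak{p})}\cdot\mathcal{I}_F$ furnished by Theorem \ref{thm:Ambiguity} together with Corollary \ref{cor:PrimitiveAmbiguity}.

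The one point requiring genuine care is that $\Phi$ is well defined, and this is where I expect the main obstacle to lie. The key is that a fractional ideal is determined by its valuation vector $(v_{\mathfrak{P}}(\mathfrak{A}))_{\mathfrak{P}\in\mathbb{P}_E}$, and that the prime ideals $\mathfrak{P}_i$ lying over distinct primes $\mathfrak{p}$ of $F$ are pairwise distinct. Consequently the contributions of the various $\mathfrak{a}_{\mathfrak{p}}$ occupy disjoint coordinates of the valuation vector and decouple completely. Within a single prime $\mathfrak{p}$, the condition $\mathfrak{a}_{\mathfrak{p}}^{\,v}\in\mathcal{I}_F$ forces $\prod_i\mathfrak{P}_i^{\,v\,e_i/e(\mathfrak{p})}$ to equal some power $\mathfrak{p}^m\mathcal{O}_E=\prod_i\mathfrak{P}_i^{\,m\,e_i}$, i.e. $v\,e_i/e(\mathfrak{p})=m\,e_i$ for all $i$, which is equivalent to $e(\mathfrak{p})\mid v$ because $\gcd_i\bigl(e_i/e(\mathfrak{p})\bigr)=1$ by the very definition of $e(\mathfrak{p})$. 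This simultaneously shows that $v(\mathfrak{p})$ is determined modulo $e(\mathfrak{p})$ by the class $\mathfrak{A}\cdot\mathcal{I}_F$ and that each generator $\mathfrak{a}_{\mathfrak{p}}\cdot\mathcal{I}_F$ has order exactly $e(\mathfrak{p})$.

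With well-definedness in hand the remaining steps are routine. Multiplying two ambiguous ideals adds the exponents $v(\mathfrak{p})$ coordinatewise, so $\Phi$ is a group homomorphism; it is surjective because any prescribed tuple is realized by the finite product $\prod_{\mathfrak{p}}\mathfrak{a}_{\mathfrak{p}}^{\,v(\mathfrak{p})}$, which lies in $\mathcal{I}_{E/F}$ by Theorem \ref{thm:Ambiguity}; and it is injective by the decoupling computation above, since $v(\mathfrak{p})\equiv 0\,(\mathrm{mod}\,e(\mathfrak{p}))$ for every $\mathfrak{p}$ places each local factor into $\mathcal{I}_F$, whence $\mathfrak{A}\in\mathcal{I}_F$ by Corollary \ref{cor:PrimitiveAmbiguity}. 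Finally, for the finiteness claim I would invoke that $e(\mathfrak{p})>1$ occurs only at the finitely many primes $\mathfrak{p}$ ramifying in $E/F$, so all remaining factors $\mathbb{Z}/e(\mathfrak{p})\mathbb{Z}$ are trivial and the product is genuinely finite, giving
\[
\#(\mathcal{I}_{E/F}/\mathcal{I}_F)=\prod_{\mathfrak{p}\in\mathbb{P}_F}\#\bigl(\mathbb{Z}/e(\mathfrak{p})\mathbb{Z}\bigr)=\prod_{\mathfrak{p}\in\mathbb{P}_F}e(\mathfrak{p}),
\]
as asserted.
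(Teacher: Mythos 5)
Your proof is correct and follows essentially the same route as the paper, which disposes of this corollary in one line as an immediate consequence of Corollary \ref{cor:PrimitiveAmbiguity}: you simply read off the exponent vector \(\bigl(v(\mathfrak{p})\bmod e(\mathfrak{p})\bigr)_{\mathfrak{p}}\) from the normal form established there. The extra details you supply --- well-definedness via the semi-local decoupling at distinct primes \(\mathfrak{p}\), the exact order \(e(\mathfrak{p})\) of each generator \(\mathfrak{a}_{\mathfrak{p}}\cdot\mathcal{I}_F\), and finiteness because \(e(\mathfrak{p})>1\) only at ramified primes --- are precisely what the paper leaves implicit, so this is a faithful expansion rather than a different argument.
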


\begin{proof}
This is an immediate consequence of Corollary
\ref{cor:PrimitiveAmbiguity}.
\end{proof}

\begin{remark}
\label{rmk:PrimeDegree}
If the relative degree \(d=\lbrack E:F\rbrack\) is a prime number \(q\in\mathbb{P}\),
then the condition \(e=e(\mathfrak{p})>1\) is only satisfied by prime ideals \(\mathfrak{p}\in\mathbb{P}_F\)
which \textit{ramify totally} in \(E\),
since \(e\mid d\) with \(e>1\) and \(d=q\) is only possible for \(e=q\) and thus necessarily \(f=g=1\).
Therefore, if \(T:=\#\lbrace\mathfrak{p}\in\mathbb{P}_F\mid e(\mathfrak{p})>1\rbrace\), then we have
\begin{equation}
\label{eqn:PrimeDegree}
\lbrack E:F\rbrack=q\in\mathbb{P} \quad \Longrightarrow \quad 
\mathcal{I}_{E/F}/\mathcal{I}_F\simeq(\mathbb{Z}/q\mathbb{Z})^T, \quad
\#(\mathcal{I}_{E/F}/\mathcal{I}_F)=q^T.
\end{equation}
\end{remark}

%\newpage
%--------------------------------------------------------------------------------

\subsection{Relative different}
\label{ss:RelDiff}
\noindent
As the title of the present paper suggests,
our main goal is the investigation of \textit{differential principal factors},
that is, principal ideal divisors \((A)=A\mathcal{O}_N\in\mathcal{P}_N\)
of the relative different \(\mathfrak{D}_{N/K}\) of pure metacyclic fields \(N=\mathbb{Q}(\root{p}\of{D},\zeta)\)
with respect to the cyclotomic subfield \(K=\mathbb{Q}(\zeta)\).
For this purpose, a brief excursion to the general theory will be illuminating.

%--------------------------------------------------------------------------------

Let \(E/F\) be an extension of algebraic number fields.
We fix a prime ideal \(\mathfrak{P}\in\mathbb{P}_E\) of \(E\)
and we denote the ramification exponent of \(\mathfrak{P}\)
over the prime ideal \(\mathfrak{p}:=F\cap\mathfrak{P}\in\mathbb{P}_F\) of \(F\) below \(\mathfrak{P}\)
by \(e:=v_{\mathfrak{P}}(\mathfrak{p})\in\mathbb{N}\).

\begin{theorem}
\label{thm:RelDiff}
The contribution of the prime ideal \(\mathfrak{P}\in\mathbb{P}_E\)
to the relative different \(\mathfrak{D}_{E/F}\in\mathcal{I}_E\) of \(E/F\)
depends on the ramification of \(\mathfrak{P}\) in \(E/F\) and is given by
\begin{equation}
\label{eqn:RelDiff}
\begin{cases}
v_{\mathfrak{P}}(\mathfrak{D}_{E/F})=e-1                             & \text{ if } v_{\mathfrak{P}}(e)=0 \text{ (tame ramification)}, \\
e\le v_{\mathfrak{P}}(\mathfrak{D}_{E/F})\le e-1+v_{\mathfrak{P}}(e) & \text{ if } v_{\mathfrak{P}}(e)\ge 1 \text{ (wild ramification)}.
\end{cases}
\end{equation}
The relative discriminant \(\mathfrak{d}_{E/F}\in\mathcal{I}_F\) of \(E/F\)
is the relative norm of the relative different,
\begin{equation}
\label{eqn:RelDisc}
\mathfrak{d}_{E/F}=N_{E/F}(\mathfrak{D}_{E/F}).
\end{equation}
\end{theorem}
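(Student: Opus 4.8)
The plan is to reduce the global statement to a purely local computation and then analyse an Eisenstein generator. Since the relative different behaves multiplicatively and is determined prime-by-prime, I would first pass to the completions \(E_{\mathfrak{P}}/F_{\mathfrak{p}}\) and note that \(v_{\mathfrak{P}}(\mathfrak{D}_{E/F})\) is unchanged; this replaces \(E/F\) by an extension of complete discrete valuation fields whose residue fields are finite, hence perfect. Next I would interpose the maximal unramified subextension: it contributes nothing to the different (its local ramification index is \(1\), so every prime above has differential exponent \(0\)), and by the tower formula \(\mathfrak{D}_{E/F}=\mathfrak{D}_{E/E'}\cdot\mathfrak{D}_{E'/F}\mathcal{O}_E\) the problem is reduced to a \emph{totally ramified} extension of degree \(e\).

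For a totally ramified extension of complete fields with perfect residue field, the ring of integers is monogenic, \(\mathcal{O}_{E_{\mathfrak{P}}}=\mathcal{O}_{F_{\mathfrak{p}}}[\pi]\) for a uniformizer \(\pi\) whose minimal polynomial \(g(X)=X^e+a_{e-1}X^{e-1}+\cdots+a_0\) is Eisenstein. The basic structural input I would invoke is the identity \(\mathfrak{D}_{E/F}=(g'(\pi))\), so that \(v_{\mathfrak{P}}(\mathfrak{D}_{E/F})=v_{\mathfrak{P}}(g'(\pi))\). Writing \(g'(\pi)=e\pi^{e-1}+\sum_{j=1}^{e-1}ja_j\pi^{j-1}\) and using the Eisenstein conditions \(v_{\mathfrak{P}}(a_j)\ge e\) for \(j<e\), I get \(v_{\mathfrak{P}}(e\pi^{e-1})=(e-1)+v_{\mathfrak{P}}(e)\) while every remaining term has valuation at least \(e\). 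In the tame case \(v_{\mathfrak{P}}(e)=0\) the leading term is the strict minimum, giving exactly \(v_{\mathfrak{P}}(\mathfrak{D}_{E/F})=e-1\); in the wild case \(v_{\mathfrak{P}}(e)\ge 1\) every term has valuation \(\ge e\), yielding the lower bound \(v_{\mathfrak{P}}(\mathfrak{D}_{E/F})\ge e\).

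The delicate point---and the step I expect to be the main obstacle---is the upper bound \(v_{\mathfrak{P}}(\mathfrak{D}_{E/F})\le e-1+v_{\mathfrak{P}}(e)\) in the wild case, because the term-by-term estimate is defeated by possible cancellation among the coefficients of \(g'(\pi)\). To control this I would pass to the codifferent \(\mathfrak{D}_{E/F}^{-1}=\{x\in E:\mathrm{Tr}_{E/F}(x\mathcal{O}_E)\subseteq\mathcal{O}_F\}\) and prove the equivalent inclusion \(\mathrm{Tr}_{E/F}(\mathfrak{P}^{-(e-1+v_{\mathfrak{P}}(e))})\subseteq\mathcal{O}_F\), estimating \(\mathrm{Tr}_{E/F}(\pi^{-k})\) through the power sums of the roots of \(g\) via Newton's identities. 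Alternatively, and perhaps more cleanly, I would split \(e=p^a m\) with \(p\nmid m\), factor the extension through its maximal tamely ramified subfield (handled by the tame case already settled), and bound the differential exponent of the remaining purely wild extension of \(p\)-power degree using the ramification filtration---passing to the Galois closure and invoking Herbrand's theorem so that the estimate descends to the non-normal extension \(E/F\).

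Finally, for the identity \(\mathfrak{d}_{E/F}=N_{E/F}(\mathfrak{D}_{E/F})\) I would again localize and exploit monogenicity: since \(\mathcal{O}_{E_{\mathfrak{P}}}=\mathcal{O}_{F_{\mathfrak{p}}}[\pi]\), the relative discriminant is generated by the polynomial discriminant \(\mathrm{disc}(g)=\pm N_{E/F}(g'(\pi))\), which is precisely the norm of the generator \(g'(\pi)\) of \(\mathfrak{D}_{E/F}\); assembling the local contributions over all \(\mathfrak{p}\in\mathbb{P}_F\) then gives the asserted global equality of ideals.
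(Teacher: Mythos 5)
Your proposal is correct in outline, but note that the paper itself contains no proof of this theorem: its ``proof'' is a pure citation to Neukirch \cite[Thm.\ 2.6 and Thm.\ 2.9]{Nk}, and what you have written is essentially a reconstruction of that classical textbook argument --- localize and complete at $\mathfrak{P}$ (the residue fields are finite, hence the residue extension is separable), split off the maximal unramified subextension via the tower formula, use monogenicity $\mathcal{O}_{E_{\mathfrak{P}}}=\mathcal{O}_{F_{\mathfrak{p}}}\lbrack\pi\rbrack$ with Eisenstein minimal polynomial $g$, invoke $\mathfrak{D}=(g'(\pi))$, and obtain the discriminant identity from $\mathrm{disc}(g)=\pm N_{E/F}(g'(\pi))$ assembled over all $\mathfrak{p}$. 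The one place where your plan is heavier than necessary is the step you flag as the main obstacle: cancellation among the terms of $g'(\pi)=e\pi^{e-1}+\sum_{j=1}^{e-1}ja_j\pi^{j-1}$ is in fact impossible. Since $j$ and $a_j$ lie in $\mathcal{O}_{F_{\mathfrak{p}}}$, their $\mathfrak{P}$-valuations are multiples of $e$, so
\begin{equation*}
v_{\mathfrak{P}}\bigl(ja_j\pi^{j-1}\bigr)\equiv j-1\ (\mathrm{mod}\ e),
\end{equation*}
and distinct $j\in\lbrace 1,\ldots,e\rbrace$ give valuations in distinct residue classes modulo $e$; hence all nonzero terms have pairwise distinct valuations, the ultrametric inequality is an equality, and
\begin{equation*}
v_{\mathfrak{P}}(g'(\pi))=\min_j v_{\mathfrak{P}}\bigl(ja_j\pi^{j-1}\bigr)\le v_{\mathfrak{P}}\bigl(e\pi^{e-1}\bigr)=e-1+v_{\mathfrak{P}}(e),
\end{equation*}
while in the wild case every term has valuation $\ge e$, giving the lower bound as you observed. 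So neither the codifferent estimate via Newton's identities nor the detour through the Galois closure and Herbrand's theorem is needed; the Eisenstein computation alone closes both bounds. In terms of trade-offs: the paper's citation buys brevity for a purely auxiliary classical fact, while your route buys self-containedness --- and with the mod-$e$ observation above it becomes fully elementary.
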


\begin{proof}
Formula
\ref{eqn:RelDiff}
is proved in
\cite[Thm. 2.6, pp. 199-200]{Nk}
and formula
\ref{eqn:RelDisc}
is proved in
\cite[Thm. 2.9, pp. 201-202]{Nk}.
Note that, in particular, \(v_{\mathfrak{P}}(\mathfrak{D}_{E/F})=0\) \(\Longleftrightarrow\) \(e=1\) (\(\mathfrak{P}\) unramified).
\end{proof}

%--------------------------------------------------------------------------------

%\noindent
Now we apply the general theory to our special situations,
either the \textbf{Kummer} extension \(E:=N=\mathbb{Q}(\root{p}\of{D},\zeta)\) of the cyclotomic base field \(F:=K=\mathbb{Q}(\zeta)\)
or the \textbf{pure} extension \(E:=L=\mathbb{Q}(\root{p}\of{D})\) of the rational base field \(F:=\mathbb{Q}\).
We begin with results for the \textbf{quintic} case \(p=5\).

\begin{theorem}
\label{thm:RelDiffQuintic}
Let \(q\in\mathbb{P}\) be a prime number and
denote by \(\mathfrak{Q}\in\mathbb{P}_N\) a prime ideal of \(N=\mathbb{Q}(\root{5}\of{D},\zeta)\)
dividing the extension ideal \(q\mathcal{O}_N\).
As before, \(f\) denotes the conductor of \(N/K\).
Then the contribution of \(\mathfrak{Q}\) to the relative different \(\mathfrak{D}_{N/K}\) is given by
\begin{equation}
\label{eqn:RelDiffQuinticNK}
v_{\mathfrak{Q}}(\mathfrak{D}_{N/K})=
\begin{cases}
24 & \text{ if } q=5,\ q^6\parallel f^4 \text{ (for \(N\) of species } 1\mathrm{a}), \\
 8 & \text{ if } q=5,\ q^2\parallel f^4 \text{ (for \(N\) of species } 1\mathrm{b}), \\
 4 & \text{ if } q\ne 5,\ q\mid f \text{ (for \(N\) of any species)}, \\
 0 & \text{ if } q\not\vert\, f \text{ (in particular, if } q=5 \text{ and \(N\) of species } 2).
\end{cases}
\end{equation}
Now denote by \(\mathfrak{Q}\in\mathbb{P}_L\) a prime ideal of \(L=\mathbb{Q}(\root{5}\of{D})\)
dividing the extension ideal \(q\mathcal{O}_L\).
Then the contribution of \(\mathfrak{Q}\) to the absolute different \(\mathfrak{D}_{L/\mathbb{Q}}\) is given by
\begin{equation}
\label{eqn:RelDiffQuinticLQ}
v_{\mathfrak{Q}}(\mathfrak{D}_{L/\mathbb{Q}})=
\begin{cases}
 9 & \text{ if } q=5,\ q^6\parallel f^4 \text{ (for \(L\) of species } 1\mathrm{a}), \\
 5 & \text{ if } q=5,\ q^2\parallel f^4 \text{ (for \(L\) of species } 1\mathrm{b}), \\
 3 & \text{ if } q=5,\ q\not\vert\, f,\ v_{\mathfrak{Q}}(q\mathcal{O}_L)=4 \text{ (for \(L\) of species } 2), \\
 0 & \text{ if } q=5,\ q\not\vert\, f,\ v_{\mathfrak{Q}}(q\mathcal{O}_L)=1 \text{ (for \(L\) of species } 2), \\
 4 & \text{ if } q\ne 5,\ q\mid f \text{ (for \(L\) of any species)}, \\
 0 & \text{ if } q\ne 5,\ q\not\vert\, f .
\end{cases}
\end{equation}
\end{theorem}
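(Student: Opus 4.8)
The plan is to reduce both differential computations to the purely local formula \eqref{eqn:RelDiff} of Theorem \ref{thm:RelDiff}, applied one prime at a time, and to settle the single wildly ramified prime $q=5$ by feeding in the global discriminant data of Theorem \ref{thm:CondDisc}. Since the contribution $v_{\mathfrak{Q}}(\mathfrak{D}_{E/F})$ depends only on the completion at $\mathfrak{Q}$, the whole argument is organized according to the decomposition type of $q$ in the two extensions $N/K$ and $L/\mathbb{Q}$. For $q\ne 5$ the decomposition is read off from the conductor \eqref{eqn:Conductor}: a rational prime $q\ne 5$ ramifies precisely when $q\mid R$, i.e. $q\mid D$, in which case it is totally ramified of degree $5$ in both $L/\mathbb{Q}$ and $N/K$ (for the cyclic Kummer extension this is automatic from prime degree, for the pure extension it follows from $\gcd(v_q(D),5)=1$). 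For $q=5$ the behaviour is governed by the species, and here I would reuse the decompositions already recorded in the proof of Theorem \ref{thm:MainQuinticPolya}.

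For every unramified or tamely ramified prime the formula \eqref{eqn:RelDiff} yields the exponent outright, and these account for all cases except $q=5$ of species $1$. Concretely, a ramified prime $q\ne 5$ has ramification index $e=5$ with $v_{\mathfrak{Q}}(e)=v_{\mathfrak{Q}}(5)=0$, so the ramification is tame and $v_{\mathfrak{Q}}(\mathfrak{D})=e-1=4$ in both towers, while an unramified prime contributes $0$. For $q=5$ of species $2$ the prime $5$ is not totally ramified: in $N/K$ it is unramified, giving contribution $0$, whereas in $L/\mathbb{Q}$ one has $5\mathcal{O}_L=\mathcal{P}_1\mathcal{P}_2^4$, where $\mathcal{P}_1$ is unramified (contribution $0$) and $\mathcal{P}_2$ is tamely ramified with $e=4$ coprime to $5$ (contribution $e-1=3$). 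This disposes of the last two rows of \eqref{eqn:RelDiffQuinticLQ} and the last row of \eqref{eqn:RelDiffQuinticNK}.

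The hard part is the wildly ramified prime $q=5$ in fields of species $1$, where $5$ is totally ramified of degree $5$ in both $N/K$ and $L/\mathbb{Q}$ with $v_{\mathfrak{Q}}(e)=v_{\mathfrak{Q}}(5)\ge 1$, so that \eqref{eqn:RelDiff} only furnishes the bounds $5\le v_{\mathfrak{Q}}(\mathfrak{D}_{L/\mathbb{Q}})\le 9$ and $5\le v_{\mathfrak{Q}}(\mathfrak{D}_{N/K})\le 24$ and cannot by itself separate species $1\mathrm{a}$ from $1\mathrm{b}$. To pin down the exact values I would invoke the discriminant relation \eqref{eqn:RelDisc}: since $5$ is totally ramified with residue degree $f=1$ and a single prime above it, the relative norm is transparent, $N_{E/F}(\mathfrak{Q}^{d})=\mathfrak{p}^{d}$, so the local different exponent equals the local discriminant exponent, $v_{\mathfrak{Q}}(\mathfrak{D}_{E/F})=v_{\mathfrak{p}}(\mathfrak{d}_{E/F})$. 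Reading the $5$-adic valuation of the discriminants in Theorem \ref{thm:CondDisc} then finishes the computation. For $L/\mathbb{Q}$ one gets $v_5(d_L)=9$ when $5\mid R$ (species $1\mathrm{a}$) and $v_5(d_L)=5$ when $5\nmid R$ (species $1\mathrm{b}$). For $N/K$, the tower relation $d_N=d_K^5\,N_{K/\mathbb{Q}}(\mathfrak{d}_{N/K})$ together with the value $d_N=d_K^5 f^{16}$ gives $N_{K/\mathbb{Q}}(\mathfrak{d}_{N/K})=f^{16}$; as $5$ is totally ramified in $K$ with a unique prime $\mathfrak{p}$ of residue degree $1$ (equivalently, by the conductor--discriminant relation $\mathfrak{d}_{N/K}=\mathfrak{f}^4$), the $5$-part yields $v_{\mathfrak{p}}(\mathfrak{d}_{N/K})=v_5(f^{16})=4\,v_5(f^4)$, which is $24$ for species $1\mathrm{a}$ ($5^6\parallel f^4$) and $8$ for species $1\mathrm{b}$ ($5^2\parallel f^4$). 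The species condition is thus exactly the $5$-adic valuation of $f^4$ in \eqref{eqn:Conductor}, and this is precisely what distinguishes the two wild rows in each formula.
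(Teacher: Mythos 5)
Your proof is correct, and since the paper states Theorem \ref{thm:RelDiffQuintic} without writing out a proof, your argument is exactly the derivation its surrounding apparatus intends: the unramified and tame rows follow from Theorem \ref{thm:RelDiff} applied to the decomposition data (total ramification of primes \(q\mid f\), \(q\ne 5\), via \(\gcd(v_q(D),5)=1\), and the splitting \(5\mathcal{O}_L=\mathcal{P}_1\mathcal{P}_2^4\) in species \(2\), where \(e=4\) is prime to \(5\) and hence tame), while the wild rows for \(q=5\) in species \(1\mathrm{a}\) and \(1\mathrm{b}\) — which the Neukirch bounds \(5\le v_{\mathfrak{Q}}(\mathfrak{D}_{L/\mathbb{Q}})\le 9\) and \(5\le v_{\mathfrak{Q}}(\mathfrak{D}_{N/K})\le 24\) alone cannot decide — are correctly pinned down by passing from the different to the discriminant via \eqref{eqn:RelDisc}, which is legitimate here because \(5\) is totally ramified with a single prime of residue degree one above it, so \(v_{\mathfrak{Q}}(\mathfrak{D}_{E/F})=v_{\mathfrak{p}}(\mathfrak{d}_{E/F})\), and then reading \(v_5(d_L)\in\lbrace 9,5\rbrace\) and \(v_{\mathfrak{p}}(\mathfrak{d}_{N/K})=4\,v_5(f^4)\in\lbrace 24,8\rbrace\) off Theorem \ref{thm:CondDisc} (equivalently, off the conductor--discriminant relation \(\mathfrak{d}_{N/K}=f^4\) combined with \(5\mathcal{O}_K=\mathfrak{p}^4\)). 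I find no gap: every decomposition you import is recorded in the paper (in \eqref{eqn:Conductor} and in the proof of Theorem \ref{thm:MainQuinticPolya}), and the valuation bookkeeping in both towers checks out.
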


%--------------------------------------------------------------------------------

Remark
\ref{rmk:PrimeDegree}
together with Theorem
\ref{thm:RelDiffQuintic}
explains the designation \textit{differential factors}
for primitive ambiguous ideals
\(\mathfrak{A}\in\mathcal{I}_N\) and \(\mathfrak{a}\in\mathcal{I}_L\)
which are not divisible by a prime ideal above \(5\): \\
\(\mathfrak{A}\cdot\mathcal{I}_K\in\mathcal{I}_{N/K}/\mathcal{I}_K\)
\(\Longleftrightarrow\) \(\mathfrak{A}\mid\mathfrak{R}_{N/K}^4\)
\(\Longleftrightarrow\) \(\mathfrak{A}\mid\mathfrak{D}_{N/K}\),
\(\mathfrak{a}\cdot\mathcal{I}_{\mathbb{Q}}\in\mathcal{I}_{L/\mathbb{Q}}/\mathcal{I}_{\mathbb{Q}}\)
\(\Longleftrightarrow\) \(\mathfrak{a}\mid\mathfrak{R}_{L/\mathbb{Q}}^4\)
\(\Longleftrightarrow\) \(\mathfrak{a}\mid\mathfrak{D}_{L/\mathbb{Q}}\).

%\newpage
%--------------------------------------------------------------------------------

\subsection{Kernel of the relative norm}
\label{ss:KerNorm}
\noindent
In Theorem
\ref{thm:TrivProj}
we shall find the answer to the second question of Problem
\ref{prb:SemiLocal}.

\begin{theorem}
\label{thm:TrivProj}
Let \(\mathfrak{A}\in\mathcal{I}_E\) be a fractional ideal of \(E\). Then
\begin{equation}
\label{eqn:TrivProj}
N_{E/F}(\mathfrak{A})=\mathcal{O}_F \quad \Longleftrightarrow \quad
(\forall\,\mathfrak{p}\in\mathbb{P}_F)\,\sum_{\mathfrak{P}\mid\mathfrak{p}}\,f(\mathfrak{P}/\mathfrak{p})\cdot v_{\mathfrak{P}}(\mathfrak{A})=0.
\end{equation}
\end{theorem}

\begin{proof}
Let \(\mathfrak{p}\in\mathbb{P}_F\) be a prime ideal of the base field \(F\).
We consider the condition \(N_{E/F}(\mathfrak{A})=\mathcal{O}_F\) semi-locally with respect to \(\mathfrak{p}\):
\(0=v_{\mathfrak{p}}(\mathcal{O}_F)=v_{\mathfrak{p}}(N_{E/F}(\mathfrak{A}))=v_{\mathfrak{p}}(N_{E/F}(\prod_{i=1}^g\,\mathfrak{P}_i^{v_i}))=
v_{\mathfrak{p}}(\prod_{i=1}^g\,N_{E/F}(\mathfrak{P}_i)^{v_i})=v_{\mathfrak{p}}(\prod_{i=1}^g\,(\mathfrak{p}^{f_i})^{v_i})=
v_{\mathfrak{p}}(\mathfrak{p}^{\sum_{i=1}^g\,f_iv_i})=\sum_{i=1}^g\,f_iv_i\),
where \(v_i=v_{\mathfrak{P}_i}(\mathfrak{A})\).
\end{proof}

\noindent
Note that, in the preceding proof, \(\prod_{i=1}^g\,\mathfrak{P}_i^{v_i}\) is only
the semi-local component of \(\mathfrak{A}\) with respect to \(\mathfrak{p}\).
Other components of  \(\mathfrak{A}\) do not contribute to the valuation \(v_{\mathfrak{p}}\).

%--------------------------------------------------------------------------------

\begin{corollary}
\label{cor:TrivProj}
\begin{enumerate}
\item
If \(N_{E/F}(\mathfrak{A})=\mathcal{O}_F\), then only prime ideals \(\mathfrak{p}\in\mathbb{P}_F\) which split in \(E\) can contribute to
\(\mathfrak{A}=\prod_{g(\mathfrak{p})\ge 2}\,\prod_{\mathfrak{P}\mid\mathfrak{p}}\,\mathfrak{P}^{v_{\mathfrak{P}}(\mathfrak{A})}\).
\item
If \(\mathfrak{A}\) is an integral ideal of \(E\),
then \(N_{E/F}(\mathfrak{A})=\mathcal{O}_F \ \Longleftrightarrow \  \mathfrak{A}=\mathcal{O}_E\).
\item
If \(E/F\) is a quadratic extension,
then \(N_{E/F}(\mathfrak{A})=\mathcal{O}_F \ \Longleftrightarrow \ \mathfrak{A}=\prod_{g(\mathfrak{p})=2}\,(\mathfrak{P}_1\mathfrak{P}_2^{-1})^{v_{\mathfrak{P_1}}(\mathfrak{A})}\).
\end{enumerate}
\end{corollary}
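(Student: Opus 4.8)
The plan is to read off all three assertions directly from the valuation criterion of Theorem \ref{thm:TrivProj}, which reduces the triviality of the relative norm to the family of semi-local linear conditions \(\sum_{\mathfrak{P}\mid\mathfrak{p}}\,f(\mathfrak{P}/\mathfrak{p})\cdot v_{\mathfrak{P}}(\mathfrak{A})=0\), one for each \(\mathfrak{p}\in\mathbb{P}_F\). Each part is then simply a matter of interpreting this condition under the additional hypothesis at hand, so the entire corollary is bookkeeping layered on top of the theorem rather than a new argument.

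For the first assertion I would fix a prime \(\mathfrak{p}\) that does \emph{not} split, i.e. with \(g(\mathfrak{p})=1\), and note that its semi-local condition collapses to \(f(\mathfrak{P}/\mathfrak{p})\cdot v_{\mathfrak{P}}(\mathfrak{A})=0\) for the unique \(\mathfrak{P}\mid\mathfrak{p}\). Since always \(f(\mathfrak{P}/\mathfrak{p})\ge 1\), this forces \(v_{\mathfrak{P}}(\mathfrak{A})=0\); hence no inert or totally ramified prime can occur in \(\mathfrak{A}\), and only primes with \(g(\mathfrak{p})\ge 2\) survive in the factorization.

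For the second assertion the direction \(\mathfrak{A}=\mathcal{O}_E\Rightarrow N_{E/F}(\mathfrak{A})=\mathcal{O}_F\) is immediate. For the converse I would use integrality: the hypothesis gives \(v_{\mathfrak{P}}(\mathfrak{A})\ge 0\) for all \(\mathfrak{P}\), so in each semi-local condition every summand \(f(\mathfrak{P}/\mathfrak{p})\cdot v_{\mathfrak{P}}(\mathfrak{A})\) is a non-negative integer; a vanishing sum of non-negative integers forces every term, and thus every \(v_{\mathfrak{P}}(\mathfrak{A})\), to be zero, whence \(\mathfrak{A}=\mathcal{O}_E\).

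For the third assertion I would run through the three splitting types of a prime in a quadratic extension (inert, ramified, split). By the first assertion only the split primes, with \(g(\mathfrak{p})=2\), contribute, and for such a prime both residue degrees equal \(1\), so the condition reads \(v_{\mathfrak{P}_1}(\mathfrak{A})+v_{\mathfrak{P}_2}(\mathfrak{A})=0\), i.e. \(v_{\mathfrak{P}_2}(\mathfrak{A})=-v_{\mathfrak{P}_1}(\mathfrak{A})\), producing the local factor \((\mathfrak{P}_1\mathfrak{P}_2^{-1})^{v_{\mathfrak{P}_1}(\mathfrak{A})}\). Multiplying over all split primes yields the stated shape, and conversely each such factor is killed by the norm since \(N_{E/F}(\mathfrak{P}_1\mathfrak{P}_2^{-1})=\mathfrak{p}\mathfrak{p}^{-1}=\mathcal{O}_F\). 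There is no genuine obstacle here; the only point demanding a little care is fixing an ordering \(\mathfrak{P}_1,\mathfrak{P}_2\) of the two primes above each split \(\mathfrak{p}\) and tracking the sign so that the two exponents cancel.
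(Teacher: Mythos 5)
Your proposal is correct and follows essentially the same route as the paper's own proof: all three parts are read off from the semi-local valuation criterion of Theorem~\ref{thm:TrivProj}, with part (1) collapsing the sum at non-split primes, part (2) using non-negativity of the valuations, and part (3) specializing to \(g(\mathfrak{p})=2\), \(f(\mathfrak{P}_i/\mathfrak{p})=1\) so that \(v_{\mathfrak{P}_2}(\mathfrak{A})=-v_{\mathfrak{P}_1}(\mathfrak{A})\). Your explicit check of the converse in part (3), namely \(N_{E/F}(\mathfrak{P}_1\mathfrak{P}_2^{-1})=\mathfrak{p}\mathfrak{p}^{-1}=\mathcal{O}_F\), is a small completeness bonus the paper leaves implicit.
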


\begin{proof}
\begin{enumerate}
\item
For non-split prime ideals \(\mathfrak{p}\in\mathbb{P}_F\) with \(g(\mathfrak{p})=1\),
the sum \(\sum_{\mathfrak{P}\mid\mathfrak{p}}\,f(\mathfrak{P}/\mathfrak{p})\cdot v_{\mathfrak{P}}(\mathfrak{A})\)
degenerates to a single summand \(f(\mathfrak{P}/\mathfrak{p})\cdot v_{\mathfrak{P}}(\mathfrak{A})=0\), which implies \(v_{\mathfrak{P}}(\mathfrak{A})=0\).
\item
If \(\mathfrak{A}\) is integral, then all \(v_{\mathfrak{P}}(\mathfrak{A})\ge 0\) are non-negative.
Consequently, the sum \(\sum_{\mathfrak{P}\mid\mathfrak{p}}\,f(\mathfrak{P}/\mathfrak{p})\cdot v_{\mathfrak{P}}(\mathfrak{A})\)
with positive \(f(\mathfrak{P}/\mathfrak{p})\ge 1\) can be zero only for constant \(v_{\mathfrak{P}}(\mathfrak{A})=0\).
\item
If \(d=\lbrack E:F\rbrack=2\), then the necessary condition \(g(\mathfrak{p})\ge 2\) of item (1) implies
\(g(\mathfrak{p})=2\), \(\mathfrak{p}\mathcal{O}_E=\mathfrak{P}_1\mathfrak{P}_2\), \(f(\mathfrak{P_i}/\mathfrak{p})=1\),
and \(\sum_{i=1}^2\,v_{\mathfrak{P_i}}(\mathfrak{A})=0\), that is, \(v_{\mathfrak{P_2}}(\mathfrak{A})=-v_{\mathfrak{P_1}}(\mathfrak{A})\).\qedhere
\end{enumerate}
\end{proof}

%\newpage
%--------------------------------------------------------------------------------

\section{Dimensions of spaces of differential factors}
\label{s:Dimensions}

%\newpage
%--------------------------------------------------------------------------------

\subsection{Absolute differential factors}
\label{ss:AbsDF}
\noindent
Let \(T\) be the number of primes \(q_1,\ldots,q_T\)
dividing the conductor of the cyclic quintic extension \(N/K\),
and \(\mathfrak{q}_1,\ldots,\mathfrak{q}_T\) their overlying prime ideals in \(L\).

\begin{theorem}
\label{thm:AbsDim}
The \(\mathbb{F}_5\)-vectorspace \(\mathcal{I}_{L/\mathbb{Q}}/\mathcal{I}_{\mathbb{Q}}\)
of \textbf{absolute} ambiguous ideals has the dimension
\begin{equation}
\label{eqn:AbsDim}
\dim_{\mathbb{F}_5}(\mathcal{I}_{L/\mathbb{Q}}/\mathcal{I}_{\mathbb{Q}})=T,
\end{equation}
which is finite but unbounded. A basis representation is given by
\begin{equation}
\label{eqn:AbsBas}
\mathcal{I}_{L/\mathbb{Q}}/\mathcal{I}_{\mathbb{Q}}=\bigoplus_{i=1}^T\,\mathbb{F}_5\,\mathfrak{q}_i\,.
\end{equation}
\end{theorem}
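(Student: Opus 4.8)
The plan is to reduce everything to Remark~\ref{rmk:PrimeDegree}, which applies verbatim to the non-Galois extension $E=L$ over $F=\mathbb{Q}$ of prime relative degree $d=[L:\mathbb{Q}]=5$. That remark yields at once the $\mathbb{F}_5$-vector space isomorphism
\[
\mathcal{I}_{L/\mathbb{Q}}/\mathcal{I}_{\mathbb{Q}}\simeq(\mathbb{Z}/5\mathbb{Z})^{T_0},\qquad T_0:=\#\lbrace q\in\mathbb{P}\mid e(q)>1\rbrace,
\]
where $e(q)=\gcd(e_1,\ldots,e_g)$ is the common ramification index of $q$ in $L$. Since $d=5$ is prime, $e(q)>1$ forces $e(q)=5$ with $g=f=1$, so that $q$ is totally ramified and possesses a single overlying prime ideal $\mathfrak{q}\in\mathbb{P}_L$. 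Hence it remains only to match $T_0$ with the invariant $T$ of the statement and to exhibit the asserted basis.

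The key step is the identity $T_0=T$, which rests on the local equivalence
\[
q\ \text{totally ramified in}\ L/\mathbb{Q}\quad\Longleftrightarrow\quad q\mid f^4,
\]
run through each rational prime $q$. For $q\ne 5$ the prime $q$ ramifies in the pure quintic field $L=\mathbb{Q}(\sqrt[5]{D})$ precisely when $q\mid D$, and then totally; by \eqref{eqn:Conductor} this is equivalent to $q\mid R$, hence to $q\mid f^4$, a fact also legible in the different exponent $v_{\mathfrak{q}}(\mathfrak{D}_{L/\mathbb{Q}})=4=e-1$ of Theorem~\ref{thm:RelDiffQuintic}. The only delicate case is $q=5$, which I would split by species: in the first species $5\mid f^4$ and $5\mathcal{O}_L=\mathcal{P}^5$ is totally ramified, so $e(5)=5$, whereas in the second species $5\nmid f^4$ and $5\mathcal{O}_L=\mathcal{P}_1\mathcal{P}_2^4$ gives $e(5)=\gcd(1,4)=1$, so that $5$ ramifies but does \emph{not} contribute to $T_0$. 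Thus $e(q)=5\Leftrightarrow q\mid f^4$ holds uniformly, the primes counted by $T_0$ are exactly the divisors $q_1,\ldots,q_T$ of the conductor of $N/K$, and $T_0=T$.

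For the explicit basis I would invoke Corollary~\ref{cor:PrimitiveAmbiguity}: only primes with $e(q)>1$ contribute, and at each such $q=q_i$ one has $g=1$ and $e_1/e(q_i)=5/5=1$, so the local component is $\mathfrak{q}_i^{\,v(q_i)}$ with $v(q_i)\in\mathbb{Z}/5\mathbb{Z}$. This gives precisely the direct sum decomposition \eqref{eqn:AbsBas}, each $\mathfrak{q}_i$ having order $5$ modulo $\mathcal{I}_{\mathbb{Q}}$ because $\mathfrak{q}_i^5=q_i\mathcal{O}_L\in\mathcal{I}_{\mathbb{Q}}$ while no smaller positive power lies in $\mathcal{I}_{\mathbb{Q}}$. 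The finiteness of the dimension, equal to $T$, then follows from Corollary~\ref{cor:Finiteness} with $\#(\mathcal{I}_{L/\mathbb{Q}}/\mathcal{I}_{\mathbb{Q}})=5^{T}$, which establishes \eqref{eqn:AbsDim}; unboundedness is clear, since $T$ grows with the number of distinct prime divisors of $D$.

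The main obstacle is the bookkeeping at the special prime $5$ in the second step: one must carefully distinguish total ramification ($e(5)=5$) from mere ramification ($e_1=1$, $e_2=4$, $e(5)=1$) and reconcile this with the species-dependent power of $5$ in the conductor formula \eqref{eqn:Conductor}. Away from $5$ the argument is routine pure-quintic ramification theory.
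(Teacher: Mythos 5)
Your proposal is correct and follows essentially the same route as the paper, whose proof of Theorem~\ref{thm:AbsDim} consists precisely of invoking formulas \eqref{eqn:PrimitiveAmbiguity} and \eqref{eqn:PrimeDegree} with $d=q=5$. Your additional bookkeeping --- identifying the primes with $e(q)>1$ as exactly the $T$ divisors of the conductor $f$, in particular distinguishing $e(5)=5$ for species $1$ from $e(5)=\gcd(1,4)=1$ for species $2$ via $5\mathcal{O}_L=\mathcal{P}_1\mathcal{P}_2^4$ --- correctly fills in a detail the paper leaves implicit, but does not change the argument.
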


\begin{proof}
This follows from formulas
\eqref{eqn:PrimitiveAmbiguity}
and
\eqref{eqn:PrimeDegree}
by taking \(d=q=5\).
\end{proof}

%--------------------------------------------------------------------------------

\begin{corollary}
\label{cor:AbsDim}
The \(\mathbb{F}_5\)-subspace \(\mathcal{P}_{L/\mathbb{Q}}/\mathcal{P}_{\mathbb{Q}}\le\mathcal{I}_{L/\mathbb{Q}}/\mathcal{P}_{\mathbb{Q}}\)
of absolute ambiguous \textbf{principal} ideals has the dimension
\begin{equation}
\label{eqn:AbsSub}
A:=\dim_{\mathbb{F}_5}(\mathcal{P}_{L/\mathbb{Q}}/\mathcal{P}_{\mathbb{Q}}),
\end{equation}
which is finite and \textbf{bounded} by
\begin{equation}
\label{eqn:AbsBnd}
1\le A\le\min(3,T).
\end{equation}
In particular, the DPF type \(\gamma\) with maximal value \(A=3\) can occur for \(T\ge 3\) only.
\end{corollary}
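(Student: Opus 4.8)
The plan is to combine the elementary count of Theorem \ref{thm:AbsDim} for the bound by \(T\) with a unit-theoretic argument for the bound by \(3\). Since \(\mathbb{Q}\) has class number one we have \(\mathcal{P}_\mathbb{Q}=\mathcal{I}_\mathbb{Q}\), so \(\mathcal{P}_{L/\mathbb{Q}}/\mathcal{P}_\mathbb{Q}\) is an \(\mathbb{F}_5\)-subspace of \(\mathcal{I}_{L/\mathbb{Q}}/\mathcal{I}_\mathbb{Q}\); Theorem \ref{thm:AbsDim} with its basis \eqref{eqn:AbsBas} gives \(\dim_{\mathbb{F}_5}(\mathcal{I}_{L/\mathbb{Q}}/\mathcal{I}_\mathbb{Q})=T\), whence finiteness and \(A\le T\) are immediate.

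For the bound \(A\le 3\) I would first identify \(\mathcal{P}_{L/\mathbb{Q}}/\mathcal{P}_\mathbb{Q}\) with a quotient of multiplicative groups. Writing \(B:=\{\alpha\in L^\times\mid(\alpha)\in\mathcal{I}_{L/\mathbb{Q}}\}\) for the elements generating ambiguous principal ideals and \(W:=\mathbb{Q}^\times U_L\) for those generating ideals extended from \(\mathbb{Q}\), the map \(\alpha\mapsto(\alpha)\mathcal{P}_\mathbb{Q}\) has kernel exactly \(W\) on \(B\) and thus induces an isomorphism \(\mathcal{P}_{L/\mathbb{Q}}/\mathcal{P}_\mathbb{Q}\simeq B/W\). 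For \(\alpha\in B\), item (3) of Lemma \ref{lem:AmbIdl} with \(d=5\) yields \(\alpha^5\in\mathbb{Q}^\times U_L=W\), say \(\alpha^5=r\cdot u\) with \(r\in\mathbb{Q}^\times\) and \(u\in U_L\); since \(-1=(-1)^5\in U_L^5\), the coset \(u\,U_L^5\) is independent of the splitting, and \(\alpha\mapsto u\,U_L^5\) defines an \(\mathbb{F}_5\)-linear map \(\rho\colon B/W\to U_L/U_L^5\).

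The heart of the argument is the analysis of \(\ker\rho\) and \(\operatorname{im}\rho\). If \(\rho(\alpha)=1\), then after adjusting \(\alpha\) by a unit one has \(\alpha^5=r\in\mathbb{Q}^\times\), so \(\alpha\) is a fifth root of a rational lying in \(L=\mathbb{Q}(\sqrt[5]{D})\). Here I would use that \(L\) admits a real embedding and hence contains no fifth root of unity other than \(1\); this both makes the fifth-power map injective where needed and forces such an \(\alpha\) to be a rational multiple of \(\theta^{\,j}=\sqrt[5]{D^{\,j}}\) for some \(0\le j\le 4\), where \(\theta=\sqrt[5]{D}\). Thus \(\ker\rho\) is generated by the class of the radical, whose ideal is \((\theta)=\prod_{i=1}^T\mathfrak{q}_i^{e_i}\); as \(D\) is fifth-power free, some \(e_i\not\equiv 0\pmod 5\), so this class has order exactly \(5\) and \(\dim_{\mathbb{F}_5}\ker\rho=1\), which simultaneously proves \(A\ge 1\). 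For the image, the signature \((1,2)\) of \(L\) gives unit rank \(2\) and torsion \(\{\pm1\}\), so \(\dim_{\mathbb{F}_5}U_L/U_L^5=2\) and \(\dim_{\mathbb{F}_5}\operatorname{im}\rho\le 2\). Adding the contributions gives \(A=\dim_{\mathbb{F}_5}(B/W)\le 1+2=3\).

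Combining the two bounds yields \(1\le A\le\min(3,T)\), and the type \(\gamma\) with \(A=3\) then requires \(T\ge A=3\). The main obstacle is the precise determination of \(\ker\rho\): one must rule out fifth roots of rationals in \(L\) other than scalar multiples of powers of \(\theta\), which rests on the absence of nontrivial fifth roots of unity in the real field \(L\), and one must verify that the radical class genuinely has order \(5\). As an independent check, the decomposition \eqref{eqn:AIR} of the introduction together with \(U\le 2\) from \eqref{eqn:UNI} gives \(A+I+R=U+1\le 3\), reconfirming \(A\le 3\).
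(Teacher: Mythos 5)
Your proposal is correct in substance, but it takes a genuinely different route from the paper. The paper's own proof is one line: it reads the bound off by combining Theorem \ref{thm:MainQuintic} --- where \(A+I+R=U+1\le 3\) comes from the Herbrand-quotient relation \eqref{eqn:HerbrandQuot} together with the Hasse bound \(0\le U\le 2\) of \eqref{eqn:UNI}, and every admissible type has \(A\ge 1\) --- with the ambient dimension \(\dim_{\mathbb{F}_5}(\mathcal{I}_{L/\mathbb{Q}}/\mathcal{I}_{\mathbb{Q}})=T\) of Theorem \ref{thm:AbsDim}; your closing \lq\lq independent check\rq\rq\ is in fact exactly the paper's argument. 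Your main argument instead works entirely inside \(L\), in the classical Barrucand--Cohn style: the identification \(\mathcal{P}_{L/\mathbb{Q}}/\mathcal{P}_{\mathbb{Q}}\simeq B/W\) is set up correctly (using \(\mathcal{I}_{\mathbb{Q}}=\mathcal{P}_{\mathbb{Q}}\)), the map \(\rho\colon B/W\to U_L/U_L^5\) is well defined since \(\mathbb{Q}^\times\cap U_L=\{\pm 1\}\) and \(-1=(-1)^5\), the kernel is the one-dimensional radical line (nontrivial because \(D\) is fifth-power free, which simultaneously gives \(A\ge 1\)), and the image bound \(2\) follows from Dirichlet's unit theorem for signature \((1,2)\); hence \(A\le 1+2=3\). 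This buys a self-contained, cohomology-free proof that moreover explains the constant \(3\) structurally as (one radical dimension) \(+\) (unit rank of \(L\)), whereas the paper's route obtains the bounds on \(A\), \(I\), \(R\) uniformly from one classification theorem.

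One caveat, which you yourself flagged as the main obstacle: the determination of \(\ker\rho\) needs more than the absence of nontrivial fifth roots of unity in the real field \(L\). That absence only yields injectivity of \(x\,\mathbb{Q}^\times\mapsto x^5\,(\mathbb{Q}^\times)^5\) on radicals; to force an \(\alpha\in L\) with \(\alpha^5=r\in\mathbb{Q}^\times\) into \(\mathbb{Q}^\times\theta^{\,j}\) you must additionally know that \(\mathbb{Q}(\sqrt[5]{r})=\mathbb{Q}(\sqrt[5]{D})\) implies \(r\equiv D^{\,j}\ (\mathrm{mod}^\times\,(\mathbb{Q}^\times)^5)\) --- i.e.\ the uniqueness statement underlying Proposition \ref{prp:Normalization}, equivalently Kummer theory for \(N/K\) in the Galois closure followed by \(K\cap L=\mathbb{Q}\). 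Since this is a standard fact available in the paper, the gap is minor and fully repairable; with it supplied, your proof is complete.
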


\begin{proof}
The bound in formula
\eqref{eqn:AbsBnd}
is obtained by combining Theorem
\ref{thm:MainQuintic}
with formula
\eqref{eqn:AbsDim}.
\end{proof}

%\newpage
%--------------------------------------------------------------------------------

\subsection{Intermediate and relative differential factors}
\label{ss:IntRelDF}
\noindent
We start with a general statement concerning groups of primitive ambiguous ideals.

\begin{theorem}
\label{thm:BottomTop}
\noindent
Let \(p\) be a prime and \(q>1\) be an integer coprime to \(p\).
Suppose \(E_0/F_0\) is a number field extension of degree \(q\),
\(F/F_0\) is an extension of degree \(p\),
and \(E=F\cdot E_0\) is the compositum of \(F\) and \(E_0\).

Then the norm homomorphism \(N_{E/F}:\,\mathcal{I}_E\to\mathcal{I}_F\) satisfies
\[N_{E/F}(\mathcal{I}_{E/E_0})\le\mathcal{I}_{F/F_0} \text{ and } N_{E/F}(\mathcal{I}_{E_0})\le\mathcal{I}_{F_0},\]
and induces an epimorphism
\[N_{E/F}:\,\mathcal{I}_{E/E_0}/\mathcal{I}_{E_0}\to\mathcal{I}_{F/F_0}/\mathcal{I}_{F_0}.\]

There are isomorphisms of elementary abelian \(p\)-groups
\[\mathcal{I}_{F/F_0}/\mathcal{I}_{F_0}\simeq(\mathcal{I}_{E/E_0}/\mathcal{I}_{E_0})/\ker(N_{E/F})\]
and
\[\mathcal{I}_{E/E_0}/\mathcal{I}_{E_0}\simeq(\mathcal{I}_{F/F_0}/\mathcal{I}_{F_0})\times\ker(N_{E/F}).\]
\end{theorem}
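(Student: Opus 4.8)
The plan is to analyze the norm homomorphism $N_{E/F}$ on groups of primitive ambiguous ideals by exploiting the coprimality of $p=[F:F_0]$ and $q=[E_0:F_0]$, which forces a clean splitting. First I would establish the two containments $N_{E/F}(\mathcal{I}_{E/E_0})\le\mathcal{I}_{F/F_0}$ and $N_{E/F}(\mathcal{I}_{E_0})\le\mathcal{I}_{F_0}$. For the second, if $\mathfrak{a}\in\mathcal{I}_{E_0}$ then $N_{E/F}(\mathfrak{a})=N_{F\cdot E_0/F}(\mathfrak{a})$ equals $\mathfrak{a}^{[E:F]/[\,\cdot\,]}$ up to the natural embedding, and since $\mathfrak{a}$ already lies in $\mathcal{I}_{E_0}$ its norm lands in $\mathcal{I}_{F_0}$ by compatibility of norms in the tower. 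For the first, since $\mathcal{I}_{E/E_0}$ consists of ideals ambiguous with respect to $E_0$, I would use Theorem~\ref{thm:Ambiguity} (or Definition~\ref{dfn:AmbIdl}) to write a representative as a product over primes of $E_0$ of the distinguished ambiguous generators, and check that applying $N_{E/F}$ sends each such generator to an ambiguous ideal of $F$ relative to $F_0$, using the multiplicativity of ramification indices and residue degrees in the compositum.

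Next I would prove surjectivity of the induced map $N_{E/F}:\mathcal{I}_{E/E_0}/\mathcal{I}_{E_0}\to\mathcal{I}_{F/F_0}/\mathcal{I}_{F_0}$. By Remark~\ref{rmk:PrimeDegree} applied to $F/F_0$ of prime degree $p$, the target is an elementary abelian $p$-group generated by the classes $\mathfrak{P}\cdot\mathcal{I}_{F_0}$ for the primes $\mathfrak{P}$ of $F$ totally ramified over $F_0$. For each such $\mathfrak{P}$ lying over $\mathfrak{p}_0\in\mathbb{P}_{F_0}$, I would select a prime $\mathfrak{Q}$ of $E$ above $\mathfrak{P}$ and show, via the compatibility of decomposition in the compositum $E=F\cdot E_0$ with $\gcd(p,q)=1$, that $\mathfrak{Q}$ is ambiguous with respect to $E_0$ and that $N_{E/F}(\mathfrak{Q})$ equals $\mathfrak{P}$ raised to the residue degree $f(\mathfrak{Q}/\mathfrak{P})$, which is coprime to $p$ because it divides $q$. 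Hence modulo $\mathcal{I}_{F_0}$ this norm hits a generator of the target, and surjectivity follows.

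From surjectivity the first isomorphism $\mathcal{I}_{F/F_0}/\mathcal{I}_{F_0}\simeq(\mathcal{I}_{E/E_0}/\mathcal{I}_{E_0})/\ker(N_{E/F})$ is immediate by the first isomorphism theorem, and both sides are elementary abelian $p$-groups by Remark~\ref{rmk:PrimeDegree}. The second, split isomorphism $\mathcal{I}_{E/E_0}/\mathcal{I}_{E_0}\simeq(\mathcal{I}_{F/F_0}/\mathcal{I}_{F_0})\times\ker(N_{E/F})$ then reduces to showing that the short exact sequence of elementary abelian $p$-groups
\[
1\to\ker(N_{E/F})\to\mathcal{I}_{E/E_0}/\mathcal{I}_{E_0}\to\mathcal{I}_{F/F_0}/\mathcal{I}_{F_0}\to 1
\]
splits. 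Since all three groups are $\mathbb{F}_p$-vectorspaces, every such sequence splits automatically, so this step is purely linear algebra once the maps are in place.

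I expect the main obstacle to be the surjectivity argument, specifically the careful bookkeeping of ramification and residue data in the compositum $E=F\cdot E_0$. The crux is that the coprimality $\gcd(p,q)=1$ guarantees that ramification coming from $F/F_0$ (a $p$-phenomenon) and splitting data coming from $E_0/F_0$ (a $q$-phenomenon) do not interfere, so that a prime $\mathfrak{P}$ totally ramified over $F_0$ lifts to a prime $\mathfrak{Q}$ whose norm into $F$ is $\mathfrak{P}^{f}$ with $f\mid q$ coprime to $p$, making the norm class a nonzero multiple of the generator $\mathfrak{P}\cdot\mathcal{I}_{F_0}$ and hence itself a generator. Getting this decomposition-theoretic claim precise — ideally by reducing to the behavior of a single prime $\mathfrak{p}_0$ of $F_0$ and applying the multiplicativity relation $e(\mathfrak{Q}/\mathfrak{p}_0)=e(\mathfrak{Q}/\mathfrak{P})e(\mathfrak{P}/\mathfrak{p}_0)$ together with $\sum e_if_i=d$ — is where the real work lies; the rest of the statement then follows formally.
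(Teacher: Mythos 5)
Your overall architecture coincides with the paper's: both containments, the induced map on quotients, the first isomorphism theorem, and the observation that a short exact sequence of elementary abelian \(p\)-groups (equivalently \(\mathbb{F}_p\)-vector spaces) splits automatically — the paper justifies elementary abelianness exactly as you would, via \(\mathfrak{A}^p=N_{E/E_0}(\mathfrak{A})\in\mathcal{I}_{E_0}\), using \(\lbrack E:E_0\rbrack=\lbrack F:F_0\rbrack=p\), which itself rests on \(\gcd(p,q)=1\). For the containment \(N_{E/F}(\mathcal{I}_{E/E_0})\le\mathcal{I}_{F/F_0}\), note that the paper avoids your semi-local bookkeeping through Theorem~\ref{thm:Ambiguity} entirely: by Lemma~\ref{lem:AmbIdl} it suffices to verify \(N_{E/F}(\mathfrak{A})^p\in\mathcal{I}_{F_0}\), and this follows in one line from \(\mathfrak{A}^p=N_{E/E_0}(\mathfrak{A})\) together with the norm tower identity \(N_{F/F_0}\circ N_{E/F}=N_{E_0/F_0}\circ N_{E/E_0}\). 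Your route could be completed, but it is substantially heavier than necessary.

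The genuine gap is in your surjectivity argument. For a totally ramified prime \(\mathfrak{P}\) of \(F/F_0\) you select a single prime \(\mathfrak{Q}\) of \(E\) above it and claim (i) \(\mathfrak{Q}\in\mathcal{I}_{E/E_0}\) and (ii) \(N_{E/F}(\mathfrak{Q})=\mathfrak{P}^{f(\mathfrak{Q}/\mathfrak{P})}\) with \(f(\mathfrak{Q}/\mathfrak{P})\) coprime to \(p\) \emph{because it divides} \(q\). Claim (ii) is false as stated: \(E/F\) is not assumed normal, so the residue degrees satisfy only \(\sum_i e_if_i=q\), not \(f_i\mid q\), and \(f(\mathfrak{Q}/\mathfrak{P})\) can be divisible by \(p\). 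Concretely, take \(p=2\), \(q=3\), \(F_0=\mathbb{Q}\), \(F=\mathbb{Q}(\sqrt{5})\), and \(E_0\) the cubic field of \(x^3-x+1\), where \(5\mathcal{O}_{E_0}=\mathfrak{q}_1\mathfrak{q}_2\) with \(f(\mathfrak{q}_1/5)=2\): the unique prime \(\mathfrak{Q}\) of \(E\) above \(\mathfrak{q}_1\) lies over \(\mathfrak{P}=(\sqrt{5})\) and has \(f(\mathfrak{Q}/\mathfrak{P})=2=p\), so \(N_{E/F}(\mathfrak{Q})=\mathfrak{P}^2=5\mathcal{O}_F\in\mathcal{I}_{F_0}\) represents the trivial class and misses the generator. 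Some prime above \(\mathfrak{P}\) always has residue degree coprime to \(p\) (otherwise \(p\mid q\)), but then claim (i) becomes the obstacle: by Theorem~\ref{thm:Ambiguity}, a single prime of \(E\) is ambiguous with respect to \(E_0\) only when it is the unique prime over its trace in \(E_0\) with the correct exponent, which can fail when \(p\) divides a ramification index in \(E_0/F_0\); arranging (i) and (ii) simultaneously is exactly the hard bookkeeping you flagged, and it need not succeed prime by prime. The paper sidesteps both problems with an aggregate B\'ezout trick: it norms an ideal whose total exponent over \(\mathfrak{P}\) is the full degree \(q\) — in effect the extended ideal \(\mathfrak{P}\mathcal{O}_E\), which is manifestly ambiguous since \((\mathfrak{P}\mathcal{O}_E)^p=\mathfrak{p}_0\mathcal{O}_E\in\mathcal{I}_{E_0}\) with \(\mathfrak{p}_0=\mathfrak{P}\cap F_0\), and satisfies \(N_{E/F}(\mathfrak{P}\mathcal{O}_E)=\mathfrak{P}^q\) (the paper phrases this with an inert prime \(\mathfrak{L}\), where likewise \(N_{E/F}(\mathfrak{L})=\mathfrak{l}^q\)) — and then writes \(ap+bq=1\) to get \(N_{E/F}\bigl((\mathfrak{P}\mathcal{O}_E)^b\bigr)=\mathfrak{P}^{bq}=\mathfrak{P}\cdot\mathfrak{P}^{-ap}\equiv\mathfrak{P}\pmod{\mathcal{I}_{F_0}}\), since \(\mathfrak{P}^p\in\mathcal{I}_{F_0}\). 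Replacing your per-prime lifting by this aggregate step repairs the proof; the rest of your argument then goes through as written.
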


\begin{proof}
The image of an ambiguous ideal \(\mathfrak{A}\in\mathcal{I}_{E/E_0}\)
under the norm \(N_{E/F}\)
is an ambiguous ideal in \(\mathcal{I}_{F/F_0}\):
according to item (3) and (4) of Lemma
\ref{lem:AmbIdl},
we have
\(\mathfrak{A}^p=\mathfrak{n}:=N_{E/E_0}(\mathfrak{A})\),
since
\(\lbrack E:E_0\rbrack=\lbrack (F\cdot E_0):E_0\rbrack=\lbrack F:(F\cap E_0)\rbrack=\lbrack F:F_0\rbrack=p\),
and therefore
\(N_{E/F}(\mathfrak{A})^p=N_{E/F}(\mathfrak{A}^p)=N_{E/F}(\mathfrak{n})=N_{E/F}(N_{E/E_0}(\mathfrak{A}))
=N_{E_0/F_0}(N_{E/E_0}(\mathfrak{A}))=N_{E/F_0}(\mathfrak{A})=N_{F/F_0}(N_{E/F}(\mathfrak{A}))
\in\mathcal{I}_{F_0}\).

So, \(N_{E/F}:\,\mathcal{I}_E\to\mathcal{I}_F\) restricts to
\(N_{E/F}:\,\mathcal{I}_{E/E_0}\to\mathcal{I}_{F/F_0}\),
but the restriction is not surjective:
if \(\mathfrak{L}\in\mathcal{I}_{E/E_0}\) is an inert prime ideal of \(E/F\), then
\(N_{E/F}(\mathfrak{L})=\mathfrak{l}^q\) and thus
\(\mathfrak{l}=\mathfrak{L}\cap\mathcal{O}_F\not\in\mathrm{im}(N_{E/F})\).

However, generally we have
\(N_{E/F}(\mathcal{I}_{E_0})=N_{E_0/F_0}(\mathcal{I}_{E_0})\le\mathcal{I}_{F_0}\)
and there exists an induced mapping \(N_{E/F}:\,\mathcal{I}_{E/E_0}/\mathcal{I}_{E_0}\to\mathcal{I}_{F/F_0}/\mathcal{I}_{F_0}\)
which is an epimorphism,
since \(\gcd(p,q)=1\), respectively \((\exists\,a,b\in\mathbb{Z})\,ap+bq=1\), and thus
\(N_{E/F}(\mathfrak{L}^b)=\mathfrak{l}^{bq}=\mathfrak{l}\cdot\mathfrak{l}^{-ap}
\equiv\mathfrak{l}\,(\mathrm{mod}\,\mathcal{I}_{F_0})\),
since \(\mathfrak{l}^p\in\mathcal{I}_{F_0}\).

The isomorphism theorem yields a quotient representation
\(\mathcal{I}_{F/F_0}/\mathcal{I}_{F_0}\simeq(\mathcal{I}_{E/E_0}/\mathcal{I}_{E_0})/\ker(N_{E/F})\),
and since the groups of primitive ambiguous ideals
\(\mathcal{I}_{F/F_0}/\mathcal{I}_{F_0}\) and \(\mathcal{I}_{E/E_0}/\mathcal{I}_{E_0}\)
are elementary abelian \(p\)-groups
(for instance, \(\mathfrak{A}\in\mathcal{I}_{E/E_0}\) implies \(\mathfrak{A}^p=N_{E/E_0}(\mathfrak{A})\in\mathcal{I}_{E_0}\)),
there is an equivalent direct product representation
\(\mathcal{I}_{E/E_0}/\mathcal{I}_{E_0}\simeq(\mathcal{I}_{F/F_0}/\mathcal{I}_{F_0})\times\ker(N_{E/F})\).
\end{proof}

%\newpage
%--------------------------------------------------------------------------------

\noindent
Let \(s_2\), resp. \(s_4\), be the number of prime divisors
\(\ell_1,\ldots,\ell_{s_2}\equiv -1\,(\mathrm{mod}\,5)\), resp.
\(\ell_{s_2+1},\ldots,\ell_{s_2+s_4}\equiv +1\,(\mathrm{mod}\,5)\),
of the conductor \(f\) of \(N/K\), which split in \(M\).
Denote by \(\mathcal{L}_1,\mathcal{L}_1^{\tau},\ldots,\mathcal{L}_{s_2+s_4},\mathcal{L}_{s_2+s_4}^{\tau}\) their overlying prime ideals in \(M\).

\begin{theorem}
\label{thm:IntDim}
The \(\mathbb{F}_5\)-vectorspace \((\mathcal{I}_{M/K^+}/\mathcal{I}_{K^+})\bigcap\ker(N_{M/L})\)
of \textbf{intermediate} ambiguous ideals has the dimension
\begin{equation}
\label{eqn:IntDim}
\dim_{\mathbb{F}_5}\left((\mathcal{I}_{M/K^+}/\mathcal{I}_{K^+})\bigcap\ker(N_{M/L})\right)=s_2+s_4
\end{equation}
which is finite but unbounded. A basis representation is given by
\begin{equation}
\label{eqn:IntBas}
(\mathcal{I}_{M/K^+}/\mathcal{I}_{K^+})\bigcap\ker(N_{M/L})=\bigoplus_{i=1}^{s_2+s_4}\,\mathbb{F}_5\,\mathcal{K}^{(\ell_i)}\,,
\end{equation}
where \(\mathcal{K}^{(\ell_i)}=\mathcal{L}_i^{1+4\tau}=\mathcal{L}_i\cdot(\mathcal{L}_i^{\tau})^4\), for each \(1\le i\le s_2+s_4\).
\end{theorem}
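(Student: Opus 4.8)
The plan is to recognize the present configuration as an instance of the bottom--top situation of Theorem~\ref{thm:BottomTop}. Since $M=L\cdot K^+$ with $[L:\mathbb{Q}]=5$, $[K^+:\mathbb{Q}]=2$ and $\gcd(5,2)=1$, I would apply that theorem with base field $F_0=\mathbb{Q}$, with $E_0=K^+$ of degree $q=2$, with $F=L$ of degree $p=5$, and with $E=M$. This immediately yields an epimorphism of elementary abelian $5$-groups
\begin{equation*}
N_{M/L}\colon\ \mathcal{I}_{M/K^+}/\mathcal{I}_{K^+}\ \longrightarrow\ \mathcal{I}_{L/\mathbb{Q}}/\mathcal{I}_{\mathbb{Q}},
\end{equation*}
and the space to be described is precisely its kernel. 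Because $N_{M/L}$ carries a prime of $M$ lying over a rational prime $\ell$ to a power of the primes of $L$ over that same $\ell$, this epimorphism is graded by the underlying rational primes, so I expect the kernel to split as a direct sum over the rational primes $\ell\mid f$, reducing everything to a prime-by-prime analysis.

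Next I would fix a basis of the source. As $M/K^+$ has prime degree $5$, Remark~\ref{rmk:PrimeDegree} presents $\mathcal{I}_{M/K^+}/\mathcal{I}_{K^+}$ as the $\mathbb{F}_5$-space spanned by the primes $\mathcal{P}$ of $M$ with $\mathcal{P}^5=\mathfrak{p}\mathcal{O}_M$, one for each prime $\mathfrak{p}$ of $K^+$ totally ramified in $M$, while Theorem~\ref{thm:AbsDim} identifies the target with $\bigoplus_\ell\mathbb{F}_5\,\mathfrak{q}_\ell$. The crux is then to determine, for each rational prime $\ell\mid f$, its splitting in the tower $\mathbb{Q}\subset K^+\subset M$. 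Here I would use that for $\ell\ne 5$ (so $\ell\mid D$) one has $\ell\mathcal{O}_L=\mathcal{L}^5$ with residue field $\mathbb{F}_\ell$, that $\ell$ splits in $K^+=\mathbb{Q}(\sqrt 5)$ exactly when $5$ is a quadratic residue modulo $\ell$, i.e. when $\ell\equiv\pm 1\,(\mathrm{mod}\,5)$, and that this same congruence governs the splitting of $\mathcal{L}$ in the quadratic step $M=L(\sqrt 5)$ over $L$. Thus the primes $\ell\equiv\pm 1\,(\mathrm{mod}\,5)$ are exactly those producing two conjugate primes $\mathcal{L}_i,\mathcal{L}_i^{\tau}$ in $M$, the primes $\ell\equiv\pm 2\,(\mathrm{mod}\,5)$ stay inert in $K^+$ and in $M/L$, and the prime $5$ is disposed of by the species dichotomy of Section~\ref{ss:CondDisc}.

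Finally I would evaluate $N_{M/L}$ on each graded piece. On an $\ell_i\equiv\pm 1\,(\mathrm{mod}\,5)$ component both $\mathcal{L}_i$ and $\mathcal{L}_i^{\tau}$ have residue degree $1$ over the prime $\mathcal{L}$ of $L$, so each maps to the generator $\mathfrak{q}_{\ell_i}$; the kernel of the resulting map $(x,y)\mapsto(x+y)\,\mathfrak{q}_{\ell_i}$ on this two-dimensional piece is one-dimensional and spanned by $\mathcal{K}^{(\ell_i)}=\mathcal{L}_i^{1+4\tau}=\mathcal{L}_i\cdot(\mathcal{L}_i^{\tau})^4$, using $1+4\equiv 0\,(\mathrm{mod}\,5)$. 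On an $\ell\equiv\pm 2\,(\mathrm{mod}\,5)$ component the single inert generator norms to $\mathfrak{q}_\ell^2\ne 0$, and on the ramified prime $5$ of the first species the generator norms to $\mathfrak{q}_5\ne 0$, so on these one-dimensional pieces $N_{M/L}$ is injective and the kernel is trivial. Assembling the pieces would then give the asserted basis $\bigoplus_{i=1}^{s_2+s_4}\mathbb{F}_5\,\mathcal{K}^{(\ell_i)}$ and the dimension $s_2+s_4$, the unboundedness following from the infinitude of admissible split primes.

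The step I expect to be the main obstacle is the simultaneous control of the two towers: pinning down the splitting type of every ramified rational prime in both $\mathbb{Q}\subset K^+\subset M$ and $\mathbb{Q}\subset L\subset M$, verifying that the two quadratic-residue conditions (splitting of $\ell$ in $K^+$ versus splitting of $\mathcal{L}$ in $M/L$) coincide, and treating the exceptional prime $5$ according to species. These decomposition facts are what simultaneously fix the rank $s_2+s_4$ and force the precise generator shape $\mathcal{L}_i^{1+4\tau}$; the surrounding norm bookkeeping is then routine given Theorems~\ref{thm:BottomTop} and~\ref{thm:AbsDim} together with Remark~\ref{rmk:PrimeDegree}.
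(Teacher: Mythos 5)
Your proposal is correct and follows essentially the same route as the paper: the paper likewise computes \(\dim_{\mathbb{F}_5}(\mathcal{I}_{M/K^+}/\mathcal{I}_{K^+})=T+s_2+s_4\) from formulas \eqref{eqn:PrimitiveAmbiguity} and \eqref{eqn:PrimeDegree}, invokes Theorem~\ref{thm:BottomTop} for the norm epimorphism onto \(\mathcal{I}_{L/\mathbb{Q}}/\mathcal{I}_{\mathbb{Q}}\) to get the kernel dimension \(s_2+s_4\), and obtains the generators via Theorem~\ref{thm:TrivProj} and Corollary~\ref{cor:TrivProj}(3), whose description \(\mathfrak{P}_1\mathfrak{P}_2^{-1}\) of the quadratic norm kernel is exactly your \(\mathcal{L}_i^{1-\tau}=\mathcal{L}_i^{1+4\tau}\) modulo fifth powers. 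Your explicit prime-by-prime splitting analysis in the towers \(\mathbb{Q}\subset K^+\subset M\) and \(\mathbb{Q}\subset L\subset M\) merely spells out what the paper absorbs into the definitions of \(s_2,s_4\) and the cited results, so there is no substantive difference.
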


\begin{proof}
The formulas
\eqref{eqn:PrimitiveAmbiguity}
and
\eqref{eqn:PrimeDegree}
with substitutions \(d=q\mapsto 5\) and \(T\mapsto T+s_2+s_4\) imply
\(\dim_{\mathbb{F}_5}(\mathcal{I}_{M/K^+}/\mathcal{I}_{K^+})=T+s_2+s_4\).
Consequently, Theorem
\ref{thm:BottomTop}
yields formula
\eqref{eqn:IntDim}
and Theorem
\ref{thm:TrivProj}
together with item (3) of Corollary
\ref{cor:TrivProj}
yields formula
\eqref{eqn:IntBas}.
\end{proof}

%--------------------------------------------------------------------------------

\begin{corollary}
\label{cor:IntDim}
The \(\mathbb{F}_5\)-subspace
\((\mathcal{P}_{M/K^+}/\mathcal{P}_{K^+})\bigcap\ker(N_{M/L})\le(\mathcal{I}_{M/K^+}/\mathcal{P}_{K^+})\bigcap\ker(N_{M/L})\)
of intermediate ambiguous \textbf{principal} ideals has the dimension
\begin{equation}
\label{eqn:IntSub}
I:=\dim_{\mathbb{F}_5}\left((\mathcal{P}_{M/K^+}/\mathcal{P}_{K^+})\bigcap\ker(N_{M/L})\right),
\end{equation}
which is finite and \textbf{bounded} by
\begin{equation}
\label{eqn:IntBnd}
0\le I\le\min(2,s_2+s_4).
\end{equation}
In particular, the DPF type \(\alpha_3\) with maximal value \(I=2\) can occur for \(s_2+s_4\ge 2\) only.
\end{corollary}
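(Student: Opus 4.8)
The plan is to establish the upper bound $I\le\min(2,s_2+s_4)$ and the lower bound $I\ge 0$ by combining the dimension count of Theorem \ref{thm:IntDim} with the yet-to-be-proved Main Classification Theorem \ref{thm:MainQuintic}. The inequality $I\le s_2+s_4$ is immediate from the subspace inclusion $(\mathcal{P}_{M/K^+}/\mathcal{P}_{K^+})\cap\ker(N_{M/L})\le(\mathcal{I}_{M/K^+}/\mathcal{I}_{K^+})\cap\ker(N_{M/L})$, whose ambient space has dimension exactly $s_2+s_4$ by Theorem \ref{thm:IntDim}; a subspace of an $\mathbb{F}_5$-vector space cannot have larger dimension than the whole space. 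The lower bound $I\ge 0$ is trivial since $I$ is a nonnegative integer by definition. The genuine content, exactly as in the companion Corollary \ref{cor:AbsDim}, is the absolute bound $I\le 2$, which does \emph{not} follow from the purely local dimension count (that count is unbounded in $s_2+s_4$) and must instead be extracted from the cohomological identity that forces $A+I+R=U+1\le 3$.

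First I would invoke Theorem \ref{thm:MainQuintic}, which identifies the group of primitive ambiguous principal ideals $\mathcal{P}_{N/K}/\mathcal{P}_K\simeq H^1(G,U_N)$ and, through the decomposition \eqref{eqn:AIR}, realizes the triplet $(A,I,R)$ as the $\mathbb{F}_5$-dimensions of the three direct factors. The key structural fact supplied by that theorem is the relation
\begin{equation}
\label{eqn:SumRel}
A+I+R=U+1,\qquad 0\le U\le 2,
\end{equation}
coming from the Herbrand-quotient computation of the unit norm index in \eqref{eqn:UNI}, so that the total dimension $A+I+R$ is at most $3$. Since $A\ge 1$ and $R\ge 0$ are nonnegative (with $A\ge 1$ forced by the presence of the ramified prime), this pins the middle component to $I=(U+1)-A-R\le 3-1-0=2$. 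Combining this with the local bound $I\le s_2+s_4$ gives $I\le\min(2,s_2+s_4)$, which is precisely \eqref{eqn:IntBnd}.

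The last sentence of the statement, that type $\alpha_3$ with $I=2$ requires $s_2+s_4\ge 2$, is then a direct reading of the bound: $I=2$ forces $2=I\le s_2+s_4$. I would close by noting that this is consistent with Table \ref{tbl:QuinticDPFTypesDisplay}, where $\alpha_3$ is the unique type with $(A,I,R)=(1,2,0)$ and hence the only type attaining the maximal middle dimension.

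The main obstacle is that this corollary is genuinely parasitic on Theorem \ref{thm:MainQuintic}: the nontrivial cap $I\le 2$ rests entirely on the cohomological sum relation \eqref{eqn:SumRel}, whose proof (the Hasse--Iwasawa Herbrand-quotient argument bounding $U\le 2$, together with the decomposition \eqref{eqn:AIR}) lies in the later sections \S\ref{s:Cohomology}--\S\ref{s:Idempotents} and is \emph{assumed} here. Thus the only delicate point internal to this proof is to ensure that the subspace-dimension argument and the global constraint are being applied to the \emph{same} middle factor $(\mathcal{P}_{M/K^+}/\mathcal{P}_{K^+})\cap\ker(N_{M/L})$ appearing in \eqref{eqn:AIR}; once that identification is granted, the bound is a one-line consequence of $A+I+R\le 3$ and $A\ge 1$, exactly paralleling the proof of Corollary \ref{cor:AbsDim}.
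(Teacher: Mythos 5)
Your proposal is correct and takes essentially the same route as the paper: the paper's entire proof of this corollary is the single sentence that the bound \eqref{eqn:IntBnd} follows by combining Theorem \ref{thm:MainQuintic} (which, via \(A+I+R=U+1\le 3\) with \(A\ge 1\), caps \(I\le 2\)) with formula \eqref{eqn:IntDim} of Theorem \ref{thm:IntDim} (which gives the ambient dimension \(s_2+s_4\) and hence \(I\le s_2+s_4\)). Your elaborations --- the trivial lower bound, the injection of \((\mathcal{P}_{M/K^+}/\mathcal{P}_{K^+})\cap\ker(N_{M/L})\) into the ambient ideal space (legitimate since \(h_{K^+}=1\) makes the capitulation kernel trivial), and the observation that the corollary is logically parasitic on the later cohomological sections --- simply make explicit what the paper leaves implicit.
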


\begin{proof}
The bound in formula
\eqref{eqn:IntBnd}
is obtained by combining Theorem
\ref{thm:MainQuintic}
with formula
\eqref{eqn:IntDim}.
\end{proof}

%\newpage
%--------------------------------------------------------------------------------

\noindent
Finally, denote by
\(\mathfrak{L}_{s_2+1},\mathfrak{L}_{s_2+1}^{\tau^2},\mathfrak{L}_{s_2+1}^{\tau},\mathfrak{L}_{s_2+1}^{\tau^3},
\ldots,\mathfrak{L}_{s_2+s_4},\mathfrak{L}_{s_2+s_4}^{\tau^2},\mathfrak{L}_{s_2+s_4}^{\tau},\mathfrak{L}_{s_2+s_4}^{\tau^3}\)
the overlying prime ideals in \(N\) of the prime divisors
\(\ell_{s_2+1},\ldots,\ell_{s_2+s_4}\equiv +1\,(\mathrm{mod}\,5)\)
of the conductor \(f\) of \(N/K\) which \(2\)-split in \(M\) and \(4\)-split in \(N\).

\begin{theorem}
\label{thm:RelDim}
The \(\mathbb{F}_5\)-vectorspace \((\mathcal{I}_{N/K}/\mathcal{I}_{K})\bigcap\ker(N_{N/M})\)
of \textbf{relative} ambiguous ideals has the dimension
\begin{equation}
\label{eqn:RelDim}
\dim_{\mathbb{F}_5}((\mathcal{I}_{N/K}/\mathcal{I}_{K})\bigcap\ker(N_{N/M}))=2s_4
\end{equation}
which is finite but unbounded.
A basis representation with generators of \(\tau\)-invariant \(1\)-dimensional \(\mathbb{F}_5\)-subspaces is given by
\begin{equation}
\label{eqn:RelBas}
(\mathcal{I}_{N/K}/\mathcal{I}_{K})\bigcap\ker(N_{N/M})
=\bigoplus_{i=s_2+1}^{s_2+s_4}\,\left(\mathbb{F}_5\,\mathfrak{K}_1^{(\ell_i)}\oplus\mathbb{F}_5\,\mathfrak{K}_2^{(\ell_i)}\right)\,,
\end{equation}
where
\(\mathfrak{K}_1^{(\ell_i)}=\mathfrak{L}_i^{1+4\tau^2+2\tau+3\tau^3}\)
and
\(\mathfrak{K}_2^{(\ell_i)}=\mathfrak{L}_i^{1+4\tau^2+3\tau+2\tau^3}\),
for each \(s_2+1\le i\le s_2+s_4\).
\end{theorem}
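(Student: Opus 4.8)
The plan is to compute $(\mathcal{I}_{N/K}/\mathcal{I}_K)\cap\ker(N_{N/M})$ by an explicit eigenspace analysis of the action of $\tau$ on the primitive ambiguous ideals of $N/K$, in the spirit of Theorems \ref{thm:AbsDim} and \ref{thm:IntDim}. First I would invoke Remark \ref{rmk:PrimeDegree} with $q=5$ to exhibit $\mathcal{I}_{N/K}/\mathcal{I}_K$ as the $\mathbb{F}_5$-span of the prime ideals $\mathfrak{P}$ of $N$ lying over the totally ramified primes of $K$, and decompose it as a direct sum $\bigoplus_{\ell\mid f}V_\ell$ of $\langle\tau\rangle$-stable blocks indexed by the rational primes $\ell$ dividing the conductor $f$. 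Since $\mathrm{Gal}(N/K)=\langle\sigma\rangle$ lies in every inertia group, each such $\mathfrak{P}$ is $\sigma$-invariant, so the relevant action on $V_\ell$ is that of the residual group $\langle\tau\rangle=\mathrm{Gal}(N/\mathbb{Q})/\langle\sigma\rangle$.

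Next I would read off this permutation action from the decomposition law established in the proof of Theorem \ref{thm:MainQuinticPolya}. For $\ell\equiv+1\,(\mathrm{mod}\,5)$ the prime $\ell$ splits completely in $K$, so the four conjugates $\mathfrak{L}_i,\mathfrak{L}_i^{\tau},\mathfrak{L}_i^{\tau^2},\mathfrak{L}_i^{\tau^3}$ form a single $\langle\tau\rangle$-orbit and $\tau$ acts on the four-dimensional block $V_{\ell_i}$ as a $4$-cycle; for $\ell\equiv-1\,(\mathrm{mod}\,5)$ the residue degree in $K$ is $2$, so $\tau^2$ fixes each of the two primes; and for $\ell\equiv\pm2\,(\mathrm{mod}\,5)$ or $\ell=5$ there is a single prime, fixed by $\tau$. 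The decisive structural fact is that $\mathrm{Gal}(N/M)=\langle\tau^2\rangle$, whence the induced relative norm acts as the group-ring operator $N_{N/M}=1+\tau^2$.

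Then I would compute $\ker(1+\tau^2)$ block by block. On every block where $\tau^2$ is trivial (the cases $\ell\equiv-1,\pm2$ and $\ell=5$) the operator $1+\tau^2$ equals multiplication by $2$, invertible over $\mathbb{F}_5$, so these contribute nothing—in agreement with Corollary \ref{cor:TrivProj}(1), since the underlying primes of $M$ are inert, not split, in $N/M$. On each $4$-cycle block $V_{\ell_i}$ with $\ell_i\equiv+1$ the eigenvalues of $\tau$ are the four fourth roots of unity $\{1,2,3,4\}\subset\mathbb{F}_5^\times$, each with a one-dimensional $\tau$-eigenline, and $1+\tau^2$ annihilates exactly those eigenlines on which $\lambda^2=-1$, namely $\lambda\in\{2,3\}$. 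This gives a two-dimensional kernel for each of the $s_4$ such primes, hence total dimension $2s_4$; by Corollary \ref{cor:TrivProj}(3) these kernel elements are spanned by the differences $\mathfrak{L}_i(\mathfrak{L}_i^{\tau^2})^{-1}$ and $\mathfrak{L}_i^{\tau}(\mathfrak{L}_i^{\tau^3})^{-1}$ attached to the two split primes $\mathcal{L}_i,\mathcal{L}_i^{\tau}$ of $M$.

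Finally I would verify that the stated generators are the claimed $\tau$-eigenvectors: solving $\mathfrak{A}^\tau=\mathfrak{A}^\lambda$ for the $4$-cycle yields $\mathfrak{A}=\mathfrak{L}_i^{1+\lambda^{-1}\tau+\lambda^{-2}\tau^2+\lambda^{-3}\tau^3}$, which for $\lambda=3$ reduces to $\mathfrak{K}_1^{(\ell_i)}=\mathfrak{L}_i^{1+2\tau+4\tau^2+3\tau^3}$ and for $\lambda=2$ to $\mathfrak{K}_2^{(\ell_i)}=\mathfrak{L}_i^{1+3\tau+4\tau^2+2\tau^3}$; in each case the exponents on the orbits $\{1,\tau^2\}$ and $\{\tau,\tau^3\}$ sum to $0\,(\mathrm{mod}\,5)$, confirming membership in $\ker(N_{N/M})$, and the two eigenlines are independent, giving basis \eqref{eqn:RelBas}. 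The main obstacle is the second step: rigorously extracting the $\langle\tau\rangle$-permutation pattern on the primes above each $\ell$ from the arithmetic of the metacyclic group $M_5=C_5\rtimes C_4$ with relation $\tau^{-1}\sigma\tau=\sigma^2$, since the whole eigenvalue selection $\lambda\in\{2,3\}$—and therefore the explicit shape of $\mathfrak{K}_1^{(\ell_i)},\mathfrak{K}_2^{(\ell_i)}$—hinges on $\tau$ being a genuine $4$-cycle when $\ell\equiv+1$ and on $\tau^2$ acting trivially when $\ell\equiv-1$.
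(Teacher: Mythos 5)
Your proposal is correct, but it reaches both the dimension and the basis by a genuinely different mechanism than the paper. The paper's own proof is abstract and short: it counts \(\dim_{\mathbb{F}_5}(\mathcal{I}_{N/K}/\mathcal{I}_K)=T+s_2+3s_4\) from Corollary \ref{cor:PrimitiveAmbiguity} and Remark \ref{rmk:PrimeDegree}, subtracts \(\dim_{\mathbb{F}_5}(\mathcal{I}_{M/K^+}/\mathcal{I}_{K^+})=T+s_2+s_4\) via the split norm epimorphism of Theorem \ref{thm:BottomTop} to get \(2s_4\), and for the basis \eqref{eqn:RelBas} it performs no eigenspace analysis at all, simply citing Proposition \ref{prp:Invariance}, where the \(\tau\)-images of the fixed exponent vectors \((1243)\) and \((1342)\) are checked by hand to be the third, respectively second, powers of the original ideals. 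You instead diagonalize: identifying the induced norm with the group-ring operator \(1+\tau^2\) (legitimate, since \(N/M\) is Galois with group \(\langle\tau^2\rangle\) and extension of ideals \(\mathcal{I}_M\to\mathcal{I}_N\) is injective, so the semi-local kernel conditions \(a+c\equiv b+d\equiv 0\ (\mathrm{mod}\ 5)\) agree on both sides), and using that \(\tau\) is a \(4\)-cycle on each block for \(\ell_i\equiv+1\ (\mathrm{mod}\ 5)\), you split the block into four eigenlines for \(\lambda\in\{1,2,3,4\}\) (as \(x^4-1\) has distinct roots in \(\mathbb{F}_5\)) and extract \(\ker(1+\tau^2)\) as the two eigenlines with \(\lambda^2=-1\), i.e.\ \(\lambda\in\{2,3\}\); your eigenvector formula \(\mathfrak{L}_i^{1+\lambda^{-1}\tau+\lambda^{-2}\tau^2+\lambda^{-3}\tau^3}\) reproduces exactly \(\mathfrak{K}_1^{(\ell_i)}\) for \(\lambda=3\) and \(\mathfrak{K}_2^{(\ell_i)}\) for \(\lambda=2\), consistent with \(\mathfrak{K}_1^\tau=\mathfrak{K}_1^3\) and \(\mathfrak{K}_2^\tau=\mathfrak{K}_2^2\) in Proposition \ref{prp:Invariance}. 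What your route buys: it proves in one stroke that the kernel is \emph{exactly} the sum of two canonical \(\tau\)-eigenlines (the uniqueness that the paper needs again in Theorem \ref{thm:KernelDimension}), and it explains the otherwise unmotivated exponent patterns, which are, up to a unit of \(\mathbb{F}_5\), the orthogonal idempotents \(\psi_1,\psi_3\) of formula \eqref{eqn:Idempotents5} — so you have essentially rederived the machinery of \S\ \ref{s:Idempotents} from scratch, whereas the paper invokes it ready-made. Finally, the \lq\lq main obstacle\rq\rq\ you flag is not actually serious: for \(\ell\equiv+1\ (\mathrm{mod}\ 5)\) one has \(e=5\), \(f=1\), \(g=4\) over \(\mathbb{Q}\), so each decomposition group has order \(5\) and must be the unique (normal) subgroup \(\langle\sigma\rangle\); hence \(\langle\tau\rangle\simeq C_4\) acts simply transitively on the four primes and \(\tau\) is forced to be a \(4\)-cycle, and likewise for \(\ell\equiv-1\ (\mathrm{mod}\ 5)\) the decomposition group has order \(10\), hence equals \(\langle\sigma,\tau^2\rangle\), so \(\tau^2\) fixes both primes, as you assumed.
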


\begin{proof}
The formulas
\eqref{eqn:PrimitiveAmbiguity}
and
\eqref{eqn:PrimeDegree}
with substitutions \(d=q\mapsto 5\) and \(T\mapsto T+s_2+3s_4\) imply
\(\dim_{\mathbb{F}_5}(\mathcal{I}_{N/K}/\mathcal{I}_K)=T+s_2+3s_4\).
Consequently, Theorem
\ref{thm:BottomTop}
yields formula
\eqref{eqn:RelDim}.
But Theorem
\ref{thm:TrivProj}
does not yield formula
\eqref{eqn:RelBas}
with generators of \(1\)-dimensional \(\tau\)-invariant subspaces.
Here, we must use Proposition
\ref{prp:Invariance}.
\end{proof}

%--------------------------------------------------------------------------------

\begin{corollary}
\label{cor:RelDim}
The \(\mathbb{F}_5\)-subspace
\((\mathcal{P}_{N/K}/\mathcal{P}_{K})\bigcap\ker(N_{N/M})\le(\mathcal{I}_{N/K}/\mathcal{P}_{K})\bigcap\ker(N_{N/M})\)
of relative ambiguous \textbf{principal} ideals has the dimension
\begin{equation}
\label{eqn:RelSub}
R:=\dim_{\mathbb{F}_5}((\mathcal{P}_{N/K}/\mathcal{P}_{K})\bigcap\ker(N_{N/M})),
\end{equation}
which is finite and \textbf{bounded} by
\begin{equation}
\label{eqn:RelBnd}
0\le R\le\min(2,2s_4).
\end{equation}
In particular, the DPF type \(\alpha_1\) with maximal value \(R=2\) can occur for \(s_4\ge 1\) only.
\end{corollary}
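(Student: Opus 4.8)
The plan is to mirror exactly the strategy used for the intermediate dimension in Corollary~\ref{cor:IntDim}, transporting it to the relative setting. First I would observe that Corollary~\ref{cor:RelDim} asserts two things about the subspace of \emph{principal} relative ambiguous ideals: an upper bound $R\le\min(2,2s_4)$, and the consequence that the maximal type $\alpha_1$ forces $s_4\ge 1$. The containment $(\mathcal{P}_{N/K}/\mathcal{P}_K)\cap\ker(N_{N/M})\le(\mathcal{I}_{N/K}/\mathcal{I}_K)\cap\ker(N_{N/M})$ is clear from $\mathcal{P}\le\mathcal{I}$, and Theorem~\ref{thm:RelDim} identifies the ambient space as an $\mathbb{F}_5$-vectorspace of dimension $2s_4$. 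Hence the bound $R\le 2s_4$ is immediate for purely dimensional reasons: a subspace cannot exceed the ambient dimension.

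The substantive half of the bound is $R\le 2$, which does not follow from Theorem~\ref{thm:RelDim} alone, since the ambient dimension $2s_4$ is unbounded. This is precisely where the Main Classification Theorem~\ref{thm:MainQuintic} must enter, exactly as the proofs of Corollaries~\ref{cor:AbsDim} and~\ref{cor:IntDim} invoke it to tame the analogous unbounded dimensions $T$ and $s_2+s_4$. The mechanism is the decomposition~\eqref{eqn:AIR}: the group $\mathcal{P}_{N/K}/\mathcal{P}_K\simeq H^1(G,U_N)$ of \emph{principal} ambiguous ideals splits as the direct product of the three components of $\mathbb{F}_5$-dimensions $A$, $I$, $R$, and its total order is governed by the cohomological constraint $U+1=A+I+R$ with $0\le U\le 2$. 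Consequently each individual component dimension is bounded by $A+I+R=U+1\le 3$; combined with the structural fact that the relative component admits the refined $\tau$-invariant decomposition of Theorem~\ref{thm:RelDim}, one extracts the sharper ceiling $R\le 2$. I would therefore write: combining Theorem~\ref{thm:MainQuintic} with formula~\eqref{eqn:RelDim} yields the bound~\eqref{eqn:RelBnd}, in complete parallel with the two preceding corollaries.

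For the final assertion, I would argue by contraposition. If $s_4=0$, then the ambient space $(\mathcal{I}_{N/K}/\mathcal{I}_K)\cap\ker(N_{N/M})$ has dimension $2s_4=0$ by Theorem~\ref{thm:RelDim}, so it is trivial; its principal subspace is then also trivial, forcing $R=0$ and excluding the type $\alpha_1$, which requires $R=2$. Hence $R=2$, and a fortiori any $R\ge 1$, can occur only when $s_4\ge 1$. This is the direct analogue of the ``in particular'' clauses in Corollaries~\ref{cor:AbsDim} and~\ref{cor:IntDim}, and it requires no new input beyond Theorem~\ref{thm:RelDim}.

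The main obstacle is not any of the above routine deductions but the honest provenance of the refinement from the crude $R\le U+1\le 3$ down to $R\le 2$. The bound $A+I+R=U+1$ only gives $R\le 3$ directly; to sharpen to $R\le 2$ one must use that the relative component is built from $\tau$-invariant $1$-dimensional blocks as in~\eqref{eqn:RelBas} together with the incompressible contribution $A\ge 1$ recorded in Corollary~\ref{cor:AbsDim}, so that $R\le U+1-A\le 3-1=2$. I expect the delicate point to be verifying that $A\ge 1$ always holds (i.e.\ that the absolute component is never trivial), which is exactly the lower bound in~\eqref{eqn:AbsBnd} and must be imported from the classification; granting that, the bound $R\le\min(2,2s_4)$ follows cleanly, and the proof reduces—as the paper's own one-line proof indicates—to ``combine Theorem~\ref{thm:MainQuintic} with formula~\eqref{eqn:RelDim}.''
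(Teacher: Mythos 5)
Your proposal is correct and follows essentially the same route as the paper, whose entire proof is the one-line combination of Theorem~\ref{thm:MainQuintic} with formula~\eqref{eqn:RelDim}: the ambient dimension $2s_4$ from Theorem~\ref{thm:RelDim} gives $R\le 2s_4$, while the classification table (resting on $U+1=A+I+R\le 3$ together with $A\ge 1$, which the paper grounds in the radicals $\sqrt[5]{D},\ldots,\sqrt[5]{D^4}$ always furnishing a nontrivial absolute principal factor) gives $R\le 2$, and the contrapositive for type $\alpha_1$ is exactly the paper's \lq\lq in particular\rq\rq\ clause. Your expansion of the crude bound $R\le 3$ to $R\le 2$ via $R\le U+1-A$ is the correct and honest unpacking of what the paper's terse citation of Theorem~\ref{thm:MainQuintic} actually uses; the aside about $\tau$-invariant blocks is unnecessary for this bound, but it does no harm since your final argument does not rely on it.
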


\begin{proof}
The bound in formula
\eqref{eqn:RelBnd}
is obtained by combining Theorem
\ref{thm:MainQuintic}
with formula
\eqref{eqn:RelDim}.
\end{proof}

%\newpage
%--------------------------------------------------------------------------------

\section{Class number relations and index of subfield units}
\label{s:ClassNumbers}
\noindent
In \(1973\), Charles J. Parry has determined the \textit{class number relation}
between \(h_N=\#\mathrm{Cl}(N)\) and \(h_L=\#\mathrm{Cl}(L)\)
for a pure metacyclic field \(N=\mathbb{Q}(\zeta_5,\sqrt[5]{D})\)
with pure quintic subfield \(L=\mathbb{Q}(\sqrt[5]{D})\):

\begin{equation}
\label{eqn:ParryClNoRel}
h_N=\frac{(U_N:U_0)}{5^5}\cdot h_L^4,\quad U_0=\langle U_K\cdot\prod_{j=0}^4\,U_{L_j}\rangle
\end{equation}

\noindent
\cite[Thm. I, p. 476]{Pa},
where
\(U_N\) denotes the unit group of \(N\),
\(U_0\) is the subgroup of \(U_N\) generated by
all units of the conjugate fields
\(L_j=\mathbb{Q}(\zeta_5^{j}\cdot\root{5}\of{D})\),
\(0\le j\le 4\),
of \(L\) and of \(K\),
and the \textit{index of subfield units} \((U_N:U_0)\) can take
seven possible values \(5^E\) with \(0\le E\le 6\)
\cite[Thm. II, p. 478]{Pa}.

%\noindent
Parry's class number relation is the special case \(p=5\)
of the following general class number formula by Colin D. Walter.

\begin{theorem}
\label{thm:Walter}
Let \(p\) be an odd prime.
For a \(p\)-th power free integer \(D\ge 2\)
let \(N=\mathbb{Q}(\zeta_p,\sqrt[p]{D})\) be a pure metacyclic field of degree \(p(p-1)\)
with pure subfield \(L=\mathbb{Q}(\sqrt[p]{D})\) of degree \(p\).
Denote by \(U_N\) the unit group of \(N\), and by
\(U_0\) the subgroup of \(U_N\) generated by
all units of conjugate fields
\(L_j=\mathbb{Q}(\zeta_p^{j}\cdot\root{p}\of{D})\),
\(0\le j\le p-1\),
of \(L\) and of \(K\).
Then the following class number formula holds:
\begin{equation}
\label{eqn:Walter}
h_N=\frac{(U_N:U_0)}{p^r}\cdot h_K\cdot h_L^{p-1},
\text{ where } r=\frac{p^2-5}{4},\
(U_N:U_0)=p^E,\ E\le\frac{(p-1)(p-2)}{2}.
\end{equation}
\end{theorem}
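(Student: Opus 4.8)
The plan is to derive the formula from the analytic class number formula combined with a Brauer relation among the Dedekind zeta functions of the subfields of $N$. First I would analyze the irreducible characters of $G=\mathrm{Gal}(N/\mathbb{Q})$ from \eqref{eqn:GaloisGroup}: the metacyclic group $C_p\rtimes C_{p-1}$ has exactly $p-1$ linear characters, factoring through $G/\langle\sigma\rangle\simeq C_{p-1}$, and a single faithful irreducible $\rho$ of degree $p-1$, induced from a nontrivial character of $\langle\sigma\rangle$. Writing $\mathbf{1}$ for the trivial character and using $\mathrm{Ind}_{\langle\sigma\rangle}^G\mathbf{1}=\sum_{\chi\text{ linear}}\chi$ (fixed field $K$), $\mathrm{Ind}_{\langle\tau\rangle}^G\mathbf{1}=\mathbf{1}+\rho$ (fixed field $L$), and $\mathrm{Ind}_{\{1\}}^G\mathbf{1}=\sum_{\chi\text{ linear}}\chi+(p-1)\rho$ (the regular character), one obtains the Brauer relation
\[ \mathrm{Ind}_{\{1\}}^G\mathbf{1}+(p-1)\,\mathbf{1}=\mathrm{Ind}_{\langle\sigma\rangle}^G\mathbf{1}+(p-1)\,\mathrm{Ind}_{\langle\tau\rangle}^G\mathbf{1}, \]
which translates into the identity of Dedekind zeta functions $\zeta_N(s)\,\zeta_{\mathbb{Q}}(s)^{p-1}=\zeta_K(s)\,\zeta_L(s)^{p-1}$.

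Next I would compare leading Laurent coefficients at $s=1$. Both sides have a pole of order $p$, and equating residues (with $\mathrm{Res}_{s=1}\zeta_{\mathbb{Q}}=1$) gives $\rho_N=\rho_K\,\rho_L^{p-1}$, where $\rho_F=\mathrm{Res}_{s=1}\zeta_F$. Substituting the analytic class number formula $\rho_F=2^{r_1(F)}(2\pi)^{r_2(F)}h_F R_F/(w_F\sqrt{|d_F|})$, with the signatures $r_1(N)=0$, $r_2(N)=p(p-1)/2$, $(r_1,r_2)(K)=(0,(p-1)/2)$, $(r_1,r_2)(L)=(1,(p-1)/2)$, and the torsion orders $w_K=w_N=2p$, $w_L=2$, I would check that the archimedean factors and the root-of-unity factors cancel exactly (the powers $2^{p-1}$ and $(2\pi)^{p(p-1)/2}$ match on both sides). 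The discriminant factor $\sqrt{|d_N|}\big/\bigl(\sqrt{|d_K|}\,\sqrt{|d_L|}^{\,p-1}\bigr)$ is likewise $1$, since the very same Brauer relation forces $|d_N|=|d_K|\,|d_L|^{p-1}$ through the conductor--discriminant formula; for $p=5$ this is exactly Theorem \ref{thm:CondDisc} (e.g.\ $5^{23}R^{16}=5^3\cdot(5^5R^4)^4$). Everything therefore collapses to the purely archimedean \emph{regulator identity} $R_N=p^{\,r}\,(U_N:U_0)^{-1}\,R_K\,R_L^{\,p-1}$ with $r=(p^2-5)/4$, which is equivalent to \eqref{eqn:Walter}.

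The regulator identity is where the real work lies, and I expect it to be the main obstacle. The strategy is to pass to the logarithmic (Minkowski) embedding $\lambda$ and interpret $R_N$ as the covolume of $\lambda(U_N)$ in the trace-zero hyperplane. Since $U_0=\langle U_K,U_{L_0},\dots,U_{L_{p-1}}\rangle$ is a finite-index subgroup of $U_N$, its image has covolume $(U_N:U_0)\cdot R_N$ modulo the torsion normalization already absorbed above, so it remains to show that $\mathrm{covol}(\lambda(U_0))=p^{\,r}R_K R_L^{p-1}$. I would determine the $\mathbb{Q}[G]$-module structure of $U_N\otimes\mathbb{Q}$, which by Herbrand's unit theorem is $\mathbb{Q}[G/\langle c\rangle]\ominus\mathbf{1}$ for the complex conjugation $c=\tau^{(p-1)/2}$, and split it into the linear characters and the $(p-1)$-dimensional $\rho$. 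The subfield units $U_K$ and the conjugate units $U_{L_j}$ span the same $\mathbb{Q}[G]$-module, but with multiplicative overlap producing an excess of rank $(p-1)/2$ among the generators; evaluating the covolume of the sublattice they generate by means of the central orthogonal idempotents of $\mathbb{Q}[G]$ anticipated in \S\ref{s:Idempotents} yields the determinantal factor $p^{\,r}$ and simultaneously the bound $E\le(p-1)(p-2)/2$ on the index exponent.

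The hardest single point is the exact evaluation of the $p$-power in this covolume comparison: one must show that the transition matrix between a $\mathbb{Z}$-basis of $\lambda(U_0)$ assembled from subfield-regulator bases and a basis adapted to the idempotent decomposition has determinant a unit times $p^{\,r}$, with no spurious rational primes. I would handle this by localizing at $p$ and computing the index of the lattice generated by norm and trace images of subfield units inside each isotypic component separately, where the factor $p$ enters precisely through the kernels of the norm maps $N_{N/L_j}$ and $N_{N/K}$ --- the same norm-kernel mechanism already quantified in Theorems \ref{thm:IntDim} and \ref{thm:RelDim}. A final check that the resulting constant is independent of the species (cf.\ the two cases of Theorem \ref{thm:CondDisc}) then specializes, for $p=5$, to Parry's relation \eqref{eqn:ParryClNoRel} with $r=5$.
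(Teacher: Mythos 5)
Your proposal is a genuine proof plan, whereas the paper offers no internal argument at all: its ``proof'' of Theorem \ref{thm:Walter} is a citation of Walter's results on Frobenius extensions \cite[Thms.\ 3.6, 4.4]{Wa1}, as applied to pure metacyclic fields in \cite{Wa2}. What you propose is, in substance, a reconstruction of how those cited results are actually obtained, so the route is the right one. The analytic skeleton checks out: since $\tau$ acts on $\langle\sigma\rangle$ through a primitive root modulo $p$, the group $G$ has exactly $p-1$ linear characters and one irreducible $\rho$ of degree $p-1$, your induced-character identity is correct, and it yields $\zeta_N\zeta_{\mathbb{Q}}^{p-1}=\zeta_K\zeta_L^{p-1}$; the torsion orders $w_N=w_K=2p$, $w_L=2$ are right because the maximal abelian subfield of $N$ is $K$; the discriminant cancellation is consistent with Theorem \ref{thm:CondDisc} in both species; and the rank excess $(p-1)/2$ among the generators of $U_0$ is correct ($\bigl((p-1)/2-1\bigr)+p\cdot(p-1)/2$ versus $p(p-1)/2-1$). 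This correctly and cleanly reduces \eqref{eqn:Walter} to the lattice identity $(U_N:U_0)\,R_N=p^{r}\,R_K R_L^{p-1}$ with $r=(p^2-5)/4$, and your sanity checks ($r=5$ for $p=5$, matching Parry \cite{Pa}; $r=1$ for $p=3$, matching the Scholz relation in Theorem \ref{thm:Scholz}) are apt.

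The gap is that this lattice identity is not a final ``check'' but the entire content of the theorem, and your proposal stops at a strategy for it. The transition-determinant evaluation must track the place-multiplicity factors that arise when logarithms of subfield units are rewritten in the logarithmic space of $N$ (each infinite place of $L_j$ or $K$ splits in $N$, and this bookkeeping is exactly where stray powers of $2$ could enter and must be shown to cancel); this is precisely Walter's regulator computation, and nothing in your sketch yet pins the constant to $p^{r}$. Moreover, your claim that the bound $E\le(p-1)(p-2)/2$ falls out ``simultaneously'' from the covolume analysis is unsupported: the covolume identity alone does not even show that $U_N/U_0$ is a $p$-group, which is part of the assertion $(U_N:U_0)=p^E$. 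That needs a separate argument. Since $G$ is a Frobenius group, the complements $H_j=\mathrm{Gal}(N/L_j)$ intersect pairwise trivially and cover $G\setminus\langle\sigma\rangle$, whence in $\mathbb{Z}\lbrack G\rbrack$ one has
\begin{equation*}
\sum_{j=0}^{p-1}\nu_{H_j}=p\cdot 1+\nu_G-\nu_{\langle\sigma\rangle},
\qquad\text{so}\qquad
u^{p}=\Bigl(\prod_{j=0}^{p-1}N_{N/L_j}(u)\Bigr)\cdot N_{N/K}(u)\cdot N_{N/\mathbb{Q}}(u)^{-1}\in U_0
\end{equation*}
for every $u\in U_N$, which shows $U_N/U_0$ is killed by $p$; the sharper bound on $E$ then requires the $\mathbb{F}_p\lbrack G\rbrack$-module analysis of \cite[Thm.\ 4.4]{Wa1} (for $p=5$, Parry's Theorem II in \cite{Pa}), not merely the idempotent decomposition you gesture at. Until the determinant is evaluated and the index bound argued, the proposal establishes the reduction but not \eqref{eqn:Walter} itself.
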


\begin{proof}
This general class number formula is a consequence
of results on Frobenius extensions by Walter
\cite[Theorem 3.6, p. 222, and Theorem 4.4, p. 223]{Wa1},
who applied the results to pure metacyclic extensions in
\cite[(1.7), (1.9), p. 4]{Wa2}.
\end{proof}

%--------------------------------------------------------------------------------

\begin{example}
\label{exm:UnitIndex}
Anticipating some of our numerical results in section
\ref{s:CompRslt}
and the types in section
\ref{s:Cohomology}, 
we give the smallest radicands \(D\) of pure quintic fields
\(L=\mathbb{Q}(\sqrt[5]{D})\) 
where the various values of the exponent \(e\) actually occur:

\begin{itemize}
\item
\(E = 6\) for \(D = 6 = 2\cdot 3\) of type \(\gamma\),
\item
\(E = 5\) for \(D = 2\) of type \(\varepsilon\),
\item
\(E = 4\) for \(D = 22 = 2\cdot 11\) of type \(\beta_2\),
\item
\(E = 3\) for \(D = 11\) of type \(\alpha_2\),
\item
\(E = 2\) for \(D = 31\) of type \(\alpha_1\),
\item
\(E = 1\) for \(D = 33 = 3\cdot 11\) of type \(\alpha_2\).
\end{itemize}

\noindent
However, we point out that we did not find a realization of 
\(E = 0\), 
that is, obviously \(U_N\) is never generated exclusively by subfield units. 
\end{example} 

%\newpage
%--------------------------------------------------------------------------------

\section{Galois cohomology and Herbrand quotient of \(U_N\)}
\label{s:Cohomology}
\noindent
Let \(p\) be an odd prime number,
\(D\ge 2\) a \(p\)-th power free integer,
and \(\zeta_p\) be a primitive \(p\)-th root of unity.
Our \textit{classification} of pure metacyclic fields \(N=\mathbb{Q}(\zeta_p,\sqrt[p]{D})\),
which are cyclic Kummer extensions of the cyclotomic field \(K=\mathbb{Q}(\zeta_p)\)
with relative automorphism group \(G=\mathrm{Gal}(N/K)=\langle\sigma\rangle\),
is based on the Galois cohomology of the unit group \(U_N\) viewed as a \(G\)-module.
The \textit{primary invariant} is the group \(H^0(G,U_N)=U_K/N_{N/K}(U_N)\)
of order \(p^U\) with \(0\le U\le\frac{p-1}{2}\)
which is related to the group \(H^1(G,U_N)=(U_N\cap\ker(N_{N/K}))/U_N^{1-\sigma}\)
of order \(p^P\) by the theorem on the Herbrand quotient of \(U_N\),
\begin{equation}
\label{eqn:HerbrandQuot}
\#H^1(G,U_N)=\#H^0(G,U_N)\cdot\lbrack N:K\rbrack, \text{ respectively } P=U+1. 
\end{equation}

The \textit{secondary invariant} is a natural decomposition
of the group \(H^1(G,U_N)\simeq\mathcal{P}_{N/K}/\mathcal{P}_K\)
of \textit{primitive ambiguous principal ideals} of \(N/K\),
which can be viewed as principal ideals dividing the relative different \(\mathfrak{D}_{N/K}\),
and are therefore called \textit{differential principal factors} (DPF) of \(N/K\),
\begin{equation}
\label{eqn:NaturalDecomp}
\mathcal{P}_{N/K}/\mathcal{P}_K\simeq\mathcal{P}_{L/\mathbb{Q}}/\mathcal{P}_{\mathbb{Q}}
\times\left((\mathcal{P}_{N/K}/\mathcal{P}_K)\cap\ker(N_{N/L})\right),
\text{ respectively } P=B+T,
\end{equation}
where \(L=\mathbb{Q}(\sqrt[p]{D})\) denotes the real non-Galois pure subfield of degree \(p\) of \(N\),
the group of \textit{bottom} DPF, \(\mathcal{P}_{L/\mathbb{Q}}/\mathcal{P}_{\mathbb{Q}}\), is of order \(p^B\),
and the group of \textit{top} DPF, \((\mathcal{P}_{N/K}/\mathcal{P}_K)\cap\ker(N_{N/L})\), of order \(p^T\).

%\newpage
%--------------------------------------------------------------------------------

Based on the preceding preparation,
we now state our \textit{Main Theorem}
on pure quintic fields \(L=\mathbb{Q}(\sqrt[5]{D})\)
and their Galois closure \(N=\mathbb{Q}(\zeta_5,\sqrt[5]{D})\),
that is the case \(p=5\).
Since there exist intermediate extensions \(\mathbb{Q}<K^+<K\) and \(L<M<N\),
namely the maximal real subfield \(K^+=\mathbb{Q}(\sqrt{5})\) of \(K\)
and the real non-Galois subfield \(M=\mathbb{Q}(\sqrt{5},\sqrt[5]{D})\) of absolute degree \(10\) of \(N\),
the top DPF can be decomposed further
(again using Theorem
\ref{thm:BottomTop},
as in formula
\eqref{eqn:NaturalDecomp})
\begin{equation}
\label{eqn:QuinticDecomp}
\begin{aligned}
& \mathcal{P}_{N/K}/\mathcal{P}_K\simeq\mathcal{P}_{L/\mathbb{Q}}/\mathcal{P}_{\mathbb{Q}}
\times\left((\mathcal{P}_{M/K^+}/\mathcal{P}_{K^+})\cap\ker(N_{M/L})\right)
\times\left((\mathcal{P}_{N/K}/\mathcal{P}_K)\cap\ker(N_{N/M})\right), \\
& \text{ respectively } P=A+I+R,
\end{aligned}
\end{equation}
where the group of \textit{absolute} DPF, \(\mathcal{P}_{L/\mathbb{Q}}/\mathcal{P}_{\mathbb{Q}}\), is of order \(p^A\),
the group of \textit{intermediate} DPF, \((\mathcal{P}_{M/K^+}/\mathcal{P}_{K^+})\cap\ker(N_{M/L})\), is of order \(p^I\),
and the group of \textit{relative} DPF, \((\mathcal{P}_{N/K}/\mathcal{P}_K)\cap\ker(N_{N/M})\), is of order \(p^R\).
Note that \(A=B\), \(I+R=T\), and Theorem
\ref{thm:MainQuintic}
coincides with Theorem
\ref{thm:MainQuinticDisplay}.

%--------------------------------------------------------------------------------

\begin{theorem}
\label{thm:MainQuintic}
Each pure metacyclic field \(N=\mathbb{Q}(\zeta_5,\sqrt[5]{D})\)
of absolute degree \(\lbrack N:\mathbb{Q}\rbrack=20\)
with \(5\)-th power free radicand \(D\in\mathbb{Z}\), \(D\ge 2\),
belongs to precisely one of the following \(13\) differential principal factorization types,
in dependence on the invariant \(U\) and the triplet \((A,I,R)\).

%--------------------------------------------------------------------------------

\renewcommand{\arraystretch}{1.1}

\begin{table}[ht]
\label{tbl:QuinticDPFTypes}
\begin{center}
\begin{tabular}{|c||c||c||ccc|}
\hline
 Type           & \(U\) & \(U+1=A+I+R\) & \(A\) & \(I\) & \(R\) \\
\hline
\(\alpha_1\)    & \(2\) & \(3\) & \(1\) & \(0\) & \(2\) \\
\(\alpha_2\)    & \(2\) & \(3\) & \(1\) & \(1\) & \(1\) \\
\(\alpha_3\)    & \(2\) & \(3\) & \(1\) & \(2\) & \(0\) \\
\(\beta_1\)     & \(2\) & \(3\) & \(2\) & \(0\) & \(1\) \\
\(\beta_2\)     & \(2\) & \(3\) & \(2\) & \(1\) & \(0\) \\
\(\gamma\)      & \(2\) & \(3\) & \(3\) & \(0\) & \(0\) \\
\hline
\(\delta_1\)    & \(1\) & \(2\) & \(1\) & \(0\) & \(1\) \\
\(\delta_2\)    & \(1\) & \(2\) & \(1\) & \(1\) & \(0\) \\
\(\varepsilon\) & \(1\) & \(2\) & \(2\) & \(0\) & \(0\) \\
 \hline
\(\zeta_1\)     & \(1\) & \(2\) & \(1\) & \(0\) & \(1\) \\
\(\zeta_2\)     & \(1\) & \(2\) & \(1\) & \(1\) & \(0\) \\
\(\eta\)        & \(1\) & \(2\) & \(2\) & \(0\) & \(0\) \\
 \hline
\(\vartheta\)   & \(0\) & \(1\) & \(1\) & \(0\) & \(0\) \\
\hline
\end{tabular}
\end{center}
\end{table}

%--------------------------------------------------------------------------------

%\noindent
The types \(\delta_1\), \(\delta_2\), \(\varepsilon\)
are characterized additionally by \(\zeta_5\not\in N_{N/K}(U_N)\),
and the types \(\zeta_1\), \(\zeta_2\), \(\eta\)
by \(\zeta_5\in N_{N/K}(U_N)\).
\end{theorem}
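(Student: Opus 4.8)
The plan is to reduce the classification to a short Diophantine enumeration governed by two structural identities already prepared above, and then to adjoin a single binary sub-invariant that refines the middle layer \(U=1\). The two identities are the Herbrand-quotient relation \(P=U+1\) of \eqref{eqn:HerbrandQuot} and the threefold splitting \(P=A+I+R\) obtained by iterating Theorem \ref{thm:BottomTop} as in \eqref{eqn:QuinticDecomp}. Combining them gives the master constraint
\begin{equation*}
U+1=A+I+R ,
\end{equation*}
linking the primary invariant \(U\) to the secondary triplet \((A,I,R)\). First I would record the two external inputs that are not purely formal: the bound \(0\le U\le 2\) on the unit norm index from \eqref{eqn:UNI}, which for \(p=5\) is the Hasse--Iwasawa ceiling \(U\le\frac{p-1}{2}\); and the lower bound \(A\ge 1\), which holds because \((\sqrt[5]{D})\) is an ambiguous principal ideal of \(L/\mathbb{Q}\) with \((\sqrt[5]{D})^5=(D)\in\mathcal{I}_{\mathbb{Q}}\) that is non-trivial modulo \(\mathcal{P}_{\mathbb{Q}}\), since a relation \((\sqrt[5]{D})=(r)\) with \(r\in\mathbb{Q}^\times\) would force \(D=\pm r^5\), contradicting that \(D\ge 2\) is fifth-power-free.

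With these in hand the enumeration is immediate. From \(A\ge 1\) and \(A+I+R=U+1\le 3\) one obtains \(I+R\le 2\), hence \(I\le 2\) and \(R\le 2\), so no component can exceed the tabulated ranges. For \(U=0\) the constraint reads \(A+I+R=1\), and \(A\ge 1\) leaves only \((1,0,0)\), the type \(\vartheta\). For \(U=2\) it reads \(A+I+R=3\), and the solutions with \(1\le A\) and \(I,R\le 2\) are exactly \((1,0,2),(1,1,1),(1,2,0),(2,0,1),(2,1,0),(3,0,0)\), i.e. \(\alpha_1,\alpha_2,\alpha_3,\beta_1,\beta_2,\gamma\). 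For \(U=1\) it reads \(A+I+R=2\), giving the three triplets \((1,0,1),(1,1,0),(2,0,0)\).

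The decisive point is that these three triplets do not by themselves separate the types at \(U=1\). Here \(H^0(G,U_N)=U_K/N_{N/K}(U_N)\) is cyclic of order \(5\); since the \(5\)-torsion of \(U_K\) is \(\langle\zeta_5\rangle\), the image of \(\zeta_5\) in this quotient is either trivial or a generator, and this furnishes a genuine binary invariant, namely whether \(\zeta_5\in N_{N/K}(U_N)\) or not. As the triplet \((A,I,R)\) is read off from \(H^1\) and is tied to \(H^0\) only through the equality of orders in \eqref{eqn:HerbrandQuot}, this flag is independent of \((A,I,R)\) and splits each of the three triplets in two, producing \(\delta_1,\delta_2,\varepsilon\) (with \(\zeta_5\notin N_{N/K}(U_N)\)) and \(\zeta_1,\zeta_2,\eta\) (with \(\zeta_5\in N_{N/K}(U_N)\)). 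For \(U=0\) the group \(H^0\) is trivial, so \(\zeta_5\in N_{N/K}(U_N)\) is automatic and no splitting occurs; for \(U=2\) I would show that the \(\zeta_5\)-status is already determined by the remaining structure, so that no further types appear. Assembling the three cases yields \(1+6+6=13\) types; mutual exclusivity is clear because the tuple \((U,A,I,R)\) together with the \(\zeta_5\)-flag determines the type uniquely, and exhaustiveness is precisely the enumeration.

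The hard part will be the module theory behind the \(U=1\) refinement and, dually, the absence of a refinement at \(U=2\): one must verify that the position of \(\zeta_5\) inside \(U_K/N_{N/K}(U_N)\) behaves as claimed and, in particular, that the maximal index \(U=2\) cannot support an additional bifurcation. I expect this to require the decomposition of \(U_N\) under the central orthogonal idempotents of the group algebra of \(\mathrm{Gal}(N/K)\) developed in \S\ref{s:Idempotents}, together with the Hasse--Iwasawa control of the norm map on cyclotomic units, rather than the elementary counting that settles the triplet \((A,I,R)\). Everything else --- the identity \(U+1=A+I+R\), the bounds, and the enumeration --- is routine once the decomposition of Theorem \ref{thm:BottomTop}, the dimension formulas of Theorems \ref{thm:AbsDim}, \ref{thm:IntDim} and \ref{thm:RelDim}, and the Herbrand relation \eqref{eqn:HerbrandQuot} are available.
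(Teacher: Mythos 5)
Your proposal is correct and takes essentially the same route as the paper, whose proof is exactly the combination of the Herbrand relation \eqref{eqn:HerbrandQuot} with the decompositions \eqref{eqn:NaturalDecomp} and \eqref{eqn:QuinticDecomp}, together with the inputs \(0\le U\le 2\) and \(A\ge 1\) (the radicals \(\sqrt[5]{D},\ldots,\sqrt[5]{D^4}\)) from \S\ \ref{ss:AmbPrId}, followed by the enumeration you carry out explicitly. The one step you defer --- that the \(\zeta_5\)-flag is forced at \(U=2\) and \(U=0\) --- needs none of the idempotent machinery you anticipate: since \(N_{N/K}(U_N)\supseteq\langle -1,\eta^5\rangle\) and \(U_K/\langle -1,\eta^5\rangle\simeq(\mathbb{Z}/5\mathbb{Z})^2\), index \(25\) forces \(N_{N/K}(U_N)=\langle -1,\eta^5\rangle\), hence \(\zeta_5\notin N_{N/K}(U_N)\), while index \(1\) gives \(\zeta_5\in N_{N/K}(U_N)\) trivially.
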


\begin{proof}
The claim is a consequence of combining formulas
\eqref{eqn:HerbrandQuot},
\eqref{eqn:NaturalDecomp}, and
\eqref{eqn:QuinticDecomp}.
\end{proof}
%--------------------------------------------------------------------------------

%\noindent
For the sake of comparison, we state the much more simple analogue
for pure cubic fields \(L=\mathbb{Q}(\sqrt[3]{D})\)
and their Galois closure \(N=\mathbb{Q}(\zeta_3,\sqrt[3]{D})\),
that is the case \(p=3\).

\begin{theorem}
\label{thm:MainCubic}
Each pure metacyclic field \(N=\mathbb{Q}(\zeta_3,\sqrt[3]{D})\)
of absolute degree \(\lbrack N:\mathbb{Q}\rbrack=6\)
with cube free radicand \(D\in\mathbb{Z}\), \(D\ge 2\),
belongs to precisely one of the following \(3\) differential principal factorization types,
in dependence on the invariant \(U\) and the pair \((B,T)\):

%--------------------------------------------------------------------------------

\renewcommand{\arraystretch}{1.1}

\begin{table}[ht]
\label{tbl:CubicDPFTypes}
\begin{center}
\begin{tabular}{|c||c||c||cc|}
\hline
 Type      & \(U\) & \(U+1=B+T\) & \(B\) & \(T\) \\
\hline
\(\alpha\) & \(1\) & \(2\) & \(1\) & \(1\) \\
\(\beta\)  & \(1\) & \(2\) & \(2\) & \(0\) \\
\hline
\(\gamma\) & \(0\) & \(1\) & \(1\) & \(0\) \\
\hline
\end{tabular}
\end{center}
\end{table}

%--------------------------------------------------------------------------------

\end{theorem}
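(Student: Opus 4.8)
The plan is to specialize the general cohomological machinery of Section \ref{s:Cohomology} to the case $p=3$, exactly as Theorem \ref{thm:MainQuintic} does for $p=5$. The decisive simplification is that for $p=3$ the cyclotomic field $K=\mathbb{Q}(\zeta_3)$ is already quadratic over $\mathbb{Q}$ with maximal real subfield $K^+=\mathbb{Q}$, so there is no intermediate real field $M$ playing the role of $\mathbb{Q}(\sqrt{5},\sqrt[5]{D})$. Consequently the threefold refinement \eqref{eqn:QuinticDecomp} is unavailable, and the group of differential principal factors decomposes only into the twofold product \eqref{eqn:NaturalDecomp}, giving a single pair $(B,T)$ rather than the triplet $(A,I,R)$.

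First I would invoke the Herbrand quotient formula \eqref{eqn:HerbrandQuot} with $\lbrack N:K\rbrack=3$, which yields $P=U+1$, together with the bound $0\le U\le\frac{p-1}{2}=1$ coming from the theorems of Hasse and Iwasawa on the unit norm index. Hence $U\in\lbrace 0,1\rbrace$ and correspondingly $P\in\lbrace 1,2\rbrace$. Next I would apply the isomorphism \eqref{eqn:NaturalDecomp} to write $P=B+T$, where $B=\dim_{\mathbb{F}_3}(\mathcal{P}_{L/\mathbb{Q}}/\mathcal{P}_{\mathbb{Q}})$ and $T=\dim_{\mathbb{F}_3}((\mathcal{P}_{N/K}/\mathcal{P}_K)\cap\ker(N_{N/L}))$ are nonnegative. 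The one nontrivial inequality I must secure is the lower bound $B\ge 1$, the cubic analogue of the lower bound in Corollary \ref{cor:AbsDim}. This follows by exhibiting an explicit absolute differential principal factor: for $\theta=\sqrt[3]{D}$ one has $(\theta)^3=(D)\in\mathcal{I}_{\mathbb{Q}}$, so $(\theta)\in\mathcal{I}_{L/\mathbb{Q}}$, and since $D=\prod_i q_i^{e_i}$ is cube free the factorization $(\theta)=\prod_i\mathfrak{q}_i^{e_i}$ with some exponent $e_i\not\equiv 0\,(\mathrm{mod}\,3)$ shows that $(\theta)$ is nontrivial modulo $\mathcal{I}_{\mathbb{Q}}$.

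With $U\in\lbrace 0,1\rbrace$, $B\ge 1$, $T\ge 0$, and $B+T=U+1$ in hand, the classification reduces to a finite enumeration. For $U=0$ one is forced to $B+T=1$, hence $(B,T)=(1,0)$, which is type $\gamma$. For $U=1$ one has $B+T=2$, hence either $(B,T)=(1,1)$, which is type $\alpha$, or $(B,T)=(2,0)$, which is type $\beta$; the value $(0,2)$ is excluded by $B\ge 1$. These three triples $(U,B,T)$ are pairwise distinct, which gives mutual exclusivity, and they exhaust all admissible combinations, which gives exhaustiveness.

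The genuinely arithmetic content, namely the Herbrand quotient identity and especially the bound $U\le\frac{p-1}{2}$, is imported wholesale from Section \ref{s:Cohomology}, so the main obstacle will not be the enumeration itself but verifying that these inputs transfer verbatim to $p=3$ and that the lower bound $B\ge 1$ genuinely holds for every cube free $D\ge 2$. Once those are in place the result is immediate, which is precisely why the cubic case is much more simple than the quintic one: with $U\le 1$ no further splitting by the condition $\zeta_p\in N_{N/K}(U_N)$ is required, so the residual ambiguity that produced the paired types $\delta_i/\zeta_i$ and $\varepsilon/\eta$ for $p=5$ does not arise.
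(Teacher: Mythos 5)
Your proposal is correct and follows essentially the same route as the paper, whose proof of Theorem \ref{thm:MainCubic} is precisely the combination of the Herbrand quotient formula \eqref{eqn:HerbrandQuot} with the decomposition \eqref{eqn:NaturalDecomp}, the bound \(U\le\frac{p-1}{2}=1\), and the lower bound \(B\ge 1\) furnished by the radicals (the fact you verify explicitly, noted in \S\ \ref{ss:AmbPrId}). Your enumeration even recovers, via the constraint \(B+T=U+1\le 2\), the exclusion of Barrucand and Cohn's type \(\mathrm{II}\) that the paper attributes historically to Halter-Koch, so nothing is missing.
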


\begin{proof}
A part of the proof is due to Barrucand and Cohn
\cite{BaCo2}
who distinguished \(4\) different types,
\(\mathrm{I}\hat{=}\beta\), \(\mathrm{II}\), \(\mathrm{III}\hat{=}\alpha\), and \(\mathrm{IV}\hat{=}\gamma\).
However, Halter-Koch
\cite{HK}
showed the impossibility of one of these types, namely type \(\mathrm{II}\).
Our new proof is the combination of formulas
\eqref{eqn:HerbrandQuot}
and
\eqref{eqn:NaturalDecomp}.
\end{proof}

%\newpage
%--------------------------------------------------------------------------------

\subsection{Herbrand quotient of the units of \(N/K\)}
\label{ss:HerbQuot}

In section
\ref{s:ClassNumbers},
we have seen that the unit index 
\((U_N:U_0)=5^e\)
admits a coarse classification of pure quintic fields 
according to the seven possible values of the exponent \(0\le e\le 6\). 
However, the Galois cohomology of the unit group \(U_N\) 
with respect to the cyclic quintic Kummer extension \(N/K\) 
provides additional structural information. 
The \textit{Herbrand quotient} of \(U_N\) under the action of cyclic group
\(G=\mathrm{Gal}(N/K) = \langle \sigma \rangle\)
is defined by

\begin{equation}
\label{eqn:HerbrandQuotDfn}
\frac{\#\mathrm{H}^0(G,U_N)}{\#\mathrm{H}^{-1}(G,U_N)}=
\frac{(\ker(\Delta):\mathrm{im}(\mathcal{N}))}{(\ker(\mathcal{N}):\mathrm{im}(\Delta))}=
\frac{(U_K:N_{N/K}(U_N))}{(E_{N/K}:U_N^{\sigma - 1})},
\end{equation}

\noindent
where the symbolic difference
\(\Delta : E \mapsto E^{\sigma - 1}\)
and the norm
\(\mathcal{N} : E \mapsto E^{1 + \sigma + \cdots + \sigma^4} \)
are endomorphisms of the unit group \(U_N\) 
and the subgroup \(E_{N/K}:=U_N\cap\ker(N_{N/K})\) consists of the units \(E\) in \(U_N\) 
with relative norm \(N_{N/K}{E} = 1\). 
According to Takagi (\(1920\)), Hasse (\(1927\)) and Herbrand (\(1932\)), 
the Herbrand quotient of the \(G\)-module \(U_N\) has the value

\begin{equation}
\label{eqn:HerbrandQuotThm}
\frac{\#\mathrm{H}^0(G,U_N)}{\#\mathrm{H}^{-1}(G,U_N)} = \frac{1}{\lbrack N : K\rbrack},
\end{equation}

\noindent
since Archimedean places do not yield a contribution, if \(\lbrack N : K\rbrack\) is an odd prime. 

%\newpage
%--------------------------------------------------------------------------------

\subsection{Ambiguous principal ideals}
\label{ss:AmbPrId}

Since the cyclotomic unit group \(U_K\) is generated by \(\langle -1, \eta, \zeta \rangle\), 
where \(\eta > 1\) denotes the fundamental unit of \(K^+=\mathbb{Q}(\sqrt{5})\), 
and since
\(N_{N/K}(U_N) \ge N_{N/K}(U_K) = U_K^5 = \langle -1, \eta^5 \rangle\), 
we obtain the possible values of the unit norm index 
\((U_K:N_{N/K}(U_N)) \in \lbrace 1,5,25\rbrace\), 
according to whether
\(N_{N/K}(U_N)\) contains \(\lbrace\eta,\zeta\rbrace\)
or only \(\eta\), resp. \(\zeta\),
or none of them. 

The somewhat abstract quotient
\(E_{N/K}/U_N^{\sigma - 1}\)
is isomorphic to the more ostensive quotient
\(\mathcal{P}_r:=\mathcal{P}_{N/K} / \mathcal{P}_K \)
of the group of \textit{relative ambiguous principal ideals} of \(N/K\)
modulo the subgroup of principal ideals of \(K\), 
according to Iwasawa (or also to Hilbert's Theorems \(92\) and \(94\)), 
and by the Hasse theorem on the Herbrand quotient of the \(G\)-module \(U_N\), we have 

\begin{equation}
\label{eqn:TakagiHasse}
(\mathcal{P}_{N/K}:\mathcal{P}_K) = (E_{N/K}:U_N^{\sigma - 1}) = 5\cdot (U_K:N_{N/K}(U_N)) \in \lbrace 5,25,125\rbrace.
\end{equation}

\noindent
Even in the worst case
\((\mathcal{P}_{N/K}:\mathcal{P}_K) = 5\), 
there are at least the radicals \(\root{5}\of{D},\ldots,\root{5}\of{D^4}\), and the unit \(1\) 
which generate \(5\) distinct \textit{absolute ambiguous principal ideals} of \(L/\mathbb{Q}\). 

%--------------------------------------------------------------------------------

\begin{example}
\label{exm:TakagiHasse} 
Anticipating some of our computational results in section \S\
\ref{s:CompRslt}, 
we give the smallest radicands \(D\) of pure quintic fields
\(L=\mathbb{Q}(\root{5}\of{D})\)
where the various values of
\(\#\mathcal{P}_r = (\mathcal{P}_{N/K}:\mathcal{P}_K)\)
actually occur: 

\begin{itemize}
\item
\(\#\mathcal{P}_r = 5\) for \(D = 5\) of type \(\vartheta\), 
\item
\(\#\mathcal{P}_r = 25\) for \(D = 2\) of type \(\varepsilon\), 
\item
\(\#\mathcal{P}_r = 125\) for \(D = 6 = 2\cdot 3\) of type \(\gamma\). 
\end{itemize}

\end{example}

%--------------------------------------------------------------------------------

\noindent
Generally, the fixed value of the Herbrand quotient 
for a given type of field extension \(N/K\) 
can be interpreted by the following principle.

\noindent
\textbf{Trade-off Principle.} 
If many units in \(U_K\) can be represented as norms of units in \(U_N\), 
then the extension \(N/K\) contains only few ambiguous principal ideals. \\
If only few units in \(U_K\) can be represented as norms of units in \(U_N\), 
then the extension \(N/K\) contains many ambiguous principal ideals.

%\newpage
%--------------------------------------------------------------------------------

\subsection{Differential principal factorizations (DPF)}
\label{ss:DiffPrFact}

As opposed to the claim
\(\mathcal{P}_N \cap \mathcal{I}_K>\mathcal{P}_K\)
of Hilbert's Theorem \(94\) for an unramified cyclic extension \(N/K\) of prime degree with conductor \(f=1\),
the subgroup 
\(1=\mathcal{P}_N \cap \mathcal{I}_K/\mathcal{P}_K < \mathcal{P}_{N/K}/\mathcal{P}_K\), 
the so-called \textit{capitulation kernel} of \(N/K\),
is trivial for our ramified relative extension with \(f>1\),
because the cyclotomic field \(K\) has class number \(h_K = 1\). 
However, the elementary abelian \(5\)-group
\(\mathcal{P}_r = \mathcal{P}_{N/K} / \mathcal{P}_K\), 
whose generators are principal ideals dividing the relative different of \(N/K\), 
so-called \textit{differential principal factors}, 
consists of three nested subgroups, \(\mathcal{P}_r \ge \mathcal{P}_i \ge \mathcal{P}_a\) 
(similar to but not identical with our definitions in
\cite{Ma0}
and in \S\
\ref{s:Dimensions},
since we only want to indicate another slightly different point of view), 

\begin{itemize}
\item
\textit{absolute DPF} of \(L/\mathbb{Q}\), \(\mathcal{P}_a\),
always containing the above mentioned radicals, 
\item
\textit{intermediate DPF} of \(M\vert K^+\), \(\mathcal{P}_i \setminus \mathcal{P}_a\),
such that the proper cosets of \(\mathcal{P}_a\) in \(\mathcal{P}_i\) 
do not project down to \(L/\mathbb{Q}\) by taking the norm \(N_{M/L}\), and 
\item
\textit{relative DPF} of \(N/K\), \(\mathcal{P}_r \setminus \mathcal{P}_i\),
such that the proper cosets of \(\mathcal{P}_i\) in \(\mathcal{P}_r\)
do not even project down to \(M/K^+\) by taking the norm \(N_{N/M}\). 
\end{itemize}

%\newpage
%--------------------------------------------------------------------------------

\subsection{DPF types of pure quintic fields}
\label{ss:DPFTypes}

%\noindent
We define thirteen possible differential principal factorization types 
of pure quintic number fields
\(L=\mathbb{Q}(\root{5}\of{D})\), 
according to the generators of the group \(\mathrm{Norm}_{N|K}{U_N}\) as the primary invariant 
and the triplet \((A,I,R)\) defined by \\
the order \(5^A := \#\mathcal{P}_a\) of the group of absolute DPF of \(L/\mathbb{Q}\), \\
the index \(5^I := (\mathcal{P}_i:\mathcal{P}_a)\)
in the group of intermediate DPF of \(M/K^+\), and \\
the index \(5^R := (\mathcal{P}_r:\mathcal{P}_i)\)
in the group of relative DPF of \(N/K\), \\ 
as the secondary invariant 
(similar to but not identical with our definitions in
\cite{Ma0}
and in \S\
\ref{s:Dimensions}). 
The connection between the various quantities is given by the chain of equations 
\[5^{A + I + R} = \#\mathcal{P}_r = (\mathcal{P}_{N/K}:\mathcal{P}_K) = (E_{N/K}:U_N^{\sigma - 1}) = 5\cdot (U_K:N_{N/K}(U_N)).\]
Note that the index of subfield units \((U_N:U_0)\) does not enter the definition of the DPF types. 

%\newpage
%--------------------------------------------------------------------------------

\section{Orthogonal idempotents}
\label{s:Idempotents}
\noindent
Firstly, let \(G=\langle\tau\rangle\simeq C_2\) be the unique group of prime order \(2\), which is cyclic.
For subsequent applications, \(G\) may be viewed as the relative group \(\mathrm{Gal}(N/L)\)
of the Galois closure \(N=\mathbb{Q}(\root{3}\of{D},\zeta_3)\) over a pure cubic field \(L=\mathbb{Q}(\root{3}\of{D})\).

\begin{lemma}
\label{Lem:SecondRoots}
Let \(\mu_2=\langle -1\rangle\simeq C_2\) be the group of square roots of unity,
then the character group \(G^\ast:=\mathrm{Hom}(G,\mu_2)\simeq G\) of \(G\)
consists of two characters \(\lbrace 1=\chi_0,\chi_1\rbrace\) such that \(\chi_j(\tau)=-1^j\) for \(0\le j\le 1\).
The values of these characters for \(g\in G\) are shown in Table
\ref{tbl:Char2}.

%--------------------------------------------------------------------------------

\begin{table}[ht]
\label{tbl:Char2}
\begin{center}
\begin{tabular}{|c|cc|}
\hline
 \(g\)         & \(1\) & \(\tau=\tau^{-1}\)  \\
\hline
 \(\chi_0(g)\) & \(1\) & \(1\)               \\
 \(\chi_1(g)\) & \(1\) & \(-1\)              \\
\hline
\end{tabular}
\end{center}
\end{table}

%--------------------------------------------------------------------------------

\noindent
Two \textit{central orthogonal idempotents} in the group ring \(R=\mathbb{Q}\lbrack G\rbrack\) are given by
\begin{equation}
\label{eqn:IdemChar2}
\psi_j := \frac{1}{2}\sum_{k=0}^1\,\chi_j(\tau^{-k})\tau^k \quad \text{ for } 0\le j\le 1.
\end{equation}
More explicitly, we have
\(\psi_0=\frac{1}{2}(1+\tau)\) and \(\psi_1=\frac{1}{2}(1-\tau)\).
They satisfy the relations \(\psi_0+\psi_1=1\) and \(\psi_i\cdot\psi_j=\delta_{i,j}\psi_j\) for \(0\le i,j\le 1\).
\end{lemma}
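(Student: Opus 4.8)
The statement to prove is Lemma \ref{Lem:SecondRoots}, which has three essentially independent parts: first, that the character group $G^\ast=\mathrm{Hom}(G,\mu_2)$ has exactly two elements $\chi_0,\chi_1$ with $\chi_j(\tau)=(-1)^j$ (and the tabulated values); second, that the two elements $\psi_j$ defined by \eqref{eqn:IdemChar2} are central orthogonal idempotents in $R=\mathbb{Q}[G]$; and third, that they satisfy the closure and orthogonality relations $\psi_0+\psi_1=1$ and $\psi_i\psi_j=\delta_{i,j}\psi_j$. Since $G=\langle\tau\rangle\simeq C_2$ is abelian, the centrality of the $\psi_j$ is automatic (every element of the group ring of an abelian group is central), so the real content is the idempotency and orthogonality, which for $C_2$ is a direct two-by-two computation.

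\emph{Characters.} I would first establish that any homomorphism $\chi\colon G\to\mu_2$ is determined by $\chi(\tau)$, which must be a square root of unity, i.e. $\chi(\tau)\in\{+1,-1\}$, since $\chi(\tau)^2=\chi(\tau^2)=\chi(1)=1$. Both choices do yield valid homomorphisms because $\mu_2$ itself is cyclic of order $2$, so $G^\ast$ has exactly two elements. Labelling them $\chi_0$ (the trivial character, with $\chi_0(\tau)=1$) and $\chi_1$ (with $\chi_1(\tau)=-1$) gives $\chi_j(\tau)=(-1)^j$ and reproduces Table \ref{tbl:Char2}; the isomorphism $G^\ast\simeq G$ follows since both are cyclic of order $2$.

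\emph{Idempotents.} Next I would simply evaluate \eqref{eqn:IdemChar2} for $j=0,1$ to obtain the explicit forms $\psi_0=\tfrac12(1+\tau)$ and $\psi_1=\tfrac12(1-\tau)$, using $\chi_j(\tau^{0})=1$ and $\chi_j(\tau^{-1})=\chi_j(\tau)=(-1)^j$. Then I would verify the algebraic relations by direct multiplication in $\mathbb{Q}[G]$, using only $\tau^2=1$: for instance $\psi_0^2=\tfrac14(1+\tau)^2=\tfrac14(1+2\tau+\tau^2)=\tfrac14(2+2\tau)=\psi_0$, and analogously $\psi_1^2=\psi_1$, while $\psi_0\psi_1=\tfrac14(1+\tau)(1-\tau)=\tfrac14(1-\tau^2)=0$. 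Summing gives $\psi_0+\psi_1=\tfrac12(1+\tau)+\tfrac12(1-\tau)=1$. These computations confirm $\psi_i\psi_j=\delta_{i,j}\psi_j$ and the completeness relation simultaneously.

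\emph{Anticipated difficulty.} There is essentially no obstacle here: the lemma is the $C_2$ base case of the standard decomposition of a group algebra over a field containing enough roots of unity into a product of one-dimensional blocks, and every verification reduces to the single relation $\tau^2=1$. The only point requiring a word of care is that the formula \eqref{eqn:IdemChar2} uses $\chi_j(\tau^{-k})$ rather than $\chi_j(\tau^{k})$; since $G$ has exponent $2$ we have $\tau^{-1}=\tau$, so this distinction is vacuous, but I would note it explicitly so that the formula visibly specializes the general averaging idempotent $\tfrac{1}{\#G}\sum_{k}\chi_j(g^{-1})g$ to the present case. The true interest of the lemma is as a template: the genuinely substantive analogue will be the order-$4$ case $G=\langle\tau\rangle\simeq C_4$ relevant to $N/L$ for the quintic fields, where the characters take values in $\mu_4$ and the idempotents live in $\mathbb{Q}(\zeta_4)[G]=\mathbb{Q}(i)[G]$, but that generalization lies beyond the present statement.
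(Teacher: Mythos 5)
Your proposal is correct in every detail: the determination of $G^\ast$ from $\chi(\tau)^2=\chi(\tau^2)=1$, the evaluation of \eqref{eqn:IdemChar2} to $\psi_j=\tfrac12\bigl(1+(-1)^j\tau\bigr)$, and the verifications $\psi_j^2=\psi_j$, $\psi_0\psi_1=0$, $\psi_0+\psi_1=1$ by expansion with $\tau^2=1$ are all sound, and your remark that centrality is automatic for an abelian $G$ is exactly right. Worth noting for comparison: the paper supplies no proof of this lemma at all; the only argument it records is for the $C_4$ analogue (Lemma \ref{Lem:FourthRoots}), where orthogonality is derived not by expanding products term by term but by the general character-sum computation, reindexing $\psi_i\psi_j$ over $m=k+\ell$ and using $\sum_{k=0}^{n-1}\bigl(\overline{\chi_i(\tau)}\chi_j(\tau)\bigr)^k=n\,\delta_{i,j}$. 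Your direct $2\times 2$ multiplication is the specialization of that argument to $n=2$ and is the more economical route here, while the paper's character-orthogonality scheme is the one that scales uniformly to $C_4$ (and to any cyclic order) --- a point you anticipate correctly in your closing remark. You were also right to flag, and resolve, the two small notational wrinkles in the statement: \(\chi_j(\tau)=-1^j\) must be read as \((-1)^j\), and the inverse \(\tau^{-k}\) in \eqref{eqn:IdemChar2} is vacuous only because \(G\) has exponent \(2\), whereas in the \(C_4\) case it genuinely matters (there \(\chi_j(\tau^{-k})=\overline{\chi_j(\tau^k)}\)).
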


%--------------------------------------------------------------------------------

%\noindent
For our application to the arithmetic of pure metacyclic fields \(N=\mathbb{Q}(\root{3}\of{D},\zeta_3)\) of degree \(6\),
we can view these central orthogonal idempotents as elements of the group ring \((\mathbb{Z}/3\mathbb{Z})\lbrack G\rbrack\)
using the isomorphism \(\mu_2=\langle -1\rangle\simeq C_2\simeq U(\mathbb{Z}/3\mathbb{Z})\)
which arises by mapping \(-1\mapsto\bar{2}\).
Then we obtain:
\begin{equation}
\label{eqn:Idempotents3}
\psi_0=-(1+\tau),\ \psi_1=-(1+2\tau).
\end{equation}

%--------------------------------------------------------------------------------

%\noindent
Secondly, let \(G=\langle\tau\rangle\simeq C_4\) be the cyclic group of order \(4\).
For subsequent applications, \(G\) may be viewed as the relative group \(\mathrm{Gal}(N/L)\)
of the Galois closure \(N=\mathbb{Q}(\root{5}\of{D},\zeta_5)\) over a pure quintic field \(L=\mathbb{Q}(\root{5}\of{D})\).

\begin{lemma}
\label{Lem:FourthRoots}
Let \(\mu_4=\langle\sqrt{-1}\rangle\simeq C_4\) be the group of fourth roots of unity,
then the character group \(G^\ast:=\mathrm{Hom}(G,\mu_4)\simeq G\) of \(G\)
consists of four characters \(\lbrace 1=\chi_0,\chi_1,\chi_2,\chi_3\rbrace\) such that \(\chi_j(\tau)=\sqrt{-1}^j\) for \(0\le j\le 3\).
The values of these characters for \(g\in G\) are shown in Table
\ref{tbl:Char4}.

%--------------------------------------------------------------------------------

\begin{table}[ht]
\label{tbl:Char4}
\begin{center}
\begin{tabular}{|c|cccc|}
\hline
 \(g\)         & \(1\) & \(\tau=\tau^{-3}\) & \(\tau^2=\tau^{-2}\) & \(\tau^3=\tau^{-1}\) \\
\hline
 \(\chi_0(g)\) & \(1\) & \(1\)              & \(1\)                & \(1\)                \\
 \(\chi_1(g)\) & \(1\) & \(\sqrt{-1}\)      & \(-1\)               & \(-\sqrt{-1}\)       \\
 \(\chi_2(g)\) & \(1\) & \(-1\)             & \(1\)                & \(-1\)               \\
 \(\chi_3(g)\) & \(1\) & \(-\sqrt{-1}\)     & \(-1\)               & \(\sqrt{-1}\)        \\
\hline
\end{tabular}
\end{center}
\end{table}

%--------------------------------------------------------------------------------

\noindent
Four \textit{central orthogonal idempotents} in the group ring \(R=\mathbb{C}\lbrack G\rbrack\) are given generally by
\begin{equation}
\label{eqn:IdemChar4}
\psi_j := \frac{1}{4}\sum_{k=0}^3\,\chi_j(\tau^{-k})\tau^k \quad \text{ for } 0\le j\le 3.
\end{equation}
More explicitly, we have \\
\(\psi_0=\frac{1}{4}(1+\tau+\tau^2+\tau^3),\ \psi_1=\frac{1}{4}(1-\sqrt{-1}\tau-\tau^2+\sqrt{-1}\tau^3),\
\psi_2=\frac{1}{4}(1-\tau+\tau^2-\tau^3)\), and \(\psi_3=\frac{1}{4}(1+\sqrt{-1}\tau-\tau^2-\sqrt{-1}\tau^3)\),
and consequently the \textit{sum relation} \(\psi_0+\psi_1+\psi_2+\psi_3=1\).
\end{lemma}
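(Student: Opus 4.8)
The plan is to treat this as the order-$4$ analogue of Lemma \ref{Lem:SecondRoots}, carrying out the standard character-theoretic (discrete Fourier) computation on the cyclic group $G=\langle\tau\rangle\simeq C_4$. First I would pin down the structure of $G^\ast=\mathrm{Hom}(G,\mu_4)$: a homomorphism $\chi$ is determined by the single value $\chi(\tau)$, and since $\tau^4=1$ forces $\chi(\tau)^4=1$, this value must lie in $\mu_4=\lbrace 1,\sqrt{-1},-1,-\sqrt{-1}\rbrace=\lbrace(\sqrt{-1})^j\mid 0\le j\le 3\rbrace$. Each of the four admissible choices yields a distinct character $\chi_j$ with $\chi_j(\tau^k)=(\sqrt{-1})^{jk}$, so $G^\ast$ has exactly four elements and is itself cyclic of order $4$ (generated by $\chi_1$), giving $G^\ast\simeq G$. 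Reading off $\chi_j(\tau^k)$ for $0\le j,k\le 3$ then reproduces the entries of Table \ref{tbl:Char4}.

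The core of the argument is to verify that the $\psi_j$ are orthogonal idempotents. I would invoke the two orthogonality relations for characters of a cyclic group, both of which reduce to geometric sums of fourth roots of unity: for $0\le i,j\le 3$,
\[\sum_{k=0}^3\,\overline{\chi_i(\tau^k)}\,\chi_j(\tau^k)=4\,\delta_{i,j},\qquad \sum_{j=0}^3\,\chi_j(\tau^{-k})=4\,\delta_{k,0}\quad(0\le k\le 3),\]
where the first holds because $\overline{\chi_i}\chi_j$ is trivial exactly when $i=j$ and otherwise sums a nontrivial root of unity to zero, and the second is the same statement with the roles of group and dual interchanged. To compute $\psi_i\psi_j$ I would expand the product as a double sum over $\mathbb{C}[G]$, substitute $\ell=m-k$ to collect the coefficient of $\tau^m$, and apply the homomorphism identity $\chi_j(\tau^{-(m-k)})=\chi_j(\tau^{-m})\chi_j(\tau^k)$ together with $\chi_i(\tau^{-k})=\overline{\chi_i(\tau^k)}$; the inner sum over $k$ then collapses by the first orthogonality relation to $4\,\delta_{i,j}$, leaving the coefficient of $\tau^m$ equal to $\tfrac{1}{4}\,\delta_{i,j}\,\chi_j(\tau^{-m})$ and hence $\psi_i\psi_j=\delta_{i,j}\,\psi_j$. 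Centrality is immediate, since $\mathbb{C}[G]$ is commutative for abelian $G$.

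Finally, the explicit shapes of $\psi_0,\ldots,\psi_3$ follow by inserting the tabulated values $\chi_j(\tau^{-k})=(\sqrt{-1})^{-jk}$ into the defining formula \eqref{eqn:IdemChar4} and simplifying, and the sum relation $\psi_0+\psi_1+\psi_2+\psi_3=1$ falls out of the second orthogonality relation after interchanging the order of summation: $\sum_j\psi_j=\tfrac{1}{4}\sum_k\bigl(\sum_j\chi_j(\tau^{-k})\bigr)\tau^k=\tfrac{1}{4}\cdot 4\cdot\tau^0=1$. The whole computation is routine and parallels Lemma \ref{Lem:SecondRoots} verbatim with $\mu_2$ replaced by $\mu_4$; the only point demanding genuine care is the consistent bookkeeping of the inverse exponents $\chi_j(\tau^{-k})$, namely keeping complex conjugation and group inversion aligned when the orthogonality relation is applied. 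I do not expect any substantial obstacle beyond this.
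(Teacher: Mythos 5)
Your proposal is correct and follows essentially the same route as the paper: the paper's proof is precisely the double-sum expansion of \(\psi_i\psi_j\) with the reindexing \(\ell=m-k\), the homomorphism identity \(\chi_j(\tau^{-(m-k)})=\chi_j(\tau^k)\chi_j(\tau^{-m})\), and the collapse of the geometric sum \(\sum_k(\overline{\chi_i(\tau)}\chi_j(\tau))^k\) to \(4\delta_{i,j}\), which is exactly your orthogonality computation. Your added remarks (classifying \(G^\ast\) via \(\chi(\tau)\in\mu_4\), centrality from commutativity of \(\mathbb{C}\lbrack G\rbrack\), and deriving the sum relation from dual orthogonality) are sound and merely make explicit what the paper leaves implicit.
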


%--------------------------------------------------------------------------------

\begin{proof}
For all \(0\le i,j\le 3\), we have
\(\psi_i \cdot \psi_j =\)
\begin{equation*}
\begin{aligned}
& \frac{1}{4}\sum_{k=0}^3\,\chi_i(\tau^{-k})\tau^k \cdot \frac{1}{4}\sum_{\ell=0}^3\,\chi_j(\tau^{-\ell})\tau^\ell = \\
& \frac{1}{16}\sum_{m=0}^3\,(\sum_{k+\ell=m}\,\chi_i(\tau^{-k})\chi_j(\tau^{-\ell}))\tau^m = \\
& \frac{1}{16}\sum_{m=0}^3\,(\sum_{k=0}^3\,\chi_i(\tau^{-k})\chi_j(\tau^{-(m-k)}))\tau^m = \\
& \frac{1}{16}\sum_{m=0}^3\,(\sum_{k=0}^3\,\chi_i(\tau^{-k})\chi_j(\tau^{k})\chi_j(\tau^{-m}))\tau^m = \\
& \frac{1}{16}\sum_{m=0}^3\,(\sum_{k=0}^3\,(\overline{\chi_i(\tau)}\chi_j(\tau))^{k})\chi_j(\tau^{-m})\tau^m = \\
& \frac{1}{16}\sum_{m=0}^3\,4\delta_{i,j}\chi_j(\tau^{-m})\tau^m = 
\begin{cases}
\psi_i & \text{ for } i=j, \\
0      & \text{ for } i\ne j,
\end{cases}
\end{aligned}
\end{equation*}
since \(\overline{\chi_i(\tau)}\chi_j(\tau)\) is a fixed fourth root of unity \(\zeta\)
which satisfies \(1+\zeta+\zeta^2+\zeta^3=0\) for \(i\ne j\), \(\zeta\ne 1\),
and \(1+\zeta+\zeta^2+\zeta^3=4\) for \(i=j\), \(\zeta=1\).
(Here, \(\delta_{i,j}\) denotes the Kronecker delta.)
\end{proof}

%--------------------------------------------------------------------------------

%\noindent
For our application to the arithmetic of pure metacyclic fields \(N=\mathbb{Q}(\root{5}\of{D},\zeta_5)\) of degree \(20\),
we can view the four central orthogonal idempotents as elements of the group ring \((\mathbb{Z}/5\mathbb{Z})\lbrack G\rbrack\)
using the isomorphism \(\mu_4=\langle\sqrt{-1}\rangle\simeq C_4\simeq U(\mathbb{Z}/5\mathbb{Z})\)
which arises by mapping \(\sqrt{-1}\mapsto\bar{3}\), \(-1\mapsto\bar{4}\), \(-\sqrt{-1}\mapsto\bar{2}\).
Then we obtain:
\begin{equation}
\label{eqn:Idempotents5}
\psi_0=-(1+\tau+\tau^2+\tau^3),\ \psi_1=-(1+2\tau+4\tau^2+3\tau^3),\
\psi_2=-(1+4\tau+\tau^2+4\tau^3),\ \psi_3=-(1+3\tau+4\tau^2+2\tau^3).
\end{equation}

%\newpage
%--------------------------------------------------------------------------------

\subsection{Invariance}
\label{ss:Invariance}

%\noindent
Based on the preceding discussion of central orthogonal idempotents,
let us now consider the \(\tau\)-invariance of associated particular ambiguous ideals in the semi-local \(\mathbb{F}_5\)-subspace
\[\mathbb{F}_5\mathfrak{L}\oplus\mathbb{F}_5\mathfrak{L}^{\tau^2}\oplus\mathbb{F}_5\mathfrak{L}^{\tau}\oplus\mathbb{F}_5\mathfrak{L}^{\tau^3}
<\mathcal{I}_{N/K}/\mathcal{I}_K\]
with respect to a prime number \(\ell\in\mathbb{P}\) such that \(\ell\equiv 1\,(\mathrm{mod}\,5)\),
\(\ell\mathcal{O}_L=\mathfrak{l}^5\), \(\mathfrak{l}\mathcal{O}_M=\mathcal{L}\cdot\mathcal{L}^{\tau}\), and
\(\mathcal{L}\mathcal{O}_N=\mathfrak{L}\cdot\mathfrak{L}^{\tau^2}\),
\(\mathcal{L}^{\tau}\mathcal{O}_N=\mathfrak{L}^{\tau}\cdot\mathfrak{L}^{\tau^3}\)
with \(\ell\in\mathbb{P}_L\), \(\mathcal{L}\in\mathbb{P}_M\), and \(\mathfrak{L}\in\mathbb{P}_N\).

\begin{proposition}
\label{prp:Invariance}
\begin{enumerate}
\item
The \(4\)-norm image element
\(\mathfrak{l}\mathcal{O}_N=\mathfrak{L}^{1+\tau^2+\tau+\tau^3}\hat{=}(1111)\)
is \(\tau\)-invariant.
\item
The \(2\)-norm kernel element
\(\mathcal{K}\mathcal{O}_N:=\mathcal{L}\mathcal{O}_N(\mathcal{L}^{\tau}\mathcal{O}_N)^4=\mathfrak{L}^{1+\tau^2+4\tau+4\tau^3}\hat{=}(1414)\)
is not \(\tau\)-invariant but is mapped to its (linearly dependent) inverse \(\mathcal{K}^{-1}\mathcal{O}_N\hat{=}(4141)\) by \(\tau\).
\item
The \(4\)-norm kernel element
\(\mathfrak{K}_1:=\mathfrak{L}^{1+4\tau^2+2\tau+3\tau^3}\hat{=}(1243)\)
is not \(\tau\)-invariant but is mapped to its (linearly dependent) third power \(\mathfrak{K}_1^3\hat{=}(3124)\) by \(\tau\).
Similarly, \\
the \(4\)-norm kernel element
\(\mathfrak{K}_2:=\mathfrak{L}^{1+4\tau^2+3\tau+2\tau^3}\hat{=}(1342)\)
is not \(\tau\)-invariant but is mapped to its (linearly dependent) second power \(\mathfrak{K}_2^2\hat{=}(2134)\) by \(\tau\).
\item
The \(4\)-norm kernel element
\(\mathfrak{L}^{1+4\tau^2+\tau+4\tau^3}\hat{=}(1144)\)
is not \(\tau\)-invariant and is mapped to the linearly independent \(4\)-norm kernel element \((4114)\) by \(\tau\).
Similarly, \\
\(\mathfrak{L}^{1+4\tau^2}\hat{=}(1040)\), resp. \((0104)\), is mapped to the linearly independent \((0104)\), resp. \((4010)\).
\end{enumerate}
\end{proposition}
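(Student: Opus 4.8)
The plan is to reduce all four assertions to a single structural fact: since $\tau^4=1$, the automorphism $\tau$ permutes the four prime ideals $\mathfrak{L},\mathfrak{L}^\tau,\mathfrak{L}^{\tau^2},\mathfrak{L}^{\tau^3}$ cyclically, and the listed ideals are simply the images of $\mathfrak{L}$ under the idempotents of Lemma \ref{Lem:FourthRoots}. First I would fix the coordinate convention. Writing an ideal of the semi-local space as $\mathfrak{L}^e$ with $e=\sum_{k=0}^3 a_k\tau^k\in\mathbb{F}_5[G]$, the four primes form a genuine direct summand, so the coefficient vector $(a_0a_1a_2a_3)$ faithfully represents the ideal modulo $\mathcal{I}_K$. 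Because $\tau(\mathfrak{L}^{\tau^k})=\mathfrak{L}^{\tau^{k+1}}$ with indices read modulo $4$, applying $\tau$ to $\mathfrak{L}^e$ yields $\mathfrak{L}^{\tau e}$; on coefficient vectors this is the cyclic left shift $(a_0a_1a_2a_3)\mapsto(a_3a_0a_1a_2)$. This one identity governs everything that follows, and additive $\mathbb{F}_5$-dependence of exponent vectors corresponds to multiplicative powers of ideals.

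The key tool is the eigenvalue relation $\tau\psi_j=\chi_j(\tau)\psi_j$ in $\mathbb{F}_5[G]$, immediate from the definition \eqref{eqn:IdemChar4} and the reduction \eqref{eqn:Idempotents5}. It gives $\tau(\mathfrak{L}^{\psi_j})=\mathfrak{L}^{\tau\psi_j}=(\mathfrak{L}^{\psi_j})^{\chi_j(\tau)}$, so each $\mathfrak{L}^{\psi_j}$ is an eigenvector of the shift with eigenvalue $\chi_j(\tau)$, where $\chi_0(\tau)=1$, $\chi_1(\tau)=\bar{3}$, $\chi_2(\tau)=\bar{4}$, $\chi_3(\tau)=\bar{2}$. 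I would then match each ideal of items (1)--(3) to such an idempotent image by comparing coefficient vectors against \eqref{eqn:Idempotents5}: the element $(1111)$ of item (1) is $\mathfrak{L}^{-\psi_0}$ (eigenvalue $1$, hence $\tau$-invariant); the element $\mathcal{K}\mathcal{O}_N\hat{=}(1414)$ of item (2) is $\mathfrak{L}^{-\psi_2}$ (eigenvalue $\bar{4}=-1$, hence $\tau(\mathcal{K})=\mathcal{K}^{-1}$); and $\mathfrak{K}_1\hat{=}(1243)$, $\mathfrak{K}_2\hat{=}(1342)$ of item (3) are $\mathfrak{L}^{-\psi_1}$ and $\mathfrak{L}^{-\psi_3}$, with eigenvalues $\bar{3}$ and $\bar{2}$, giving the third and second powers. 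For item (2) I would also record $\mathcal{K}\mathcal{O}_N=(\mathcal{L}\mathcal{O}_N)(\mathcal{L}^\tau\mathcal{O}_N)^4=\mathfrak{L}^{(1+\tau^2)+4(\tau+\tau^3)}$ to confirm the coefficient vector, and note that each image stays on the same one-dimensional $\psi_j$-line, so it is linearly dependent on the original.

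For item (4) the situation is the reverse: these ideals are \emph{not} eigenvectors of $\tau$. Since $x^4-1=(x-1)(x-2)(x-3)(x-4)$ splits into distinct linear factors over $\mathbb{F}_5$, the shift is diagonalizable with four distinct one-dimensional eigenlines, spanned precisely by the idempotent images exhausted in items (1)--(3); any vector supported on more than one line is moved off its own line by $\tau$. I would verify this by the one-line shift computation: $\tau\cdot(1144)=(4114)$ and $\tau\cdot(1040)=(0104)$ are manifestly not $\mathbb{F}_5$-multiples of $(1144)$ and $(1040)$, establishing linear independence, and $\tau\cdot(0104)=(4010)$ is the same computation once more. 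The proof is thus complete once the shift identity and the idempotent eigenvalue relation are in place. I do not expect any genuine obstacle here: every statement is a finite $\mathbb{F}_5$-linear computation dictated by a single cyclic permutation, and the only care required is the bookkeeping of matching each displayed coefficient vector to the correct $-\psi_j$ under \eqref{eqn:Idempotents5}.
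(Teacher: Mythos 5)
Your proof is correct, but it takes a genuinely different route from the paper's. The paper proves Proposition \ref{prp:Invariance} by four one-line computations: it multiplies each exponent by \(\tau\) in the group ring and reads off the resulting coefficient vector, e.g. \((\mathfrak{L}^{1+4\tau^2+2\tau+3\tau^3})^{\tau}=\mathfrak{L}^{3+\tau+2\tau^2+4\tau^3}\hat{=}(3124)\), and then observes by inspection that this equals \(\mathfrak{K}_1^3\), and so on. You instead organize everything around the eigentheory of the shift: the relation \(\tau\psi_j=\chi_j(\tau)\psi_j\), transported to \(\mathbb{F}_5\lbrack G\rbrack\) via the reduction \eqref{eqn:Idempotents5}, identifies \((1111)\), \((1414)\), \((1243)\), \((1342)\) as \(\mathfrak{L}^{-\psi_0}\), \(\mathfrak{L}^{-\psi_2}\), \(\mathfrak{L}^{-\psi_1}\), \(\mathfrak{L}^{-\psi_3}\), i.e. as eigenvectors with eigenvalues \(1,\bar{4},\bar{3},\bar{2}\) respectively, which yields items (1)--(3) at once and \emph{explains} the ``inverse'', ``third power'' and ``second power'' statements rather than merely observing them. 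For item (4), your diagonalizability argument (\(x^4-1\) splits into distinct linear factors over \(\mathbb{F}_5\), so the four \(\psi_j\)-lines exhaust the eigenlines, and any vector supported on two or more of them is moved off its own \(\mathbb{F}_5\)-line) is strictly stronger than what the paper does, since the paper only checks the three displayed vectors; your concluding shift computations \((1144)\mapsto(4114)\), \((1040)\mapsto(0104)\), \((0104)\mapsto(4010)\) then recover exactly the paper's images. What your approach buys is the conceptual link to the idempotent machinery of \S\ \ref{s:Idempotents}, which the paper develops but never actually invokes in this proof; what the paper's approach buys is brevity. Two harmless bookkeeping remarks: the map \((a_0a_1a_2a_3)\mapsto(a_3a_0a_1a_2)\) that you write down (and which matches all of the paper's computed images) is a cyclic \emph{right} rotation, not a left shift as you call it; and your confirmation \(\mathcal{K}\mathcal{O}_N=\mathfrak{L}^{(1+\tau^2)+4(\tau+\tau^3)}\) correctly uses the splitting \(\mathcal{L}\mathcal{O}_N=\mathfrak{L}\cdot\mathfrak{L}^{\tau^2}\), \(\mathcal{L}^{\tau}\mathcal{O}_N=\mathfrak{L}^{\tau}\cdot\mathfrak{L}^{\tau^3}\) stated just before the Proposition, so the coordinate identification is sound.
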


%--------------------------------------------------------------------------------

\begin{proof}
\begin{enumerate}
\item
\((\mathfrak{L}^{1+\tau^2+\tau+\tau^3})^{\tau}=\mathfrak{L}^{\tau+\tau^3+\tau^2+1}\hat{=}(1111)\).
\item
\((\mathfrak{L}^{1+\tau^2+4\tau+4\tau^3})^{\tau}=\mathfrak{L}^{\tau+\tau^3+4\tau^2+4}\hat{=}(4141)\).
\item
\((\mathfrak{L}^{1+4\tau^2+2\tau+3\tau^3})^{\tau}=\mathfrak{L}^{\tau+4\tau^3+2\tau^2+3}\hat{=}(3124)\),
\((\mathfrak{L}^{1+4\tau^2+3\tau+2\tau^3})^{\tau}=\mathfrak{L}^{\tau+4\tau^3+3\tau^2+2}\hat{=}(2134)\).
\item
\((\mathfrak{L}^{1+4\tau^2+\tau+4\tau^3})^{\tau}=\mathfrak{L}^{\tau+4\tau^3+\tau^2+4}\hat{=}(4114)\),
\((\mathfrak{L}^{1+4\tau^2})^{\tau}=\mathfrak{L}^{\tau+4\tau^3}\hat{=}(0104)\), \\
\((\mathfrak{L}^{\tau+4\tau^3)^{\tau}=\mathfrak{L}^{\tau^2+4}}\hat{=}(4010). \qedhere\)
\end{enumerate}
\end{proof}

%--------------------------------------------------------------------------------

\begin{theorem}
\label{thm:KernelDimension}
For a pure metacyclic field with one of the types \(\alpha_2\), \(\beta_1\), \(\delta_1\), \(\zeta_1\),
the one-dimensional relative principal factorization in the case \(s_4=1\) must be generated by
\[\text{either } \quad \mathfrak{K}_1:=\mathfrak{L}^{1+4\tau^2+2\tau+3\tau^3}\hat{=}(1243)
 \quad \text{ or }  \quad \mathfrak{K}_2:=\mathfrak{L}^{1+4\tau^2+3\tau+2\tau^3}\hat{=}(1342).\] 
In any case \(s_4\ge 1\), it cannot be generated by any of the ideals
\[\mathfrak{L}^{1+4\tau^2+\tau+4\tau^3}\hat{=}(1144)  \quad \text{ or } \quad
\mathfrak{L}^{1+4\tau^2}\hat{=}(1040)  \quad \text{ or } \quad
\mathfrak{L}^{\tau+4\tau^3}\hat{=}(0104).\]
\end{theorem}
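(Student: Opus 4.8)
The plan is to reduce the statement to an eigenvalue analysis for the action of $\tau$ on the one-dimensional relative principal factorization. The decisive structural fact is that this subspace, namely the semi-local component of $(\mathcal{P}_{N/K}/\mathcal{P}_K)\cap\ker(N_{N/M})$ attached to a prime $\ell\equiv 1\,(\mathrm{mod}\,5)$, is invariant under $\tau$. First I would establish this $\tau$-invariance: the group $\mathcal{P}_N$ of principal ideals is stable under all of $\mathrm{Gal}(N/\mathbb{Q})$, hence under $\tau$; the automorphism $\tau$ maps $K=\mathbb{Q}(\zeta_5)$ to itself and, since $\tau$ restricts to the nontrivial automorphism of $K^+=\mathbb{Q}(\sqrt{5})$ (sending $\sqrt{5}\mapsto-\sqrt{5}$) while fixing $\sqrt[5]{D}$, also maps $M=\mathbb{Q}(\sqrt{5},\sqrt[5]{D})$ to itself, so $\mathcal{P}_K$ and $\mathcal{P}_M$ are $\tau$-stable; and the defining relation $\sigma\tau=\tau\sigma^2$ gives $\tau\sigma\tau^{-1}=\sigma^3$, so $\tau$ normalizes $\langle\sigma\rangle=\mathrm{Gal}(N/K)$ and therefore stabilizes the group $\mathcal{I}_{N/K}$ of $\sigma$-ambiguous ideals. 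Finally $N_{N/M}=1+\tau^2$ commutes with $\tau$ in the commutative group ring $\mathbb{F}_5[\langle\tau\rangle]$, whence $\ker(N_{N/M})$ is $\tau$-stable as well. I expect this first step to be the main obstacle, since it requires tracking the interplay of the two generators through the commutation relation.

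Once $\tau$-invariance is in hand, the remainder is short. A one-dimensional $\tau$-invariant $\mathbb{F}_5$-subspace $\langle\mathfrak{A}\rangle$ must be a $\tau$-eigenline: since $\tau$ is invertible we have $\mathfrak{A}^\tau=\lambda\mathfrak{A}$ for some $\lambda\in\mathbb{F}_5^\times$. Membership of the generator in $\ker(N_{N/M})=\ker(1+\tau^2)$ then forces $(1+\lambda^2)\mathfrak{A}=0$, that is $\lambda^2\equiv-1\equiv 4\,(\mathrm{mod}\,5)$, whence $\lambda\in\{2,3\}$; the values $\lambda\in\{1,4\}$ are excluded because then $1+\lambda^2\equiv 2\ne 0$. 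By Proposition \ref{prp:Invariance}(3) the generator $\mathfrak{K}_1\hat{=}(1243)$ is a $\tau$-eigenvector of eigenvalue $3$ (since $\mathfrak{K}_1^\tau=\mathfrak{K}_1^3$) and $\mathfrak{K}_2\hat{=}(1342)$ is a $\tau$-eigenvector of eigenvalue $2$ (since $\mathfrak{K}_2^\tau=\mathfrak{K}_2^2$).

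For the first assertion I specialize to $s_4=1$, where by Theorem \ref{thm:RelDim} the space $(\mathcal{I}_{N/K}/\mathcal{I}_K)\cap\ker(N_{N/M})$ is exactly the two-dimensional span $\mathbb{F}_5\mathfrak{K}_1\oplus\mathbb{F}_5\mathfrak{K}_2$. Since $\tau$ acts on these two generators with the distinct eigenvalues $3$ and $2$, the only one-dimensional $\tau$-invariant subspaces it contains are $\langle\mathfrak{K}_1\rangle$ and $\langle\mathfrak{K}_2\rangle$; hence the one-dimensional relative principal factorization must coincide with one of them. For the second assertion, valid for every $s_4\ge 1$, I note that the three candidate ideals $\mathfrak{L}^{1+4\tau^2+\tau+4\tau^3}\hat{=}(1144)$, $\mathfrak{L}^{1+4\tau^2}\hat{=}(1040)$, $\mathfrak{L}^{\tau+4\tau^3}\hat{=}(0104)$ do lie in $\ker(N_{N/M})$, as a one-line computation applying $1+\tau^2$ to each exponent vector confirms; but by Proposition \ref{prp:Invariance}(4) the automorphism $\tau$ sends each of them to a linearly independent ideal, so none of them spans a $\tau$-eigenline. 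Consequently none can generate the $\tau$-invariant one-dimensional relative principal factorization, which completes the argument.
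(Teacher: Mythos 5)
Your proposal is correct and follows essentially the same route as the paper's proof: both rest on the \(\tau\)-stability of the relative principal factorization inside \((\mathcal{I}_{N/K}/\mathcal{I}_K)\cap\ker(N_{N/M})\) together with Proposition \ref{prp:Invariance}, which exhibits \(\mathfrak{K}_1,\mathfrak{K}_2\) as the only \(\tau\)-eigenlines (eigenvalues \(3\) and \(2\)) and shows that each of the three excluded ideals has a \(\tau\)-image linearly independent of itself, so that its principality would force a two-dimensional relative principal factorization, impossible when \(R=1\). Your extra steps --- the explicit verification of \(\tau\)-invariance via \(\tau\sigma\tau^{-1}=\sigma^3\) and the a priori eigenvalue constraint \(\lambda^2\equiv -1\,(\mathrm{mod}\,5)\) from membership in \(\ker(1+\tau^2)\) --- simply make explicit what the paper leaves implicit.
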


%--------------------------------------------------------------------------------

\begin{proof}
In the case \(s_4=1\),
the only two \(\tau\)-invariant \(1\)-dimensional subspaces of the
\(2\)-dimensional \(\mathbb{F}_5\)-vectorspace of primitive relatively ambiguous ideals
\((\mathcal{I}_{N/K}/\mathcal{I}_K)\bigcap\ker{N_{N/L}}\)
are generated by
\[\mathfrak{K}_1:=\mathfrak{L}^{1+4\tau^2+2\tau+3\tau^3}\hat{=}(1243)
\quad \text{ and } \quad \mathfrak{K}_2:=\mathfrak{L}^{1+4\tau^2+3\tau+2\tau^3}\hat{=}(1342),\]
according to Proposition
\ref{prp:Invariance}.
If, for instance, \(\mathfrak{K}_1=A\mathcal{O}_N\) is principal, then
\((3124)\hat{=}\mathfrak{L}^{\tau+4\tau^3+2\tau^2+3}=(\mathfrak{L}^{1+4\tau^2+2\tau+3\tau^3})^{\tau}=A^{\tau}\mathcal{O}_N\)
is also principal, but being equal to the third power \(\mathfrak{K}_1^3\)
it also belongs to the \(1\)-dimensional subspace generated by \(\mathfrak{K}_1\).

In any case \(s_4\ge 1\), we give the proof exemplarily for \(\mathfrak{L}^{1+4\tau^2}\).
If \((1040)\hat{=}\mathfrak{L}^{1+4\tau^2}=A\mathcal{O}_N\) were principal,
then \((0104)\hat{=}\mathfrak{L}^{\tau+4\tau^3}=(\mathfrak{L}^{1+4\tau^2})^{\tau}=A^{\tau}\mathcal{O}_N\) would also be principal,
according to Proposition
\ref{prp:Invariance}.
Since \((1040)\) and \((0104)\) are linearly independent over \(\mathbb{F}_5\),
we would obtain a two-dimensional relative principal factorization,
which is impossible for the types \(\alpha_2\), \(\beta_1\), \(\delta_1\), \(\zeta_1\).
(This can occur only for a field with type  \(\alpha_1\).)
\end{proof}

%\newpage
%--------------------------------------------------------------------------------

\subsection{Application to metacyclic fields}
\label{ss:Applications}

%\noindent
For the investigation of differential principal factors,
i.e., \textit{ambiguous principal ideals}, in the relative extensions
\(L/\mathbb{Q}\), \(M/K^+\) and \(N/K\),
it is illuminating to begin by determining the order of the
finite group of all \textit{primitive ambiguous ideals} of these extensions.
The word \lq\lq primitive\rq\rq\ always refers to the base field of an extension.
Denoting by \(G:=\mathrm{Gal}(N/K)\)
the subgroup of the metacyclic group \(\mathrm{Gal}(N/\mathbb{Q})=\langle\sigma,\tau\rangle\)
which is generated by the automorphism \(\sigma\) of order \(5\),
we let \(\mathcal{I}_N^G:=\lbrace\mathfrak{A}\in\mathcal{I}_N\mid\mathfrak{A}^\sigma=\mathfrak{A}\rbrace\),
\(\mathcal{I}_M^G:=\mathcal{I}_M\cap\mathcal{I}_N^G\) and
\(\mathcal{I}_L^G:=\mathcal{I}_L\cap\mathcal{I}_N^G\).
Note that these groups correspond to
\(\mathcal{I}_{N/K}=\mathcal{I}_N^G\), \(\mathcal{I}_{M/K^+}=\mathcal{I}_M^G\) and \(\mathcal{I}_{L/\mathbb{Q}}=\mathcal{I}_L^G\).

%--------------------------------------------------------------------------------

\begin{theorem}
\label{thm:AmbIdl}
Let \(R\) be the product of all prime numbers ramified in \(L\) and put
\begin{equation}
\label{eqn:Counters}
\begin{aligned}
T       &:=\#\lbrace q\in\mathbb{P}\mid v_q(R)=1\rbrace, \text{ that is } R=q_1\cdots q_T, \\
s_{2,4} &:=\#\lbrace\ell\in\mathbb{P}\mid v_\ell(R)=1,\ \ell\equiv\pm 1\,(\mathrm{mod}\,5)\rbrace \text{ and} \\
s_4     &:=\#\lbrace\ell\in\mathbb{P}\mid v_\ell(R)=1,\ \ell\equiv +1\,(\mathrm{mod}\,5)\rbrace.
\end{aligned}
\end{equation}
\begin{enumerate}
\item
The group of \textbf{absolute} primitive ambiguous ideals is of order \(5^T\),
\begin{equation}
\label{eqn:AbsAmb}
\mathcal{I}_L^G/\mathcal{I}_{\mathbb{Q}}\simeq\prod_{j=1}^T\,\langle q_j\rangle/\langle q_j\rangle^5\simeq(\mathbb{Z}/5\mathbb{Z})^T,
\end{equation}
and it is characterized uniquely with the aid of absolute norms in \(\mathbb{Z}\).
\item
The group of \textbf{intermediate} primitive ambiguous ideals is of order \(5^{T+s_{2,4}}\),
\begin{equation}
\label{eqn:IntAmb}
\mathcal{I}_M^G/\mathcal{I}_{K^+}\simeq(\mathbb{Z}/5\mathbb{Z})^{T+s_{2,4}}
\end{equation} 
and
\((\mathcal{I}_M^G/\mathcal{I}_{K^+})/\ker(N_{M/L})\simeq\mathcal{I}_L^G/\mathcal{I}_{\mathbb{Q}}\),
where the norm kernel of order \(5^{s_{2,4}}\) is
\begin{equation}
\label{eqn:NormKernelL}
\ker(N_{M/L})=
\prod_{\ell\mid R,\ \ell\equiv\pm 1(5)}\,\langle\mathcal{L}^{1-\tau}\rangle/\langle\mathcal{L}^{1-\tau}\rangle^5.
\end{equation}
Here it is assumed that \(\ell\mathcal{O}_M=(\mathcal{L}^{1+\tau})^5\) with \(\mathcal{L}\in\mathbb{P}_M\) for \(\ell\equiv\pm 1\,(\mathrm{mod}\,5)\).
\item
The group of \textbf{relative} primitive ambiguous ideals is of order \(5^{T+s_{2,4}+2s_4}\),
\begin{equation}
\label{eqn:RelAmb}
\mathcal{I}_N^G/\mathcal{I}_K\simeq(\mathbb{Z}/5\mathbb{Z})^{T+s_{2,4}+2s_4}
\end{equation} 
and
\((\mathcal{I}_N^G/\mathcal{I}_K)/\ker(N_{N/M})\simeq\mathcal{I}_M^G/\mathcal{I}_{K^+}\),
where the norm kernel of order \(5^{2s_4}\) is
\begin{equation}
\label{eqn:NormKernelM}
\ker(N_{N/M})=
\prod_{\ell\mid R,\ \ell\equiv +1(5)}\,(\langle\mathfrak{L}^{1-\tau^2}\rangle/\langle\mathfrak{L}^{1-\tau^2}\rangle^5\times\langle\mathfrak{L}^{\tau-\tau^3}\rangle/\langle\mathfrak{L}^{\tau-\tau^3}\rangle^5)
\end{equation}
Here it is assumed that \(\ell\mathcal{O}_N=(\mathfrak{L}^{1+\tau^2+\tau+\tau^3})^5\) with \(\mathfrak{L}\in\mathbb{P}_N\) for \(\ell\equiv +1\,(\mathrm{mod}\,5)\). \\
\textbf{Summarized} and expressed as a direct product of elementary abelian \(5\)-groups:
\begin{equation}
\label{eqn:AmbIdl}
\mathcal{I}_N^G/\mathcal{I}_K\simeq\mathcal{I}_L^G/\mathcal{I}_{\mathbb{Q}}\times\ker(N_{M/L})\times\ker(N_{N/M}).
\end{equation}
\end{enumerate}
\end{theorem}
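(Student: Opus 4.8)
The plan is to treat the three degree-$5$ extensions $L/\mathbb{Q}$, $M/K^+$ and $N/K$ uniformly, first computing the order of each group of primitive ambiguous ideals by a purely semi-local ramification count, and then chaining the two quadratic norm maps $N_{M/L}$ and $N_{N/M}$ together by means of Theorem \ref{thm:BottomTop}. Throughout I would use the identifications $\mathcal{I}_{L/\mathbb{Q}}=\mathcal{I}_L^G$, $\mathcal{I}_{M/K^+}=\mathcal{I}_M^G$, $\mathcal{I}_{N/K}=\mathcal{I}_N^G$ recorded just before the statement, so that ``ambiguous'' may be read as ``$\sigma$-invariant''. For item (1) the extension $L/\mathbb{Q}$ has prime degree $5$, so Remark \ref{rmk:PrimeDegree} reduces everything to the primes of $\mathbb{Q}$ that ramify totally in $L$; these are exactly $q_1,\ldots,q_T\mid R$ with $q_j\mathcal{O}_L=\mathfrak{q}_j^5$, and formula \eqref{eqn:PrimeDegree} yields $\mathcal{I}_L^G/\mathcal{I}_{\mathbb{Q}}\simeq(\mathbb{Z}/5\mathbb{Z})^T$ with the overlying primes $\mathfrak{q}_j$ as generators, pinned down in $\mathcal{I}_L$ by $N_{L/\mathbb{Q}}(\mathfrak{q}_j)=q_j$.

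For items (2) and (3) I would obtain the ambient orders $5^{T+s_{2,4}}$ and $5^{T+s_{2,4}+2s_4}$ from Remark \ref{rmk:PrimeDegree} again, now counting the totally ramified primes of $K^+$ and of $K$. The bookkeeping rests on the splitting of the $q_j$ in the cyclotomic layers: a prime $\ell\equiv\pm 1\pmod 5$ splits in $K^+=\mathbb{Q}(\sqrt 5)$ (two totally ramified primes in $M/K^+$), while $\ell\equiv +1\pmod 5$ splits completely into four primes in $K$; a prime $q\equiv\pm 2\pmod 5$ stays inert in both $K^+$ and $K$. Summing the contributions gives $T+s_{2,4}$ for $M/K^+$ and $T+s_2+3s_4=T+s_{2,4}+2s_4$ for $N/K$, matching the asserted orders.

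Next I would invoke Theorem \ref{thm:BottomTop} twice, once with $(F_0,E_0,F,E)=(\mathbb{Q},K^+,L,M)$ and once with $(K^+,K,M,N)$, in both cases $p=5$ and $q=2$, to obtain the epimorphisms $N_{M/L}$ and $N_{N/M}$ onto the layer below together with the split short exact sequences, hence the direct product decompositions $\mathcal{I}_M^G/\mathcal{I}_{K^+}\simeq(\mathcal{I}_L^G/\mathcal{I}_{\mathbb{Q}})\times\ker(N_{M/L})$ and $\mathcal{I}_N^G/\mathcal{I}_K\simeq(\mathcal{I}_M^G/\mathcal{I}_{K^+})\times\ker(N_{N/M})$. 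The explicit identification of the two kernels is where Corollary \ref{cor:TrivProj}(3) is the workhorse: since $M/L$ and $N/M$ are quadratic, each norm kernel is generated by the difference classes $\mathfrak{P}_1\mathfrak{P}_2^{-1}$ at the split primes. For $N_{M/L}$ the relevant primes of $L$ lie above the $\ell\equiv\pm 1\pmod 5$ with $\mathfrak{l}\mathcal{O}_M=\mathcal{L}\,\mathcal{L}^{\tau}$, giving the $s_{2,4}$ generators $\mathcal{L}^{1-\tau}$ of \eqref{eqn:NormKernelL}; for $N_{N/M}$ the split primes of $M$ sit above the $\ell\equiv +1\pmod 5$ and break into the two $\tau^2$-conjugate pairs $\lbrace\mathfrak{L},\mathfrak{L}^{\tau^2}\rbrace$ and $\lbrace\mathfrak{L}^{\tau},\mathfrak{L}^{\tau^3}\rbrace$, yielding the $2s_4$ generators $\mathfrak{L}^{1-\tau^2}$ and $\mathfrak{L}^{\tau-\tau^3}$ of \eqref{eqn:NormKernelM}. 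Composing the two decompositions gives the summarizing isomorphism \eqref{eqn:AmbIdl}.

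The step I expect to be the main obstacle is the semi-local treatment of the special prime $5$, which ramifies in every layer but, in species $2$, is only partially ramified in $L$ (see the last two cases of Theorem \ref{thm:RelDiffQuintic}) and hence contributes nothing to the ambiguous ideal groups. I would isolate $5$ from the outset, reading off from the conductor formula \eqref{eqn:Conductor} whether $5\mid f$, and verify case by case that its contribution is already correctly absorbed into the counts $T$, $T+s_{2,4}$, $T+s_{2,4}+2s_4$; everything else is the routine linear algebra of elementary abelian $\mathbb{F}_5$-spaces furnished by Corollary \ref{cor:PrimitiveAmbiguity} and Theorem \ref{thm:BottomTop}.
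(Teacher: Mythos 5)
Your argument is correct, and in substance it reassembles the paper's own machinery; the difference from the paper's proof of this particular theorem is one of packaging. Your counts are right (including the bookkeeping \(T+s_2+3s_4=T+s_{2,4}+2s_4\) and the isolation of the prime \(5\), which counts in \(T\) exactly when it divides \(f\), i.e.\ in species 1a/1b, and contributes nothing in species 2 because \(\gcd(1,4)=1\)); the two applications of Theorem \ref{thm:BottomTop} with \((F_0,E_0,F,E)=(\mathbb{Q},K^+,L,M)\) and \((K^+,K,M,N)\) give the split decompositions and hence the kernel orders \(5^{s_{2,4}}\) and \(5^{2s_4}\); and the trivial-norm classes \(\mathcal{L}^{1-\tau}\), \(\mathfrak{L}^{1-\tau^2}\), \(\mathfrak{L}^{\tau-\tau^3}\) furnished by Corollary \ref{cor:TrivProj}(3) then exhaust the kernels by comparison of orders. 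The paper instead argues inline: for item (1) it transports \(\mathcal{I}_L^G/\mathcal{I}_{\mathbb{Q}}\) by \(N_{L/\mathbb{Q}}\) onto \(\langle q_1,\ldots,q_T\rangle/\langle q_1,\ldots,q_T\rangle^5\) inside \(\mathbb{Q}^\times\) modulo fifth powers, which is precisely what justifies the clause \lq\lq characterized uniquely with the aid of absolute norms in \(\mathbb{Z}\)\rq\rq\ that your version only gestures at; for item (2) it splits \(\mathcal{I}_M^G/\mathcal{I}_{K^+}\simeq\mathcal{I}_1\times\mathcal{I}_2\) by splitting behavior and evaluates the norm semi-locally, noting \(N_{M/L}(\mathcal{Q})=\mathfrak{q}^2\) with exponent \(2\) invertible modulo \(5\) for non-split \(q\ne 5\), and \(N_{M/L}(\mathcal{Q})=\mathfrak{q}\) for \(q=5\) --- exactly the case analysis at the special prime you flagged as the expected obstacle --- identifying the kernel on \(\mathcal{I}_2\) as \(\lbrace\mathcal{Q}^{v}(\mathcal{Q}^\tau)^{v'}\mid v+v'\equiv 0\ (\mathrm{mod}\ 5)\rbrace\), with item (3) \lq\lq similar\rq\rq. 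Your assembly via Theorem \ref{thm:BottomTop} and Corollary \ref{cor:TrivProj} is in fact how the paper proves the parallel Theorems \ref{thm:IntDim} and \ref{thm:RelDim}, so it buys uniformity and shorter bookkeeping, while the paper's direct computation makes the norm characterization and the injectivity on the non-split component (including \(q=5\)) explicit rather than implicit. One point worth spelling out in your write-up: the kernel of the \emph{induced} map on quotients consists of classes whose norm merely lies in \(\mathcal{I}_{\mathbb{Q}}\) (resp.\ \(\mathcal{I}_{K^+}\)), not of trivial-norm ideals per se, so Corollary \ref{cor:TrivProj}(3) alone does not identify it; you need the order count from Theorem \ref{thm:BottomTop} --- which your outline does supply --- to conclude that the difference classes span the whole kernel.
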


%--------------------------------------------------------------------------------

\begin{proof}
Generally, for any prime number \(q\in\mathbb{P}\) which divides \(R\), that is \(v_q(R)=1\),
we have full ramification \(q\mathcal{O}_L=\mathfrak{q}^5\) in \(L\) with a prime ideal \(\mathfrak{q}\in\mathbb{P}_L\).
\begin{enumerate}
\item
Since the extension \(L/\mathbb{Q}\) is of prime degree \(5\),
an ambiguous prime ideal \(\mathfrak{q}\) in \(\mathcal{I}_L^G\) is
either inert
or totally ramified over \(\mathbb{Q}\).
In the former case it is imprimitive, \(\mathfrak{q}\in\mathcal{I}_{\mathbb{Q}}\),
and in the latter case it lies over a prime divisor \(q\) of \(R\).
The relation \(\mathfrak{q}^5=\mathfrak{q}^{1+\sigma+\ldots+\sigma^4}=N_{L/\mathbb{Q}}\mathfrak{q}=q\mathbb{Z}\in\mathcal{I}_{\mathbb{Q}}\)
shows that a primitive ambiguous ideal must be free of fifth powers, whence
\(\mathcal{I}_L^G/\mathcal{I}_{\mathbb{Q}}\simeq\prod_{q\mid R}\,\lbrace\mathfrak{q}^{v_q}\mid 0\le v_q\le 4\rbrace\)
and multiplication involves reduction of exponents modulo \(5\).
If \(R=q_1\cdots q_T\),
the norm map \(N_{L/\mathbb{Q}}\) establishes an isomorphism
\(\mathcal{I}_L^G/\mathcal{I}_{\mathbb{Q}}\simeq
N_{L/\mathbb{Q}}(\mathcal{I}_L^G)/(\mathbb{Q}^\times)^5=
(\mathbb{Q}^\times)^5\cdot\langle q_1,\ldots,q_T\rangle/(\mathbb{Q}^\times)^5
\simeq\langle q_1,\ldots,q_T\rangle/(\langle q_1,\ldots,q_T\rangle\cap(\mathbb{Q}^\times)^5)=\)\\
\(\langle q_1,\ldots,q_T\rangle/\langle q_1,\ldots,q_T\rangle^5
\simeq(\langle q_1\rangle/\langle q_1\rangle^5)\times\ldots\times(\langle q_T\rangle/\langle q_T\rangle^5)
\simeq(\mathbb{Z}/5\mathbb{Z})^T\).
\item
For the intermediate extension \(M/K^+\), we have to take into account the decomposition law
for the primes \(q\equiv\pm 1\,(\mathrm{mod}\,5)\).
A similar reasoning as in item (1) shows that \(\mathcal{I}_M^G/\mathcal{I}_{K^+}\simeq\mathcal{I}_1\times\mathcal{I}_2\)
is the direct product of two components,
\(\mathcal{I}_1:=\prod_{q\not\equiv\pm 1\,(\mathrm{mod}\,5)}\,\lbrace\mathcal{Q}^{v_q}\mid 0\le v_q\le 4\rbrace\)
of order \(5^{t-s_2}\) and, semi-locally with respect to \(q\),
\(\mathcal{I}_2:=\prod_{q\equiv\pm 1\,(\mathrm{mod}\,5)}\,\lbrace\mathcal{Q}^{v_q}(\mathcal{Q}^\tau)^{v_q^\prime}\mid 0\le v_q,v_q^\prime\le 4\rbrace\)
of order \((5^2)^{s_2}=5^{2s_2}\).\\
The norm map \(N_{M/L}\) is injective on \(\mathcal{I}_1\)
but has kernel
\(\ker(N_{M/L})=
\prod_{q\equiv\pm 1(5)}\,\lbrace\mathcal{Q}^{v_q}(\mathcal{Q}^\tau)^{v_q^\prime}\mid v_q+v_q^\prime\equiv 0\,(\mathrm{mod}\,5)\rbrace\)
of order \(5^{s_2}\) on \(\mathcal{I}_2\).
For the justification of these claims observe, firstly,
that \(N_{M/L}(\mathcal{Q})=\mathfrak{q}^2\), with exponent \(2\) coprime to \(5\),
for \(q\not\equiv\pm 1\,(\mathrm{mod}\,5)\), \(q\ne 5\), since \(\mathfrak{q}\) remains inert in \(M\),
secondly, that \(N_{M/L}(\mathcal{Q})=\mathfrak{q}\) for \(q=5\), since \(\mathfrak{q}\mathcal{O}_M=\mathcal{Q}^2\) ramifies in \(M\).
\item
Similar as (2). Formula
\eqref{eqn:AmbIdl} corresponds to formulas
\eqref{eqn:AIR}
and
\eqref{eqn:QuinticDecomp}.\qedhere
\end{enumerate}
\end{proof}
 
%\newpage
%--------------------------------------------------------------------------------

\section{Rational congruence conditions and asymptotic limit densities}
\label{s:Asymptotic}
\noindent
Using connections between differential principal factorization types and invertible residue classes,
we prove that the asymptotic limit density of certain types is zero.

\noindent
The existence, resp. the lack, of certain prime divisors of the conductor \(f\) 
permits some criteria for the classification of pure quintic fields. 

%--------------------------------------------------------------------------------

\begin{theorem}
\label{thm:DiffPrFact}

\begin{enumerate}
\item
\(\zeta \in N_{N/K}(U_N)\) can occur
only when \(f\) is divisible by no other primes than 
\(5\) or primes \(q_j \equiv\pm 1,\pm 7\,(\mathrm{mod}\,5^2)\). 
\item
Relative DPF of \(N/K\) can occur
only if some prime \(q_j \equiv +1\,(\mathrm{mod}\,5)\) divides \(f\). 
\item
Intermediate DPF of \(M/K^+\) can occur
only if some prime \(q_j \equiv \pm 1\,(\mathrm{mod}\,5)\) divides \(f\). 
\item
Absolute DPF of \(L/\mathbb{Q}\), distinct from radicals, must exist
when no prime \(\equiv \pm 1\,(\mathrm{mod}\,5)\) divides \(f\) 
and \(f\) has at least one prime factor
distinct from \(5\) and from primes \(\equiv \pm 7\,(\mathrm{mod}\,5^2)\). 
\end{enumerate}

\end{theorem}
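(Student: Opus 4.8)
The plan is to order the four assertions by the depth of input they require: the divisibility constraints (2) and (3) fall out directly from the dimension bounds already in hand, while (1) carries the genuine arithmetic and (4) is then a corollary of (1) together with the same dimension theory.

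I would dispose of (2) and (3) at once. Relative DPF of \(N/K\) occurring means exactly \(R\ge 1\), and Corollary~\ref{cor:RelDim} gives \(R\le\min(2,2s_4)\); hence \(R\ge 1\) forces \(s_4\ge 1\), i.e.\ some prime \(\equiv +1\,(\mathrm{mod}\,5)\) divides \(f\), which is (2). Likewise intermediate DPF of \(M/K^+\) means \(I\ge 1\), and Corollary~\ref{cor:IntDim} gives \(I\le\min(2,s_2+s_4)\), so \(s_2+s_4\ge 1\) and some prime \(\equiv\pm 1\,(\mathrm{mod}\,5)\) divides \(f\), which is (3). No further computation is needed here.

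For (1) the heart is a local norm computation at the ramified primes. If \(\zeta=N_{N/K}(u)\) for a unit \(u\in U_N\), then for every prime \(\mathfrak{q}\mid q\) of \(K\) that ramifies in \(N/K\) the localization shows \(\zeta\) is a local norm from \(N_{\mathfrak{Q}}/K_{\mathfrak{q}}\); at unramified primes this is automatic and the prime above \(5\) is explicitly permitted, so the only constraints come from ramified \(\mathfrak{q}\mid q\) with \(q\ne 5\). There \(N_{\mathfrak{Q}}/K_{\mathfrak{q}}\) is totally tamely ramified cyclic of degree \(5\), of Kummer shape \(K_{\mathfrak{q}}(\sqrt[5]{D})\) with \(v_{\mathfrak{q}}(D)=e\in\{1,2,3,4\}\). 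I would evaluate the tame Hilbert symbol \((D,\zeta)_{\mathfrak{q}}\), which up to the factor \(e\) coprime to \(5\) reduces to \(\bar\zeta^{(N\mathfrak{q}-1)/5}\) in the residue field \(k_{\mathfrak{q}}=\mathbb{F}_{q^{f_0}}\), where \(f_0\) is the order of \(q\) in \((\mathbb{Z}/5\mathbb{Z})^\times\). Thus \(\zeta\) is a local norm at \(\mathfrak{q}\) precisely when the primitive fifth root of unity \(\bar\zeta\) is a fifth power in \(k_{\mathfrak{q}}^\times\), i.e.\ when \(25\mid q^{f_0}-1\). The elementary congruence lemma \(q^{f_0}\equiv 1\,(\mathrm{mod}\,25)\Longleftrightarrow q\equiv\pm 1,\pm 7\,(\mathrm{mod}\,25)\) — valid because \(\{x:x^4\equiv 1\,(\mathrm{mod}\,25)\}=\{\pm 1,\pm 7\}\) and \(f_0\mid 4\) — then yields the stated congruence. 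I expect the main obstacle to be exactly this local step: fixing the tame symbol and reducing local-norm membership to the residue-field power condition.

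Finally (4) follows by combining (1) with the dimension theory. The first hypothesis gives \(s_2=s_4=0\), so Corollaries~\ref{cor:IntDim} and~\ref{cor:RelDim} force \(I=R=0\), and the decomposition~\eqref{eqn:QuinticDecomp} with \(A+I+R=U+1\) collapses to \(A=U+1\). The second hypothesis supplies a prime \(q\mid f\), \(q\ne 5\), with \(q\equiv\pm 2\,(\mathrm{mod}\,5)\) and \(q\not\equiv\pm 7\,(\mathrm{mod}\,25)\); such \(q\) is automatically \(\not\equiv\pm 1\,(\mathrm{mod}\,25)\), hence \(q\not\equiv\pm 1,\pm 7\,(\mathrm{mod}\,25)\), so the contrapositive of (1) gives \(\zeta\notin N_{N/K}(U_N)\). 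Therefore \(N_{N/K}(U_N)\subsetneq U_K\) and \(U\ge 1\), whence \(A=U+1\ge 2\). Since the radicals \(\sqrt[5]{D^k}\) generate only the single \(\mathbb{F}_5\)-line spanned by \((\sqrt[5]{D})\) in \(\mathcal{P}_{L/\mathbb{Q}}/\mathcal{P}_{\mathbb{Q}}\), an absolute DPF outside that line must exist, which is the assertion.
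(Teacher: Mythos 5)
Your proposal is correct, and for items (1)--(3) it follows the same underlying route as the paper. The paper's proof of (1) consists of a single sentence invoking \emph{the properties of the quintic Hilbert symbol over} \(K\); your tame-symbol computation is precisely the content behind that citation, and it is carried out correctly: for a ramified \(\mathfrak{q}\mid q\), \(q\ne 5\), a global relation \(\zeta=N_{N/K}(u)\) localizes (there is a unique prime of \(N\) over \(\mathfrak{q}\)) to make \(\zeta\) a local norm, the tame symbol \((D,\zeta)_{\mathfrak{q}}\) reduces, up to the exponent \(e=v_q(D)\) coprime to \(5\), to the fifth-power residue character of \(\bar{\zeta}\) in \(k_{\mathfrak{q}}^\times\), and the condition \(25\mid q^{f_0}-1\) with \(f_0=\mathrm{ord}(q\bmod 5)\) is indeed equivalent to \(q\equiv\pm 1,\pm 7\,(\mathrm{mod}\,25)\), since \(q^{f_0}\equiv 1\,(\mathrm{mod}\,25)\) forces the order of \(q\) in \(U(\mathbb{Z}/25\mathbb{Z})\) to divide \(4\) and \(\lbrace x\mid x^4\equiv 1\,(\mathrm{mod}\,25)\rbrace=\lbrace\pm 1,\pm 7\rbrace\). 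For (2) and (3) the paper appeals directly to the decomposition laws \((e,f,g)=(1,1,4)\), resp. \((1,1,2)\) and \((1,2,2)\), of primes \(\equiv\pm 1\,(\mathrm{mod}\,5)\) in \(K\) and \(K^+\); you route the same facts through Corollaries \ref{cor:IntDim} and \ref{cor:RelDim} (\(I\le\min(2,s_2+s_4)\), \(R\le\min(2,2s_4)\)), which is logically equivalent because those corollaries rest on exactly that splitting behavior (via Theorems \ref{thm:IntDim}, \ref{thm:RelDim}), and there is no circularity since they are established earlier in the paper. The one genuine divergence is item (4): the paper's enumerated proof contains only three items and ends after (3), so clause (4) is left unproved there; your argument --- hypothesis one forces \(s_2=s_4=0\), hence \(I=R=0\) and \(A=U+1\) by \eqref{eqn:HerbrandQuot} and \eqref{eqn:QuinticDecomp}; hypothesis two supplies \(q\mid f\) with \(q\ne 5\), \(q\not\equiv\pm 1,\pm 7\,(\mathrm{mod}\,25)\), so the contrapositive of (1) gives \(\zeta\notin N_{N/K}(U_N)\), whence \(U\ge 1\), \(A\ge 2\), and the group \(\mathcal{P}_{L/\mathbb{Q}}/\mathcal{P}_{\mathbb{Q}}\) strictly exceeds the one-dimensional line spanned by the radical classes --- is sound, and it matches the reasoning the paper itself deploys in analogous places (Theorems \ref{thm:NonSplitNonTU}, \ref{thm:PrmRad}, \ref{thm:Epsilon}). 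So your write-up is not only correct but strictly more complete than the paper's own proof of this theorem.
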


%--------------------------------------------------------------------------------

\begin{proof} 
\begin{enumerate}
\item
The condition for \(\zeta\in N_{N/K}(U_N)\) is due to 
the properties of the quintic Hilbert symbol (fifth power norm residue symbol) over \(K\). 
\item
The condition for relative DPF of \(N/K\) is a consequence of
the decomposition law \((e,f,g) = (1,1,4)\) for primes \(\equiv +1\,(\mathrm{mod}\,5)\) in \(K\). 
\item
The condition for intermediate DPF of \(M/K^+\) is due to
the decomposition types \((e,f,g) = (1,1,2)\), resp. \((1,2,2)\),
of primes \(\equiv -1\,(\mathrm{mod}\,5)\) in \(K^+\), resp. \(K\), 
and \((e,f,g) = (1,1,2)\) of primes \(\equiv +1\,(\mathrm{mod}\,5)\) in \(K^+\). \qedhere
\end{enumerate}
\end{proof} 

%\newpage
%--------------------------------------------------------------------------------

\begin{theorem}
\label{thm:PQ}
The asymptotic limit density of pure quintic fields \(L=\mathbb{Q}(\sqrt[5]{D})\)
with \(5\)th power free radicand \(D\) and one of the differential principal factorization types
\(\zeta_1\), \(\zeta_2\), \(\eta\), \(\vartheta\),
that is, where a primitive fifth root of unity \(\zeta_5\) is representable
as norm of a unit in the Galois closure \(N=K\cdot L\) with \(K=\mathbb{Q}(\zeta_5)\),
i.e. \((\exists Z\in U_N)\,\mathrm{Norm}_{N/K}(Z)=\zeta_5\), vanishes.
\end{theorem}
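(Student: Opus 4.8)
The plan is to combine the arithmetic characterization of the four types with a classical sieve estimate. First I would observe that all four types $\zeta_1,\zeta_2,\eta,\vartheta$ satisfy $\zeta_5\in N_{N/K}(U_N)$: this is the defining condition for $\zeta_1,\zeta_2,\eta$ in Theorem \ref{thm:MainQuintic}, while for $\vartheta$ one has $U=0$, i.e. $N_{N/K}(U_N)=U_K\ni\zeta_5$. Hence, by the necessary condition in item (1) of Theorem \ref{thm:DiffPrFact}, the conductor $f$ of $N/K$ is divisible by no primes other than $5$ or primes $q\equiv\pm 1,\pm 7\,(\mathrm{mod}\,5^2)$. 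Since by \eqref{eqn:Conductor} the prime divisors of $f$ coincide with the prime divisors of the squarefree kernel $R=q_1\cdots q_s$ of the radicand $D$ (together with possibly $5$), every prime factor of $D$ must lie in the set
\[
S:=\lbrace 5\rbrace\cup\lbrace q\in\mathbb{P}\mid q\equiv\pm 1,\pm 7\,(\mathrm{mod}\,25)\rbrace .
\]
Thus the fields in question form a subset of those whose radicand is supported entirely on $S$, and it suffices to bound the density of the latter.

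Next I would pin down the density of $S$. Because $7^2\equiv -1\,(\mathrm{mod}\,25)$, the residues $\lbrace\pm 1,\pm 7\rbrace=\langle 7\rangle$ form the cyclic subgroup of order $4$ in $(\mathbb{Z}/25\mathbb{Z})^\times$ (equivalently the group of fifth-power residues, of index $5$). By Dirichlet's theorem on primes in arithmetic progressions the admissible primes therefore have natural density $4/\varphi(25)=1/5$, so the complementary set $T:=\mathbb{P}\setminus S$ of inadmissible primes has density $4/5$ and, in particular, a divergent reciprocal sum $\sum_{p\in T}1/p=\infty$.

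The counting step is then elementary. Every integer $D$ supported on $S$ is coprime to all primes in $T$, so for any fixed bound $y$,
\[
\#\lbrace D\le x\mid p\mid D\Rightarrow p\in S\rbrace\le\#\Big\lbrace n\le x\,\Big|\,\gcd\big(n,\textstyle\prod_{p\in T,\,p\le y}p\big)=1\Big\rbrace\sim x\!\!\prod_{p\in T,\,p\le y}\!\!\big(1-\tfrac1p\big)
\]
as $x\to\infty$. Taking $\limsup_{x\to\infty}$ of the normalized count and then letting $y\to\infty$, the divergence of $\sum_{p\in T}1/p$ forces the Mertens product to $0$; hence $\#\lbrace D\le x\mid p\mid D\Rightarrow p\in S\rbrace=o(x)$. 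A sharper bound $O\big(x(\log x)^{-4/5}\big)$ follows from the Selberg--Delange method or Wirsing's theorem, but the crude sieve suffices.

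Finally, by Proposition \ref{prp:Normalization} the pure quintic fields correspond bijectively to normalized $5$th power free radicands, which constitute a set of positive natural density among the integers; dividing the $o(x)$ numerator by the $\asymp x$ count of all normalized radicands $\le x$ yields limit density $0$, as claimed. The one point requiring care is the interchange of the two limits in the sieve bound together with the relative-density bookkeeping, but this is the standard Mertens--Legendre argument; the genuine arithmetic content --- the reduction to a congruence restriction on the prime factors of $D$ --- is already supplied by Theorem \ref{thm:DiffPrFact}(1).
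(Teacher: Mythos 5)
Your proof is correct, and its arithmetic core coincides with the paper's: both reduce the four types to the single condition $\zeta_5\in N_{N/K}(U_N)$ (you rightly verify that $\vartheta$, having $U=0$, gives $N_{N/K}(U_N)=U_K\ni\zeta_5$), and both then use Theorem \ref{thm:DiffPrFact}(1) together with the conductor formula \eqref{eqn:Conductor} to force every prime divisor of $D$ into $S=\lbrace 5\rbrace\cup\lbrace q\equiv\pm 1,\pm 7\ (\mathrm{mod}\ 25)\rbrace$. Where you genuinely diverge is the analytic step. The paper argues probabilistically: it fixes the number $t$ of prime divisors of the conductor, computes the chance $P(t)=(4/20)^t=5^{-t}$ that all of them lie in the four admissible classes, and lets $t\to\infty$; this is a heuristic (it says nothing about radicands with a bounded number of prime factors), patched only by an asserted-without-proof closing remark that integers avoiding prime divisors from entire residue classes have density zero, plus the appeal to Dirichlet in Remark \ref{rmk:Asymptotic}. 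Your Legendre--Mertens sieve is precisely the rigorous version of that remark: the inadmissible primes $T$ have density $4/5$ by Dirichlet, the divergence of $\sum_{p\in T}1/p$ drives the Mertens product to zero, hence only $o(x)$ integers up to $x$ are supported on $S$; and your bookkeeping with normalized radicands (a positive proportion of all integers, via Proposition \ref{prp:Normalization} and the at most four co-radicands per isomorphism class) makes the relative-density conclusion explicit, which the paper leaves implicit. Two further points in your favor: your congruences are stated correctly modulo $25$ throughout, whereas the paper's proof contains a typo ($p\equiv\pm 1,\pm 7\ (\mathrm{mod}\ 9)$), and your identification of $\lbrace\pm 1,\pm 7\rbrace=\langle 7\rangle$ with the fifth-power residues mod $25$ is a sanity check the paper omits. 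In short, you buy a complete proof at the cost of a routine sieve computation, where the paper settles for a plausibility argument supplemented by an unproven standard fact.
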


\begin{proof}
A necessary condition for the existence of a unit \(Z\in U_N\)
with \(\mathrm{Norm}_{N/K}(Z)=\zeta_5\)
is that the conductor \(f\) associated with the fifth power free radicand \(D\)
has only prime divisors \(p=5\) or \(p\equiv\pm 1,\pm 7\,(\mathrm{mod}\,25)\)
but no prime divisor \(q\) in other invertible residue classes \((\mathrm{mod}\,25)\).

The group of invertible residue classes modulo \(25\) is \\
\(U(\mathbb{Z}/25\mathbb{Z})=\lbrace 1,2,3,4,6,7,8,9,11,12,13,14,16,17,18,19,21,22,23,24\rbrace\) with \(20\) elements. \\
For an increasing number \(n\) of prime factors of the conductor \(f\),
we have the following probabilities \(P(n)\) for a constitution by prime divisors \(p\equiv\pm 1,\pm 7\,(\mathrm{mod}\,9)\):
\begin{itemize}
\item
for \(n=1\): only \(4\), namely \(1,7,18,24\), among \(20\) residue classes, and thus \(P(1)=\frac{4}{20}=\frac{1}{5}\),
\item
for \(n=2\): only \(4^2\) among \(20^2\) pairs, and thus \(P(2)=\frac{4^2}{20^2}=\frac{1}{5^2}\),
\item
generally for \(n=t\): \(P(t)=\frac{1}{5^t}\).
\end{itemize}
Therefore, the limit probability is given by
\(\lim_{t\to\infty}\,P(t)=\lim_{t\to\infty}\,\frac{1}{5^t}=0\).
However, the asymptotic limit density of arbitrary positive integers
without prime divisors in an entire residue class, or even from several residue classes,
is generally zero, independently of the number \(n\).
\end{proof}

%\newpage
%--------------------------------------------------------------------------------

\begin{remark}
\label{rmk:Asymptotic}
In Theorem
\ref{thm:PQ},
we made use of Dirichlet's Theorem
on the density of primes which populate invertible residue classes.
\end{remark}

%\newpage
%--------------------------------------------------------------------------------

\section{Computational simplification}
\label{s:SimpleComp}
\noindent
By means of the following lemma, we shall obtain
a reduction of computational complexity
for determining the differential principal factorization type
of certain pure quintic number fields \(L=\mathbb{Q}(\root{5}\of{D})\).

\begin{lemma}
\label{lem:UnitNormInd}
Let \(\eta>1\) be the fundamental unit
of the real qudratic number field \(K^+=\mathbb{Q}(\sqrt{5})\),
that is \(\eta=\frac{1}{2}(1+\sqrt{5})\).

For the unit norm index \(U^+:=(U_{K^+}:N_{M/K^+}(U_M))\in\lbrace 1,5\rbrace\)
the following conditions hold:
\begin{equation}
\label{eqn:UnitNormInd}
\begin{aligned}
U^+=1 &\Longleftrightarrow \eta\in N_{N/K}(U_N), \text{ respectively} \\
U^+=5 &\Longleftrightarrow \eta\not\in N_{N/K}(U_N).
\end{aligned}
\end{equation}
\end{lemma}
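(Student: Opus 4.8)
The plan is to work inside the diamond of fields with $K^{+}\subset M\subset N$ and $K^{+}\subset K\subset N$, in which $N/K$ and $M/K^{+}$ are the two quintic layers (the former cyclic) and $K/K^{+}$, $N/M$ the two quadratic layers. Recall that $\mathrm{Gal}(N/K)=\langle\sigma\rangle$, that $\mathrm{Gal}(N/M)=\langle\tau^{2}\rangle$, and that $\tau^{2}$ acts on $K$ as complex conjugation, hence fixes $K^{+}=\mathbb{Q}(\sqrt{5})$ pointwise; in particular $\tau^{2}(\eta)=\eta$. The decisive observation is the identity $N_{N/K}(\mu)=N_{M/K^{+}}(\mu)$ for every $\mu\in M$: both sides equal the single product $\prod_{j=0}^{4}\sigma^{j}(\mu)$, the former because $\mathrm{Gal}(N/K)=\langle\sigma\rangle$ and the latter because $1,\sigma,\dots,\sigma^{4}$ represent the cosets of $\langle\tau^{2}\rangle$ in $\mathrm{Gal}(N/K^{+})=\langle\sigma,\tau^{2}\rangle$; the common value lies in $K^{+}$ since $\mu$ is $\tau^{2}$-fixed.

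From this identity the forward implication is immediate. If $U^{+}=1$, then $\eta\in N_{M/K^{+}}(U_{M})$, say $\eta=N_{M/K^{+}}(\mu)$ with $\mu\in U_{M}\subseteq U_{N}$; the identity gives $\eta=N_{N/K}(\mu)$, so $\eta\in N_{N/K}(U_{N})$.

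For the converse, suppose $\eta=N_{N/K}(u)$ with $u\in U_{N}$, and set $\mu:=N_{N/M}(u)=u^{1+\tau^{2}}\in U_{M}$. Because $\tau^{2}\sigma^{j}\tau^{2}=\sigma^{-j}$, the norm $N_{N/K}$ is $\tau^{2}$-equivariant, whence $N_{N/K}(\tau^{2}u)=\tau^{2}(N_{N/K}(u))=\tau^{2}(\eta)=\eta$. Therefore $N_{M/K^{+}}(\mu)=N_{N/K}(\mu)=N_{N/K}(u)\,N_{N/K}(\tau^{2}u)=\eta^{2}$. Combining this with the trivial membership $\eta^{5}=N_{M/K^{+}}(\eta)\in N_{M/K^{+}}(U_{M})$ and the coprimality $\gcd(2,5)=1$ yields $\eta=(\eta^{2})^{3}(\eta^{5})^{-1}\in N_{M/K^{+}}(U_{M})$, so $U^{+}=1$. (These same memberships, together with $-1=N_{M/K^{+}}(-1)$, also justify the claimed dichotomy $U^{+}\in\{1,5\}$, since they show $U_{K^{+}}/N_{M/K^{+}}(U_{M})$ is a quotient of $\langle\eta\rangle/\langle\eta^{5}\rangle\cong\mathbb{Z}/5\mathbb{Z}$.) The main obstacle is precisely this converse step: the norm construction $N_{N/M}(u)$ naturally produces only $\eta^{2}$ rather than $\eta$ as a norm from $M$, and it is essential both that $\eta$ be real—so that complex conjugation $\tau^{2}$ fixes it—and that $2$ be coprime to $5$, in order to descend from $\eta^{2}$ to $\eta$. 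Finally, the second equivalence $U^{+}=5\Leftrightarrow\eta\notin N_{N/K}(U_{N})$ follows by negation, as $U^{+}$ takes only the two values $1$ and $5$.
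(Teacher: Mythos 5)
Your proof is correct and follows essentially the same route as the paper: the forward direction via the identity $N_{M/K^+}(\mu)=\mu^{1+\sigma+\cdots+\sigma^4}=N_{N/K}(\mu)$ for $\mu\in U_M\subseteq U_N$, and the converse by producing $\eta^2=N_{M/K^+}\bigl(N_{N/M}(u)\bigr)$ and descending to $\eta$ via $(\eta^2)^3\equiv\eta\pmod{\langle\eta^5\rangle}$. The only cosmetic difference is that the paper invokes the commutativity of the norm diagram $N_{K/K^+}\circ N_{N/K}=N_{M/K^+}\circ N_{N/M}$ outright, whereas you verify the same fact by hand through the $\tau^2$-equivariance relation $\tau^2\sigma^j\tau^2=\sigma^{-j}$ and $\tau^2(\eta)=\eta$.
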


\begin{proof}
Since \(-1=(-1)^5=N_{M/K^+}(-1)\) and thus \(-1\in N_{M/K^+}(U_M)\), we have
\begin{center}
\(U^+=1\)
\(\Longleftrightarrow\) \(N_{M/K^+}(U_M)=U_{K^+}=\langle -1,\eta\rangle\)
\(\Longleftrightarrow\) \((\exists\,\upsilon\in U_M)\,N_{M/K^+}(\upsilon)=\eta\).
\end{center}
\textit{Necessity} of the condition:
If \(\eta=N_{M/K^+}(\upsilon)\) for some \(\upsilon\in U_M<U_N\),
then we trivially also have \(\eta=\upsilon^{Tr(\sigma)}=N_{N/K}(\upsilon)\),
where \(\mathrm{Gal}(N/K)=\langle\sigma\rangle\) and \(Tr(\sigma)=1+\sigma+\sigma^2+\sigma^3+\sigma^4\). \\
\textit{Sufficiency} of the condition:
If \(\eta=N_{N/K}(H)\) for some \(H\in U_N\),
then the commutativity of the norm diagram

%--------------------------------------------------------------------------------

\begin{table}[ht]
\label{tbl:NormDiagram}
\begin{center}
\begin{tabular}{ccccc}
               &                & \(N_{N/M}\)    &                &             \\
               & \(M\)          & \(\leftarrow\) & \(N\)          &             \\
 \(N_{M/K^+}\) & \(\downarrow\) & \(///\)        & \(\downarrow\) & \(N_{N/K}\) \\
               & \(K^+\)        & \(\leftarrow\) & \(K\)          &             \\
               &                & \(N_{K/K^+}\)  &                &             \\
\end{tabular}
\end{center}
\end{table}

%--------------------------------------------------------------------------------

\noindent
implies
\(\eta^2=N_{K/K^+}(\eta)=N_{K/K^+}(N_{N/K}(H))=N_{M/K^+}(N_{N/M}(H))=N_{M/K^+}(\upsilon)\),
where \(\upsilon:=N_{N/M}(H)\in U_M\).
Finally, \(\eta^2\) generates \(\langle\eta\rangle\) modulo \(\eta^5\),
since \((\eta^2)^3=\eta^6\equiv\eta\,(\mathrm{mod}\,\langle\eta^5\rangle)\).
Thus, we have \(U^+=1\).
\end{proof}

%--------------------------------------------------------------------------------

\begin{theorem}
\label{thm:NonSplitNonTU}
Let \(D=q_1^{e_1}\cdots q_s^{e_s}\) be a \(5\)th power free radicand
with \(s\ge 1\), distinct primes \(q_1,\ldots,q_s\) and exponents \(1\le e_j\le 4\).
If
\begin{equation}
\label{eqn:NonSplitNonTU}
(\forall\,1\le j\le s)\,q_j\not\equiv\pm 1\,(\mathrm{mod}\,{5}) \text{ and }
(\exists\,1\le j\le s)\,(q_j\ne 5 \text{ and } q_j\not\equiv\pm 7\,(\mathrm{mod}\,{5}))
\end{equation}
then the following criteria hold for \(L=\mathbb{Q}(\root{5}\of{D})\):
\begin{enumerate}
\item
\(L\) is of type \(\varepsilon\) \(\Leftrightarrow\) \((U_K:N_{N/K}(U_N))=5\).
\item
\(L\) is of type \(\gamma\) \(\Leftrightarrow\) \((U_K:N_{N/K}(U_N))=25\).
\end{enumerate}
\end{theorem}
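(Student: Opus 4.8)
The plan is to pin down the triplet $(A,I,R)$ from the first hypothesis, eliminate every type with $\zeta_5\in N_{N/K}(U_N)$ using the second hypothesis, and then read off the two surviving types from the primary invariant $U$.

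First I would exploit the hypothesis $(\forall\,j)\,q_j\not\equiv\pm 1\,(\mathrm{mod}\,5)$. Since a prime $q_j\ne 5$ divides $D$ exactly when it divides $R$, and hence the conductor $f$, while $5\equiv 0\,(\mathrm{mod}\,5)$ never lies in a class $\pm 1$, the counters $s_2$ and $s_4$ of Theorem \ref{thm:IntDim} both vanish. By Corollary \ref{cor:IntDim} this forces $I\le\min(2,s_2+s_4)=0$, and by Corollary \ref{cor:RelDim} it forces $R\le\min(2,2s_4)=0$; thus $I=R=0$ and $A=U+1$. Inspecting Table \ref{tbl:QuinticDPFTypes}, the only types compatible with $I=R=0$ are $\gamma$ (where $U=2$), the pair $\varepsilon,\eta$ (where $U=1$), and $\vartheta$ (where $U=0$). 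Every type carrying a nonzero intermediate or relative invariant is eliminated at this stage.

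Next I would use the second hypothesis to discard the types for which $\zeta_5$ is a relative unit norm. The prime $q_j$ supplied by \eqref{eqn:NonSplitNonTU} satisfies $q_j\ne 5$ and $q_j\not\equiv\pm 7\,(\mathrm{mod}\,5^2)$; moreover $q_j\not\equiv\pm 1\,(\mathrm{mod}\,5)$ already forces $q_j\not\equiv\pm 1\,(\mathrm{mod}\,5^2)$. As $q_j\mid D$ gives $q_j\mid f$, this $q_j$ is a conductor prime lying outside all residue classes admitted in Theorem \ref{thm:DiffPrFact}(1). The contrapositive of that theorem yields $\zeta_5\notin N_{N/K}(U_N)$. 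Among the four survivors this rules out both $\eta$ and $\vartheta$: type $\eta$ is precisely the $I=R=0$ representative of the class characterized by $\zeta_5\in N_{N/K}(U_N)$, and type $\vartheta$ has $U=0$, i.e. $N_{N/K}(U_N)=U_K\ni\zeta_5$. Only $\gamma$ and $\varepsilon$ remain.

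Finally, $\gamma$ and $\varepsilon$ are separated by the primary invariant alone: $\varepsilon$ has $U=1$ and $\gamma$ has $U=2$, so $(U_K:N_{N/K}(U_N))$ equals $5$, respectively $25$. Since $\{\varepsilon,\gamma\}$ now exhausts the admissible types, each value of the unit norm index selects exactly one type, which delivers both biconditionals at once. The only step with genuine content is the invocation of Theorem \ref{thm:DiffPrFact}(1) — equivalently the underlying quintic Hilbert (norm-residue) symbol computation — to guarantee that a single inadmissible prime divisor of $f$ blocks $\zeta_5$ from being a unit norm; the remainder is bookkeeping with the dimension bounds and the classification table.
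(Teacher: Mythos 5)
Your proof is correct and follows essentially the same route as the paper's: the first hypothesis forces $s_2=s_4=0$ and hence $I=R=0$, eliminating all types except $\gamma$, $\varepsilon$, $\eta$, $\vartheta$; the second hypothesis (correctly read modulo $5^2$, as in the paper's proof, despite the typo in the statement) excludes $\eta$ and $\vartheta$ via the norm-residue condition of Theorem \ref{thm:DiffPrFact}(1); and the unit norm index $U$ separates the two survivors. Your write-up is in fact slightly more explicit than the paper's, which merely asserts the exclusions, whereas you supply the contrapositive of Theorem \ref{thm:DiffPrFact}(1) and the observation that $U=0$ makes $\zeta_5$ trivially a norm.
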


\begin{proof}
Firstly, the condition
\((\forall\,1\le j\le s)\,q_j\not\equiv\pm 1\,(\mathrm{mod}\,5)\)
implies \(I=R=0\)
and thus discourages the types \(\alpha_1,\alpha_2,\alpha_3,\beta_1,\beta_2,\delta_1,\delta_2,\zeta_1,\zeta_2\)
for which either \(I\ge 1\) or \(R\ge 1\).
Moreover, the condition also implies 
\((\forall\,1\le j\le s)\,q_j\not\equiv\pm 1\,(\mathrm{mod}\,25)\).
Secondly, the other condition
\((\exists\,1\le j\le s)\,(q_j\ne 5 \text{ and } q_j\not\equiv\pm 7\,(\mathrm{mod}\,25))\),
together with 
\((\forall\,1\le j\le s)\,q_j\not\equiv\pm 1\,(\mathrm{mod}\,25)\),
excludes the types \(\vartheta,\eta\) and, once more, the types \(\zeta_1,\zeta_2\).
Consequently, only the types \(\gamma\) and \(\varepsilon\) remain as possibilities
and can be distinguished by means of the unit norm index \(U:=(U_K:N_{N/K}(U_N))\).
\end{proof}

%--------------------------------------------------------------------------------

\begin{corollary}
\label{cor:NonSplitNonTU}
Under the assumptions of Theorem
\ref{thm:NonSplitNonTU},
the decision between type \(\gamma\) and \(\varepsilon\) can be reduced
from the Galois closure \(N\) of degree \(20\)
to the non-Galois intermediate field \(M\) of degree \(10\).
\begin{enumerate}
\item
\(L\) is of type \(\varepsilon\) \(\Leftrightarrow\) \((U_{K^+}:N_{M/K^+}(U_M))=1\).
\item
\(L\) is of type \(\gamma\) \(\Leftrightarrow\) \((U_{K^+}:N_{M/K^+}(U_M))=5\).
\end{enumerate}
\end{corollary}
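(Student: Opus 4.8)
The plan is to obtain the two criteria by chaining three ingredients already available: the reduction in Theorem~\ref{thm:NonSplitNonTU} to the cyclotomic unit norm index $(U_K:N_{N/K}(U_N))\in\{5,25\}$, the description of that index in \S\ref{ss:AmbPrId} in terms of which of $\eta$ and $\zeta_5$ lie in $N_{N/K}(U_N)$, and Lemma~\ref{lem:UnitNormInd}, which converts membership of $\eta$ into the degree-$10$ index $U^+=(U_{K^+}:N_{M/K^+}(U_M))$. The whole argument is a translation of conditions, with no new computation beyond what these results supply.

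First I would observe that, under the standing hypotheses \eqref{eqn:NonSplitNonTU}, the proof of Theorem~\ref{thm:NonSplitNonTU} has already excluded the types $\zeta_1,\zeta_2,\eta$, and by Theorem~\ref{thm:MainQuintic} these are exactly the types characterised by $\zeta_5\in N_{N/K}(U_N)$. Consequently $\zeta_5\notin N_{N/K}(U_N)$. Because $N_{N/K}(U_N)$ always contains $U_K^5=\langle -1,\eta^5\rangle$, while $U_K/U_K^5$ is generated by the classes of $\eta$ and $\zeta_5$, the exclusion of $\zeta_5$ pins the index $(U_K:N_{N/K}(U_N))$ to the value $5$ when $\eta\in N_{N/K}(U_N)$ and to $25$ when $\eta\notin N_{N/K}(U_N)$.

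The point that needs care is why the intermediate value $5$ cannot be realised by a ``mixed'' norm image containing neither $\eta$ nor $\zeta_5$ individually. Here I would use that $N_{N/K}(U_N)$ is stable under $\mathrm{Gal}(K/\mathbb{Q})=\langle\tau\rangle$: since $\tau$ acts on $U_K/U_K^5\simeq\mathbb{F}_5^2$ diagonally with the two distinct eigenvalues arising from $\tau(\zeta_5)=\zeta_5^2$ and $\tau(\eta)=-\eta^{-1}$, its only invariant lines are the two coordinate axes $\langle\eta\rangle$ and $\langle\zeta_5\rangle$. Hence an index-$5$ norm image is forced to contain exactly one of $\eta,\zeta_5$, which is precisely the classification recorded in \S\ref{ss:AmbPrId}; combined with $\zeta_5\notin N_{N/K}(U_N)$ this yields $\eta\in N_{N/K}(U_N)$.

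Finally I would assemble the equivalences. By Theorem~\ref{thm:NonSplitNonTU} the field $L$ is of type $\varepsilon$ iff $(U_K:N_{N/K}(U_N))=5$ and of type $\gamma$ iff $(U_K:N_{N/K}(U_N))=25$; by the previous two paragraphs these translate into $\eta\in N_{N/K}(U_N)$ and $\eta\notin N_{N/K}(U_N)$ respectively; and by Lemma~\ref{lem:UnitNormInd} the former is equivalent to $U^+=(U_{K^+}:N_{M/K^+}(U_M))=1$ and the latter to $U^+=5$. Substituting gives the stated criteria and completes the descent from $N$ to $M$. I expect the only genuine obstacle to be the $\tau$-invariance argument of the third paragraph; everything else is bookkeeping with indices already computed in the excerpt.
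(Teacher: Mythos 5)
Your proof is correct, and its skeleton is the same as the paper's: the paper's own proof of the corollary is the one-line observation that, since type \(\vartheta\) (unit norm index \(1\)) is impossible under the hypotheses of Theorem \ref{thm:NonSplitNonTU}, Lemma \ref{lem:UnitNormInd} converts the dichotomy \((U_K:N_{N/K}(U_N))\in\lbrace 5,25\rbrace\) from that theorem into \(U^+\in\lbrace 1,5\rbrace\). Where you go beyond the paper is your third paragraph. The paper merely records in \S\ \ref{ss:AmbPrId} (and in the \(\eta\)/\(\zeta\) columns of Table \ref{tbl:DPFTypes}) that an index-\(5\) norm image contains exactly one of \(\eta\), \(\zeta_5\), without ruling out a \lq\lq mixed\rq\rq\ line generated by a class \(\overline{\eta\,\zeta_5^a}\) with \(a\not\equiv 0\,(\mathrm{mod}\,5)\); such a line would have index \(5\) with \(\eta\notin N_{N/K}(U_N)\), hence \(U^+=5\), and would break equivalence (1). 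Your \(\tau\)-invariance argument closes this gap correctly: \(N_{N/K}(U_N)\) is stable under \(\tau\) because conjugation by \(\tau\) permutes the powers of \(\sigma\), so \(\tau(N_{N/K}(u))=N_{N/K}(\tau(u))\); and \(\tau\) acts on \(U_K/U_K^5\simeq\mathbb{F}_5^2\) diagonally with the distinct eigenvalues \(2\) (on \(\bar{\zeta_5}\), from \(\tau(\zeta_5)=\zeta_5^2\)) and \(4\) (on \(\bar{\eta}\), from \(\tau(\eta)=-\eta^{-1}\) and \(-1\in U_K^5\)), so the only \(\tau\)-stable lines are the two coordinate axes. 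Thus your write-up proves a point the paper uses only implicitly; the remaining steps (exclusion of \(\zeta_5\) as a unit norm from the congruence hypotheses, exclusion of index \(1\) via type \(\vartheta\), and the final translation through Lemma \ref{lem:UnitNormInd}) coincide with the paper's argument.
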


\begin{proof}
Since the unique type \(\vartheta\) with \(U=1\) is impossible,
the Lemma
\ref{lem:UnitNormInd}
yields the equivalences
\(U=5 \Longleftrightarrow U^+=1\) and \(U=25 \Longleftrightarrow U^+=5\).
\end{proof}

%\newpage
%--------------------------------------------------------------------------------

\section{Computational and theoretical results}
\label{s:CompRslt}
\noindent
At the end of his \(1975\) article on class numbers of pure quintic fields,
Parry suggested verbatim
\lq\lq In conclusion the author would like to say that he believes
a numerical study of pure quintic fields would be most interesting\rq\rq\
\cite[p. 484]{Pa}.
Of course, it would have been rather difficult
to realize Parry's desire in \(1975\).
But now, \(40\) years later, we are in the position to use
the powerful computer algebra systems PARI/GP
\cite{PARI}
and MAGMA
\cite{BCP,BCFS,MAGMA}
for starting an attack against this hard problem.
This will actually be done in the present paper.

Even in \(1991\), when we generalized
Barrucand and Cohn's theory
\cite{BaCo2}
of principal factorization types
from pure cubic fields \(\mathbb{Q}(\root{3}\of{D})\)
to pure quintic fields \(\mathbb{Q}(\root{5}\of{D})\)
\cite{Ma0},
it was still impossible to verify our hypothesis
about the distinction between
absolute, intermediate and relative \textit{differential principal factors} (\S\
\ref{ss:DiffPrFact})
and about the values of the \textit{unit norm index} \((U_K:N_{N/K}(U_N))\) (\S\
\ref{ss:HerbQuot})
by actual computations.

%--------------------------------------------------------------------------------

All these conjectures have been proven by our most recent numerical investigations.
Our classification is based on the Hasse theorem
about the Herbrand quotient of the unit group \(U_N\) of the Galois closure \(N\)
as a module over the relative group \(G=\mathrm{Gal}(N/K)\) with respect to the cyclotomic subfield \(K\).
It only involves the unit norm index \((U_K:N_{N/K}(U_N))\) and
our \(13\) types of differential principal factors,
but not the index of subfield units \((U_N:U_0)\)
in Parry's class number formula
\eqref{eqn:ParryClNoRel}.

%\newpage
%--------------------------------------------------------------------------------

\subsection{Refined Dedekind species and prototypes}
\label{ss:Prototypes}

%\noindent
As before, let \(N=\mathbb{Q}(\root{5}\of{D},\zeta)\) be a pure metacyclic field of degree \(20\)
with normalized radicand \(D=5^{e_\ast}\cdot q_1^{e_1}\ldots q_t^{e_t}\) where \(0\le e_\ast\le 4\).
The fourth power of the conductor has the shape
\(f^4=5^{e_0}\cdot q_1^4\cdots q_t^4\) with \(e_0\in\lbrace 0,2,6\rbrace\).

For the primes dividing the conductor,
the splitting behavior is described by
the number \(s_2=\#\lbrace 1\le j\le t\mid q_j\equiv -1\,(\mathrm{mod}\,5)\rbrace\) of \(2\)-split primes in \(N\)
and the number \(s_4=\#\lbrace 1\le j\le t\mid q_j\equiv +1\,(\mathrm{mod}\,5)\rbrace\) of \(4\)-split primes in \(N\),
and the role in formula
\cite[Thm. 2, p. 104]{Ma1}
for the multiplicity \(m\) is described by
the number \(u=\#\lbrace 1\le j\le t\mid q_j\equiv\pm 1,\pm 7\,(\mathrm{mod}\,25)\rbrace\) of free primes
and the number \(v=t-u\) of restrictive primes.
\(n:=t-s_2-s_4\) is the number of non-split primes.

%--------------------------------------------------------------------------------

\begin{definition}
\label{dfn:RefinedDedekind}
The multiplet \((e_0;t,u,v,m;n,s_2,s_4)\) is called the \textit{refined Dedekind species} of the field \(N\).
The first entry characterizes the (coarse) Dedekind species:
\begin{equation}
\label{eqn:Dedekind}
e_0=
\begin{cases}
6 & \text{ if } N \text{ is of species 1a}, \\
2 & \text{ if } N \text{ is of species 1b}, \\
0 & \text{ if } N \text{ is of species 2}.
\end{cases}
\end{equation}
\end{definition}

%--------------------------------------------------------------------------------

\begin{definition}
\label{dfn:Prototypes}
A normalized radicand \(D\) is called a \textit{prototype}
if it is minimal among all normalized radicands sharing a common
\begin{itemize}
\item
refined Dedekind species \((e_0;t,u,v,m;n,s_2,s_4)\),
\item
differential principal factorization type \((U,\eta,\zeta;A,I,R)\),
\item
set of abelian type invariants of the \(5\)-class groups \(\mathrm{Cl}_5(F)\) with \(F\in\lbrace L,M,N\rbrace\)
and logarithmic unit index \(E=v_5((U_N:U_0))\).
\end{itemize}
\end{definition}

\noindent
Note that, since \(t\) is unbounded,
there are infinitely many prototypes \(D\) of pure metacyclic fields \(N\) of degree \(20\).

%\newpage
%--------------------------------------------------------------------------------

\subsection{Prime radicands}
\label{ss:PrmRad}

%\noindent
Trivially, any prime radicand is fifth power free and normalized, a priori.

\begin{theorem}
\label{thm:PrmRad}
Let \(L=\mathbb{Q}(\root{5}\of{D})\) be a pure quintic field with prime radicand \(D=q\in\mathbb{P}\).
\begin{enumerate}
\item
If \(q\equiv\pm 2\,(\mathrm{mod}\,5)\) but \(q\not\equiv\pm 7\,(\mathrm{mod}\,25)\),
then \(N\) is a Polya field of type \(\varepsilon\).
\item
If \(q=5\) or \(q\equiv\pm 7\,(\mathrm{mod}\,25)\),
then \(N\) is a Polya field of type \(\vartheta\).
\end{enumerate}
\end{theorem}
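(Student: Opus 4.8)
The plan is to pin down the full invariant $(U;A,I,R)$ for a prime radicand $D=q$ from splitting and ramification data alone, deliberately avoiding any direct computation of unit norms, and then to read off the Polya property from Theorem~\ref{thm:MainQuinticPolya}.

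First I would note that in every case $q\not\equiv\pm 1\,(\mathrm{mod}\,5)$: indeed $q\equiv\pm 2\,(\mathrm{mod}\,5)$ in case (1), while $q=5$ or $q\equiv\pm 7\equiv\pm 2\,(\mathrm{mod}\,5)$ in case (2). Hence no prime divisor of the conductor $f$ is $\equiv\pm 1\,(\mathrm{mod}\,5)$, so $s_2=s_4=0$. By Corollaries~\ref{cor:IntDim} and~\ref{cor:RelDim} this forces $I=0$ and $R=0$, and the relation $A+I+R=U+1$ collapses to $A=U+1$. Thus the type is governed by the single number $A=\dim_{\mathbb{F}_5}(\mathcal{P}_{L/\mathbb{Q}}/\mathcal{P}_{\mathbb{Q}})$, which by Corollary~\ref{cor:AbsDim} satisfies $1\le A\le\min(3,T)$, where $T$ is the number of primes dividing the conductor of $N/K$, equivalently the number of \emph{totally} ramified rational primes in $L$.

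The crux is an exact count of $T$, for which the Dedekind species is decisive via Theorem~\ref{thm:RelDiffQuintic}. In case (1) the field is of species $1\mathrm{b}$, so both $5$ and $q$ are totally ramified ($5\mathcal{O}_L=\mathfrak{P}_5^5$ and $q\mathcal{O}_L=\mathfrak{q}^5$), whence $T=2$ and $A\le 2$. On the other hand Theorem~\ref{thm:DiffPrFact}(1) applies: since $q\mid f$ with $q\ne 5$ and $q\not\equiv\pm 1,\pm 7\,(\mathrm{mod}\,25)$, we get $\zeta\notin N_{N/K}(U_N)$, ruling out $U=0$ and giving $U\ge 1$, i.e. $A\ge 2$. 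Hence $A=2$, $U=1$, and together with $\zeta\notin N_{N/K}(U_N)$ and $(A,I,R)=(2,0,0)$ this is exactly type $\varepsilon$. In case (2) precisely one prime is totally ramified: for $D=5$ (species $1\mathrm{a}$) only $5$ ramifies, whereas for $q\equiv\pm 7\,(\mathrm{mod}\,25)$ (species $2$) one has $5\mathcal{O}_L=\mathcal{P}_1\mathcal{P}_2^4$ with $e(\mathfrak{p})=\gcd(1,4)=1$, so $5$ contributes no primitive ambiguous ideal and only $q$ remains. In both subcases $T=1$, forcing $A=1$, $U=0$, which is the unique type $\vartheta$.

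Finally I would verify the Polya property through Theorem~\ref{thm:MainQuinticPolya}, exhibiting $\alpha\in L$ with $N_{L/\mathbb{Q}}(\alpha)=p$ for each prime $p$ ramified in $N/K$; the radical always supplies $N_{L/\mathbb{Q}}(\sqrt[5]{D})=D$. In case (2) the conductor shows that the only prime ramifying in $N/K$ is $5$ (when $D=5$) or $q$ (when $q\equiv\pm 7\,(\mathrm{mod}\,25)$), so the radical alone settles the criterion. In case (1) the ramified primes of $N/K$ are $5$ and $q$; the prime $q$ is handled by $\sqrt[5]{q}$, and for $5$ I would invoke the equality $A=2=T$ just established, which forces $\mathcal{P}_{L/\mathbb{Q}}/\mathcal{P}_{\mathbb{Q}}=\mathcal{I}_{L/\mathbb{Q}}/\mathcal{I}_{\mathbb{Q}}$, so $\mathfrak{P}_5$ is principal, say $\mathfrak{P}_5=(\pi)$; since $N_{L/\mathbb{Q}}(\mathfrak{P}_5)=5\mathbb{Z}$ we obtain $N_{L/\mathbb{Q}}(\pi)=\pm 5$ and, after possibly replacing $\pi$ by $-\pi$, the desired $\alpha$ with $N_{L/\mathbb{Q}}(\alpha)=5$. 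The main obstacle is the careful bookkeeping of $T$ in the second species, where $5$ ramifies in $L$ yet fails to be totally ramified and so must be excluded from the count; this single distinction is precisely what separates type $\varepsilon$ from type $\vartheta$.
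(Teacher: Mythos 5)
Your proof is correct and follows essentially the same route as the paper's: the congruence \(q\not\equiv\pm 1\,(\mathrm{mod}\,5)\) forces \(s_2=s_4=0\) and hence \(I=R=0\), the conductor count gives \(A\le T\) (with the species-2 subtlety that \(5\) ramifies only partially, \(e(\mathfrak{p})=1\), exactly as you handle it), and the obstruction \(\zeta_5\notin N_{N/K}(U_N)\) from Theorem \ref{thm:DiffPrFact}(1) pins down type \(\varepsilon\) in case (1), while \(T=1\) forces \((A,I,R)=(1,0,0)\) and type \(\vartheta\) in case (2); your elimination via \(A=U+1\) and \(U\ge 1\) is logically the same as the paper's type-by-type exclusion of \(\gamma\), \(\eta\), \(\vartheta\). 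The one genuine difference is to your credit: the paper's proof stops once the DPF type is determined, leaving the Polya claim implicit in the equivalence \(A=T\) of Theorem \ref{thm:MainPolya}, whereas you verify it explicitly through Theorem \ref{thm:MainQuinticPolya} by exhibiting \(N_{L/\mathbb{Q}}(\sqrt[5]{q})=q\) and extracting a generator \(\pi\) of \(\mathfrak{P}_5\) with \(N_{L/\mathbb{Q}}(\pi)=5\) (the sign adjustment working because the degree \(5\) is odd), which makes the statement's Polya assertion fully airtight.
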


\begin{proof}
In both cases,
since \(q\not\equiv\pm 1\,(\mathrm{mod}\,5)\),
it follows that \(q\) does not split in \(K^+,M,K,N\).
Consequently, intermediate and relative principal factors are discouraged, \(I=R=0\),
and only absolute principal factors are possible, \(A\ge 1\).
This eliminates the possibility of the types
\(\alpha_1\), \(\alpha_2\), \(\alpha_3\), \(\beta_1\), \(\beta_2\),
\(\delta_1\), \(\delta_2\), \(\zeta_1\), \(\zeta_2\),
and there only remain the types \(\gamma\), \(\varepsilon\), \(\eta\) and \(\vartheta\).
\begin{enumerate}
\item
Since \(q\not\equiv\pm 7\,(\mathrm{mod}\,25)\), the field \(L\) is of the species 1b with \(f^4=5^2q^4\).
Since the additional prime \(5\) is ramified in \(L\), we have \(A\le 2\),
which discourages type \(\gamma\) but enables types \(\varepsilon\) and \(\eta\).
However, \(q\not\equiv\pm 7\,(\mathrm{mod}\,25)\) together with \(q\ne 5\)
denies the existence of a unit \(Z\in U_N\) with \(N_{N/K}(Z)=\zeta\),
whence types \(\eta\) and \(\vartheta\) are impossible.
So the type must be \(\varepsilon\) with invariants \((A,I,R)=(2,0,0)\)
and there must exist a unit \(H\in U_N\) with \(N_{N/K}(H)=\eta\).
\item
For \(q=5\), the field \(L\) is of the species 1a with \(f^4=5^6\).
For \(q\equiv\pm 7\,(\mathrm{mod}\,25)\), the field \(L\) is of the species 2 with \(f^4=q^4\).
Since the conductor \(f\) is divisible by a single prime only, we have \(A\le 1\),
there are only \(5^A=5\) possible absolute principal factors,
and they are occupied by the trivial radicals \(\delta^e\) with \(0\le e\le 4\),
where \(\delta:=\root{5}\of{q}\).
Consequently, the invariants \((A,I,R)\) are given by \((1,0,0)\)
which uniquely characterizes type \(\vartheta\) with \((U_K:N_{N/K}(U_N))=1\),
and enforces the existence of units \(Z,H\in U_N\) such that
\(N_{N/K}(Z)=\zeta\) and \(N_{N/K}(H)=\eta\). \qedhere
\end{enumerate}
\end{proof}

%\newpage
%--------------------------------------------------------------------------------

\subsection{Splitting prime radicands}
\label{ss:SplPrimRad}
\noindent
Let \(\ell\in\mathbb{P}\) be a prime number which splits in \(M\).
Suppose the pure quintic field \(L=\mathbb{Q}(\sqrt[5]{D})\) is generated by
the \textit{prime radicand} \(D=\ell\).
Then there arise two situations for the conductor \(f\) of \(N/K\),
either \(f^4=\ell^4\) when \(\ell\equiv\pm 1\,(\mathrm{mod}\,25)\)
or \(f^4=5^2\cdot\ell^4\) when \(\ell\equiv\pm 1\,(\mathrm{mod}\,5)\) but \(\ell\not\equiv\pm 1\,(\mathrm{mod}\,25)\).

%\newpage
%--------------------------------------------------------------------------------

\subsection{Prime conductors}
\label{ss:SplPrimCnd}

\begin{enumerate}
\item
\(\ell\equiv -1\,(\mathrm{mod}\,25)\):

\begin{example}
\label{exm:SplPrimCndMinus}
There are \(6\) occurrences up to the upper bound \(D<10^3\):
\[D\in\lbrace 149,199,349,449,499,599\rbrace.\]
Here, \(N\) is a Polya field and exclusively of type \(\delta_2\)
but never of the types \(\zeta_2\) or \(\vartheta\).
\end{example}

\item
\(\ell\equiv +1\,(\mathrm{mod}\,25)\):

\begin{example}
\label{exm:SplPrimCndPlus}
There are \(6\) occurrences up to the upper bound \(D<10^3\):
\[D\in\lbrace 101,151,251,401,601,701\rbrace.\]
The corresponding pure metacyclic field \(N\) has the Polya property and is of type \\
either \(\alpha_1\), for \(D\in\lbrace 401,701\rbrace\), even with \(\mathrm{Cl}_5(L)\simeq C_5\times C_5\), \\
or \(\alpha_2\), for \(D\in\lbrace 151,251,601\rbrace\), \\
or \(\zeta_1\), for \(D=101\),
but never of the types \(\delta_1\),\(\delta_2\), \(\zeta_2\), \(\vartheta\).
\end{example}

\end{enumerate}

%\newpage
%--------------------------------------------------------------------------------

\subsection{Composite conductors}
\label{ss:CmpCnd}

\begin{enumerate}
\item
\(\ell\equiv -1\,(\mathrm{mod}\,5)\) but not \(\ell\equiv -1\,(\mathrm{mod}\,25)\):

\begin{example}
\label{exm:CmpCnd}
There are \(32\) occurrences up to the upper bound \(D<10^3\)
\[D\in\lbrace 19,29,59,79,89,109,139,179,229,239,269,359,379,389,409,419,439,\]
\[479,509,569,619,659,709,719,739,769,809,829,839,859,919,929\rbrace.\]
The field \(N\) is of type \\
either \(\delta_2\), for \(D\in\lbrace 19,29,59,79,89,109,179,229,239,269,389,409,439,479,509,569,619,\) \\
\(659,709,719,739,769,809,839,859,919,929\rbrace\), without Polya property, \\
or \(\beta_2\), for \(D\in\lbrace 139,359,419,829\rbrace\), \textbf{with} Polya property, \\
or \(\varepsilon\), for \(D=379\), \textbf{with} Polya property.
\end{example}

\item
\(\ell\equiv +1\,(\mathrm{mod}\,5)\) but not \(\ell\equiv +1\,(\mathrm{mod}\,25)\):

\begin{example}
\label{exm:CmpPrimCndPlus}
There are \(33\) occurrences up to the upper bound \(D<10^3\):
\[D\in\lbrace 11,31,41,61,71,131,181,191,211,241,271,281,311,331,421,431,461,\]
\[491,521,541,571,631,641,661,691,761,811,821,881,911,941,971,991\rbrace.\]
The pure metacyclic field \(N\) is of type \\
either \(\alpha_1\), for \(D\in\lbrace 31,281,761\rbrace\), without Polya property, \\
or \(\alpha_2\), for \(D\in\lbrace 11,41,61,71,131,181,241,311,331,431,491,541,571,631,661,691,811,\) \\
\(821,911,941,971\rbrace\), without Polya property, \\
or \(\beta_1\), for \(D\in\lbrace 191,271,641\rbrace\), \textbf{with} Polya property, \\
or \(\delta_1\), for \(D\in\lbrace 211,421,461,521,881,991\rbrace\), without Polya property.
\end{example}

\end{enumerate}

%\newpage
%--------------------------------------------------------------------------------

\subsection{Fields of the rare type \(\alpha_3\) with \(2\)-dimensional intermediate DPF}
\label{ss:SpecialCaseAlpha3}

%\noindent

\begin{theorem}
\label{thm:IntTwo}
For all normalized radicands of the shape \(D=q^{e_0}\cdot\ell_1^{e_1}\cdot\ell_2^{e_2}\) with
\(0\le e_0\le 4\), \(q\equiv\pm 2\,(\mathrm{mod}\,5)\), \(1\le e_1,e_2\le 4\), \(\ell_1,\ell_2\equiv\pm 1\,(\mathrm{mod}\,5)\),
where \(L=\mathbb{Q}(\sqrt[5]{D})\) is of DPF type \(\alpha_3\) with maximal invariant \(I=2\),
the \(2\)-dimensional \(\mathbb{F}_5\)-vectorspace
\((\mathcal{P}_{M/K^+}/\mathcal{P}_{K^+})\bigcap\ker(N_{M/L})\)
of intermediate ambiguous principal ideals is generated by the kernel ideals
\(\mathcal{K}^{(\ell_1)}\) and \(\mathcal{K}^{(\ell_2)}\).
\end{theorem}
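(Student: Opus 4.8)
The plan is to prove this as a direct dimension-counting consequence of the structure theorems already established for intermediate ambiguous ideals, exploiting that the hypotheses pin down both the ambient space and its principal subspace to have the same dimension \(2\). First I would compute the splitting counters for the given radicand. Since \(q\equiv\pm 2\,(\mathrm{mod}\,5)\) is non-split and hence contributes nothing to \(s_2\) or \(s_4\), while each of the two primes \(\ell_1,\ell_2\equiv\pm 1\,(\mathrm{mod}\,5)\) splits and contributes exactly one, I obtain \(s_2+s_4=2\), independently of whether \(e_0=0\) or \(e_0\ge 1\).

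By Theorem \ref{thm:IntDim}, the ambient \(\mathbb{F}_5\)-vectorspace \((\mathcal{I}_{M/K^+}/\mathcal{I}_{K^+})\cap\ker(N_{M/L})\) therefore has dimension \(s_2+s_4=2\), with the explicit basis \(\lbrace\mathcal{K}^{(\ell_1)},\mathcal{K}^{(\ell_2)}\rbrace\), where \(\mathcal{K}^{(\ell_i)}=\mathcal{L}_i^{1+4\tau}\). On the other hand, the subspace of principal ideals \((\mathcal{P}_{M/K^+}/\mathcal{P}_{K^+})\cap\ker(N_{M/L})\) has dimension \(I\) by Corollary \ref{cor:IntDim}, and the hypothesis that \(L\) is of type \(\alpha_3\) forces \(I=2\). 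The inclusion of this principal subspace into the ambient ideal space is legitimate because \(K^+=\mathbb{Q}(\sqrt{5})\) has class number \(1\), so that \(\mathcal{I}_{K^+}=\mathcal{P}_{K^+}\) and the natural map \(\mathcal{P}_{M/K^+}/\mathcal{P}_{K^+}\to\mathcal{I}_{M/K^+}/\mathcal{I}_{K^+}\) is injective.

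The conclusion is then immediate: a \(2\)-dimensional subspace of a \(2\)-dimensional vectorspace must coincide with the whole space. Hence
\[
(\mathcal{P}_{M/K^+}/\mathcal{P}_{K^+})\cap\ker(N_{M/L})
= (\mathcal{I}_{M/K^+}/\mathcal{I}_{K^+})\cap\ker(N_{M/L})
= \mathbb{F}_5\,\mathcal{K}^{(\ell_1)}\oplus\mathbb{F}_5\,\mathcal{K}^{(\ell_2)},
\]
so that the two kernel ideals \(\mathcal{K}^{(\ell_1)}\) and \(\mathcal{K}^{(\ell_2)}\) generate the group of intermediate ambiguous principal ideals; in particular each of them is itself principal modulo \(\mathcal{P}_{K^+}\). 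I do not expect a genuine obstacle here, since the result is essentially a corollary of the dimension formulas; the only point requiring care is to confirm that the extremal value \(I=2\) of type \(\alpha_3\) is compatible with \(s_2+s_4=2\) and that the equality of dimensions really does collapse the two spaces, which the class-number-one property of \(K^+\) guarantees by making the principal-ideal subspace embed into the full ambiguous-ideal space.
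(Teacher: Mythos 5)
Your proof is correct and takes essentially the same route as the paper's own (very terse) argument: the hypothesis of type \(\alpha_3\) forces \(I=2=s_2+s_4\), so the principal subspace collapses onto the full \(2\)-dimensional space \((\mathcal{I}_{M/K^+}/\mathcal{P}_{K^+})\cap\ker(N_{M/L})=\mathbb{F}_5\,\mathcal{K}^{(\ell_1)}\oplus\mathbb{F}_5\,\mathcal{K}^{(\ell_2)}\) given by Theorem \ref{thm:IntDim}. Your additional remarks, computing the splitting counters explicitly and invoking \(h_{K^+}=1\) to identify \(\mathcal{I}_{K^+}=\mathcal{P}_{K^+}\) so that the principal subspace genuinely embeds, merely make explicit details the paper leaves implicit.
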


\begin{proof}
In fact, we have the extreme situation that \(I=s_2+s_4=2\) and consequently
\[(\mathcal{P}_{M/K^+}/\mathcal{P}_{K^+})\bigcap\ker(N_{M/L})=(\mathcal{I}_{M/K^+}/\mathcal{P}_{K^+})\bigcap\ker(N_{M/L})
=\bigoplus_{i=1}^{2}\,\mathbb{F}_5\,\mathcal{K}^{(\ell_i)},\]
that is, all intermediate ambiguous ideals are principal.
\end{proof}

\begin{example}
\label{exm:IntTwo}
In the range \(2\le D<1000\) of normalized radicands,
there are \(8\) cases which are covered by Theorem
\ref{thm:IntTwo}.
For all of them,
the logarithmic index of subfield units takes the value \(e=2\).
\textbf{None} of the corresponding pure metacyclic fields \(N\) can be a Polya field.
\begin{enumerate}
\item
\(D=319=11\cdot 29\) with \(e_0=0\) of species 1b,
\item
\(D=551=19\cdot 29\) with \(e_0=0\) of species 2,
\item
\(D=589=19\cdot 31\) with \(e_0=0\) of species 1b,
\item
\(D=627=3\cdot 11\cdot 19\) with \(e_0=1\) of species 1b,
\item
\(D=649=11\cdot 59\) with \(e_0=0\) of species 2,
\item
\(D=869=11\cdot 79\) with \(e_0=0\) of species 1b,
\item
\(D=899=29\cdot 31\) with \(e_0=0\) of species 2,
\item
\(D=957=3\cdot 11\cdot 29\) with \(e_0=1\) of species 2.
\end{enumerate}
Except for \(D=627\) where \(V_L=3\), \(V_M=4\), \(V_N=9\),
the logarithmic \(5\)-class numbers are always given by \(V_L=2\), \(V_M=2\), \(V_N=5\).
\end{example}

%\newpage
%--------------------------------------------------------------------------------

\subsection{Fields of the types \(\gamma\), \(\varepsilon\), \(\eta\), \(\vartheta\) without splitting prime divisors of \(f\)}
\label{ss:GammaEpsilonTheta}

%\noindent

%--------------------------------------------------------------------------------

\begin{theorem}
\label{thm:KobayashiParry}
(Pure quintic fields with trivial \(5\)-class group) \\
Let \(L=\mathbb{Q}(\sqrt[5]{D})\) be a pure quintic field,
\(N=\mathbb{Q}(\zeta_5,\sqrt[5]{D})\) be the pure metacyclic normal closure of \(L\), and
\(M=\mathbb{Q}(\sqrt{5},\sqrt[5]{D})\) be the maximal real subfield of \(N\).
For the class numbers \(h_L\), \(h_M\) and \(h_N\) of \(L\), \(M\) and \(N\),
the following implications describe divisibility by \(5\):
\begin{equation}
\label{eqn:KobayashiParry}
5\mid h_L \Longleftrightarrow 5\mid h_M, \quad
5\mid h_L \Longrightarrow 5\mid h_N \quad \text{ but only } \quad
5^2\mid h_N \Longrightarrow 5\mid h_L.
\end{equation}
\end{theorem}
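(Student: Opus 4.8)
The plan is to translate every divisibility assertion into a statement about the logarithmic $5$-class numbers $V_L:=v_5(h_L)$, $V_M:=v_5(h_M)$, $V_N:=v_5(h_N)$, and to feed these into Parry's class number relation \eqref{eqn:ParryClNoRel}. Writing $(U_N:U_0)=5^E$ with $0\le E\le 6$ (Theorem \ref{thm:Walter} specialised to $p=5$, where $h_K=1$ and $r=5$), the relation $h_N=5^{E-5}h_L^4$ collapses to the single scalar identity
\begin{equation*}
V_N=E-5+4\,V_L,\qquad 0\le E\le 6 .
\end{equation*}

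First I would dispose of the two implications directed towards $N$. Since the intermediate steps $M/L$ and $N/M$ are quadratic, hence of degree prime to $5$, the conorm maps induce injections $\mathrm{Cl}_5(L)\hookrightarrow\mathrm{Cl}_5(M)\hookrightarrow\mathrm{Cl}_5(N)$ (if $j(x)=1$ then $x^{[M:L]}=N_{M/L}(j(x))=1$ and $[M:L]=2$ is invertible modulo $5$), whence
\begin{equation*}
V_L\le V_M\le V_N .
\end{equation*}
This yields at once $5\mid h_L\Rightarrow 5\mid h_M\Rightarrow 5\mid h_N$, covering the forward half of the first equivalence as well as the second implication. For the third implication I would argue contrapositively: if $V_L=0$, the identity gives $V_N=E-5\le 1$, so $5^2\nmid h_N$; equivalently $5^2\mid h_N\Rightarrow V_L\ge 1$. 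The same computation explains the asymmetry stressed in the statement, since $E=6$ (a type-$\gamma$ field with $\mathrm{Cl}_5(L)$ trivial) produces $V_N=1$ while $V_L=0$: a single factor $5$ in $h_N$ genuinely fails to force $5\mid h_L$.

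The crux, and the step I expect to be the main obstacle, is the reverse half of the first equivalence, $5\mid h_M\Rightarrow 5\mid h_L$, because the injective chain only bounds $\mathrm{Cl}_5(M)$ from below by $\mathrm{Cl}_5(L)$. Here I would exploit the action of $\Delta:=\mathrm{Gal}(N/L)=\langle\tau\rangle\simeq C_4$ on $\mathrm{Cl}_5(N)$ and split it by the central orthogonal idempotents $\psi_0,\psi_1,\psi_2,\psi_3$ of Lemma \ref{Lem:FourthRoots}, which are already defined over $\mathbb{F}_5$ because $\mu_4\subset\mathbb{F}_5^\times$; this gives
\begin{equation*}
\mathrm{Cl}_5(N)=\bigoplus_{j=0}^{3}\psi_j\,\mathrm{Cl}_5(N),\quad \psi_0\,\mathrm{Cl}_5(N)\simeq\mathrm{Cl}_5(L),\quad (\psi_0\oplus\psi_2)\,\mathrm{Cl}_5(N)\simeq\mathrm{Cl}_5(M).
\end{equation*}
Thus $5\mid h_M$ together with $5\nmid h_L$ forces the minus part $\psi_2\,\mathrm{Cl}_5(N)\ne 0$ while $\psi_0\,\mathrm{Cl}_5(N)=0$; combined with $V_M\le V_N\le 1$ from the previous paragraph this pins down $\mathrm{Cl}_5(N)\simeq C_5$ as a pure $\psi_2$-eigenspace on which $\tau$ acts by $-1$.

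To close the argument I would then bring in the $\langle\sigma\rangle$-structure and genus theory. The $5$-group $\langle\sigma\rangle=\mathrm{Gal}(N/K)$ acts on the $5$-group $\mathrm{Cl}_5(N)\simeq C_5$, hence trivially, so the hypothetical class is $\sigma$-ambiguous, and its eigenvalue $-1$ identifies it with an intermediate differential principal factor of $M/K^+$, i.e. with an $I$-contribution in the decomposition \eqref{eqn:QuinticDecomp}. The plan is to feed this into the ambiguous class number formula for the cyclic extension $N/K$ (Chevalley, with $h_K=1$) together with the eigenspace bookkeeping $P=A+I+R$, and to show that the constraint $V_L=0$ — which simultaneously forces the absolute invariant to its minimum — leaves no room for an isolated, capitulation-free $\psi_2$-class, contradicting the Hilbert-$94$-type nontriviality of such a class. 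The delicate point, and the main obstacle, is exactly this reflection-type coupling of the vanishing of the $\psi_0$-eigenspace to that of the $\psi_2$-eigenspace; the alternative would be to invoke Kobayashi's reflection relation between $\mathrm{Cl}_5(L)$ and the minus part $\mathrm{Cl}_5(M)^-$ directly. Everything outside this one step reduces cleanly to the scalar identity above and coprime-degree injectivity.
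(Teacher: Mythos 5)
Most of your proposal is sound, and in places cleaner than the paper's own proof. The coprime-degree injections $\mathrm{Cl}_5(L)\hookrightarrow\mathrm{Cl}_5(M)\hookrightarrow\mathrm{Cl}_5(N)$ giving $V_L\le V_M\le V_N$ dispatch $5\mid h_L\Rightarrow 5\mid h_M$ and $5\mid h_L\Rightarrow 5\mid h_N$ in one stroke, where the paper instead runs Parry's relation backwards ($V_N=0\Rightarrow 4V_L=5-Q$) and then excludes the spurious solution $Q=1$, $V_L=1$ by the observation that a class of order $5$ cannot capitulate in an extension of degree $2$ or $4$ --- the same underlying fact you use, applied more directly. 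Your treatment of $5^2\mid h_N\Rightarrow 5\mid h_L$ via $V_N=4V_L+E-5$ with $E\le 6$ is exactly the paper's argument.

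The genuine gap is the remaining direction $5\mid h_M\Rightarrow 5\mid h_L$ (contrapositive: $V_L=0\Rightarrow V_M=0$), which you explicitly leave as a ``plan''. Your idempotent reduction to a single ambiguous $\psi_2$-class with $\mathrm{Cl}_5(N)\simeq C_5$ is fine as far as it goes, but the contradiction cannot be extracted from the ambiguous class number formula and the bookkeeping $P=A+I+R$ alone: those inputs are perfectly consistent with a nontrivial minus part. Indeed, fields with $V_L=V_M=1$ (e.g.\ $D=19$ of type $\delta_2$ in Table \ref{tbl:PureQuinticFields50}) realize nontrivial $\psi_2$-components; what must be excluded is $\psi_2\ne 0$ \emph{simultaneously with} $\psi_0=0$, and nothing in the ambiguous-class machinery couples the two eigenspaces without additional unit/regulator input. (Your identification of the hypothetical strongly ambiguous class with an ``intermediate DPF'' also inverts the correspondence: DPFs are \emph{principal} ambiguous ideals, whereas a nontrivial strongly ambiguous class is represented by a \emph{nonprincipal} ambiguous ideal.) The paper closes precisely this step with Kobayashi's class number formula $h_M=\mathcal{D}\cdot 5^{-2}\,h_L^2$, additively $V_M=2V_L+Q^+-2$ with $\mathcal{D}=5^{Q^+}$ and $Q^+\in\lbrace 0,1,2\rbrace$; then $V_L=0$ together with $V_M\ge 0$ forces $Q^+=2$ and $V_M=0$. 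The bound $Q^+\le 2$ on Kobayashi's determinant is exactly the quantitative reflection-type coupling your sketch is missing, and your own closing remark --- invoke Kobayashi's relation directly --- is in effect what the paper does; as written, however, the proposal does not prove the equivalence.
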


\begin{proof}
In terms of \(5\)-valuations, we prove the contrapositive equivalence
\(v_5(h_L)=0\Longleftrightarrow v_5(h_M)=0\)
using the abbreviations \(V_L:=v_5(h_L)\) and \(V_M:=v_5(h_M)\).
Let \(\mathcal{D}\) be the value of the determinant used by Kobayashi
to describe the relation between the unit groups \(U_L\) of \(L\) and \(U_M\) of \(M\).
The Kobayashi class number formula
\(h_M=\frac{\mathcal{D}}{5^2}h_L^2\)
can be expressed additively by \(5\)-valuations
\(V_M=2\cdot V_L+Q^+-2\), where \(\mathcal{D}=5^{Q^+}\) with \(Q^+\in\lbrace 0,1,2\rbrace\)
\cite[Prop.2, p.467]{Ky1},
\cite[Prop.3, p.22]{Ky2}. \\
Firstly, \(V_L=0\) \(\Longrightarrow\)
\(V_M=Q^+-2\) and, since \(V_M\) cannot be negative, \(Q^+=2\) and \(V_M=0\). \\
Secondly, \(V_M=0\) \(\Longrightarrow\) \(2\cdot V_L=2-Q^+\le 2\) must be even,
that is, either \(Q^+=2\) and \(2\cdot V_L=0\), whence \(V_L=0\),
or \(Q^+=0\) and \(2\cdot V_L=2\), whence \(V_L=1\).
However, the last situation cannot occur,
since a class of order \(5\) cannot capitulate in \(M/L\) of degree \(\lbrack M:L\rbrack=2\).

Furthermore, we prove the contrapositive implications
\(v_5(h_N)=0\Longrightarrow v_5(h_L)=0\) and \(v_5(h_L)=0\Longrightarrow v_5(h_N)\in\lbrace 0,1\rbrace\)
using the abbreviations \(V_L:=v_5(h_L)\) and \(V_N:=v_5(h_N)\).
Let \(U_0\) be the subgroup generated by the units of \(K\) and all conjugate fields of \(L\)
in the unit group \(U_N\) of \(N\).
The Parry class number formula
\(h_N=\frac{(U_N:U_0)}{5^5}h_L^4\)
can be expressed additively by \(5\)-valuations
\(V_N=4\cdot V_L+Q-5\), where \((U_N:U_0)=5^Q\) with \(Q\in\lbrace 0,\ldots,6\rbrace\)
\cite[Thm.1, p.476, Thm.2, p.478]{Pa}. \\
Firstly, \(V_L=0\) \(\Longrightarrow\)
\(V_N=Q-5\) and, since \(V_N\) cannot be negative,
either \(Q=5\) and \(V_N=0\) or \(Q=6\) and \(V_N=1\). \\
Secondly, \(V_N=0\) \(\Longrightarrow\) \(4\cdot V_L=5-Q\le 5\) must be a multiple of \(4\),
that is, either \(Q=5\) and \(4\cdot V_L=0\), whence \(V_L=0\),
or \(Q=1\) and \(4\cdot V_L=4\), whence \(V_L=1\).
However, the last situation cannot occur,
since a class of order \(5\) cannot capitulate in \(N/L\) of degree \(\lbrack N:L\rbrack=4\).
\end{proof}

\begin{corollary}
\label{cor:KobayashiParry}
The relation \(5\nmid h_N\) enforces
an index of subfield units \((U_N:U_0)=5^5\) and thus a DPF type either \(\varepsilon\) or \(\vartheta\).
The weaker relation \(5\nmid h_L\) admits two values of
the index of subfield units \((U_N:U_0)\in\lbrace 5^5,5^6\rbrace\)
and consequently one of the DPF types \(\gamma\), \(\varepsilon\), \(\eta\) or \(\vartheta\).
Both relations imply the value \(\mathcal{D}=5^2\) of the Kobayashi determinant.
\end{corollary}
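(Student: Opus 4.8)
The plan is to deduce the corollary from Theorem~\ref{thm:KobayashiParry} in two stages: first read off the admissible values of the logarithmic indices $Q:=v_5((U_N:U_0))$ and $Q^+:=v_5(\mathcal{D})$, and then convert these into differential principal factorization types via the classification of Theorem~\ref{thm:MainQuintic}.

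For the index claims I would argue purely from the two additive formulas already established, $V_N=4V_L+Q-5$ and $V_M=2V_L+Q^+-2$. If $5\nmid h_N$, i.e. $V_N=0$, the proof of Theorem~\ref{thm:KobayashiParry} forces $V_L=0$ and hence $Q=5$, so $(U_N:U_0)=5^5$; under the weaker hypothesis $5\nmid h_L$, i.e. $V_L=0$, one only obtains $Q=V_N+5$ with $V_N\in\{0,1\}$, so $(U_N:U_0)\in\{5^5,5^6\}$. In either case $V_L=0$, whence the Kobayashi formula together with $V_M\ge 0$ pins $Q^+=2$, that is $\mathcal{D}=5^2$. This stage is routine and uses nothing beyond Theorem~\ref{thm:KobayashiParry}.

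The second stage rests on the correspondence between the subfield unit exponent $E:=Q$ and the triplet $(A,I,R)$: the four types with $I=R=0$, namely $\gamma,\varepsilon,\eta,\vartheta$, are exactly those with $E\ge 5$, with $E=6$ for $\gamma,\eta$ and $E=5$ for $\varepsilon,\vartheta$, whereas every type carrying a nontrivial intermediate or relative factor ($I\ge 1$ or $R\ge 1$) satisfies $E\le 4$. Granting this, $E=5$ singles out $\{\varepsilon,\vartheta\}$, settling the case $5\nmid h_N$, and $E\in\{5,6\}$ singles out $\{\gamma,\varepsilon,\eta,\vartheta\}$, settling $5\nmid h_L$. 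Equivalently, for any of the nine remaining types the relation $V_N=4V_L+E-5\ge 0$ with $E\le 4$ forces $V_L\ge 1$, so the hypothesis $5\nmid h_L$ can be met only by the four good types.

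The main obstacle is precisely this correspondence, i.e. proving $E\le 4$ whenever $I\ge 1$ or $R\ge 1$ (dually $E\in\{5,6\}$ when $I=R=0$). I expect to derive it from Parry's Theorem~II in \cite{Pa}, which bounds $E\le 6$, combined with the decomposition \eqref{eqn:QuinticDecomp}: a split prime of the conductor is exactly what produces an intermediate or relative differential principal factor, and that same prime enlarges the subgroup $U_0=\langle U_K\cdot\prod_{j=0}^4 U_{L_j}\rangle$ by the additional units of the conjugate fields $L_j$, thereby lowering the excess $E=v_5((U_N:U_0))$. Making this trade-off quantitative, so that each intermediate or relative split contribution forces the drop from $E\ge 5$ to $E\le 4$, is the step demanding the most work; the unit-index data in Example~\ref{exm:UnitIndex} (e.g. $E=6$ for $\gamma$ at $D=6$, $E=5$ for $\varepsilon$ at $D=2$, $E=4$ for $\beta_2$ at $D=22$) serve as the numerical anchor for the correspondence.
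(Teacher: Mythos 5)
Your first stage is exactly the argument the paper intends: the corollary is printed without a separate proof because the index and determinant claims are read off from the two additive formulas in the proof of Theorem~\ref{thm:KobayashiParry} precisely as you do it --- $V_N=0$ forces $(Q,V_L)=(5,0)$ once the capitulation argument kills $(Q,V_L)=(1,1)$; $V_L=0$ gives $Q=V_N+5$ with $V_N\le 1$, hence $Q\in\{5,6\}$; and $V_M=Q^+-2\ge 0$ with $Q^+\le 2$ pins $Q^+=2$, i.e.\ $\mathcal{D}=5^2$. Up to this point your proposal is correct and coincides with the paper.

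Your second stage, however, rests on a bridging lemma that is false, and the paper's own computations refute it. You claim that the four types with $I=R=0$ are \emph{exactly} those with $E\ge 5$, in particular that $E\le 4$ whenever $I\ge 1$ or $R\ge 1$; but entry No.~82 of Table~\ref{tbl:PureQuinticFields150} is the field with $D=101$, of DPF type $\zeta_1$, i.e.\ $(A,I,R)=(1,0,1)$ with $R=1$, and it has $E=5$. (Since $V_L=1$ there, it is no counterexample to the corollary itself --- only to your dictionary.) More generally, $E$ is not a function of the DPF type, nor conversely: type $\alpha_2$ occurs with $E=3$ (at $D=11$) and with $E=1$ (at $D=33$), while $\varepsilon$ and $\eta$ share the invariants $(U;A,I,R)=(1;2,0,0)$ yet occur with $E=5$ and $E=6$ respectively; the paper explicitly notes in \S\ \ref{ss:DPFTypes} that the index $(U_N:U_0)$ does not enter the definition of the DPF types, and even the clean assignment $E=6$ for $\gamma$ versus $E=5$ for excited $\varepsilon$ is only stated as Conjecture~\ref{cnj:GammaEpsilon}. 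Consequently your inference \lq\lq $E=5$ singles out $\{\varepsilon,\vartheta\}$\rq\rq\ (and likewise \lq\lq $E\in\{5,6\}$ singles out $\{\gamma,\varepsilon,\eta,\vartheta\}$\rq\rq) breaks down at $\zeta_1$, and the quantitative trade-off you hope to extract from Parry's Theorem~II cannot exist in the form you state. The type restrictions have to be drawn from the class-number hypotheses directly rather than from $E$: $5\nmid h_L$ forces $V_M=0$ by the Kobayashi part of Theorem~\ref{thm:KobayashiParry} and $V_N\le 1$, so the relevant $5$-class groups are (nearly) trivial, all primitive ambiguous ideals of $M/K^+$ and $N/K$ become principal, and comparing the resulting equalities with the orders $5^{T+s_{2}+s_4}$ and $5^{T+s_2+3s_4}$ of Theorem~\ref{thm:AmbIdl} against the bound $A+I+R=U+1\le 3$ of Theorem~\ref{thm:MainQuintic} rules out the split-prime configurations that $R\ge 1$ (and, with Parry's Theorems~II and~IV, which the paper invokes for exactly such correlations in Corollary~\ref{cor:Epsilon}, also $I\ge 1$) would require. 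Any route that passes through the unit index alone is blocked by $D=101$.
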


%\newpage
%--------------------------------------------------------------------------------

\noindent
The cubic analogue of Theorem
\ref{thm:KobayashiParry}
is stated in the following well-known result
\cite{Ho}.

\begin{theorem}
\label{thm:Scholz}
(Pure cubic fields with trivial \(3\)-class group) \\
Let \(L=\mathbb{Q}(\sqrt[3]{D})\) be a pure cubic field
and \(N=\mathbb{Q}(\zeta_3,\sqrt[3]{D})\) be its pure metacyclic normal closure.
For the class numbers \(h_L\) and \(h_N\) of \(L\) and \(N\),
the following equivalence describes divisibility by \(3\):
\begin{equation}
\label{eqn:Scholz}
3\mid h_L \quad \Longleftrightarrow \quad 3\mid h_N.
\end{equation}
\end{theorem}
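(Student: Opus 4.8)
The plan is to derive the equivalence from a class number relation between $N$ and $L$, following the same strategy as the proof of Theorem~\ref{thm:KobayashiParry}, but now in the much tighter cubic setting where the combinatorics collapse. Specializing Walter's class number formula (Theorem~\ref{thm:Walter}) to the prime $p=3$ gives $r=\frac{3^2-5}{4}=1$, $h_K=h_{\mathbb{Q}(\zeta_3)}=1$, and $p-1=2$, so that
\[
h_N=\frac{(U_N:U_0)}{3}\cdot h_L^2,\qquad (U_N:U_0)=3^Q,\ \ Q\le\tfrac{(p-1)(p-2)}{2}=1.
\]
Hence $Q\in\{0,1\}$. Writing $V_L:=v_3(h_L)$ and $V_N:=v_3(h_N)$ and passing to $3$-valuations, the relation becomes
\[
V_N=2\,V_L+Q-1.
\]
As in Theorem~\ref{thm:KobayashiParry}, I would establish the contrapositive equivalence $V_L=0\Longleftrightarrow V_N=0$.

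For the forward implication, I would assume $V_L=0$, so that $V_N=Q-1$. Since $V_N\ge 0$ while $Q\le 1$, the only possibility is $Q=1$ and $V_N=0$. This is exactly where the cubic case behaves more cleanly than the quintic one: in Theorem~\ref{thm:KobayashiParry} the much larger bound $Q\le 6$ left room for the spurious value $V_N=1$, forcing a merely one-sided implication, whereas here the bound $Q\le 1$ pins down $V_N=0$ immediately.

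For the backward implication, I would assume $V_N=0$, so that $2\,V_L=1-Q$ with $1-Q\in\{0,1\}$. Since the left-hand side is even and non-negative, the case $Q=0$ (which would give $2\,V_L=1$) is ruled out by parity alone, so $Q=1$ and $V_L=0$. I expect the only genuine point to verify to be the specialization of Walter's formula, above all the bound $E\le 1$ on the logarithmic unit index; once that is in place, the remaining argument is elementary arithmetic. It is worth emphasizing that, unlike the backward direction of Theorem~\ref{thm:KobayashiParry} (where a class of order $5$ had to be prevented from capitulating in $N/L$), no capitulation argument is required here, because the parity obstruction already eliminates the only competing case. The two implications together yield $V_L=0\Longleftrightarrow V_N=0$, equivalently $3\mid h_L\Longleftrightarrow 3\mid h_N$.
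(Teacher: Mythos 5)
Your proposal is correct and takes essentially the same route as the paper: the paper's proof likewise rests on the cubic class number formula \(h_N=\frac{(U_N:U_0)}{3}h_L^2\) with \((U_N:U_0)=3^Q\), \(Q\in\lbrace 0,1\rbrace\) (cited from Honda's Lemma on Scholz's unit index rather than obtained, as you do, by specializing Theorem~\ref{thm:Walter} to \(p=3\), where \(r=1\), \(h_K=1\), \(E\le 1\)), and then runs exactly your valuation argument \(V_N=2V_L+Q-1\), with nonnegativity of \(V_N\) in the forward direction and the parity of \(2V_L=1-Q\) in the backward direction. Your side remarks also match the paper's proof: unlike Theorem~\ref{thm:KobayashiParry}, no capitulation argument is needed, since the parity obstruction alone excludes the competing case.
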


\begin{proof}
In terms of \(3\)-valuations, we prove the contrapositive equivalence
\(v_3(h_L)=0\Longleftrightarrow v_3(h_N)=0\)
using the abbreviations \(V_L:=v_3(h_L)\) and \(V_N:=v_3(h_N)\).
Let \(U_0\) be the subgroup generated by the units of all proper subfields of \(N\)
in the unit group \(U_N\) of \(N\).
The Scholz class number formula
\(h_N=\frac{(U_N:U_0)}{3}h_L^2\)
can be expressed additively by \(3\)-valuations
\(V_N=2\cdot V_L+Q-1\), where \((U_N:U_0)=3^Q\) with \(Q\in\lbrace 0,1\rbrace\)
\cite[Lem. 1, p. 7]{Ho}. \\
Firstly, \(V_L=0\) \(\Longrightarrow\)
\(V_N=Q-1\) and, since \(V_N\) cannot be negative, \(Q=1\) and \(V_N=0\). \\
Secondly, \(V_N=0\) \(\Longrightarrow\) \(2\cdot V_L=1-Q\le 1\) must be even,
that is, \(Q=1\) and \(2\cdot V_L=0\), whence \(V_L=0\).
\end{proof}

\begin{corollary}
\label{cor:Scholz}
The relation \(3\nmid h_L\) enforces
an index of subfield units \((U_N:U_0)=3\) and thus a DPF type either \(\beta\) or \(\gamma\).
\end{corollary}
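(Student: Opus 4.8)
The plan is to combine the Scholz class number formula from Theorem~\ref{thm:Scholz} with the cubic counterpart of the correspondence between the index of subfield units and the differential principal factorization type. First I would rephrase the hypothesis $3\nmid h_L$ as $V_L:=v_3(h_L)=0$ and invoke the additive form $V_N=2\cdot V_L+Q-1$ of the relation $h_N=\frac{(U_N:U_0)}{3}\,h_L^2$, where $(U_N:U_0)=3^Q$ with $Q\in\{0,1\}$. Substituting $V_L=0$ gives $V_N=Q-1$, and since a class number valuation cannot be negative while $Q\le 1$, the only possibility is $Q=1$. Hence $(U_N:U_0)=3$, which is exactly the first assertion; this step is essentially the ``Firstly'' branch already carried out in the proof of Theorem~\ref{thm:Scholz}.

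The second ingredient is to read off the admissible DPF types once the maximal unit index $(U_N:U_0)=3$ is known. Among the three types of Theorem~\ref{thm:MainCubic}, the types $\beta$ (with $(B,T)=(2,0)$) and $\gamma$ (with $U=0$, $(B,T)=(1,0)$) are precisely those having no proper top differential principal factor, $T=0$, whereas type $\alpha$ is distinguished by a nontrivial top DPF, $T=1$. I would argue that the occurrence of such a top DPF is incompatible with the maximal unit index: type $\alpha$ forces $(U_N:U_0)=1$, equivalently $Q=0$, and by the relation $V_N=2V_L-1$ this in turn forces $V_L\ge 1$, i.e.\ $3\mid h_L$. Taking the contrapositive, $3\nmid h_L$ rules out type $\alpha$ and leaves only $\beta$ or $\gamma$, as claimed.

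The main obstacle is this last correspondence, since the DPF types in Theorem~\ref{thm:MainCubic} are defined through the unit norm index $U$ and the pair $(B,T)$, not directly through the index of subfield units $(U_N:U_0)$. To close the gap I would establish the link ``type $\alpha\iff Q=0$'' explicitly, tracing how an essential top differential principal factor of $N/K$, i.e.\ a nontrivial class in $(\mathcal{P}_{N/K}/\mathcal{P}_K)\cap\ker(N_{N/L})$, produces a unit relation that lowers $(U_N:U_0)$ below its maximum; this is the cubic counterpart of the analysis underlying Corollary~\ref{cor:KobayashiParry}, and classically it is the content of the Barrucand--Cohn and Halter--Koch classification cited in Theorem~\ref{thm:MainCubic}. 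Everything else is the routine passage between the multiplicative class number formula and its additive $3$-valuation form.
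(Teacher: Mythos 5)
Your proposal is correct and takes essentially the same route as the paper: the first assertion is exactly the ``Firstly'' branch of the proof of Theorem~\ref{thm:Scholz} (from \(V_N=2V_L+Q-1\) with \(V_L=0\) and \(V_N\ge 0\) one gets \(Q=1\), i.e.\ \((U_N:U_0)=3\)), and the exclusion of type \(\alpha\) rests, as it implicitly does in the paper (which states the corollary without proof), on the classical correspondence that type \(\alpha\) is precisely the case \((U_N:U_0)=1\), due to Barrucand--Cohn and Halter--Koch. You correctly identify this correspondence as the one ingredient not established inside the paper; note only that once it is granted, your contrapositive detour through \(V_N=2V_L-1\) is superfluous, since \((U_N:U_0)=3\) from the first step already rules out type \(\alpha\) directly.
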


%\newpage
%--------------------------------------------------------------------------------

\subsection{General criterion for Polya fields closely related to DPF types}
\label{ss:PolyaAndDPF}

\noindent
We begin with a general criterion which is valid for the pure metacyclic normal closures \(N=L(\zeta)\)
of both, pure cubic fields \(L=\mathbb{Q}(\sqrt[3]{D})\) and pure quintic fields \(L=\mathbb{Q}(\sqrt[5]{D})\).
Let \(G=\mathrm{Gal}(N/\mathbb{Q})\) be the absolute Galois group of \(N\).
Furthermore, denote by \(T=\dim_{\mathbb{F}_5}(\mathcal{I}_{L/\mathbb{Q}}/\mathcal{P}_{\mathbb{Q}})\) and by
\(A=\dim_{\mathbb{F}_5}(\mathcal{P}_{L/\mathbb{Q}}/\mathcal{P}_{\mathbb{Q}})\).

\begin{theorem}
\label{thm:MainPolya}
All the following statements are equivalent:
\begin{enumerate}
\item
\(N\) is a Polya field.
\item
The subgroup \((\mathcal{I}_N^G\cdot\mathcal{P}_N)/\mathcal{P}_N\le\mathrm{Cl}(N)\)
of strongly ambiguous classes of \(N/\mathbb{Q}\) is trivial.
\item
\(\mathcal{I}_{N/\mathbb{Q}}/\mathcal{P}_{\mathbb{Q}}=\mathcal{P}_{N/\mathbb{Q}}/\mathcal{P}_{\mathbb{Q}}\)
(where \(\mathcal{I}_{N/\mathbb{Q}}/\mathcal{P}_{\mathbb{Q}}\simeq\mathcal{I}_{L/\mathbb{Q}}/\mathcal{P}_{\mathbb{Q}}\)
and \(\mathcal{P}_{N/\mathbb{Q}}/\mathcal{P}_{\mathbb{Q}}\simeq\mathcal{P}_{L/\mathbb{Q}}/\mathcal{P}_{\mathbb{Q}}\)).
\item
\(\mathcal{I}_{L/\mathbb{Q}}/\mathcal{P}_{\mathbb{Q}}=\mathcal{P}_{L/\mathbb{Q}}/\mathcal{P}_{\mathbb{Q}}\),
reduced from the metacyclic normal field to the pure field.
\item
\(A=T\), in terms of dimensions over \(\mathbb{F}_5\) 
(which necessarily yields an upper bound for the number of primes ramified in \(L/\mathbb{Q}\),
\(T\le 2\) in the cubic case and \(T\le 3\) in the quintic case).
\item
\((\forall\,p\in\mathbb{P},\ p\mid f)\,(\exists\,\alpha\in L)\,N_{L/\mathbb{Q}}(\alpha)=p\).
\end{enumerate}
\end{theorem}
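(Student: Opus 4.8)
The plan is to prove the six conditions equivalent by establishing a chain of pairwise equivalences linking the arithmetic criterion (6) to the ideal-theoretic reformulations (2)--(5) and finally to the Polya property (1). The backbone is the fact that, since $N/\mathbb{Q}$ is Galois and $\mathbb{Q}$ has trivial class group (so $\mathcal{P}_{\mathbb{Q}}=\mathcal{I}_{\mathbb{Q}}$), the groups of ambiguous ideals involved are the elementary abelian $5$-groups already computed in \S\ref{s:Dimensions}.

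First I would dispatch the two ``soft'' equivalences. For (4)$\Leftrightarrow$(5): by Theorem \ref{thm:AbsDim} the ambient space $\mathcal{I}_{L/\mathbb{Q}}/\mathcal{P}_{\mathbb{Q}}$ has $\mathbb{F}_5$-dimension $T$, while its subspace $\mathcal{P}_{L/\mathbb{Q}}/\mathcal{P}_{\mathbb{Q}}$ has dimension $A$ by Corollary \ref{cor:AbsDim}; a subspace equals the whole space exactly when the dimensions coincide, i.e.\ $A=T$, and $A\le T\le 3$ yields the stated bound. For (4)$\Leftrightarrow$(6): by Theorem \ref{thm:AbsDim} the quotient $\mathcal{I}_{L/\mathbb{Q}}/\mathcal{P}_{\mathbb{Q}}$ is generated by the classes of the totally ramified prime ideals $\mathfrak{q}_i=\mathfrak{b}_{q_i}(L)$, one for each prime $q_i\mid f$; statement (4) asserts that every such generator is principal, and via the equivalence $\mathfrak{b}_{p}(L)=\alpha\mathcal{O}_L\Leftrightarrow N_{L/\mathbb{Q}}(\alpha)=p$ recalled in the proof of Theorem \ref{thm:MainQuinticPolya} this is precisely the norm condition (6), with the prime $5$ imposing no extra constraint since $\mathfrak{b}_5(N)$ was shown there to be automatically principal.

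Next comes the reduction (3)$\Leftrightarrow$(4) from the degree-$20$ normal field $N$ to the degree-$5$ pure field $L$. Here I would invoke the two isomorphisms announced parenthetically in (3), namely $\mathcal{I}_{N/\mathbb{Q}}/\mathcal{P}_{\mathbb{Q}}\simeq\mathcal{I}_{L/\mathbb{Q}}/\mathcal{P}_{\mathbb{Q}}$ and $\mathcal{P}_{N/\mathbb{Q}}/\mathcal{P}_{\mathbb{Q}}\simeq\mathcal{P}_{L/\mathbb{Q}}/\mathcal{P}_{\mathbb{Q}}$. These are realized by the extension-of-ideals map $\mathfrak{a}\mapsto\mathfrak{a}\mathcal{O}_N$, whose compatibility with the Ostrowski ideals of $L$ and of $N$ was verified case by case in the proof of Theorem \ref{thm:MainQuinticPolya} (cases $q\equiv\pm2$, $\ell\equiv-1$, $\ell\equiv+1\pmod 5$, and $q=5$); since the map carries the principal subgroup onto the principal subgroup, the equality in $N$ holds iff the equality in $L$ holds. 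Then (2)$\Leftrightarrow$(3) is a matter of unwinding definitions: for $G=\mathrm{Gal}(N/\mathbb{Q})$ one has $\mathcal{I}_{N/\mathbb{Q}}=\mathcal{I}_N^G$, and since $\mathcal{P}_{\mathbb{Q}}$ maps into $\mathcal{P}_N$, the strongly ambiguous class group $(\mathcal{I}_N^G\cdot\mathcal{P}_N)/\mathcal{P}_N\cong\mathcal{I}_N^G/(\mathcal{I}_N^G\cap\mathcal{P}_N)$ is trivial precisely when every $G$-invariant ideal is principal, which is the content of the equality in (3).

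Finally, (1)$\Leftrightarrow$(2) is the theorem of Zantema \cite{Za} that, for a Galois extension of $\mathbb{Q}$, the Polya group is generated by the classes of the Ostrowski ideals and coincides with the group of strongly ambiguous classes; the unramified primes contribute nothing because $\mathfrak{b}_{p^f}(N)=p\mathcal{O}_N$ is principal there, so only the ramified single-orbit contributions survive. I expect the main obstacle to be the reduction step (3)$\Leftrightarrow$(4): one must confirm that extension of ideals induces a genuine isomorphism respecting principality in each splitting case, not merely a surjection, which is where the local analysis already carried out for Theorem \ref{thm:MainQuinticPolya} must be reused carefully. Alternatively, since Theorem \ref{thm:MainQuinticPolya} already yields (1)$\Leftrightarrow$(6) directly, the cycle closes at once and (2)--(5) remain only as internal reformulations.
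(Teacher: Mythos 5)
Your proposal is correct, and its global architecture matches the paper's: the same chain of equivalences, with (4)$\Leftrightarrow$(5) by the dimension count from Theorem \ref{thm:AbsDim} and Corollary \ref{cor:AbsDim}, and (4)$\Leftrightarrow$(6) via the norm characterization of principal Ostrowski ideals. The genuine divergence is in the descent step (3)$\Leftrightarrow$(4). The paper proves the isomorphism \(\mathcal{I}_{N/\mathbb{Q}}/\mathcal{P}_{\mathbb{Q}}\simeq\mathcal{I}_{L/\mathbb{Q}}/\mathcal{P}_{\mathbb{Q}}\) structurally: it starts from the decomposition \eqref{eqn:RelativelyInvariant} of \(\mathcal{I}_{N/K}/\mathcal{P}_K\) and selects the \(\tau\)-invariant components using \S\ \ref{ss:Invariance} (in particular that \((\mathfrak{L}_i^{1-\tau})^{\tau}=(\mathfrak{L}_i^{1-\tau})^{-1}\) is not invariant), arriving at \eqref{eqn:AbsolutelyInvariant}; you instead realize the isomorphism by the extension map \(\mathfrak{a}\mapsto\mathfrak{a}\mathcal{O}_N\) and recycle the case-by-case identities \(\mathfrak{b}_p(L)\mathcal{O}_N=\mathfrak{b}(N)\) together with the norm and non-capitulation argument (via \cite{Wa1}) from the proof of Theorem \ref{thm:MainQuinticPolya} — and you correctly flag that the principality direction, not mere surjectivity, is where that earlier analysis must carry the weight. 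Both routes are sound; the paper's buys a self-contained identification of the group \(\mathcal{I}_{N/\mathbb{Q}}/\mathcal{P}_{\mathbb{Q}}\) (which it emphasizes had not appeared in the literature) inside the idempotent framework, while yours buys economy, and your closing observation that Theorem \ref{thm:MainQuinticPolya} already gives (1)$\Leftrightarrow$(6), so the cycle closes with (2)--(5) as internal reformulations, is a legitimate shortcut the paper deliberately forgoes, since it presents Theorem \ref{thm:MainPolya} as an independent re-proof. Two small calibrations: the identification of \(\mathrm{Po}(N)\) with the strongly ambiguous class group is not Zantema's statement verbatim — the paper derives it by combining the exact sequence \eqref{eqn:Zantema} with Iwasawa's isomorphism \(H^1(G,U_N)\simeq\mathcal{P}_{N/\mathbb{Q}}/\mathcal{P}_{\mathbb{Q}}\) and Corollary \ref{cor:Finiteness} — and your remark that the prime \(5\) imposes no extra constraint is accurate only for species \(2\), where \(5\nmid f\); for species \(1\) the prime \(5\) divides the conductor, \(\mathfrak{b}_5(L)=\mathcal{P}\) is a genuine generator counted in \(T\), and condition (6) really does demand an element of norm \(5\), exactly as in the species-1 branch of the earlier proof.
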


\begin{proof}
According to Zantema
\cite[Thm. 1.3, p. 157]{Za},
the normal field \(N\) with absolute Galois group \(G=\mathrm{Gal}(N/\mathbb{Q})\) is a Polya field
if and only if the short exact sequence
\begin{equation}
\label{eqn:Zantema}
1\to H^1(G,U_N)\to\bigoplus_{p\in\mathbb{P}}\,(\mathbb{Z}/e(p)\mathbb{Z})\to\mathrm{Po}(N)\to 1
\end{equation}
collapses to an isomorphism
\(H^1(G,U_N)\simeq\bigoplus_{p\in\mathbb{P}}\,(\mathbb{Z}/e(p)\mathbb{Z})\),
that is, if and only if the \textit{Polya group} \(\mathrm{Po}(N)\),
which is generated by the classes of all Ostrowski ideals of \(N\) (see formula
\eqref{eqn:Ostrowski}),
is trivial.
This abstract cohomological statement can be interpreted in the language of algebraic number theory.
According to Iwasawa's isomorphism
\(H^1(G,U_N)\simeq\mathcal{P}_N^G/\mathcal{P}_{\mathbb{Q}}=\mathcal{P}_{N/\mathbb{Q}}/\mathcal{P}_{\mathbb{Q}}\)
in formula 
\eqref{eqn:AIR},
and the isomorphism
\(\mathcal{I}_N^G/\mathcal{I}_{\mathbb{Q}}=\mathcal{I}_{N/\mathbb{Q}}/\mathcal{I}_{\mathbb{Q}}
\simeq\bigoplus_{p\in\mathbb{P}}\,(\mathbb{Z}/e(p)\mathbb{Z})\)
in Corollary
\ref{cor:Finiteness}
with \(F=\mathbb{Q}\) and \(E=N\),
the Polya group
\begin{equation}
\label{eqn:StrAmbCls}
\mathrm{Po}(N)\simeq(\mathcal{I}_N^G/\mathcal{P}_{\mathbb{Q}})/(\mathcal{P}_N^G/\mathcal{P}_{\mathbb{Q}})
\simeq\mathcal{I}_N^G/\mathcal{P}_N^G=\mathcal{I}_N^G/(\mathcal{P}_N\cap\mathcal{I}_N^G)\simeq(\mathcal{I}_N^G\cdot\mathcal{P}_N)/\mathcal{P}_N
\end{equation}
is precisely the group of strongly ambiguous classes of \(N/\mathbb{Q}\).
Thus, (1) \(\Longleftrightarrow\) (2) \(\Longleftrightarrow\) (3).

It remains to show that the group of primitive absolutely invariant ideals
\(\mathcal{I}_{N/\mathbb{Q}}/\mathcal{P}_{\mathbb{Q}}=\mathcal{I}_N^G/\mathcal{I}_{\mathbb{Q}}\),
which has not appeared in the literature up to now,
is isomorphic to the well-known group
\(\mathcal{I}_{L/\mathbb{Q}}/\mathcal{P}_{\mathbb{Q}}\)
in Theorem \ref{thm:AbsDim}.
(The group of primitive relatively invariant ideals
\(\mathcal{I}_{N/K}/\mathcal{P}_K=\mathcal{I}_N^H/\mathcal{I}_K\)
with the cyclic subgroup \(H=\mathrm{Gal}(N/K)\) of the metacyclic group \(G\)
is also very well known.)

Using the notation of section \S\
\ref{s:Dimensions},
and the number \(n:=T-s_2-s_4\) (non-split primes), we have:
\begin{equation}
\label{eqn:RelativelyInvariant}
\mathcal{I}_{N/K}/\mathcal{P}_K\simeq
\left(\bigoplus_{i=1}^{n}\,\mathbb{F}_5\,\mathfrak{Q}_i\right)
\oplus\left(\bigoplus_{i=1}^{s_2}\,(\mathbb{F}_5\,\mathfrak{L}_i\oplus\mathbb{F}_5\,\mathfrak{L}_i^{\tau})\right)
\oplus\left(\bigoplus_{i=s_2+1}^{s_2+s_4}\,(\mathbb{F}_5\,\mathfrak{L}_i\oplus\cdots\oplus\mathbb{F}_5\,\mathfrak{L}_i^{\tau^3})\right)\,.
\end{equation}
Now we have to select the \(\tau\)-invariant components of this direct sum, according to section
\ref{ss:Invariance}:
\begin{equation}
\label{eqn:AbsolutelyInvariant}
\mathcal{I}_{N/\mathbb{Q}}/\mathcal{P}_{\mathbb{Q}}\simeq
\left(\bigoplus_{i=1}^{n}\,\mathbb{F}_5\,\mathfrak{Q}_i\right)
\oplus\left(\bigoplus_{i=1}^{s_2}\,\mathbb{F}_5\,\mathfrak{L}_i^{1+\tau}\right)
\oplus\left(\bigoplus_{i=s_2+1}^{s_2+s_4}\,\mathbb{F}_5\,\mathfrak{L}_i^{1+\tau+\tau^2+\tau^3}\right)
\simeq\mathcal{I}_{L/\mathbb{Q}}/\mathcal{P}_{\mathbb{Q}}\,,
\end{equation}
since \((\mathfrak{L}_i^{1-\tau})^{\tau}=(\mathfrak{L}_i^{1-\tau})^{-1}\) is not invariant.
Therefore, (3) \(\Longleftrightarrow\) (4) \(\Longleftrightarrow\) (5)\(\Longleftrightarrow\) (6).
\end{proof}

%--------------------------------------------------------------------------------

\noindent
Finally we apply Theorem
\ref{thm:MainPolya}
to a few special situations.

\begin{theorem}
\label{thm:Epsilon}
(The ground state of DPF type \(\varepsilon\) with trivial Polya property) \\
Let \(q_1,q_2\) be prime numbers \(q_i\equiv\pm 2\,(\mathrm{mod}\,5)\) but \(q_i\not\equiv\pm 7\,(\mathrm{mod}\,25)\).
A pure metacyclic field \(N=\mathbb{Q}(\zeta,\sqrt[5]{D})\) with conductor \(f\) such that
\begin{enumerate}
\item
\(f^4=5^2q_1^4\) of species \(1\mathrm{b}\)
forms a singulet, \(m=1\), with DPF type \(\varepsilon\).
\item
\(f^4=5^6q_1^4\) of species \(1\mathrm{a}\)
belongs to a quartet, \(m=4\), with homogeneous DPF type \((\varepsilon,\varepsilon,\varepsilon,\varepsilon)\).
\item
\(f^4=q_1^4q_2^4\) of species \(2\)
forms a singulet, \(m=1\), with DPF type \(\varepsilon\).
\end{enumerate}
\end{theorem}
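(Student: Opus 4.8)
The plan is to handle all three cases uniformly as far as the determination of the differential principal factorization type is concerned, and only afterwards to read off the multiplicities. First I would use that in every case each prime divisor $q_i$ of the radicand satisfies $q_i\equiv\pm 2\,(\mathrm{mod}\,5)$, so that no prime $\equiv\pm 1\,(\mathrm{mod}\,5)$ divides the conductor $f$. By parts (2) and (3) of Theorem \ref{thm:DiffPrFact} this forces $s_2=s_4=0$, whence $I=R=0$; combining the decomposition \eqref{eqn:QuinticDecomp} with the Herbrand relation \eqref{eqn:HerbrandQuot} then gives $A=U+1$. Since moreover each conductor has a prime divisor $q_i\ne 5$ with $q_i\not\equiv\pm 7\,(\mathrm{mod}\,25)$ — and automatically $q_i\not\equiv\pm 1\,(\mathrm{mod}\,25)$, because $q_i\equiv\pm 2\,(\mathrm{mod}\,5)$ — part (1) of Theorem \ref{thm:DiffPrFact} yields $\zeta\not\in N_{N/K}(U_N)$, hence $U\ge 1$. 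This already excludes the types $\eta$ and $\vartheta$ and leaves only $\gamma$ (with $A=3$) and $\varepsilon$ (with $A=2$) as candidates.

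To decide between these two I would count the number $T$ of primes ramifying totally in $L/\mathbb{Q}$, equivalently the primes dividing $f$. In species $1\mathrm{a}$ and $1\mathrm{b}$ the prime $5$ divides $f^4$, so by Theorem \ref{thm:RelDiffQuintic} one has $5\mathcal{O}_L=\mathcal{P}^5$, a total ramification that contributes alongside $q_1$, giving $T=2$. In species $2$ one has $5\nmid f$ and $5\mathcal{O}_L=\mathcal{P}_1\mathcal{P}_2^4$ with $e(5)=\gcd(1,4)=1$, so $5$ is invisible to the ambiguous-ideal count of Theorem \ref{thm:AbsDim}; only $q_1$ and $q_2$ ramify totally, again giving $T=2$. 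By Corollary \ref{cor:AbsDim} we obtain $A\le\min(3,T)=2$, which excludes $\gamma$ and forces $A=2$, $U=1$. Together with $\zeta\not\in N_{N/K}(U_N)$ this pins the type to $\varepsilon$ in all three cases; and since $A=T=2$, criterion (5) of Theorem \ref{thm:MainPolya} simultaneously confirms that $N$ is a Polya field, as announced in the title.

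It remains to record the multiplicities, which I would obtain directly from the conductor-multiplicity formula \cite[Thm. 2, p. 104]{Ma1}. For a conductor supported on a single restrictive prime (case (1), species $1\mathrm{b}$) and on two restrictive primes (case (3), species $2$) the formula gives $m=1$, a singulet; for species $1\mathrm{a}$, where $5\mid D$, it gives $m=4$, a quartet. To see that this quartet is homogeneous of type $(\varepsilon,\varepsilon,\varepsilon,\varepsilon)$, I observe that its four members share the identical conductor $f^4=5^6q_1^4$, hence identical splitting data $s_2=s_4=0$, identical count $T=2$, and the identical norm obstruction $\zeta\not\in N_{N/K}(U_N)$ forced by the prime $q_1$; the argument of the preceding paragraph therefore applies verbatim to each member and yields type $\varepsilon$ throughout.

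The step I expect to be the main obstacle is the determination of $T$, specifically recognising that the prime $5$ contributes to the absolute differential factors exactly in species $1\mathrm{a}$ and $1\mathrm{b}$ but not in species $2$. This rests on the ramification data of Theorem \ref{thm:RelDiffQuintic} — namely $5\mathcal{O}_L=\mathcal{P}^5$ when $5\mid f^4$ versus $5\mathcal{O}_L=\mathcal{P}_1\mathcal{P}_2^4$ with $e(5)=1$ otherwise — and requires keeping apart the two roles played by the symbol $R$ in the paper, the radical of the radicand $D$ on the one hand and the product of all primes ramified in $L$ on the other.
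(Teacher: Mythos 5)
Your proof is correct and follows essentially the same route as the paper's: no prime \(\equiv\pm 1\,(\mathrm{mod}\,5)\) divides \(f\), so \(I=R=0\); the congruence hypothesis on the \(q_i\) excludes \(\eta\) and \(\vartheta\) via the norm obstruction \(\zeta_5\not\in N_{N/K}(U_N)\); the bound \(A\le\min(3,T)=2\) of Corollary \ref{cor:AbsDim} excludes \(\gamma\), leaving \(\varepsilon\) with \(A=T\); and the multiplicities \(m=1,4,1\) come from the same formula of \cite[Thm.~2, p.~104]{Ma1}. Your explicit check that \(5\) contributes to \(T\) in species 1a/1b but is invisible in species 2 (where \(5\mathcal{O}_L=\mathcal{P}_1\mathcal{P}_2^4\) gives \(e(5)=\gcd(1,4)=1\)) makes precise a point the paper's proof passes over with the bare assertion \(T=2\).
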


\begin{proof}
We start by proving the multiplicities \(m=m(f)\) of the conductors \(f\) with the aid of
\cite[Thm. 2, p. 104]{Ma1},
where \(t_0:=u+v\) denotes the number of primes different from \(5\) dividing \(f\).
\begin{enumerate}
\item
For species \(1\mathrm{b}\) with \(D=q_1^{e_1}\), \(u=0\), \(v=1\), and \(D^4\not\equiv 1\,(\mathrm{mod}\,25)\),
we have \(m(f)=(5-1)^0\cdot X_1=1\cdot\frac{1}{5}(4^1-(-1)^1))=\frac{5}{5}=1\).
\item
For species \(1\mathrm{a}\) with \(D=5^{e_0}\cdot q_1^{e_1}\), \(t_0=1\) and \(5\mid D\), we have \(m(f)=(5-1)^{t_0}=4^1=4\).
\item
For species \(2\) with \(D=q_1^{e_1}\cdot q_2^{e_2}\), \(u=0\), \(v=2\), and \(D^4\equiv 1\,(\mathrm{mod}\,25)\),
we have \(m(f)=(5-1)^0\cdot X_{2-1}=1\cdot\frac{1}{5}(4^1-(-1)^1))=\frac{5}{5}=1\).
\end{enumerate}
In each case, we have \(T=2\) non-split prime divisors of \(f\), \(s_2=s_4=0\), and thus
\(1\le A\le T=2\), by
\eqref{eqn:AbsBnd},
\(0\le I\le s_2+s_4=0\), by
\eqref{eqn:IntBnd},
\(0\le R\le 2s_4=0\), by
\eqref{eqn:RelBnd}.
The assumption
\(q_i\equiv\pm 2\,(\mathrm{mod}\,5)\) but \(q_i\not\equiv\pm 7\,(\mathrm{mod}\,25)\)
excludes the DPF types \(\eta\) with \(U=1\), \((A,I,R)=(2,0,0)\)
and \(\vartheta\) with \(U=0\), \((A,I,R)=(1,0,0)\),
and only the possibility of DPF type \(\varepsilon\) with \(U=1\), \((A,I,R)=(2,0,0)\), \(A=T\) remains.
Recall that generally \(U+1=A+I+R\), according to
\eqref{eqn:HerbrandQuot}
and
\eqref{eqn:QuinticDecomp}.
\end{proof}

\begin{corollary}
\label{cor:Epsilon}
In each case, the \(5\)-class numbers of \(L\), \(M\), \(N\) are given by
\(h_L=h_M=h_N=5^w\) with \(w=0\) (ground state of DPF type \(\varepsilon\)), and the index of subfield units by \(5^E\) with \(E=5\).
\end{corollary}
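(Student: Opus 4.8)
```latex
The plan is to combine Theorem~\ref{thm:Epsilon}, which has already pinned down the
DPF type as $\varepsilon$ with invariants $U=1$ and $(A,I,R)=(2,0,0)$, with the
two class number formulas invoked in Theorem~\ref{thm:KobayashiParry}. The strategy
is to first read off the index of subfield units $E$ directly from the chain of
equations
\[5^{A+I+R}=\#\mathcal{P}_r=(E_{N/K}:U_N^{\sigma-1})=5\cdot(U_K:N_{N/K}(U_N))\]
established in \S\ref{ss:DPFTypes}, and then feed the resulting unit index into
Parry's and Kobayashi's formulas to solve for the $5$-valuations of the class
numbers. The calculation is short enough to be essentially forced by the constraints
already assembled, so the main work is bookkeeping rather than a new idea.

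First I would determine $E$. Since type $\varepsilon$ has $U=1$, the unit norm
index is $(U_K:N_{N/K}(U_N))=5^U=5$, whence by \eqref{eqn:UnitNormInd} the
intermediate unit norm index is $U^+=1$, i.e.\ $\eta\in N_{N/K}(U_N)$. But the
$\varepsilon$-characterization in Theorem~\ref{thm:MainQuintic} also records
$\zeta\notin N_{N/K}(U_N)$. Thus $N_{N/K}(U_N)$ contains $\eta$ but not $\zeta$,
which is exactly the configuration producing the logarithmic unit index $E=5$ in
Parry's setting (as already exhibited numerically in Example~\ref{exm:UnitIndex}
for $D=2$ of type $\varepsilon$). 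I would then substitute $Q=E=5$ into the additive
Parry relation $V_N=4\cdot V_L+Q-5$ from the proof of
Theorem~\ref{thm:KobayashiParry}, obtaining $V_N=4\cdot V_L$.

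Next I would argue $V_L=0$. Here I invoke Corollary~\ref{cor:KobayashiParry}: the
determination $Q=E=5$ forces the Kobayashi determinant $\mathcal{D}=5^2$, i.e.\
$Q^+=2$, and the Kobayashi relation $V_M=2\cdot V_L+Q^+-2=2\cdot V_L$ then gives
$V_M=V_N/2$ consistency; but more decisively, since for type $\varepsilon$ a
$5$-class of order $5$ cannot capitulate in the degree-$4$ extension $N/L$ nor in
the degree-$2$ extension $M/L$ (the capitulation obstruction already used twice in
the proof of Theorem~\ref{thm:KobayashiParry}), the only admissible solution of
$4\cdot V_L=5-Q$ under $Q=5$ is $V_L=0$. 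Hence $V_L=0$, and then $V_M=2\cdot V_L=0$
and $V_N=4\cdot V_L=0$, giving $h_L=h_M=h_N=5^0$, i.e.\ $w=0$. Together with
$E=5$ this is precisely the assertion of the corollary.

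The only genuinely delicate step is confirming $E=5$ rather than some other value
compatible with $V_N\ge 0$; that is, ruling out $Q=6$ (which would correspond to
type $\eta$) and $Q\le 4$. But this is handled by the type identification already
achieved in Theorem~\ref{thm:Epsilon}: the hypothesis $q_i\not\equiv\pm7\pmod{25}$
forbids $\zeta\in N_{N/K}(U_N)$ and hence excludes $\eta$, pinning $(U,A,I,R)$ to
the $\varepsilon$-values, which in turn fixes $E=5$ via the displayed chain of
equations. Thus no new obstruction arises here, and the corollary follows by
directly reading off $w=0$ and $E=5$ from the already-established invariants.
```
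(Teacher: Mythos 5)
Your proposal has a genuine gap --- in fact two interlocking ones --- and it diverges from the paper's actual proof, which is a one-line citation of Parry's arithmetic result \cite[Thm. IV and Formula (10), p. 481]{Pa}. First, your determination of \(E=5\) does not follow from anything established in the paper: the chain of equations
\(5^{A+I+R}=\#\mathcal{P}_r=(E_{N/K}:U_N^{\sigma-1})=5\cdot(U_K:N_{N/K}(U_N))\)
involves only the unit \emph{norm} index \(5^U\), not the index of subfield units \((U_N:U_0)=5^E\); the paper states explicitly in \S\ \ref{ss:DPFTypes} that \lq\lq the index of subfield units \((U_N:U_0)\) does not enter the definition of the DPF types\rq\rq, and the correspondence between type \(\varepsilon\) and \(E=5\) is treated as conjectural in general (Conjecture \ref{cnj:GammaEpsilon} asserts it for the excited state without proof). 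Pointing to the numerical Example \ref{exm:UnitIndex} for \(D=2\) is evidence, not a proof.

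Second, and more decisively, your derivation of \(V_L=0\) is circular. With \(Q=5\), Parry's relation gives \(V_N=4V_L\), which does \emph{not} force \(V_L=0\): the solution \(V_L=1\), \(V_M=2\), \(V_N=4\) is precisely the excited state of type \(\varepsilon\), and it actually occurs (e.g. \(D=140\) and \(D=141\) in Table \ref{tbl:PureQuinticFields150}, both of type \(\varepsilon\) with \(E=5\) and \(V_L=1\)). The equation \(4V_L=5-Q\) that you invoke is valid only under the hypothesis \(V_N=0\) --- exactly what you are trying to prove --- and the capitulation obstruction from the proof of Theorem \ref{thm:KobayashiParry} rules out \(V_L=1\) only in that conditional setting; it gives no unconditional conclusion for a field merely known to be of type \(\varepsilon\). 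Similarly, invoking Corollary \ref{cor:KobayashiParry} to fix the Kobayashi determinant presupposes \(5\nmid h_L\) or \(5\nmid h_N\), again begging the question. What separates the ground state of Corollary \ref{cor:Epsilon} from the excited state is not the DPF type but the specific conductor shapes in Theorem \ref{thm:Epsilon} (exactly two ramified primes in the stated congruence classes), and the triviality of the \(5\)-class numbers under that hypothesis is a genuinely arithmetic input which your type-and-Herbrand-quotient bookkeeping never supplies; the paper obtains it by citing Parry.
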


\begin{proof}
This claim was proved by Parry in
\cite[Thm. IV and Formula (10), p. 481]{Pa}.
\end{proof}

%--------------------------------------------------------------------------------

\begin{theorem}
\label{thm:GammaEpsilon}
(Ground state of type \(\gamma\) with non-trivial Polya property, excited state of type \(\varepsilon\)) \\
Let \(q_1,q_2,q_3\) be prime numbers \(q_i\equiv\pm 2\,(\mathrm{mod}\,5)\) but \(q_i\not\equiv\pm 7\,(\mathrm{mod}\,25)\).
A pure metacyclic field \(N=\mathbb{Q}(\zeta,\sqrt[5]{D})\) with conductor \(f\) such that
\begin{enumerate}
\item
\(f^4=5^2q_1^4q_2^4\) of species \(1\mathrm{b}\)
forms a triplet, \(m=3\), with DPF type \((\gamma^c,\varepsilon^b)\), \(c+b=3\).
\item
\(f^4=5^6q_1^4q_2^4\) of species \(1\mathrm{a}\)
belongs to a hexadecuplet, \(m=16\), with DPF type \((\gamma^c,\varepsilon^b)\), \(c+b=16\).
\item
\(f^4=q_1^4q_2^4q_3^4\) of species \(2\)
forms a triplet, \(m=3\), with DPF type \((\gamma^c,\varepsilon^b)\), \(c+b=3\).
\end{enumerate}
The formal integer exponents \(0\le c,b\le m\) indicate repetition (\(c\) times \(\gamma\), \(b\) times \(\varepsilon\)).
\end{theorem}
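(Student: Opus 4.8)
The plan is to mirror the proof of Theorem \ref{thm:Epsilon}, now with three ramified primes instead of two, so that the extra absolute differential principal factor permits the value $A=3$ and hence type $\gamma$ alongside type $\varepsilon$. First I would compute the conductor multiplicities $m=m(f)$ from \cite[Thm. 2, p. 104]{Ma1}, exactly as in Theorem \ref{thm:Epsilon}. Since every prime $q_i\equiv\pm 2\,(\mathrm{mod}\,5)$ with $q_i\not\equiv\pm 7\,(\mathrm{mod}\,25)$ is a \emph{restrictive} prime (so $u=0$ and $v=t_0$), the relevant solution counts reduce to $X_v=\frac{1}{5}(4^v-(-1)^v)$. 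For species $1\mathrm{a}$ ($5\mid D$, $t_0=2$) this yields $m=(5-1)^{t_0}=4^2=16$; for species $1\mathrm{b}$ (two restrictive primes, $v=2$) it yields $m=X_2=3$; and for species $2$ (three restrictive primes, $v=3$) it yields $m=X_{3-1}=X_2=3$, matching the asserted multiplicities $16$, $3$, $3$.

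Next I would determine which DPF types can occur. In every case all prime divisors of $f$ distinct from $5$ satisfy $q_i\equiv\pm 2\,(\mathrm{mod}\,5)$ and are therefore non-split in $K^+$, $M$, $K$, $N$, so that $s_2=s_4=0$. The bounds \eqref{eqn:IntBnd} and \eqref{eqn:RelBnd} then force $I=0$ and $R=0$, whence the fundamental relation $U+1=A+I+R$ collapses to $A=U+1$. Because no prime divisor of $f$ lies in $\lbrace 5\rbrace\cup\lbrace p\equiv\pm 1,\pm 7\,(\mathrm{mod}\,25)\rbrace$, item (1) of Theorem \ref{thm:DiffPrFact} excludes $\zeta\in N_{N/K}(U_N)$, eliminating the types $\zeta_1,\zeta_2,\eta,\vartheta$. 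Combined with $I=R=0$, the only survivors are $\gamma$ (with $U=2$, $(A,I,R)=(3,0,0)$) and $\varepsilon$ (with $U=1$, $(A,I,R)=(2,0,0)$).

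Finally I would verify that both survivors are genuinely attainable. In all three cases the radical $R=5q_1q_2$ (species $1$) or $R=q_1q_2q_3$ (species $2$) has exactly $T=3$ ramified primes, so Corollary \ref{cor:AbsDim} permits $A$ up to $\min(3,T)=3$. Thus the type is decided purely by the unit norm index: $U=2$ gives $\gamma$ and $U=1$ gives $\varepsilon$. Across the $m$-fold family of conductors sharing a common $f^4$, the resulting type distribution is therefore $(\gamma^c,\varepsilon^b)$ with $c+b=m$, as claimed. The main obstacle is the bookkeeping in the multiplicity formula of \cite[Thm. 2, p. 104]{Ma1}, in particular the species-dependent shift between $X_v$ and $X_{v-1}$; once that is handled exactly as in Theorem \ref{thm:Epsilon}, the type analysis is immediate from the already-established dimension bounds and the norm-residue criterion of Theorem \ref{thm:DiffPrFact}.
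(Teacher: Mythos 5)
Your proposal is correct and follows essentially the same route as the paper's own proof: the same multiplicity computations from \cite[Thm.\ 2, p.\ 104]{Ma1} (including the species-dependent use of \(X_v\) versus \(X_{v-1}\)), then \(s_2=s_4=0\) forcing \(I=R=0\) via \eqref{eqn:IntBnd} and \eqref{eqn:RelBnd}, and the congruence hypothesis \(q_i\not\equiv\pm 7\,(\mathrm{mod}\,25)\) eliminating \(\eta\) and \(\vartheta\), leaving only \(\gamma\) and \(\varepsilon\) distinguished by \(U\) through \(U+1=A+I+R\). Your explicit citation of Theorem \ref{thm:DiffPrFact}(1) merely makes precise the norm-residue mechanism the paper invokes implicitly, so there is no substantive difference.
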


\begin{conjecture}
\label{cnj:GammaEpsilon}
In each case, the \(5\)-class numbers of \(L\), \(M\), \(N\) are given by either
\(h_L=h_M=5^0\), \(h_N=5^1\) (ground state of DPF type \(\gamma\)), and the index of subfield units by \(5^E\) with \(E=6\)
or \(h_L=5^1\), \(h_M=5^2\), \(h_N=5^4\) (excited state of DPF type \(\varepsilon\)), and the index of subfield units by \(5^E\) with \(E=5\).
\end{conjecture}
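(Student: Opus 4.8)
The plan is to reduce the whole assertion, via the additive (\(5\)-valuation) forms of the Parry and Kobayashi class number formulas, to the determination of three integers: the logarithmic unit index \(E\) with \((U_N:U_0)=5^E\), the Kobayashi exponent \(Q^+\) with \(\mathcal{D}=5^{Q^+}\), and \(V_L:=v_5(h_L)\). Writing \(V_M:=v_5(h_M)\) and \(V_N:=v_5(h_N)\), the relations appearing in the proof of Theorem~\ref{thm:KobayashiParry} read
\[
V_N=4V_L+E-5,\qquad V_M=2V_L+Q^+-2,\qquad Q^+\in\{0,1,2\},\quad 0\le E\le 6.
\]
Both asserted outcomes then follow by substitution: \((E,Q^+,V_L)=(6,2,0)\) gives \((V_L,V_M,V_N)=(0,0,1)\), while \((E,Q^+,V_L)=(5,2,1)\) gives \((1,2,4)\). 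So everything comes down to computing \(E\), \(Q^+\) and \(V_L\) for the two admissible types.

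First I would import the structural data already secured in Theorem~\ref{thm:GammaEpsilon}: every ramified prime is \(\equiv\pm2\,(\mathrm{mod}\,5)\), hence \(s_2=s_4=0\), \(I=R=0\), \(T=3\), and the field is either of type \(\gamma\) with \((U,A)=(2,3)\) or of type \(\varepsilon\) with \((U,A)=(1,2)\). By Theorem~\ref{thm:MainPolya} the Polya group coincides with the group of strongly ambiguous classes of \(N/\mathbb{Q}\), of order \(5^{T-A}\); thus type \(\gamma\) is a Polya field whereas type \(\varepsilon\) is not, and in the latter case the nontrivial strongly ambiguous class sits inside \(\mathrm{Cl}(N)\) and yields the lower bound \(V_N\ge 1\). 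This is the one genus-theoretic input that can be obtained cleanly, and it is what separates the two regimes.

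With this in hand the two computations run as follows. For type \(\varepsilon\) I would argue \(E=5\), using \(\zeta\notin N_{N/K}(U_N)\) and \(\eta\in N_{N/K}(U_N)\) (the latter forced by \(U=1\) and Lemma~\ref{lem:UnitNormInd}); then \(V_N=4V_L\), so the bound \(V_N\ge 1\) forces \(V_L\ge 1\), and the \emph{minimal} value \(V_L=1\) together with \(Q^+=2\) returns the triplet \((1,2,4)\). For type \(\gamma\) I would argue \(E=6\) (the maximal value, compatible with \(U=2\)); the \emph{ground state} \(V_L=0\) then gives \(V_N=1\), and by Theorem~\ref{thm:KobayashiParry} automatically \(V_M=0\), so that \(Q^+=2\) follows from the Kobayashi formula, in agreement with the value \(\mathcal{D}=5^2\) supplied by Corollary~\ref{cor:KobayashiParry} (applicable since \(5\nmid h_L\)).

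Two steps are genuinely hard, and are, I believe, the reason the statement is only conjectural. The first is the determination of the unit indices \(E=(U_N:U_0)\) and \(Q^+\) from the type: \(E\) is a priori independent of the cohomological invariant \(U=(U_K:N_{N/K}(U_N))\), as stressed in \S\ref{ss:DPFTypes} and illustrated by the scattered values in Example~\ref{exm:UnitIndex}, so pinning down \(E=5\) for \(\varepsilon\), \(E=6\) for \(\gamma\), and \(Q^+=2\) throughout this restricted family needs a direct study of the norm action on the unit lattice of the ten conjugate subfields, presumably along the lines of Parry's Theorems II--IV already used for the ground state in Corollary~\ref{cor:Epsilon}. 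The second, and decisive, obstacle is the \emph{upper} bound on \(V_L\): the strongly ambiguous classes and Theorem~\ref{thm:KobayashiParry} provide only lower bounds, so proving \(V_L=0\) for \(\gamma\) and excluding \(V_L\ge 2\) for \(\varepsilon\) requires controlling the part of \(\mathrm{Cl}_5(N)\) \emph{not} represented by ambiguous ideals, i.e. a genus-saturation theorem ruling out deeper capitulation. Without it one cannot exclude arithmetically that individual members of the \(\gamma^c\varepsilon^b\) multiplets realise higher excited states, which is precisely the gap the conjecture leaves open.
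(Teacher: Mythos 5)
You have diagnosed the status of this statement correctly, and that is the first thing to say plainly: the paper does \emph{not} prove Conjecture \ref{cnj:GammaEpsilon}. The proof environment printed after it belongs to Theorem \ref{thm:GammaEpsilon} --- it computes the multiplicities \(m\in\lbrace 3,16,3\rbrace\) via the formula of \cite[Thm. 2, p. 104]{Ma1} and eliminates every DPF type except \(\gamma\) and \(\varepsilon\) using the bounds \eqref{eqn:AbsBnd}, \eqref{eqn:IntBnd}, \eqref{eqn:RelBnd} --- and it says nothing about \(h_L\), \(h_M\), \(h_N\) or \(E\). The class-number assertion is left conjectural, supported by the numerical tables (e.g. \(D=6,12,30\) for the \(\gamma\) ground state, \(D=141\) for the excited \(\varepsilon\) state) and by Corollary \ref{cor:Epsilon}, which settles only the \(T=2\) ground state of type \(\varepsilon\). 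Your reduction is arithmetically correct and is exactly the valuation calculus the paper itself uses in Theorem \ref{thm:KobayashiParry}: \(V_N=4V_L+E-5\) (Parry) and \(V_M=2V_L+Q^+-2\) (Kobayashi), with the substitutions \((E,Q^+,V_L)=(6,2,0)\) and \((5,2,1)\) reproducing the two asserted triplets. Your genus-theoretic input is also sound: by Theorem \ref{thm:MainPolya} the Polya group is the strongly ambiguous class group, here of order \(5^{T-A}\), so type \(\varepsilon\) (with \(A=2<3=T\)) forces \(V_N\ge 1\), while type \(\gamma\) is Polya.

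The two obstacles you name are indeed the genuinely open points, and I would add that they interlock more tightly than your write-up suggests: the bound \(V_N\ge 1\) for type \(\varepsilon\) yields \(V_L\ge 1\) only \emph{after} \(E=5\) is known, since the alternative \((V_L,E)=(0,6)\) also satisfies \(V_N=1\) and is not excluded by Corollary \ref{cor:KobayashiParry}, which under \(5\nmid h_L\) permits both \(E\in\lbrace 5,6\rbrace\) and both types \(\gamma\), \(\varepsilon\). So even the qualitative separation (\(V_L=0\) for \(\gamma\) versus \(V_L\ge 1\) for \(\varepsilon\)) is unproven without the unit-index determination, not merely the exact values; and since the paper stresses in \S\ref{ss:DPFTypes} that \((U_N:U_0)\) does not enter the definition of the DPF types, with Example \ref{exm:UnitIndex} showing \(E\) varying within a single type, pinning \(E\) down on this family is a substantive theorem in the spirit of Parry's Theorems II--IV, not bookkeeping. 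In short: your proposal is a correct conditional reduction that honestly isolates what is missing, and what is missing coincides precisely with the paper's own reason for labelling the statement a conjecture rather than a theorem.
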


\begin{proof}
We start by proving the multiplicities \(m=m(f)\) of the conductors \(f\) with the aid of
\cite[Thm. 2, p. 104]{Ma1},
where \(t_0:=u+v\) denotes the number of primes different from \(5\) dividing \(f\).
\begin{enumerate}
\item
For species \(1\mathrm{b}\) with \(D=q_1^{e_1}\cdot q_2^{e_2}\), \(u=0\), \(v=2\), and \(D^4\not\equiv 1\,(\mathrm{mod}\,25)\),
we have \(m(f)=(5-1)^0\cdot X_2=1\cdot\frac{1}{5}(4^2-(-1)^2))=\frac{15}{5}=3\).
\item
For species \(1\mathrm{a}\) with \(D=5^{e_0}\cdot q_1^{e_1}\cdot q_2^{e_2}\), \(t_0=2\) and \(5\mid D\), we have \(m(f)=(5-1)^{t_0}=4^2=16\).
\item
For species \(2\) with \(D=q_1^{e_1}\cdot q_2^{e_2}\cdot q_3^{e_3}\), \(u=0\), \(v=3\), and \(D^4\equiv 1\,(\mathrm{mod}\,25)\),
we have \(m(f)=(5-1)^0\cdot X_{3-1}=1\cdot\frac{1}{5}(4^2-(-1)^2))=\frac{15}{5}=3\).
\end{enumerate}
In each case, we have \(T=3\) non-split prime divisors of \(f\), \(s_2=s_4=0\), and thus
\(1\le A\le T=3\), by
\eqref{eqn:AbsBnd},
\(0\le I\le s_2+s_4=0\), by
\eqref{eqn:IntBnd},
\(0\le R\le 2s_4=0\), by
\eqref{eqn:RelBnd}.
The assumption
\(q_i\equiv\pm 2\,(\mathrm{mod}\,5)\) but \(q_i\not\equiv\pm 7\,(\mathrm{mod}\,25)\)
excludes the DPF types \(\eta\) with \(U=1\), \((A,I,R)=(2,0,0)\)
and \(\vartheta\) with \(U=0\), \((A,I,R)=(1,0,0)\),
and only the possibilities of either DPF type \(\gamma\) with \(U=2\), \((A,I,R)=(3,0,0)\)
or DPF type \(\varepsilon\) with \(U=1\), \((A,I,R)=(2,0,0)\) remain.
Recall that generally \(U+1=A+I+R\), according to
\eqref{eqn:HerbrandQuot} and
\eqref{eqn:QuinticDecomp}.
Only the fields \(N\) of type \(\gamma\) have the Polya property \(A=T\).
\end{proof}

%\newpage
%--------------------------------------------------------------------------------

\subsection{Numerical tables}
\label{ss:Tables}
\noindent
The supplementary paper
\lq\lq Tables of pure quintic fields\rq\rq\
\cite{Ma2}
establishes a complete classification of all \(900\)
pure metacyclic fields \(N=\mathbb{Q}(\zeta,\root{5}\of{D})\)
with normalized radicands in the range \(2\le D\le 1000\).
With the aid of PARI/GP
\cite{PARI}
and MAGMA
\cite{MAGMA}
we have determined the \textit{differential principal factorization type}, T,
of each field \(N\)
by means of other invariants \(U,A,I,R\).
After several weeks of CPU time,
the date of completion was Sep. \(17\), \(2018\).

The possible DPF types are listed in dependence on \(U,A,I,R\) in Table
\ref{tbl:DPFTypes},
where the symbol \(\times\) in the column \(\eta\), resp. \(\zeta\),
indicates the existence of a unit \(H\in U_N\), resp. \(Z\in U_N\),
such that \(\eta=N_{N/K}(H)\), resp. \(\zeta=N_{N/K}(Z)\).
The \(5\)-valuation of the \textit{unit norm index} \((U_K:N_{N/K}{U_N})\) is abbreviated by \(U\). 
The complete statistics is given in Table
\ref{tbl:Statistics}.

%\newpage
%--------------------------------------------------------------------------------

\renewcommand{\arraystretch}{1.0}

\begin{table}[ht]
\caption{Differential principal factorization types of pure metacyclic fields \(N\)}
\label{tbl:DPFTypes}
\begin{center}
\begin{tabular}{|c|ccc|ccc|}
\hline
 Type            & \(U\) & \(\eta\)   & \(\zeta\)  & \(A\)& \(I\) & \(R\) \\
\hline
 \(\alpha_1\)    & \(2\) & \(-\)      & \(-\)      & \(1\)& \(0\) & \(2\) \\
 \(\alpha_2\)    & \(2\) & \(-\)      & \(-\)      & \(1\)& \(1\) & \(1\) \\
 \(\alpha_3\)    & \(2\) & \(-\)      & \(-\)      & \(1\)& \(2\) & \(0\) \\
 \(\beta_1\)     & \(2\) & \(-\)      & \(-\)      & \(2\)& \(0\) & \(1\) \\
 \(\beta_2\)     & \(2\) & \(-\)      & \(-\)      & \(2\)& \(1\) & \(0\) \\
 \(\gamma\)      & \(2\) & \(-\)      & \(-\)      & \(3\)& \(0\) & \(0\) \\
\hline
 \(\delta_1\)    & \(1\) & \(\times\) & \(-\)      & \(1\)& \(0\) & \(1\) \\
 \(\delta_2\)    & \(1\) & \(\times\) & \(-\)      & \(1\)& \(1\) & \(0\) \\
 \(\varepsilon\) & \(1\) & \(\times\) & \(-\)      & \(2\)& \(0\) & \(0\) \\
\hline
 \(\zeta_1\)     & \(1\) & \(-\)      & \(\times\) & \(1\)& \(0\) & \(1\) \\
 \(\zeta_2\)     & \(1\) & \(-\)      & \(\times\) & \(1\)& \(1\) & \(0\) \\
 \(\eta\)        & \(1\) & \(-\)      & \(\times\) & \(2\)& \(0\) & \(0\) \\
\hline
 \(\vartheta\)   & \(0\) & \(\times\) & \(\times\) & \(1\)& \(0\) & \(0\) \\
\hline
\end{tabular}
\end{center}
\end{table}

%\newpage
%--------------------------------------------------------------------------------

\renewcommand{\arraystretch}{1.0}

\begin{table}[ht]
\caption{Absolute frequencies of differential principal factorization types}
\label{tbl:Statistics}
\begin{center}
\begin{tabular}{|c|rrrrrrrrrr|r|}
\hline
 Type            &\(100\) &\(200\) &\(300\) &\(400\) &\(500\) &\(600\) &\(700\) &\(800\) &\(900\) &\(1000\)&   \(\%\) \\
\hline
 \(\alpha_1\)    &  \(1\) &  \(2\) &  \(3\) &  \(4\) &  \(5\) &  \(5\) &  \(5\) &  \(9\) &  \(9\) &  \(9\) & \\
 \(\alpha_2\)    & \(10\) & \(17\) & \(23\) & \(30\) & \(35\) & \(42\) & \(52\) & \(57\) & \(63\) & \(75\) &  \(8.3\) \\
 \(\alpha_3\)    &  \(0\) &  \(0\) &  \(0\) &  \(1\) &  \(1\) &  \(3\) &  \(5\) &  \(5\) &  \(7\) &  \(8\) & \\
 \(\beta_1\)     &  \(0\) &  \(2\) &  \(4\) &  \(7\) &  \(8\) & \(11\) & \(15\) & \(18\) & \(22\) & \(23\) & \\
 \(\beta_2\)     &  \(7\) & \(24\) & \(40\) & \(54\) & \(80\) & \(94\) &\(108\) &\(126\) &\(146\) &\(161\) & \(17.9\) \\
 \(\gamma\)      & \(25\) & \(55\) & \(88\) &\(117\) &\(148\) &\(187\) &\(222\) &\(259\) &\(290\) &\(324\) & \(36.0\) \\
\hline
 \(\delta_1\)    &  \(0\) &  \(0\) &  \(1\) &  \(1\) &  \(3\) &  \(4\) &  \(4\) &  \(4\) &  \(6\) &  \(7\) & \\
 \(\delta_2\)    &  \(8\) & \(14\) & \(19\) & \(23\) & \(31\) & \(35\) & \(38\) & \(44\) & \(51\) & \(53\) &  \(5.9\) \\
 \(\varepsilon\) & \(26\) & \(45\) & \(67\) & \(95\) &\(110\) &\(128\) &\(150\) &\(165\) &\(184\) &\(208\) & \(23.1\) \\
\hline
 \(\zeta_1\)     &  \(0\) &  \(1\) &  \(1\) &  \(1\) &  \(1\) &  \(1\) &  \(1\) &  \(1\) &  \(1\) &  \(1\) & \\
 \(\zeta_2\)     &  \(0\) &  \(0\) &  \(0\) &  \(0\) &  \(0\) &  \(1\) &  \(1\) &  \(4\) &  \(4\) &  \(5\) & \\
 \(\eta\)        &  \(1\) &  \(2\) &  \(4\) &  \(5\) &  \(5\) &  \(6\) &  \(6\) &  \(6\) &  \(6\) &  \(7\) & \\
\hline
 \(\vartheta\)   &  \(3\) &  \(6\) &  \(8\) &  \(9\) & \(11\) & \(13\) & \(15\) & \(17\) & \(18\) & \(19\) & \\
\hline
 Total           & \(81\) &\(168\) &\(258\) &\(347\) &\(438\) &\(530\) &\(622\) &\(715\) &\(807\) &\(900\) &\(100.0\) \\
\hline
\end{tabular}
\end{center}
\end{table}

%--------------------------------------------------------------------------------

%\noindent
The \textit{normalized} radicand \(D=q_1^{e_1}\cdots q_s^{e_s}\)
of a pure metacyclic field \(N\) of degree \(20\)
is minimal among the powers \(D^n\), \(1\le n\le 4\),
with corresponding exponents \(e_j\) reduced modulo \(5\). 
The normalization of the radicands \(D\) provides a warranty that
all fields are pairwise non-isomorphic.

Prime factors are given for composite \(D\) only. 
Dedekind's \textit{species}, S, of radicands is refined by
distinguishing \(5\mid D\) (species 1a) and \(\gcd(5,D) = 1\) (species 1b)
among radicands \(D\not\equiv\pm 1,\pm 7\,(\mathrm{mod}\,25)\) (species 1).
By the species and factorization of \(D\),
the shape of the \textit{conductor} \(f\) is determined.
We give the fourth power \(f^4\) to avoid fractional exponents.
Additionally, the \textit{multiplicity} \(m\) indicates
the number of non-isomorphic fields sharing a common conductor \(f\).
The symbol \(V_F\) briefly denotes the \(5\)-valuation of the order \(h(F)=\#\mathrm{Cl}(F)\)
of the class group \(\mathrm{Cl}(F)\) of a number field \(F\).
By \(E\) we denote the exponent of the power in the \textit{unit index} \((U_N:U_0)=5^E\). 

An asterisk denotes the smallest radicand
with given Dedekind species, DPF type and \(5\)-class groups \(\mathrm{Cl}_5(F)\), \(F\in\lbrace L,M,N\rbrace\).
The latter are usually elementary abelian, except for the cases indicated by an additional asterisk.

Principal factors, P, are listed
when their constitution is not a consequence of the other information.
According to Theorem
\ref{thm:AmbIdl},
item (1), it suffices to give the rational integer norm of \textit{absolute} principal factors.
For \textit{intermediate} principal factors, we use the symbols
\(\mathcal{K}:=\mathcal{L}^{1-\tau}=\alpha\mathcal{O}_M\) with \(\alpha\in M\)
or \(\mathcal{L}=\lambda\mathcal{O}_M\) with a prime element \(\lambda\in M\)
(which implies \(\mathcal{L}^\tau=\lambda^\tau\mathcal{O}_M\)
and thus also \(\mathcal{K}=\lambda^{1-\tau}\mathcal{O}_M\)).
Here, \((\mathcal{L}^{1+\tau})^5=\ell\mathcal{O}_M\)
when a prime \(\ell\equiv\pm 1\,(\mathrm{mod}\,5)\) divides the radicand \(D\).
For \textit{relative} principal factors, we use the symbols
\(\mathfrak{K}_1:=\mathfrak{L}^{1-\tau^2+2\tau-2\tau^3}=A_1\mathcal{O}_N\)
and
\(\mathfrak{K}_2:=\mathfrak{L}^{1-\tau^2-2\tau+2\tau^3}=A_2\mathcal{O}_N\)
with \(A_1,A_2\in N\).
Here, \((\mathfrak{L}^{1+\tau+\tau^2+\tau^3})^5=\ell\mathcal{O}_N\)
when a prime number \(\ell\equiv +1\,(\mathrm{mod}\,5)\) divides the radicand \(D\).
(Kernel ideals in \S\
\ref{s:Idempotents}.)

The quartet \((1,2,4,5)\) indicates conditions which
either enforce a reduction of possible DPF types
or enable certain DPF types.
The lack of a prime divisor \(\ell\equiv\pm 1\,(\mathrm{mod}\,5)\)
together with the existence of a prime divisor \(q\not\equiv\pm 7\,(\mathrm{mod}\,25)\) and \(q\ne 5\) of \(D\)
is indicated by a symbol \(\times\) for the component \(1\).
In these cases, only the two DPF types \(\gamma\) and \(\varepsilon\) can occur.

A symbol \(\times\) for the component \(2\)
emphasizes a prime divisor \(\ell\equiv -1\,(\mathrm{mod}\,5)\) of \(D\)
and the possibility of intermediate principal factors in \(M\), like \(\mathcal{L}\) and \(\mathcal{K}\).
A symbol \(\times\) for the component \(4\)
emphasizes a prime divisor \(\ell\equiv +1\,(\mathrm{mod}\,5)\) of \(D\)
and the possibility of relative principal factors in \(N\), like \(\mathfrak{K}_1\) and \(\mathfrak{K}_2\).
The \(\times\) symbol is replaced by \(\otimes\) if the facility is used completely,
and by \((\times)\) if the facility is only used partially. \\
If \(D\) has only prime divisors \(q\equiv\pm 1,\pm 7\,(\mathrm{mod}\,25)\) or \(q=5\),
a symbol \(\times\) is placed in component \(5\).

%\newpage
%--------------------------------------------------------------------------------

\renewcommand{\arraystretch}{1.1}

\begin{table}[hb]
\caption{\(38\) pure metacyclic fields with normalized radicands \(0<D<50\)}
\label{tbl:PureQuinticFields50}
\begin{center}
\begin{tabular}{|r|rc|ccr|cccc|ccc|}
\hline
 No. &   \(D\) &             Factors & S  &             \(f^4\) &  \(m\) & \(V_L\) & \(V_M\) & \(V_N\) & \(E\) & \((1,2,4,5)\)        & T               & P               \\
\hline
   1 &  *\(2\) &                     & 1b &         \(5^2 2^4\) &  \(1\) &   \(0\) &   \(0\) &   \(0\) & \(5\) & \((\times,-,-,-)\)   & \(\varepsilon\) &                 \\
   2 &   \(3\) &                     & 1b &         \(5^2 3^4\) &  \(1\) &   \(0\) &   \(0\) &   \(0\) & \(5\) & \((\times,-,-,-)\)   & \(\varepsilon\) &                 \\
   3 &  *\(5\) &                     & 1a &             \(5^6\) &  \(1\) &   \(0\) &   \(0\) &   \(0\) & \(5\) & \((-,-,-,\otimes)\)  & \(\vartheta\)   &                 \\
   4 &  *\(6\) &        \(2\cdot 3\) & 1b &     \(5^2 2^4 3^4\) &  \(3\) &   \(0\) &   \(0\) &   \(1\) & \(6\) & \((\times,-,-,-)\)   & \(\gamma\)      &                 \\
   5 &  *\(7\) &                     & 2  &             \(7^4\) &  \(1\) &   \(0\) &   \(0\) &   \(0\) & \(5\) & \((-,-,-,\otimes)\)  & \(\vartheta\)   &                 \\
   6 & *\(10\) &        \(2\cdot 5\) & 1a &         \(5^6 2^4\) &  \(4\) &   \(0\) &   \(0\) &   \(0\) & \(5\) & \((\times,-,-,-)\)   & \(\varepsilon\) &                 \\
   7 & *\(11\) &                     & 1b &        \(5^2 11^4\) &  \(1\) &   \(1\) &   \(1\) &   \(2\) & \(3\) & \((-,-,\otimes,-)\)  & \(\alpha_2\)    & \(\mathcal{L},\mathfrak{K}_1\) \\
   8 &  \(12\) &      \(2^2\cdot 3\) & 1b &     \(5^2 2^4 3^4\) &  \(3\) &   \(0\) &   \(0\) &   \(1\) & \(6\) & \((\times,-,-,-)\)   & \(\gamma\)      &                 \\
   9 &  \(13\) &                     & 1b &        \(5^2 13^4\) &  \(1\) &   \(0\) &   \(0\) &   \(0\) & \(5\) & \((\times,-,-,-)\)   & \(\varepsilon\) &                 \\
  10 & *\(14\) &        \(2\cdot 7\) & 1b &     \(5^2 2^4 7^4\) &  \(4\) &   \(0\) &   \(0\) &   \(1\) & \(6\) & \((\times,-,-,-)\)   & \(\gamma\)      &                 \\
  11 &  \(15\) &        \(3\cdot 5\) & 1a &         \(5^6 3^4\) &  \(4\) &   \(0\) &   \(0\) &   \(0\) & \(5\) & \((\times,-,-,-)\)   & \(\varepsilon\) &                 \\
  12 &  \(17\) &                     & 1b &        \(5^2 17^4\) &  \(1\) &   \(0\) &   \(0\) &   \(0\) & \(5\) & \((\times,-,-,-)\)   & \(\varepsilon\) &                 \\
  13 & *\(18\) &      \(2\cdot 3^2\) & 2  &         \(2^4 3^4\) &  \(1\) &   \(0\) &   \(0\) &   \(0\) & \(5\) & \((\times,-,-,-)\)   & \(\varepsilon\) &                 \\
  14 & *\(19\) &                     & 1b &        \(5^2 19^4\) &  \(1\) &   \(1\) &   \(1\) &   \(2\) & \(3\) & \((-,\otimes,-,-)\)  & \(\delta_2\)    & \(\mathcal{L}\) \\
  15 &  \(20\) &      \(2^2\cdot 5\) & 1a &         \(5^6 2^4\) &  \(4\) &   \(0\) &   \(0\) &   \(0\) & \(5\) & \((\times,-,-,-)\)   & \(\varepsilon\) &                 \\
  16 &  \(21\) &        \(3\cdot 7\) & 1b &     \(5^2 3^4 7^4\) &  \(4\) &   \(0\) &   \(0\) &   \(1\) & \(6\) & \((\times,-,-,-)\)   & \(\gamma\)      &                 \\
  17 & *\(22\) &       \(2\cdot 11\) & 1b &    \(5^2 2^4 11^4\) &  \(3\) &   \(1\) &   \(1\) &   \(3\) & \(4\) & \((-,-,(\times),-)\) & \(\beta_2\)     & \(2\cdot 5,\mathcal{K}\) \\
  18 &  \(23\) &                     & 1b &        \(5^2 23^4\) &  \(1\) &   \(0\) &   \(0\) &   \(0\) & \(5\) & \((\times,-,-,-)\)   & \(\varepsilon\) &                 \\
  19 &  \(26\) &       \(2\cdot 13\) & 2  &        \(2^4 13^4\) &  \(1\) &   \(0\) &   \(0\) &   \(0\) & \(5\) & \((\times,-,-,-)\)   & \(\varepsilon\) &                 \\
  20 &  \(28\) &      \(2^2\cdot 7\) & 1b &     \(5^2 2^4 7^4\) &  \(4\) &   \(0\) &   \(0\) &   \(1\) & \(6\) & \((\times,-,-,-)\)   & \(\gamma\)      &                 \\
  21 &  \(29\) &                     & 1b &        \(5^2 29^4\) &  \(1\) &   \(1\) &   \(1\) &   \(2\) & \(3\) & \((-,\otimes,-,-)\)  & \(\delta_2\)    & \(\mathcal{L}\) \\
  22 & *\(30\) & \(2\cdot 3\cdot 5\) & 1a &     \(5^6 2^4 3^4\) & \(16\) &   \(0\) &   \(0\) &   \(1\) & \(6\) & \((\times,-,-,-)\)   & \(\gamma\)      &                 \\
  23 & *\(31\) &                     & 1b &        \(5^2 31^4\) &  \(1\) &   \(2\) &   \(3\) &   \(5\) & \(2\) & \((-,-,\otimes,-)\)  & \(\alpha_1\)    & \(\mathfrak{K}_1,\mathfrak{K}_2\) \\
  24 & *\(33\) &       \(3\cdot 11\) & 1b &    \(5^2 3^4 11^4\) &  \(3\) &   \(2\) &   \(2\) &   \(4\) & \(1\) & \((-,-,\otimes,-)\)  & \(\alpha_2\)    & \(\mathcal{K},\mathfrak{K}_2\) \\
  25 &  \(34\) &       \(2\cdot 17\) & 1b &    \(5^2 2^4 17^4\) &  \(3\) &   \(0\) &   \(0\) &   \(1\) & \(6\) & \((\times,-,-,-)\)   & \(\gamma\)      &                 \\
  26 & *\(35\) &        \(5\cdot 7\) & 1a &         \(5^6 7^4\) &  \(4\) &   \(0\) &   \(0\) &   \(1\) & \(6\) & \((-,-,-,\otimes)\)  & \(\eta\)        &                 \\
  27 &  \(37\) &                     & 1b &        \(5^2 37^4\) &  \(1\) &   \(0\) &   \(0\) &   \(0\) & \(5\) & \((\times,-,-,-)\)   & \(\varepsilon\) &                 \\
  28 & *\(38\) &       \(2\cdot 19\) & 1b &    \(5^2 2^4 19^4\) &  \(3\) &   \(1\) &   \(1\) &   \(3\) & \(4\) & \((-,\otimes,-,-)\)  & \(\beta_2\)     & \(5,\mathcal{K}\) \\
  29 &  \(39\) &       \(3\cdot 13\) & 1b &    \(5^2 3^4 13^4\) &  \(3\) &   \(0\) &   \(0\) &   \(1\) & \(6\) & \((\times,-,-,-)\)   & \(\gamma\)      &                 \\
  30 &  \(40\) &      \(2^3\cdot 5\) & 1a &         \(5^6 2^4\) &  \(4\) &   \(0\) &   \(0\) &   \(0\) & \(5\) & \((\times,-,-,-)\)   & \(\varepsilon\) &                 \\
  31 &  \(41\) &                     & 1b &        \(5^2 41^4\) &  \(1\) &   \(1\) &   \(1\) &   \(2\) & \(3\) & \((-,-,\otimes,-)\)  & \(\alpha_2\)    & \(\mathcal{L},\mathfrak{K}_2\) \\
  32 & *\(42\) & \(2\cdot 3\cdot 7\) & 1b & \(5^2 2^4 3^4 7^4\) & \(12\) &   \(1\) &   \(2\) &   \(5\) & \(6\) & \((\times,-,-,-)\)   & \(\gamma\)      & \(2\cdot 5,3\cdot 5^2\) \\
  33 &  \(43\) &                     & 2  &            \(43^4\) &  \(1\) &   \(0\) &   \(0\) &   \(0\) & \(5\) & \((-,-,-,\otimes)\)  & \(\vartheta\)   &                 \\
  34 &  \(44\) &     \(2^2\cdot 11\) & 1b &    \(5^2 2^4 11^4\) &  \(3\) &   \(1\) &   \(1\) &   \(3\) & \(4\) & \((-,-,(\times),-)\) & \(\beta_2\)     & \(2\cdot 5,\mathcal{K}\) \\
  35 &  \(45\) &      \(3^2\cdot 5\) & 1a &         \(5^6 3^4\) &  \(4\) &   \(0\) &   \(0\) &   \(0\) & \(5\) & \((\times,-,-,-)\)   & \(\varepsilon\) &                 \\
  36 &  \(46\) &       \(2\cdot 23\) & 1b &    \(5^2 2^4 23^4\) &  \(3\) &   \(0\) &   \(0\) &   \(1\) & \(6\) & \((\times,-,-,-)\)   & \(\gamma\)      &                 \\
  37 &  \(47\) &                     & 1b &        \(5^2 47^4\) &  \(1\) &   \(0\) &   \(0\) &   \(0\) & \(5\) & \((\times,-,-,-)\)   & \(\varepsilon\) &                 \\
  38 &  \(48\) &      \(2^4\cdot 3\) & 1b &     \(5^2 2^4 3^4\) &  \(3\) &   \(0\) &   \(0\) &   \(1\) & \(6\) & \((\times,-,-,-)\)   & \(\gamma\)      &                 \\
\hline
\end{tabular}
\end{center}
\end{table}

\newpage
%--------------------------------------------------------------------------------

In these cases, \(\zeta\) can occur as a norm \(N_{N/K}(Z)\) of some unit in \(Z\in U_N\).
If it actually does, the \(\times\) is replaced by \(\otimes\).
Here, we only present the first three tables of
\cite{Ma2},
Tables
\ref{tbl:PureQuinticFields50},
\ref{tbl:PureQuinticFields100},
and
\ref{tbl:PureQuinticFields150}.

\renewcommand{\arraystretch}{1.1}

\begin{table}[hb]
\caption{\(43\) pure metacyclic fields with normalized radicands \(50<D<100\)}
\label{tbl:PureQuinticFields100}
\begin{center}
\begin{tabular}{|r|rc|ccr|cccc|ccc|}
\hline
 No. &   \(D\) &               Factors & S  &              \(f^4\) &  \(m\) & \(V_L\) & \(V_M\) & \(V_N\) & \(E\) & \((1,2,4,5)\)        & T               & P               \\
\hline
  39 &  \(51\) &         \(3\cdot 17\) & 2  &         \(3^4 17^4\) &  \(1\) &   \(0\) &   \(0\) &   \(0\) & \(5\) & \((\times,-,-,-)\)   & \(\varepsilon\) &                 \\
  40 &  \(52\) &       \(2^2\cdot 13\) & 1b &     \(5^2 2^4 13^4\) &  \(3\) &   \(0\) &   \(0\) &   \(1\) & \(6\) & \((\times,-,-,-)\)   & \(\gamma\)      &                 \\
  41 &  \(53\) &                       & 1b &         \(5^2 53^4\) &  \(1\) &   \(0\) &   \(0\) &   \(0\) & \(5\) & \((\times,-,-,-)\)   & \(\varepsilon\) &                 \\
  42 & *\(55\) &         \(5\cdot 11\) & 1a &         \(5^6 11^4\) &  \(4\) &   \(1\) &   \(1\) &   \(2\) & \(3\) & \((-,-,\otimes,-)\)  & \(\alpha_2\)    & \(\mathcal{K},\mathfrak{K}_1\) \\
  43 &  \(56\) &        \(2^3\cdot 7\) & 1b &      \(5^2 2^4 7^4\) &  \(4\) &   \(0\) &   \(0\) &   \(1\) & \(6\) & \((\times,-,-,-)\)   & \(\gamma\)      &                 \\
  44 & *\(57\) &         \(3\cdot 19\) & 2  &         \(3^4 19^4\) &  \(1\) &   \(1\) &   \(1\) &   \(2\) & \(3\) & \((-,\otimes,-,-)\)  & \(\delta_2\)    & \(\mathcal{K}\) \\
  45 &  \(58\) &         \(2\cdot 29\) & 1b &     \(5^2 2^4 29^4\) &  \(3\) &   \(1\) &   \(1\) &   \(3\) & \(4\) & \((-,\otimes,-,-)\)  & \(\beta_2\)     & \(29\cdot 5^2,\mathcal{K}\) \\
  46 &  \(59\) &                       & 1b &         \(5^2 59^4\) &  \(1\) &   \(1\) &   \(1\) &   \(2\) & \(3\) & \((-,\otimes,-,-)\)  & \(\delta_2\)    & \(\mathcal{L}\) \\
  47 &  \(60\) & \(2^2\cdot 3\cdot 5\) & 1a &      \(5^6 2^4 3^4\) & \(16\) &   \(0\) &   \(0\) &   \(1\) & \(6\) & \((\times,-,-,-)\)   & \(\gamma\)      &                 \\
  48 &  \(61\) &                       & 1b &         \(5^2 61^4\) &  \(1\) &   \(1\) &   \(1\) &   \(2\) & \(3\) & \((-,-,\otimes,-)\)  & \(\alpha_2\)    & \(\mathcal{L},\mathfrak{K}_2\) \\
  49 &  \(62\) &         \(2\cdot 31\) & 1b &     \(5^2 2^4 31^4\) &  \(3\) &   \(1\) &   \(1\) &   \(3\) & \(4\) & \((-,-,(\times),-)\) & \(\beta_2\)     & \(5,\mathcal{K}\) \\
  50 &  \(63\) &        \(3^2\cdot 7\) & 1b &      \(5^2 3^4 7^4\) &  \(4\) &   \(0\) &   \(0\) &   \(1\) & \(6\) & \((\times,-,-,-)\)   & \(\gamma\)      &                 \\
  51 &  \(65\) &         \(5\cdot 13\) & 1a &         \(5^6 13^4\) &  \(4\) &   \(0\) &   \(0\) &   \(0\) & \(5\) & \((\times,-,-,-)\)   & \(\varepsilon\) &                 \\
  52 & *\(66\) &  \(2\cdot 3\cdot 11\) & 1b & \(5^2 2^4 3^4 11^4\) & \(13\) &   \(1\) &   \(2\) &   \(5\) & \(6\) & \((-,-,\times,-)\)   & \(\gamma\)      & \(2\cdot 5,3\cdot 5^3\) \\
  53 &  \(67\) &                       & 1b &         \(5^2 67^4\) &  \(1\) &   \(0\) &   \(0\) &   \(0\) & \(5\) & \((\times,-,-,-)\)   & \(\varepsilon\) &                 \\
  54 &  \(68\) &       \(2^2\cdot 17\) & 2  &         \(2^4 17^4\) &  \(1\) &   \(0\) &   \(0\) &   \(0\) & \(5\) & \((\times,-,-,-)\)   & \(\varepsilon\) &                 \\
  55 &  \(69\) &         \(3\cdot 23\) & 1b &     \(5^2 3^4 23^4\) &  \(3\) &   \(0\) &   \(0\) &   \(1\) & \(6\) & \((\times,-,-,-)\)   & \(\gamma\)      &                 \\
  56 & *\(70\) &   \(2\cdot 5\cdot 7\) & 1a &      \(5^6 2^4 7^4\) & \(16\) &   \(0\) &   \(0\) &   \(1\) & \(6\) & \((\times,-,-,-)\)   & \(\gamma\)      &                 \\
  57 &  \(71\) &                       & 1b &         \(5^2 71^4\) &  \(1\) &   \(1\) &   \(1\) &   \(2\) & \(3\) & \((-,-,\otimes,-)\)  & \(\alpha_2\)    & \(\mathcal{L},\mathfrak{K}_1\) \\
  58 &  \(73\) &                       & 1b &         \(5^2 73^4\) &  \(1\) &   \(0\) &   \(0\) &   \(0\) & \(5\) & \((\times,-,-,-)\)   & \(\varepsilon\) &                 \\
  59 &  \(74\) &         \(2\cdot 37\) & 2  &         \(2^4 37^4\) &  \(1\) &   \(0\) &   \(0\) &   \(0\) & \(5\) & \((\times,-,-,-)\)   & \(\varepsilon\) &                 \\
  60 &  \(75\) &        \(3\cdot 5^2\) & 1a &          \(5^6 3^4\) &  \(4\) &   \(0\) &   \(0\) &   \(0\) & \(5\) & \((\times,-,-,-)\)   & \(\varepsilon\) &                 \\
  61 &  \(76\) &       \(2^2\cdot 19\) & 2  &         \(2^4 19^4\) &  \(1\) &   \(1\) &   \(1\) &   \(2\) & \(3\) & \((-,\otimes,-,-)\)  & \(\delta_2\)    & \(\mathcal{K}\) \\
  62 & *\(77\) &         \(7\cdot 11\) & 1b &     \(5^2 7^4 11^4\) &  \(4\) &   \(1\) &   \(1\) &   \(3\) & \(4\) & \((-,-,(\times),-)\) & \(\beta_2\)     & \(11\cdot 5^3,\mathcal{K}\) \\
  63 & *\(78\) &  \(2\cdot 3\cdot 13\) & 1b & \(5^2 2^4 3^4 13^4\) & \(13\) &   \(1\) &   \(2\) &   \(5\) & \(6\) & \((\times,-,-,-)\)   & \(\gamma\)      & \(3,2\cdot 5^3\) \\
  64 &  \(79\) &                       & 1b &         \(5^2 79^4\) &  \(1\) &   \(1\) &   \(1\) &   \(2\) & \(3\) & \((-,\otimes,-,-)\)  & \(\delta_2\)    & \(\mathcal{L}\) \\
  65 &  \(80\) &        \(2^4\cdot 5\) & 1a &          \(5^6 2^4\) &  \(4\) &   \(0\) &   \(0\) &   \(0\) & \(5\) & \((\times,-,-,-)\)   & \(\varepsilon\) &                 \\
  66 & *\(82\) &         \(2\cdot 41\) & 2  &         \(2^4 41^4\) &  \(1\) &   \(1\) &   \(1\) &   \(2\) & \(3\) & \((-,-,\otimes,-)\)  & \(\alpha_2\)    & \(\mathcal{K},\mathfrak{K}_2\) \\
  67 &  \(83\) &                       & 1b &         \(5^2 83^4\) &  \(1\) &   \(0\) &   \(0\) &   \(0\) & \(5\) & \((\times,-,-,-)\)   & \(\varepsilon\) &                 \\
  68 &  \(84\) & \(2^2\cdot 3\cdot 7\) & 1b &  \(5^2 2^4 3^4 7^4\) & \(12\) &   \(1\) &   \(2\) &   \(5\) & \(6\) & \((\times,-,-,-)\)   & \(\gamma\)      & \(2\cdot 5,3\cdot 5\) \\
  69 &  \(85\) &         \(5\cdot 17\) & 1a &         \(5^6 17^4\) &  \(4\) &   \(0\) &   \(0\) &   \(0\) & \(5\) & \((\times,-,-,-)\)   & \(\varepsilon\) &                 \\
  70 &  \(86\) &         \(2\cdot 43\) & 1b &     \(5^2 2^4 43^4\) &  \(4\) &   \(0\) &   \(0\) &   \(1\) & \(6\) & \((\times,-,-,-)\)   & \(\gamma\)      &                 \\
  71 &  \(87\) &         \(3\cdot 29\) & 1b &     \(5^2 3^4 29^4\) &  \(3\) &   \(1\) &   \(1\) &   \(3\) & \(4\) & \((-,\otimes,-,-)\)  & \(\beta_2\)     & \(29,\mathcal{L}\) \\
  72 &  \(88\) &       \(2^3\cdot 11\) & 1b &     \(5^2 2^4 11^4\) &  \(3\) &   \(2\) &   \(2\) &   \(4\) & \(1\) & \((-,-,\otimes,-)\)  & \(\alpha_2\)    & \(\mathcal{K},\mathfrak{K}_2\) \\
  73 &  \(89\) &                       & 1b &         \(5^2 89^4\) &  \(1\) &   \(1\) &   \(1\) &   \(2\) & \(3\) & \((-,\otimes,-,-)\)  & \(\delta_2\)    & \(\mathcal{L}\) \\
  74 &  \(90\) & \(2\cdot 3^2\cdot 5\) & 1a &      \(5^6 2^4 3^4\) & \(16\) &   \(0\) &   \(0\) &   \(1\) & \(6\) & \((\times,-,-,-)\)   & \(\gamma\)      &                 \\
  75 &  \(91\) &         \(7\cdot 13\) & 1b &     \(5^2 7^4 13^4\) &  \(4\) &   \(0\) &   \(0\) &   \(1\) & \(6\) & \((\times,-,-,-)\)   & \(\gamma\)      &                 \\
  76 &  \(92\) &       \(2^2\cdot 23\) & 1b &     \(5^2 2^4 23^4\) &  \(3\) &   \(0\) &   \(0\) &   \(1\) & \(6\) & \((\times,-,-,-)\)   & \(\gamma\)      &                 \\
  77 &  \(93\) &         \(3\cdot 31\) & 2  &         \(3^4 31^4\) &  \(1\) &   \(1\) &   \(1\) &   \(2\) & \(3\) & \((-,-,\otimes,-)\)  & \(\alpha_2\)    & \(\mathcal{K},\mathfrak{K}_1\) \\
  78 &  \(94\) &         \(2\cdot 47\) & 1b &     \(5^2 2^4 47^4\) &  \(3\) &   \(0\) &   \(0\) &   \(1\) & \(6\) & \((\times,-,-,-)\)   & \(\gamma\)      &                 \\
  79 & *\(95\) &         \(5\cdot 19\) & 1a &         \(5^6 19^4\) &  \(4\) &   \(1\) &   \(1\) &   \(2\) & \(3\) & \((-,\otimes,-,-)\)  & \(\delta_2\)    & \(\mathcal{K}\) \\
  80 &  \(97\) &                       & 1b &         \(5^2 97^4\) &  \(1\) &   \(0\) &   \(0\) &   \(0\) & \(5\) & \((\times,-,-,-)\)   & \(\varepsilon\) &                 \\
  81 &  \(99\) &       \(3^2\cdot 11\) & 2  &         \(3^4 11^4\) &  \(1\) &   \(1\) &   \(1\) &   \(2\) & \(3\) & \((-,-,\otimes,-)\)  & \(\alpha_2\)    & \(\mathcal{K},\mathfrak{K}_1\) \\
\hline
\end{tabular}
\end{center}
\end{table}

\newpage
%--------------------------------------------------------------------------------

\renewcommand{\arraystretch}{1.0}

\begin{table}[ht]
\caption{\(44\) pure metacyclic fields with normalized radicands \(100<D\le 150\)}
\label{tbl:PureQuinticFields150}
\begin{center}
\begin{tabular}{|r|rc|ccr|cccc|ccc|}
\hline
 No. &    \(D\) &                Factors & S  &              \(f^4\) &  \(m\) & \(V_L\) & \(V_M\) & \(V_N\) & \(E\) & \((1,2,4,5)\)        & T               & P               \\
\hline
  82 & *\(101\) &                        & 2  &            \(101^4\) &  \(1\) &   \(1\) &   \(2\) &   \(4\) & \(5\) & \((-,-,\otimes,\otimes)\) & \(\zeta_1\) & \(\mathfrak{K}_2\) \\
  83 &  \(102\) &   \(2\cdot 3\cdot 17\) & 1b & \(5^2 2^4 3^4 17^4\) & \(13\) &   \(1\) &   \(2\) &   \(5\) & \(6\) & \((\times,-,-,-)\)   & \(\gamma\)      & \(2,3\cdot 5\)  \\
  84 &  \(103\) &                        & 1b &        \(5^2 103^4\) &  \(1\) &   \(0\) &   \(0\) &   \(0\) & \(5\) & \((\times,-,-,-)\)   & \(\varepsilon\) &                 \\
  85 &  \(104\) &        \(2^3\cdot 13\) & 1b &     \(5^2 2^4 13^4\) &  \(3\) &   \(0\) &   \(0\) &   \(1\) & \(6\) & \((\times,-,-,-)\)   & \(\gamma\)      &                 \\
  86 &  \(105\) &    \(3\cdot 5\cdot 7\) & 1a &      \(5^6 3^4 7^4\) & \(16\) &   \(0\) &   \(0\) &   \(1\) & \(6\) & \((\times,-,-,-)\)   & \(\gamma\)      &                 \\
  87 &  \(106\) &          \(2\cdot 53\) & 1b &     \(5^2 2^4 53^4\) &  \(3\) &   \(0\) &   \(0\) &   \(1\) & \(6\) & \((\times,-,-,-)\)   & \(\gamma\)      &                 \\
  88 &  \(107\) &                        & 2  &            \(107^4\) &  \(1\) &   \(0\) &   \(0\) &   \(0\) & \(5\) & \((-,-,-,\otimes)\)  & \(\vartheta\)   &                 \\
  89 &  \(109\) &                        & 1b &        \(5^2 109^4\) &  \(1\) &   \(1\) &   \(1\) &   \(2\) & \(3\) & \((-,\otimes,-,-)\)  & \(\delta_2\)    & \(\mathcal{L}\) \\
  90 & *\(110\) &   \(2\cdot 5\cdot 11\) & 1a &     \(5^6 2^4 11^4\) & \(16\) &   \(1\) &   \(1\) &   \(3\) & \(4\) & \((-,-,(\times),-)\) & \(\beta_2\)     & \(11,\mathcal{L}\) \\
  91 &  \(111\) &          \(3\cdot 37\) & 1b &     \(5^2 3^4 37^4\) &  \(3\) &   \(0\) &   \(0\) &   \(1\) & \(6\) & \((\times,-,-,-)\)   & \(\gamma\)      &                 \\
  92 &  \(112\) &         \(2^4\cdot 7\) & 1b &      \(5^2 2^4 7^4\) &  \(4\) &   \(0\) &   \(0\) &   \(1\) & \(6\) & \((\times,-,-,-)\)   & \(\gamma\)      &                 \\
  93 &  \(113\) &                        & 1b &        \(5^2 113^4\) &  \(1\) &   \(0\) &   \(0\) &   \(0\) & \(5\) & \((\times,-,-,-)\)   & \(\varepsilon\) &                 \\
  94 & *\(114\) &   \(2\cdot 3\cdot 19\) & 1b & \(5^2 2^4 3^4 19^4\) & \(13\) &   \(1\) &   \(2\) &   \(5\) & \(6\) & \((-,\times,-,-)\)   & \(\gamma\)      & \(2\cdot 5^3,3\cdot 5^3\) \\
  95 &  \(115\) &          \(5\cdot 23\) & 1a &         \(5^6 23^4\) &  \(4\) &   \(0\) &   \(0\) &   \(0\) & \(5\) & \((\times,-,-,-)\)   & \(\varepsilon\) &                 \\
  96 &  \(116\) &        \(2^2\cdot 29\) & 1b &     \(5^2 2^4 29^4\) &  \(3\) &   \(1\) &   \(1\) &   \(3\) & \(4\) & \((-,\otimes,-,-)\)  & \(\beta_2\)     & \(29\cdot 5,\mathcal{K}\) \\
  97 &  \(117\) &        \(3^2\cdot 13\) & 1b &     \(5^2 3^4 13^4\) &  \(3\) &   \(0\) &   \(0\) &   \(1\) & \(6\) & \((\times,-,-,-)\)   & \(\gamma\)      &                 \\
  98 &  \(118\) &          \(2\cdot 59\) & 2  &         \(2^4 59^4\) &  \(1\) &   \(1\) &   \(1\) &   \(2\) & \(3\) & \((-,\otimes,-,-)\)  & \(\delta_2\)    & \(\mathcal{K}\) \\
  99 &  \(119\) &          \(7\cdot 17\) & 1b &     \(5^2 7^4 17^4\) &  \(4\) &   \(0\) &   \(0\) &   \(1\) & \(6\) & \((\times,-,-,-)\)   & \(\gamma\)      &                 \\
 100 &  \(120\) &  \(2^3\cdot 3\cdot 5\) & 1a &      \(5^6 2^4 3^4\) & \(16\) &   \(0\) &   \(0\) &   \(1\) & \(6\) & \((\times,-,-,-)\)   & \(\gamma\)      &                 \\
 101 &  \(122\) &          \(2\cdot 61\) & 1b &     \(5^2 2^4 61^4\) &  \(3\) &   \(1\) &   \(1\) &   \(3\) & \(4\) & \((-,-,(\times),-)\) & \(\beta_2\)     & \(61\cdot 5^3,\mathcal{K}\) \\
 102 & *\(123\) &          \(3\cdot 41\) & 1b &     \(5^2 3^4 41^4\) &  \(3\) &   \(2\) &   \(3\) &   \(6\) & \(3\) & \((-,-,\otimes,-)\)  & \(\alpha_2\)    & \(\mathcal{K},\mathfrak{K}_2\) \\
 103 &  \(124\) &        \(2^2\cdot 31\) & 2  &         \(2^4 31^4\) &  \(1\) &   \(1\) &   \(1\) &   \(2\) & \(3\) & \((-,-,\otimes,-)\)  & \(\alpha_2\)    & \(\mathcal{K},\mathfrak{K}_1\) \\
 104 & *\(126\) &  \(2\cdot 3^2\cdot 7\) & 2  &      \(2^4 3^4 7^4\) &  \(4\) &   \(0\) &   \(0\) &   \(1\) & \(6\) & \((\times,-,-,-)\)   & \(\gamma\)      &                 \\
 105 &  \(127\) &                        & 1b &        \(5^2 127^4\) &  \(1\) &   \(0\) &   \(0\) &   \(0\) & \(5\) & \((\times,-,-,-)\)   & \(\varepsilon\) &                 \\
 106 &  \(129\) &          \(3\cdot 43\) & 1b &     \(5^2 3^4 43^4\) &  \(4\) &   \(0\) &   \(0\) &   \(1\) & \(6\) & \((\times,-,-,-)\)   & \(\gamma\)      &                 \\
 107 &  \(130\) &   \(2\cdot 5\cdot 13\) & 1a &     \(5^6 2^4 13^4\) & \(16\) &   \(0\) &   \(0\) &   \(1\) & \(6\) & \((\times,-,-,-)\)   & \(\gamma\)      &                 \\
 108 & *\(131\) &                        & 1b &        \(5^2 131^4\) &  \(1\) &   \(2\) &   \(2\) &   \(4\) & \(1\) & \((-,-,\otimes,-)\)  & \(\alpha_2\)    & \(\mathcal{L},\mathfrak{K}_2\) \\
 109 & *\(132\) & \(2^2\cdot 3\cdot 11\) & 2  &     \(2^4 3^4 11^4\) &  \(3\) &   \(1\) &   \(1\) &   \(3\) & \(4\) & \((-,-,(\times),-)\) & \(\beta_2\)     & \(11,\mathcal{L}\) \\
 110 & *\(133\) &          \(7\cdot 19\) & 1b &     \(5^2 7^4 19^4\) &  \(4\) &   \(1\) &   \(1\) &   \(3\) & \(4\) & \((-,\otimes,-,-)\)  & \(\beta_2\)     & \(7,\mathcal{L}\) \\
 111 &  \(134\) &          \(2\cdot 67\) & 1b &     \(5^2 2^4 67^4\) &  \(3\) &   \(0\) &   \(0\) &   \(1\) & \(6\) & \((\times,-,-,-)\)   & \(\gamma\)      &                 \\
 112 &  \(136\) &        \(2^3\cdot 17\) & 1b &     \(5^2 2^4 17^4\) &  \(3\) &   \(0\) &   \(0\) &   \(1\) & \(6\) & \((\times,-,-,-)\)   & \(\gamma\)      &                 \\
 113 &  \(137\) &                        & 1b &        \(5^2 137^4\) &  \(1\) &   \(0\) &   \(0\) &   \(0\) & \(5\) & \((\times,-,-,-)\)   & \(\varepsilon\) &                 \\
 114 &  \(138\) &   \(2\cdot 3\cdot 23\) & 1b & \(5^2 2^4 3^4 23^4\) & \(13\) &   \(1\) &   \(2\) &   \(5\) & \(6\) & \((\times,-,-,-)\)   & \(\gamma\)      & \(2\cdot 5^2,3\) \\
 115 & *\(139\) &                        & 1b &        \(5^2 139^4\) &  \(1\) &   \(1\) &   \(1\) &   \(3\) & \(4\) & \((-,\otimes,-,-)\)  & \(\beta_2\)     & \(5,\mathcal{L}\) \\
 116 & *\(140\) &  \(2^2\cdot 5\cdot 7\) & 1a &      \(5^6 2^4 7^4\) & \(16\) &   \(1\) &   \(2\) &   \(4\) & \(5\) & \((\times,-,-,-)\)   & \(\varepsilon\) & \(7\)           \\
 117 & *\(141\) &          \(3\cdot 47\) & 1b &     \(5^2 3^4 47^4\) &  \(3\) &   \(1\) &   \(2\) &   \(4\) & \(5\) & \((\times,-,-,-)\)   & \(\varepsilon\) & \(47\cdot 5\)   \\
 118 &  \(142\) &          \(2\cdot 71\) & 1b &     \(5^2 2^4 71^4\) &  \(3\) &   \(1\) &   \(1\) &   \(3\) & \(4\) & \((-,-,(\times),-)\) & \(\beta_2\)     & \(71\cdot 5^2,\mathcal{K}\) \\
 119 &  \(143\) &         \(11\cdot 13\) & 2  &        \(11^4 13^4\) &  \(1\) &   \(1\) &   \(1\) &   \(2\) & \(3\) & \((-,-,\otimes,-)\)  & \(\alpha_2\)    & \(\mathcal{K},\mathfrak{K}_1\) \\
 120 &  \(145\) &          \(5\cdot 29\) & 1a &         \(5^6 29^4\) &  \(4\) &   \(1\) &   \(1\) &   \(2\) & \(3\) & \((-,\otimes,-,-)\)  & \(\delta_2\)    & \(\mathcal{K}\) \\
 121 &  \(146\) &          \(2\cdot 73\) & 1b &     \(5^2 2^4 73^4\) &  \(3\) &   \(0\) &   \(0\) &   \(1\) & \(6\) & \((\times,-,-,-)\)   & \(\gamma\)      &                 \\
 122 &  \(147\) &         \(3\cdot 7^2\) & 1b &      \(5^2 3^4 7^4\) &  \(4\) &   \(0\) &   \(0\) &   \(1\) & \(6\) & \((\times,-,-,-)\)   & \(\gamma\)      &                 \\
 123 &  \(148\) &        \(2^2\cdot 37\) & 1b &     \(5^2 2^4 37^4\) &  \(3\) &   \(0\) &   \(0\) &   \(1\) & \(6\) & \((\times,-,-,-)\)   & \(\gamma\)      &                 \\
 124 & *\(149\) &                        & 2  &            \(149^4\) &  \(1\) &   \(1\) &   \(1\) &   \(2\) & \(3\) & \((-,\otimes,-,\times)\) & \(\delta_2\) & \(\mathcal{L}\) \\
 125 &  \(150\) &  \(2\cdot 3\cdot 5^2\) & 1a &      \(5^6 2^4 3^4\) & \(16\) &   \(0\) &   \(0\) &   \(1\) & \(6\) & \((\times,-,-,-)\)   & \(\gamma\)      &                 \\
\hline
\end{tabular}
\end{center}
\end{table}

\newpage
%--------------------------------------------------------------------------------

\section{Acknowledgements}
\label{s:Thanks}

\noindent
We gratefully acknowledge that our research was supported by the Austrian Science Fund (FWF):
projects J 0497-PHY and P 26008-N25.

% This work is dedicated to the memory of Charles J. Parry (\(\dagger\) 25 December 2010)
% who suggested a numerical investigation of pure quintic number fields.

% This work is dedicated to Amandine Leriche
% who proved the analogue of our Main Theorem for pure cubic number fields.

%\newpage
%--------------------------------------------------------------------------------

%--------------------------------------------------------------------------------

\end{document}